\newtheorem{theorem}{Theorem}[section]
\newtheorem{lemma}[theorem]{Lemma}
\newtheorem{proposition}[theorem]{Proposition}
\newtheorem{corollary}[theorem]{Corollary}
\newtheorem{definition}[theorem]{Definition}
\theoremstyle{remark}
\newtheorem{remark}[theorem]{Remark}
\theoremstyle{definition}
\newtheorem{example}{Example}[section]
\numberwithin{equation}{section}
\newcommand{\R}{\ensuremath{\mathbb{R}}}
\newcommand{\N}{\ensuremath{\mathbb{N}}}
\newcommand{\Levy}{\ensuremath{\mathcal{L}}}
\newcommand{\Levyd}{\ensuremath{\overline{\mathcal{L}}}}
\newcommand{\ra}{\rightarrow}
\newcommand{\alp}{\alpha}
\newcommand{\veps}{\varepsilon}
\newcommand{\uu}{\hat{u}}
\newcommand{\vv}{\hat{v}}
\newcommand{\B}{B_\varepsilon^\mu}
\newcommand{\Bn}{B_{\varepsilon_n}^\mu}
\newcommand{\As}{{A}}
\newcommand{\Bs}{{B}}
\newcommand{\Cs}{{C}}
\newcommand{\Ss}{{S}}
\newcommand{\Gs}{{G}}
\newcommand{\dd}{\,\mathrm{d}}
\newcommand{\dell}{\partial}
\newcommand{\indikator}{\mathbf{1}_{|z|\leq 1}}
\newcommand{\lef}{\Big(}
\newcommand{\rig}{\Big)}
\DeclareMathOperator{\sgn}{\textup{sign}}
\DeclareMathOperator{\supp}{supp}
\begin{document}

\title[Nonlocal porous medium equations]{Uniqueness and
  properties of distributional solutions of nonlocal  
equations of porous medium type
}


\author[F.~del~Teso]{F\'elix del Teso}
\address[F. del Teso]{Basque Center for Applied Mathematics (BCAM)\\
Bilbao, Spain} 
\email[]{felix.delteso\@@{}uam.es}
\urladdr{http://www.bcamath.org/es/people/fdelteso}

\author[J.~Endal]{J\o rgen Endal}
\address[J. Endal]{Department of Mathematical Sciences\\
Norwegian University of Science and Technology (NTNU)\\
N-7491 Trondheim, Norway} 
\email[]{jorgen.endal\@@{}math.ntnu.no}
\urladdr{http://www.math.ntnu.no/\~{}jorgeen/}

\author[E. ~R.~Jakobsen]{Espen R. Jakobsen}
\address[E. R. Jakobsen]{Department of Mathematical Sciences\\
Norwegian University of Science and Technology (NTNU)\\
N-7491 Trondheim, Norway} 
\email[]{erj\@@{}math.ntnu.no}
\urladdr{http://www.math.ntnu.no/\~{}erj/}

\subjclass[2010]{35A02, 35B30, 35B35, 35B53, 35D30, 35J15, 35K59, 35K65, 35L65, 35R09, 35R11}

\keywords{uniqueness, distributional solutions, nonlinear degenerate
  diffusion, porous medium equation, Stefan problem, fractional
  Laplacian, nonlocal operators, existence, stability, local limits, continuous
  dependence, numerical approximation, convergence}


\begin{abstract}  
  We study the uniqueness,
  existence, and properties of bounded distributional solutions of the
  initial value problem for the anomalous diffusion equation
  $\partial_tu-\mathcal{L}^\mu [\varphi (u)]=0$. Here $\mathcal{L}^\mu$ can be
  any nonlocal symmetric degenerate elliptic operator including the
  fractional Laplacian and numerical discretizations of this
  operator. The function $\varphi:\mathbb{R} \to \mathbb{R}$ is only assumed to be
  continuous and 
  nondecreasing. The class of equations include nonlocal (generalized) porous
  medium equations, fast diffusion equations, and Stefan problems.
In addition to very general uniqueness and existence results, we
obtain stability, $L^1$-contraction, and a priori estimates. We also study local
limits, continuous dependence, and properties and convergence of a
numerical approximation of our equations.
\end{abstract}

\let\thefootnote\relax\footnote{\textcopyright 2016. This manuscript version is made available under the CC-BY-NC-ND 4.0 license http://creativecommons.org/licenses/by-nc-nd/4.0/.}

\maketitle


\section{Introduction}

In this paper, we obtain uniqueness, existence, and various other
properties for bounded distributional solutions of a class of
possibly degenerate nonlinear anomalous diffusion equations of the form:
\begin{align}\label{E}
\dell_t u -\Levy^\mu[\varphi(u)]&=0 &&\text{in}\quad Q_T:=\R^N\times(0,T)\\
u(x,0)&=u_0(x) &&\text{on}\quad \R^N \label{IC}
\end{align}
where $u=u(x,t)$ is the solution and $T>0$. The nonlinearity
$\varphi$ is  an arbitrary continuous nondecreasing
function, while the anomalous or nonlocal diffusion operator
$\Levy^\mu$ is defined for any $\psi \in  
C_\textup{c}^\infty(\R^N)$ as 
\begin{equation}\label{deflevy}
\Levy ^\mu [\psi](x)=\int_{\R^N\setminus\{0\} } \lef\psi(x+z)-\psi(x)-z\cdot D\psi(x)\indikator \rig\dd\mu(z),
\end{equation}
where $D$ is the gradient,  $\indikator$ a characteristic
function, and $\mu$ a nonnegative symmertic possibly singular measure
satisfying the L\'evy condition $\int |z|^2\wedge 1 \dd \mu(z)<\infty$. For the precise
assumptions, we refer to Section \ref{sec:mainresults}.

The class of nonlocal diffusion operators we consider coincide with
the generators of the symmetric pure-jump L\'evy processes
\cite{Ber96,App09,Sch03} 
like e.g. compound Poisson processes, CGMY processes in Finance, and symmetric
$s$-stable processes. Included are the
well-known fractional Laplacians $-(-\Delta)^{\frac{s}{2}}$ for
$s\in(0,2)$ (where $\dd\mu(z)=c_{N,s}\frac{\dd z}{|z|^{N+s}}$ for some
$c_{N,s}>0$ \cite{DrIm06, App09}), 
along with degenerate operators, and surprisingly,
numerical discretizations of these operators!  

In the language of \cite{Vaz07}, equation \eqref{E} is a generalized
porous medium equation. {On one hand,} since $\varphi$ is
only assumed to be continuous, the full range of porous medium and fast diffusion nonlinearities are included:
$\varphi(r)=r|r|^{m-1}$ for $m>0$. This is somehow optimal for power
nonlinearities since if $m<0$ (ultra fast diffusion), then not only
uniqueness, but also existence may fail \cite{BoSeVa16}.
 On the other hand, since $\varphi$ is only
assumed to be nondecreasing, it can be constant on sets of positive
measure and then equation \eqref{E} is strongly degenerate. This case
include Stefan type of problems, like e.g. when $c_1,c_2,T> 0$ and
$$\varphi(r)=\begin{cases}c_2 r,& r<0,\\ c_1(r-T)^+,& r\geq 0.\end{cases}$$
Many physical problems can be modelled by equations like
\eqref{E}. We mention flow in a porous medium of e.g.~oil, gas, and
groundwater, nonlinear heat transfer, and population dynamics. For more
information and examples, we refer to 
Chapter 2 and 21 in \cite{Vaz07} for local problems, and to
\cite{Woy01,MeKl04,App09,Vaz12,Vaz14} for nonlocal problems.

A key result in this paper is the  uniqueness result for bounded
distributional solutions of \eqref{E} and \eqref{IC}. 
Almost half of the paper is devoted to the proof
of this result. Once we have it, we prove a general
stability result, and then we obtain other properties like existence,
$L^1$-contraction, and many a priori estimates from more regular problems via
approximation and compactness arguments. As straightforward
applications of all of these estimates, we then obtain the following results:
(i) Convergence as $s\to2^-$ of distributional solutions of 
\begin{align}
\label{FPME}
\dell_t u+(-\Delta)^{\frac{s}{2}}\varphi(u)=0 \qquad \text{in}\qquad
Q_T, 
\end{align}
to distributional solutions of the local equation
\begin{align}
\label{GPME}
\dell_t u-\Delta\varphi(u)=0 \qquad \text{in}\qquad
Q_T; 
\end{align}
(ii) continuous dependence in $(m,s)\in(0,\infty)\times(0,2]$ for the
porous medium equation of \cite{PaQuRoVa12},
\begin{equation}
\label{FPME2}
\dell_t u+(-\Delta)^{\frac{s}{2}}u|u|^{m-1}=0 \qquad\text{in}\qquad Q_T,
\end{equation}
including for the first time also the fast diffusion range; and
(iii) convergence of  semi-discrete numerical
approximations of a class of equations including \eqref{E}
(cf. \eqref{limiteq} and \eqref{approx} in Section \ref{sec:num}).

The uniqueness result is hard to prove because of our very general
assumptions on the initial value problem combined with a very weak
solution concept -- merely bounded distributional solutions.
This combination means that many classical techniques do not
work:  Fourier techniques are 
hard to apply because the problem is nonlinear and the Fourier symbol of
$\Levy^\mu$ could be merely a bounded function, energy estimates do
not imply uniqueness because 
$\varphi$ is not strictly increasing, and $L^1$-contraction
arguments do not apply since we do not assume additional entropy
conditions (cf. e.g. \cite{AnMa10} for the local case), or
equivalently, additional regularity in time as in \cite{PaQuRoVa12}
(see the uniqueness result for  
so-called strong solutions). The (weighted)
  $L^1$-contraction argument 
for ordered solutions given in \cite{BoVa14} avoids these additional
conditions, but it cannot be adapted here since it strongly depends
on the equation being like \eqref{FPME2} with $0<m<1$ and $s\in(0,2)$. Finally, since our solutions are not assumed to have 
finite energy, the classical uniqueness argument of Oleinik
\cite{OK58} cannot be adapted either.
We refer to \cite{OK58,Vaz07} for the local case, and
the uniqueness argument for so-called weak solutions in \cite{PaQuRoVa12} for 
results in the nonlocal case.

For the local equation \eqref{GPME}, uniqueness for bounded
distributional solution was proven by Brezis and Crandall in
\cite{BrCr79} under similar assumptions on $\varphi$ and $u_0$. Their
argument is quite indirect and rely on a clever idea using resolvents
and their integral representations (fundamental solutions). In this
paper, we adapt such an approach to our nonlocal setting.
 But because
of the generality of 
our diffusion operators, we cannot 
rely on explicit fundamental solutions for our proofs. Instead, we
have to develop 
this part of the theory from scratch, using the equation and the regularity
that comes with our solutions concept to obtain the necessary
estimates. 
To do this, a key tool is to approximate the possibly singular
integral operator $\Levy^\mu$ by a bounded integral operator and then
 carefully pass  to the limit.
This proceedure, and
hence also the proof, is truly
nonlocal -- there is no similar approximation by local operators.
The proof necessarily becomes much more involved than in
 \cite{BrCr79}, and includes a number of approximations, a priori estimates,
 $L^1$-contraction estimates, comparison principles, compactness and
 regularity arguments. It also includes new Stroock-Varoupolous
 inequalities and a new Liouville type of result for nonlocal
 operators. Both our approach and intermediate results
 should be of independent interest. 

 Let us give the main references for the well-posedness of the Cauchy
 problems for \eqref{E} and \eqref{GPME}. We start with the local case
 \eqref{GPME}. In the linear case, when $\varphi(u)=u$, it is the
 classical heat equation, cf. e.g. \cite{Eva10}. When
 $\varphi(u)=u^m$, it is a porous medium equation, and a very complete
 theory can be found in \cite{Vaz07}. In the general case,
 \eqref{GPME} is a generalized porous medium equation (or filtration
 equation). We refer again to \cite{Vaz07}. Uniqueness of
 distributional solutions of this equation was proven in \cite{BrCr79}
 for bounded initial data and continuous, nondecreasing $\varphi$, and
 in \cite{HePi85} for locally integrable initial data,
 $\varphi(r)=r^m$ for $0<m<1$, and with regularity assumptions on
 $\dell_t u$.
 Some nonuniqueness results can be found in
 e.g. \cite{Tyc35,Vaz90}. In the 
 presence of convection, or if general $L^1$-contraction results are
 sought, then so-called entropy solutions are a useful tool to obtain
 well-posedness \cite{Kru70,Car99}. A very general well-posedness
 result which cover the case of merely continuous $\varphi$ can then
 be found in \cite{AnMa10}. 

In the nonlocal case, one linear special case of \eqref{E} is the
fractional heat equation $\dell_t
u+(-\Delta)^{\frac{s}{2}}u=0$ for $s\in(0,2)$.  As in
  the local case, 
the initial value problem has a classical solution $u(x,t)=(K_s(\cdot,t)\ast
u(\cdot,0))(x)$ for
$\mathcal{F}(K_s(\cdot,t))(\xi)=\textup{e}^{-|\xi|^s
  t}$. It is well-posed even for measure data and solutions growing at
infinity \cite{BaPeSoVa14,BoSiVa16}.
 The
fractional porous medium equations \eqref{FPME2} are examples of
nonlinear equations of the form \eqref{E}. In
\cite{PaQuRoVa11,PaQuRoVa12}, existence, uniqueness and a priori
estimates for \eqref{FPME2} are proven for so-called weak
$L^1$-energy solutions -- possibly unbounded solutions with finite
energy. In \cite{BoVa14} there are  existence and uniqueness
  results for minimal distributional solutions of \eqref{FPME2} with
  $0<m<1$ in weighted 
$L^1$-spaces (solutions can grow at infinity).
We also mention that logarithmic diffusion
($\varphi(u)=\log(1+u)$) is considered in \cite{PaQuRoVa14},
singular or ultra fast diffusions in \cite{BoSeVa16}, weighted 
equations with measure data in \cite{GrMuPu15},  and 
problems on bounded domains in \cite{BoSiVa14, BoVa15, BoVa16}. 
Energy solutions
of equations with a larger class of
  nonlinearities $\varphi$ and nonlocal operators $\Levy^\mu$ are
  studied in the recent paper \cite{PaQuRo15}. The
  authors obtain results on well-posedness, 
continuity/regularity, and long time asymptotics.
 The setting, solution concept, and techniques are different from
 ours. Their operators 
 $\Levy^\mu$ can have some $x$-dependence, but the (singular part)
 must be comparable to a fractional Laplacian
 (i.e. be nondegenerate). Initial data in $L^\infty\cap L^1$ is assumed
 for uniquenss. In the $x$-independent case their assumptions are 
 less general than ours, especially those for $\Levy^\mu$ and the
 regularity of the solutions.  Other types of equations of the form
 \eqref{E} can be found in \cite{AMRT10}. These equations involve
bounded diffusion operators that can be represented by nonsingular
integral operators of the form \eqref{deflevy}. Because of this, at
least the well-posedness is easier to handle in this case.

It should be clear from the previous discussion that even if our
uniqueness result is very general, it is usually not strictly
comparable to the other results. E.g. a price  to pay to
work with general $\varphi$ and a very weak solution concept, is that
solutions $u$ have to be bounded. Our method of proof also requires
that $u-u_0\in L^1(Q_T)$. For particular choices of
$\varphi$, these assumptions may not be optimal. E.g. if you change
the solution concept and assume finite energy, then there are
uniqueness results for unbounded solutions of \eqref{FPME2} in $L^1$
in \cite{PaQuRoVa11,PaQuRoVa12}. There are even uniquness results in
weighted $L^1$-spaces, see \cite{BoVa14}. Here the solutions are
allowed to grow at infinity, but the uniqueness result is weaker in
the sense that it only holds for {\em minimal} distributional
solutions.


There are other ways to generalize the porous medium equation to a
nonlocal setting. In \cite{BiKaMo10,CaVa11,StanTesoVazCRAS, BiImKa15, StTeVa15} the authors consider
a so-called porous medium equations with fractional pressure. These
equations are in a divergence form, and no uniqueness is known except
when $N=1$. 
Finally, we mention that in the presence of (nonlinear) convection,
additional entropy conditions are needed to have uniqueness as in the
local case. Nonuniqueness of distributional solutions is proven in
\cite{AlAn10}, and several well-posedness results for entropy
solutions are given in \cite{Ali07, CiJa11, EnJa14}. These latter
results requires $\varphi$ to be linear or locally Lipschitz and hence
do not apply to our case where $\varphi$ is merely continuous.

\subsection*{Outline}
In Section \ref{sec:mainresults} we state the assumptions and present and discuss our main
results. The proof of the uniqueness result is given in Section
\ref{proofofuniqueness}. This proof requires a number of results and
estimates for a resolvent equation -- an auxiliary elliptic equation -- and these are proven in Section \ref{ellipticsection}.
In Section \ref{sec:stabexapriori}, we prove the main stability and 
existence result, along with a number of a priori estimates. We then
apply these results to prove the convergence to the local case,
continuous dependence, and the properties and convergence of the
numerical scheme in Section \ref{sec:appl}. Finally, after Section
\ref{ellipticsection}, there is an
appendix with the proofs of some technical results.

\subsection*{Notation} 
For $x\in\mathbb{R}$,  $x^+:=\, \text{max}\{x,0\}$, $x^-:=(-x)^+$,
and $\sgn^+(x)$ is $+1$ for $x>0$ and $0$ for $x\leq0$. We let
$B(x,r)=\{y\in\mathbb{R}^d : |x-y|< r\}$, $\mathbf{1}_{A}(x)$ be $1$
for $x\in A\subset \R^N$ and $0$ 
otherwise, and $\text{supp}\,\psi$ be the
support of a function $\psi$. Derivatives are denoted by $'$,
$\frac{d}{dt}$, $\partial_t$, $\dell_{x_i}$, and $D\psi$ and $D^2\psi$ denote the $x$-gradient and Hessian matrix of $\psi$. Convolution is defined as
$f\ast g(x)=\left[f\ast g\right](x)=\int_{\R^N}f(x-y)g(y)\dd y$, and
$(f,g)=\int_{\R^N}fg\dd x$ whenever the integral is well-defined. If
$f,g\in L^2(\R^ N)$, we write $(f,g)_{L^2(\R^N)}$. 
The $L^2$-adjoint of an operator $\mathcal{T}$ is denoted by
$\mathcal{T}^*$, and the reader may check that $(\mathcal
L^{\mu})^*=\mathcal L^{\mu^*}$ (see below for the definition of $\mu^*$). 
 A modulus of continuity is a nonnegative function $\lambda(\veps)$ which is
continuous in $\veps$ with $\lambda(0)=0$. 
By a classical solution, we mean a solution such that the equation
holds pointwise everywhere. 

Function spaces: $C_0$, $C_\textup{b}$, $C_\textup{b}^\infty$ and $C_\textup{c}^\infty$ are 
spaces of continuous functions that are vanishing at infinity; bounded; bounded with bounded
derivatives of all orders; and smooth functions with compact
support respectively. $C([0,T]; L_\textup{loc}^1(\mathbb{R}^N))$ is the space of
measurable functions $\psi:\R^N\times[0,T]\to\R$ such that
(i) $\psi(\cdot,t)\in 
L_{\textup{loc}}^1(\mathbb{R}^N)$ for every $t\in[0,T]$; (ii) for all
compact $K\subset\R^N$, 
$\int_{K}|\psi(x,t)-\psi(x,s)|\dd x \to0$ when $t\to s\in[0,T]$; and (iii) $\|\psi\|_{C([0,T];L^1(K))}:={\textup{ess\,sup}}_{t\in[0,T]}\int_{K}|\psi(x,t)|\dd x<\infty$.

Measures: $\delta_a(x)$ denotes the delta measure centered at $a\in \R^N$.
Let $X\subset\R^N$ be open and $\mu$ a Borel measure on $X$. For $x\in X$ and
$\Omega\subset X$ Borel, we denote $\mu_x(\Omega)=\mu(\Omega+x)$ where
$\Omega+{x}=\{y+x: y\in \Omega\}$. Moreover, $\mu^*$ is
defined as $\mu^*(B)=\mu(-B)$ for all Borel sets $B$, and we say that
$\mu$ is symmetric if $\mu^*=\mu$. The support of a Borel measure
$\mu$ on is 
$$\supp{\mu}=\{x\in X: \mu(B(x,r)\cap X)>0\ \text{for all}\
r>0\}.$$
The Lebesgue measure of $\R^N$ is denoted by $\dd w$ if $w$ is a
generic variable on $\R^N$. Moreover, the tensor product $\dd
\mu(z)\dd w$ is a well-defined nonnegative Radon measure since $\mu$
is $\sigma$-finite (for more details, consult \cite[Section 2.1.2]{AlCiJa12}.)

For the rest of the paper, we fix two families of mollifiers
$\omega_\delta$, $\rho_\delta$ defined by
\begin{align}
\omega_\delta(\sigma):=\frac{1}{\delta^N}\omega\left(\frac{\sigma}{\delta}\right)
\label{mollifierspace}
\end{align}
for fixed $0\leq\omega\in C_\textup{c}^\infty(\R^N)$ satisfying $\text{supp}\,
\omega\subseteq \overline{B}(0,1)$, $\omega(\sigma)=\omega(-\sigma),$ 
$\int\omega=1$; and
\begin{align}
&\rho_\delta(\tau):=\frac{1}{\delta}\rho\left(\frac{\tau}{\delta}\right)
\label{mollifiertime}
\end{align} 
for fixed $0\leq\rho\in C_\textup{c}^\infty([0,T])$, $\textup{supp}\,\rho\subseteq
[-1,1]$, $\rho(\tau)=\rho(-\tau)$,
$\int\rho=1$. 
\section{The main results}
\label{sec:mainresults}

In this section, we present the main results: first of all uniqueness, and then stability, existence and a number of estimates for
the solutions of \eqref{E} and \eqref{IC}. As an application of 
our main results, we give compactness and continuous dependence
estimates. We introduce a semi-discrete numerical scheme
for even more general equations and show that convergence and other
properties easily follow from our previous results. Finally, we
establish a new existence result that also cover local diffusion
equations. 

 Throughout the paper we assume that
\begin{align}
&\varphi:\R\to\R\text{ is continuous and nondecreasing};
\tag{$\textup{A}_\varphi$}&
\label{phiassumption}\\
&u_0\in L^{\infty}(\mathbb{R}^N);
\tag{$\textup{A}_{u_0}$}&
\label{u_0assumption}\\
&\label{muassumption}\tag{$\textup{A}_{\mu}$} \mu \text{ is a nonnegative symmetric Radon measure on
}\R^N\setminus\{0\}
\text{ satisfying}
\nonumber\\ 
&\quad\int_{|z|\leq1}|z|^2\dd \mu(z)+\int_{|z|>1}1\dd
\mu(z)<\infty.\nonumber
\end{align}

\begin{remark}
\begin{enumerate}[(a)]
\item Without loss of generality, we can assume $\varphi(0)=0$ (by adding a constant to $\varphi$). 
\item A nonlocal operator defined by \eqref{deflevy} is a nonpositive
  operator (see Lemma \ref{fouriersymbol}).
\end{enumerate}
\end{remark}

We use the following definition of  distributional solutions of  \eqref{E}
and \eqref{IC}.

\begin{definition}\label{distsol} 
Let  $u_0\in L_\textup{loc}^1(\R^N)$
  and $u\in L_\textup{loc}^1(Q_T)$. Then
\begin{enumerate}[(a)]
\item
$u$ is a distributional solution of equation \eqref{E} if
\begin{equation*}
\dell_t u-\Levy^\mu[\varphi(u)]=0 \quad\text{in}\quad \mathcal{D}'(Q_T),
\end{equation*}
\item $u$ is a distributional solution of the initial condition \eqref{IC} if
\begin{equation*}
\underset{t\to0^+}{\textup{ess\,lim}}\int_{\R^N}u(x,t)\psi(x,t)\dd x=\int_{\R^N}u_0(x)\psi(x,0)\dd x\quad\forall\psi\in C_\textup{c}^\infty(\R^N\times[0,T)).
\end{equation*}
\end{enumerate}
\end{definition}
The equation in part (a) is
well-defined when e.g.~\eqref{phiassumption} and 
\eqref{muassumption} hold and $u\in  L^\infty(Q_T)$. Note as well that
the initial condition $u_0$ is assumed in the distributional sense
($u_0$ is a weak initial 
trace). See Lemma \ref{equivdistsol} below for an equivalent definition.

We state the main result
of this paper.

\begin{theorem}\label{uniqueness}
Assume \eqref{phiassumption} and \eqref{muassumption}. Let $u(x,t)$ and $\uu(x,t)$ satisfy 
\begin{equation}\label{cond1}
u,\uu \in L^{\infty}(Q_T),
\end{equation}
\begin{equation}\label{cond3}
u-\uu \in L^1(Q_T),
\end{equation}
\begin{equation}\label{cond2}
\dell_t u-\Levy^{\mu}[\varphi(u)]=\dell_t\uu-\Levy^{\mu}[\varphi(\uu)] \qquad\text{in}\qquad \mathcal{D}'(Q_T)
\end{equation}
\begin{equation}\label{cond4}
\underset{t\to0^+}{\textup{ess\,lim}}\int_{\R^N}(u(x,t)-\uu(x,t))\psi(x,t)\dd x=0 \quad\text{for all}\quad \psi\in C_\textup{c}^\infty(\R^N\times[0,T)).
\end{equation}
Then $u=\uu$ a.e. in $Q_T$.
\end{theorem}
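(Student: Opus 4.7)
The plan is to adapt the resolvent--duality method of Brezis and Crandall \cite{BrCr79} to the nonlocal setting. Set $w := u - \uu$ and $W := \varphi(u) - \varphi(\uu)$. Subtracting the equations in \eqref{cond2} and using \eqref{cond4} gives that $w \in L^{\infty}(Q_T) \cap L^{1}(Q_T)$ solves $\dell_t w = \Levy^\mu[W]$ in $\mathcal{D}'(Q_T)$ with zero weak initial trace, while monotonicity of $\varphi$ forces $wW \ge 0$ and $W \in L^\infty(Q_T)$. The task therefore reduces to showing $\iint_{Q_T} w\,g\,\dd x\,\dd t = 0$ for every $g \in C_\textup{c}^\infty(Q_T)$.

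To establish this identity, I would test the equation for $w$ against a dual object built from a resolvent problem. Write $W = a\,w$ with $a := W/w$ where $w \ne 0$ and $a := 0$ otherwise, so that $0 \le a \in L^\infty(Q_T)$. Formally, given $g$ one wishes to solve the backward problem
\begin{equation*}
-\dell_t \Psi - a\,\Levy^{\mu}[\Psi] = g, \qquad \Psi(\cdot,T)=0,
\end{equation*}
so that, using self-adjointness of $\Levy^\mu$ (valid because $\mu$ is symmetric), an integration by parts collapses the pairing to $\iint w\,g$. This cannot be done directly: $a$ is only bounded and nonnegative, hence can vanish on large sets (since $\varphi$ need not be strictly monotone), and $\Levy^\mu$ may be strongly singular, with no explicit fundamental solution available. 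I would therefore regularize in two independent parameters: first, mollify $a$ and add a constant $\veps$ to obtain a smooth, strictly positive $a_\veps$; second, truncate the mass of $\mu$ near the origin to obtain a bounded L\'evy operator $\Levy^{\mu_\veps}$, and develop the solvability together with quantitative estimates for the elliptic resolvent problem $\Phi - \lambda \Levy^{\mu_\veps}[a_\veps \Phi] = f$ from scratch (this is the program announced for Section \ref{ellipticsection}). The needed estimates are $L^\infty$-bounds, energy and gradient bounds, and Stroock--Varopoulos type inequalities to control nonlinear functionals of $\Phi$; crucially, they must be uniform in both regularization parameters.

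With such a $\Phi$ in hand, testing the equation for $w$ against $\Phi$ (after a mollification in time to make the pairing legitimate) isolates $\iint w\,g$ modulo commutator and truncation errors. The bulk of the work is then the limit procedure, carried out in the correct order: first the coefficient regularization, then the truncation of the kernel, with every remainder shown to vanish by the uniform estimates together with a compactness argument and a Liouville--type rigidity statement for $\Levy^\mu$-harmonic functions. The main obstacle is precisely this limiting step. The approximation of $\Levy^\mu$ by bounded operators is intrinsically nonlocal, with no local comparison to fall back on, so every estimate must be extracted directly from the equation and must remain stable under the truncation; simultaneously, the degeneracy of $a$ rules out classical parabolic duality and forces the elliptic resolvent scheme just outlined. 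Getting these two regularizations to commute robustly under the very weak hypotheses \eqref{cond1}--\eqref{cond4} is where I expect the proof to concentrate its difficulty.
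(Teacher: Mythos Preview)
Your plan diverges from the paper's at a key juncture. You propose to construct a dual function via a backward problem with \emph{variable} coefficient $a = W/w$, or its elliptic-resolvent surrogate $\Phi - \lambda\Levy^{\mu_\veps}[a_\veps\Phi] = f$. That is the Holmgren/Oleinik duality scheme, not the Brezis--Crandall one; the paper's introduction remarks that the Oleinik argument is unavailable here precisely because solutions are not assumed to have finite energy, and the degeneracy of $a$ would obstruct the uniform-in-$\veps$ control of $\Levy^\mu[\Psi_\veps]$ that your remainder terms require.

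What the paper (following Brezis--Crandall) actually does is avoid variable coefficients entirely. It works only with the \emph{linear, constant-coefficient} resolvent $\B = (\veps I - \Levy^\mu)^{-1}$ and the scalar quantity
\[
h_\veps(t) = \big(\B[U(\cdot,t)],\, U(\cdot,t)\big), \qquad U := u - \uu,\quad \Phi := \varphi(u)-\varphi(\uu).
\]
The argument runs in four steps: (i) $h_\veps \ge 0$ from the energy identity $h_\veps = \veps\|\B[U]\|_{L^2}^2 + \|(\Levy^\mu)^{1/2}\B[U]\|_{L^2}^2$; (ii) $h_\veps$ is absolutely continuous with $h_\veps'(t) = 2(\veps\B[\Phi] - \Phi,\, U)$, and monotonicity of $\varphi$ gives $(\Phi,U)\ge 0$, so $h_\veps' \le 2(\veps\B[\Phi],U)$; (iii) $h_\veps(0{+}) = 0$ from the weak initial trace and a uniform-in-$\veps$ bound $\|\B[U](\cdot,t)\|_{L^\infty}\le 2t\|\Phi\|_{L^\infty}$ coming from the equation; (iv) along a sequence $\veps_n\to 0$ one has $\veps_n B_{\veps_n}^\mu[g]\to 0$ a.e.\ for every $g\in L^1\cap L^\infty$ --- this is where the Liouville-type theorem for $\Levy^\mu$ and the compactness arguments enter. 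These force $h_{\veps_n}(t)\to 0$, so both pieces in (i) vanish along the sequence, and then $U = \veps_n\B[U] - \Levy^\mu[\B[U]] \to 0$ in $\mathcal{D}'$.

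The payoff is that every estimate in Section~\ref{ellipticsection} concerns the constant-coefficient, self-adjoint problem $\veps v - \Levy^\mu[v] = g$, which is far more tractable than a variable-coefficient resolvent. You correctly anticipate the essential ingredients (truncation of $\mu$ by bounded operators, Stroock--Varopoulos inequalities, a Liouville result), but in the paper they are deployed on this linear object rather than on a dual problem with degenerating coefficient.
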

Sections \ref{proofofuniqueness} and \ref{ellipticsection} are devoted
to the (long) proof of this result. 

\begin{corollary}[Uniqueness]\label{uniquenessresult}
Assume \eqref{phiassumption}, \eqref{u_0assumption} and \eqref{muassumption}.  
Then there is at most one distributional solution $u$ of \eqref{E} and
\eqref{IC} such that $u\in L^\infty(Q_T)$ and $u-u_0\in L^1(Q_T)$.
\end{corollary}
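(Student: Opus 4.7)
The plan is to derive the corollary as an immediate application of Theorem \ref{uniqueness}. Suppose $u$ and $\uu$ are two distributional solutions of \eqref{E}--\eqref{IC} in the sense of Definition \ref{distsol}, both belonging to $L^\infty(Q_T)$ and both satisfying $u-u_0, \uu-u_0 \in L^1(Q_T)$. I only need to verify that the pair $(u,\uu)$ fulfills the four hypotheses \eqref{cond1}--\eqref{cond4} of Theorem \ref{uniqueness}; once that is done, the theorem directly yields $u=\uu$ a.e.\ in $Q_T$.

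First, \eqref{cond1} is immediate from the standing assumption $u,\uu \in L^\infty(Q_T)$. For \eqref{cond3}, I write $u-\uu = (u-u_0)-(\uu-u_0)$; both summands lie in $L^1(Q_T)$ by hypothesis, so $u-\uu \in L^1(Q_T)$. For \eqref{cond2}, Definition \ref{distsol}(a) gives $\dell_t u - \Levy^\mu[\varphi(u)] = 0 = \dell_t \uu - \Levy^\mu[\varphi(\uu)]$ in $\mathcal D'(Q_T)$, and subtracting the two distributional identities yields \eqref{cond2}. Finally, for \eqref{cond4}, Definition \ref{distsol}(b) applied to each solution gives, for every $\psi \in C_\textup{c}^\infty(\R^N\times[0,T))$,
\begin{equation*}
\underset{t\to 0^+}{\textup{ess\,lim}}\int_{\R^N} u(x,t)\psi(x,t)\dd x = \int_{\R^N} u_0(x)\psi(x,0)\dd x = \underset{t\to 0^+}{\textup{ess\,lim}}\int_{\R^N} \uu(x,t)\psi(x,t)\dd x,
\end{equation*}
so the essential limit of the difference vanishes, i.e.\ \eqref{cond4} holds. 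Theorem \ref{uniqueness} now applies and produces $u=\uu$ a.e.\ in $Q_T$.

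There is no genuine obstacle in proving the corollary itself; all the difficulty has been absorbed into Theorem \ref{uniqueness}, and the corollary is essentially a bookkeeping exercise matching the hypotheses of the theorem to the definition of distributional solution of the initial value problem. The only small point worth flagging is that the weak attainment of the initial datum in Definition \ref{distsol}(b) is precisely what is needed to produce hypothesis \eqref{cond4} by subtraction, which is why the corollary is naturally phrased with the distributional initial trace rather than, say, a strong $L^1$ trace.
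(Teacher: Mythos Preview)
Your proof is correct and follows exactly the same approach as the paper: assume two solutions, verify the hypotheses \eqref{cond1}--\eqref{cond4} of Theorem~\ref{uniqueness} (the paper only spells out \eqref{cond3} via the triangle inequality $\|u-\uu\|_{L^1}\leq\|u-u_0\|_{L^1}+\|\uu-u_0\|_{L^1}$, treating the rest as obvious), and invoke the theorem. Your version is simply a more detailed write-up of what the paper compresses into one line.
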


\begin{proof} Assume  there are two solutions $u$
  and $\uu$. Then all assumptions of Theorem \ref{uniqueness}
  obviously hold ($\| u-\uu \|_{L^1}\leq\| u-u_0 \|_{L^1}+\| \uu-u_0
  \|_{L^1}<\infty$), and $u=\uu$ a.e.
\end{proof}

\begin{remark}
 Uniqueness holds for $u_0\not\in L^1$, for example
  $u_0(x)=c+\phi(x)$ for $c\in\R$ and $\phi\in L^\infty(\R^N)\cap L^1(\R^N)$. However, periodic
  $u_0$ are not included. In Section \ref{sec:rems} below we discuss some
  extensions of the uniqueness result.
\end{remark}

Next, we study under which assumptions solutions of  
\begin{equation}\label{En}
\dell_t u_n -\Levy^{\mu_n}[\varphi_n(u_n)]=0 \quad\text{in}\quad Q_T,
\end{equation}
converge to solutions of
\begin{equation}\label{limitprob}
\dell_t u -\Levy[\varphi(u)]=0 \quad\text{in}\quad Q_T.
\end{equation}

\begin{theorem}[Stability]\label{stability}
Assume $\Levy:C_\textup{c}^\infty(Q_T)\to L^1(Q_T)$,
$\mu_n$ satisfies \eqref{muassumption},
$\varphi_n$ and $\varphi$ satisfy \eqref{phiassumption}, and $u_n,u\in
L^\infty(Q_T)$ for every $n\in\N$.
Then if $\{u_n\}_{n\in\N}$ is a sequence of 
distributional solutions of
\eqref{En}, $\sup_n\|u_n\|_{L^\infty(Q_T)}<\infty$, and 
\begin{enumerate}[(i)]  
\item $\Levy^{\mu_n}[\psi]\to\Levy[\psi]$ in $L^1(\R^N)$ for all $ \psi \in C_\textup{c}^\infty(\R^N)$;
\smallskip
\item $\varphi_n\to\varphi$ locally uniformly;
\smallskip
\item
$u_n\to u$ pointwise a.e. in $Q_T$;
\end{enumerate}
then $u$ is a distributional solution of \eqref{limitprob}.
\end{theorem}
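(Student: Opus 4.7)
My plan is to pass to the limit in the distributional formulation of \eqref{En}. Symmetry of $\mu_n$ makes $\Levy^{\mu_n}$ self-adjoint, so for every $\psi\in C_\textup{c}^\infty(Q_T)$ the solution $u_n$ satisfies
\begin{equation*}
\int_{Q_T} u_n\,\dell_t\psi\,\dd x\,\dd t+\int_{Q_T}\varphi_n(u_n)\,\Levy^{\mu_n}[\psi(\cdot,t)](x)\,\dd x\,\dd t=0,
\end{equation*}
and it suffices to pass to the limit in each integral. With $M:=\sup_n\|u_n\|_{L^\infty(Q_T)}<\infty$, pointwise a.e.\ convergence from (iii), and the bound $|u_n\dell_t\psi|\leq M|\dell_t\psi|\in L^1(Q_T)$, the first integral converges by dominated convergence.

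For the nonlocal integral, the local uniform convergence $\varphi_n\to\varphi$ on $[-M,M]$ together with the splitting
\begin{equation*}
|\varphi_n(u_n)-\varphi(u)|\leq\|\varphi_n-\varphi\|_{L^\infty([-M,M])}+|\varphi(u_n)-\varphi(u)|,
\end{equation*}
continuity of $\varphi$, and (iii) give $\varphi_n(u_n)\to\varphi(u)$ a.e.\ in $Q_T$ with $\sup_n\|\varphi_n(u_n)\|_{L^\infty(Q_T)}<\infty$. I then decompose
\begin{equation*}
\int\varphi_n(u_n)\Levy^{\mu_n}[\psi]-\int\varphi(u)\Levy[\psi]=\int(\varphi_n(u_n)-\varphi(u))\Levy^{\mu_n}[\psi]+\int\varphi(u)(\Levy^{\mu_n}[\psi]-\Levy[\psi]).
\end{equation*}
Granting $\Levy^{\mu_n}[\psi]\to\Levy[\psi]$ in $L^1(Q_T)$, the second piece vanishes because $\varphi(u)\in L^\infty$, and the first vanishes by a Vitali argument, since $\{\Levy^{\mu_n}[\psi]\}_n$ is equi-integrable (being $L^1$-convergent) and $\varphi_n(u_n)-\varphi(u)$ is uniformly bounded and tends to $0$ a.e.

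The main obstacle is precisely to upgrade the spatial hypothesis (i)---stated for $\psi\in C_\textup{c}^\infty(\R^N)$ with convergence in $L^1(\R^N)$---to $L^1(Q_T)$-convergence for the time-dependent $\psi\in C_\textup{c}^\infty(Q_T)$. Slicing in $t$ immediately gives $\Levy^{\mu_n}[\psi(\cdot,t)]\to\Levy[\psi(\cdot,t)]$ in $L^1(\R^N)$ for each $t$; the symmetric second-difference form
\begin{equation*}
\Levy^{\mu_n}[\psi(\cdot,t)](x)=\tfrac{1}{2}\int_{\R^N\setminus\{0\}}[\psi(x+z,t)+\psi(x-z,t)-2\psi(x,t)]\,\dd\mu_n(z),
\end{equation*}
combined with Fubini and the compact space-support of $\psi$, yields the $t$-uniform estimate $\|\Levy^{\mu_n}[\psi(\cdot,t)]\|_{L^1(\R^N)}\leq C_\psi\int(|z|^2\wedge 1)\dd\mu_n$. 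Dominated convergence in $t$ then delivers $L^1(Q_T)$-convergence, provided $\sup_n\int(|z|^2\wedge 1)\dd\mu_n<\infty$. This last, crucial bound is extracted from (i) itself by testing against a single nonnegative symmetric bump $\zeta\in C_\textup{c}^\infty(\R^N)$ with $D^2\zeta(0)<0$: a pointwise lower estimate $\Levy^{\mu_n}[-\zeta](x)\geq c\int(|z|^2\wedge 1)\dd\mu_n$ valid on a small ball $B$ about the origin integrates to $c|B|\int(|z|^2\wedge 1)\dd\mu_n\leq\|\Levy^{\mu_n}[-\zeta]\|_{L^1(\R^N)}$, the latter being finite because (i) forces $L^1$-boundedness. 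This is the most technical step; everything else is dominated-convergence bookkeeping.
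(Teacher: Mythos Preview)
Your proof is correct and follows the same overall strategy as the paper---pass to the limit in the distributional formulation term by term---but it differs in two respects worth noting.

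First, you use a two-term splitting
\[
\int\varphi_n(u_n)\Levy^{\mu_n}[\psi]-\int\varphi(u)\Levy[\psi]=\int(\varphi_n(u_n)-\varphi(u))\Levy^{\mu_n}[\psi]+\int\varphi(u)(\Levy^{\mu_n}-\Levy)[\psi],
\]
whereas the paper uses a three-term splitting in which the operator difference $\Levy^{\mu_n}-\Levy$ is paired against $\varphi_n(u_n)$, and the remaining discrepancy $\varphi_n(u_n)-\varphi(u)$ is further split into $(\varphi_n-\varphi)(u_n)$ and $\varphi(u_n)-\varphi(u)$, each paired against the \emph{fixed} function $\Levy[\psi]$. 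The paper's grouping has the advantage that the two latter pieces are handled by plain dominated convergence against $\Levy[\psi]\in L^1(Q_T)$, avoiding your Vitali/equi-integrability step; your version is more compact but requires the extra observation that an $L^1$-convergent sequence is equi-integrable and tight.

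Second, and more substantively, you explicitly justify the upgrade from hypothesis~(i)---stated for $\psi\in C_\textup{c}^\infty(\R^N)$ with convergence in $L^1(\R^N)$---to $L^1(Q_T)$-convergence for time-dependent $\psi\in C_\textup{c}^\infty(Q_T)$. The paper's proof simply asserts $\int_0^T\int_{\R^N}|\Levy^{\mu_n}[\psi]-\Levy[\psi]|\,\dd x\,\dd t\to0$ ``using assumption~(i)'' without further comment. Your route---deriving $\sup_n\int(|z|^2\wedge1)\,\dd\mu_n<\infty$ directly from~(i) by testing against a single strictly concave-at-the-origin bump, then invoking dominated convergence in $t$---fills this in rigorously. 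This extraction of the uniform L\'evy mass bound from~(i) alone is a genuine observation not made in the paper, which instead imposes that bound as a separate hypothesis in its compactness result (Theorem~\ref{compactness}(i)).
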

This result is proven in Section \ref{sec:stabexapriori}.

\begin{remark}
 The limit operator $\Levy$ need not satisfy
  \eqref{muassumption}, we can recover any
  operator of the form
  $\Levy[\psi]=\mathrm{tr}[\sigma\sigma^TD^2\psi]+\Levy^{\mu}[\psi]$:
  the general form of the
  generator of a {\em symmetric L\'evy process} \cite{App09}.
See sections \ref{sec:num} and \ref{sec:pfconvnum} 
  for more details and examples. An extension of this result will be
  discussed in Section \ref{sec:rems} below. 
\end{remark}

The stability result will be used along with approximation and
compactness arguments to obtain the following existence result and a
priori estimates.
\begin{theorem}[Existence and uniqueness]\label{existenceparabolic}
Assume \eqref{phiassumption}, $\eqref{muassumption}$, and $u_0\in
L^\infty(\R^N)\cap L^1(\R^N)$. Then there exists a unique
distributional solution $u$  of \eqref{E} and \eqref{IC} satisfying 
$$u\in L^\infty(Q_T)\cap L^1(Q_T)\cap C([0,T];L_\textup{loc}^1(\R^N)).$$ 
\end{theorem}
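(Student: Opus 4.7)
Uniqueness is immediate: any two solutions $u,\uu$ satisfying the stated regularity obey $u-\uu = (u-u_0)-(\uu-u_0)\in L^1(Q_T)$ together with $u,\uu\in L^\infty(Q_T)$, so Corollary~\ref{uniquenessresult} applies. The real work is existence, and the plan is to produce $u$ as a limit of regularized problems and invoke Theorem~\ref{stability}.

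The first step is to build an approximating family. I would regularize simultaneously on three fronts: $(i)$ take $\varphi_n\in C^\infty(\R)$ Lipschitz, strictly increasing, with $\varphi_n(0)=0$ and $\varphi_n\to\varphi$ locally uniformly (e.g.\ mollification plus $\varepsilon_n r$); $(ii)$ replace $\mu$ by $\mu_n:=\mathbf{1}_{|z|>1/n}\mu$, which is a finite measure, so that $\Levy^{\mu_n}$ is a bounded linear operator on $L^\infty\cap L^1$ and $\Levy^{\mu_n}[\psi]\to \Levy^{\mu}[\psi]$ in $L^1(\R^N)$ for every $\psi\in C_\textup{c}^\infty(\R^N)$ by dominated convergence (using the Lévy condition \eqref{muassumption} and the $C^2$-bound $|\psi(x+z)-\psi(x)-z\cdot D\psi(x)\indikator|\leq C(|z|^2\wedge 1)$); $(iii)$ mollify $u_0$ to $u_{0,n}\in C_\textup{b}^\infty(\R^N)\cap L^1(\R^N)$ with $u_{0,n}\to u_0$ in $L^1(\R^N)$ and $\|u_{0,n}\|_{L^\infty}\leq\|u_0\|_{L^\infty}$. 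For the regularized Cauchy problem $\dell_t u_n - \Levy^{\mu_n}[\varphi_n(u_n)]=0$, $u_n(\cdot,0)=u_{0,n}$, existence of a classical bounded solution follows by a standard fixed-point/semigroup argument since $\Levy^{\mu_n}$ is bounded and $\varphi_n$ Lipschitz.

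The second step is a priori estimates uniform in $n$. Translation invariance and the formal $L^1$-contraction for monotone semigroups (which for these smooth problems can be obtained by multiplying the equation for $u_n-\tilde u_n$ by $\sgn_\varepsilon(\varphi_n(u_n)-\varphi_n(\tilde u_n))$ and using positivity of $-\Levy^{\mu_n}$) yield
\begin{equation*}
\|u_n(\cdot,t)\|_{L^\infty}\leq \|u_{0,n}\|_{L^\infty},\qquad \|u_n(\cdot,t)\|_{L^1}\leq\|u_{0,n}\|_{L^1},
\end{equation*}
\begin{equation*}
\|u_n(\cdot+h,t)-u_n(\cdot,t)\|_{L^1(\R^N)}\leq \|u_{0,n}(\cdot+h)-u_{0,n}\|_{L^1(\R^N)}\xrightarrow{h\to 0}0 \text{ uniformly in }n.
\end{equation*}
For time continuity I would exploit the $L^1$-contraction applied to $u_n(\cdot,t+h)$ and $u_n(\cdot,t)$ together with the identity $u_n(\cdot,h)-u_{0,n}=\int_0^h \Levy^{\mu_n}[\varphi_n(u_n)]\,\dd s$; testing against mollifications of $u_0$ and using the uniform $L^1\cap L^\infty$ bound gives a modulus of continuity $\lambda(h)$ independent of $n$, so that
\begin{equation*}
\sup_{t\in[0,T-h]}\|u_n(\cdot,t+h)-u_n(\cdot,t)\|_{L^1(\R^N)}\leq \lambda(h).
\end{equation*}

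The third step is compactness and passage to the limit. Combining the $L^\infty$-bound with the uniform space and time translation estimates, the Kolmogorov–Riesz–Fréchet criterion yields a subsequence $u_n\to u$ in $C([0,T];L^1_\textup{loc}(\R^N))$ and hence pointwise a.e.\ in $Q_T$. Fatou and the uniform bounds give $u\in L^\infty(Q_T)\cap L^1(Q_T)$. The hypotheses of Theorem~\ref{stability} are now met (with $\Levy=\Levy^\mu$), and so $u$ is a distributional solution of \eqref{E}. The initial condition is recovered from $C([0,T];L^1_\textup{loc})$-convergence of $u_n$ together with $u_n(\cdot,0)=u_{0,n}\to u_0$ in $L^1$. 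Uniqueness was already handled. The step I expect to be most delicate is obtaining the uniform-in-$n$ time modulus $\lambda(h)$ for merely continuous $\varphi$: since $\Levy^{\mu_n}$ blows up in operator norm as $n\to\infty$, one cannot read off time continuity from the equation directly, and the argument must proceed by testing against a fixed smooth function and using the uniform spatial estimates to transfer the bound, very much in the spirit of the Bénilan–Crandall time-regularity argument.
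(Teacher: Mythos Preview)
Your strategy---regularize, derive uniform $L^1\cap L^\infty$ and translation estimates, extract a convergent subsequence via Kolmogorov, and identify the limit through Theorem~\ref{stability}---is exactly the paper's. The paper streamlines two of your choices: it regularizes \emph{only} $\varphi$ (to $\varphi_\eta\in W^{1,\infty}_{\textup{loc}}$ by mollification), keeping $\mu$ and $u_0$ fixed, and then obtains the approximate solutions $u_\eta$ directly from the entropy-solution theory of \cite{CiJa11,CiJa14} (Theorem~\ref{propapproxparabolic}). This avoids building approximate solutions by hand and sidesteps the blow-up of $\|\Levy^{\mu_n}\|$ entirely, since $\Levy^\mu$ never changes.

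One point to tighten: your claimed uniform modulus $\sup_t\|u_n(\cdot,t+h)-u_n(\cdot,t)\|_{L^1(\R^N)}\leq\lambda(h)$ in \emph{global} $L^1$ does not follow from the argument you sketch. After the mollifier-in-space interpolation, the middle term $\|u_n^\delta(\cdot,t)-u_n^\delta(\cdot,s)\|$ is controlled by $\|\varphi_n(u_n)\|_{L^\infty}\|\Levy^{\mu_n}[\omega_\delta]\|_{L^1}\,|K|\,|t-s|$, and the factor $|K|$ is unavoidable when $\varphi$ is merely continuous (so $\varphi_n(u_n)$ has no uniform $L^1$ bound). The paper accordingly proves time equicontinuity only in $L^1(K)$ for compact $K$ (Lemma~\ref{convuapproxparabolic}, Step~3, with the optimized choice $\delta=|t-s|^{1/3}$), which is all Kolmogorov needs. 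Adjust your claim to $L^1_{\textup{loc}}$ and the argument goes through.
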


\begin{remark}
Existence results for merely bounded (and more general) initial
data can be found in Theorem 3.1 in \cite{BoVa14} in the setting of the
fractional porous medium equation \eqref{FPME2} with $0<m<1$.
\end{remark}

\begin{theorem}[A priori estimates]\label{consequences}
Assume \eqref{phiassumption}, $\eqref{muassumption}$, $u_0, \uu_0\in
L^\infty(\R^N)\cap L^1(\R^N)$. Let $u, \uu$ be the distributional
solutions of \eqref{E} with initial data $u_0,
\uu_0$ in the sense of Definition \ref{distsol} (b), respectively. Then
\begin{enumerate}[(a)]
\item \textup{($L^1$-contraction)} $\int_{\R^N}(u(x,t)-\uu(x,t))^+\dd
  x \leq \int_{\R^N}(u_0(x)-\uu_0(x))^+\dd x$, $t\in[0,T]$;
  
 \smallskip
\item \textup{(Comparison principle)} If $u_0\leq \uu_0$ a.e.~in $\R^N$, then $u\leq \uu$ a.e.~in $Q_T$;

\smallskip
\item \textup{($L^1$-bound)}
  $\|u(\cdot,t)\|_{L^1(\R^N)}\leq\|u_0\|_{L^1(\R^N)}$, $t\in[0,T]$;\medskip
\item \textup{($L^\infty$-bound)}
  $\|u(\cdot,t)\|_{L^\infty(\R^N)}\leq\|u_0\|_{L^\infty(\R^N)}$,
  $t\in[0,T]$;
  
  \smallskip 
\item \textup{(Time regularity)} For every $t, s\in[0,T]$ and compact
  set $K\subset \R^N$, 
$$
\|u(\cdot,t)-u(\cdot,s)\|_{L^1(K)}\leq \lambda_{u_0}\left(|t-s|^{\frac{1}{3}}\right)+C_{K,\varphi,u_0,\mu}\left(|t-s|^{\frac{1}{3}}+|t-s|\right)
$$
where
$\lambda_{u_0}(\delta)=\max_{|\sigma|\leq\delta}\|u_0-u_0(\cdot+\sigma)\|_{L^1(\R^N)},$  
$|K|$ is the Lebesgue measure of $K$, and for some
  constant $C$ independent of  $K$, $\varphi$, $u_0$, and $\mu$,
\begin{equation*}
C_{K,\varphi,u_0,\mu}= C|K|\Big(\sup_{|r|\leq \|u_0\|_{L^\infty}}|\varphi(r)|+1\Big)\int_{|z|>0}\min\{|z|^2,1\}\dd \mu(z).
\end{equation*}

\item \textup{(Mass conservation)}  If, in addition, there exists
  $L, \delta>0$ such that $|\varphi(r)|\leq L |r|$ for  $|r|\leq
  \delta$, then 
$$
\int_{\R^N}u(x,t)\dd x=\int_{\R^N}u_0(x)\dd x,\quad t\in[0,T].
$$
\end{enumerate}
\end{theorem}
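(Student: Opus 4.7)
The plan is to prove each estimate first at the level of a smooth \emph{approximate} problem where $\Levy^\mu$ is replaced by a bounded integral operator and $\varphi$ by a globally Lipschitz nondecreasing function, then transfer the bounds to the distributional solution via Theorem~\ref{stability} together with the uniqueness from Theorem~\ref{existenceparabolic}. Concretely, set $\mu_n:=\mathbf{1}_{\{|z|>1/n\}}\mu$, a finite symmetric Radon measure making $\Levy^{\mu_n}$ a bounded operator on $L^1\cap L^\infty$; choose a smooth, globally Lipschitz, nondecreasing $\varphi_n$ with $\varphi_n(0)=0$ converging locally uniformly to $\varphi$; and mollify the data, $u_{0,n}:=u_0\ast\omega_{1/n}$, likewise $\hat{u}_{0,n}$. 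Picard iteration in $L^1(\R^N)\cap L^\infty(\R^N)$ produces unique classical solutions $u_n,\hat{u}_n$ of the regularized Cauchy problems.

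\textbf{Estimates for the approximations and passage to the limit.} Finiteness and symmetry of $\mu_n$ give, for $\phi,\psi\in L^1\cap L^\infty$,
\begin{equation*}
\int_{\R^N}\phi\,\Levy^{\mu_n}[\psi]\dd x=-\tfrac{1}{2}\iint(\phi(x+z)-\phi(x))(\psi(x+z)-\psi(x))\dd\mu_n(z)\dd x.
\end{equation*}
Multiplying the difference of the equations for $u_n$ and $\hat{u}_n$ by a smooth approximation of $\sgn^+(u_n-\hat{u}_n)$ and using monotonicity of $\varphi_n$ makes the right-hand side nonpositive, yielding $\tfrac{d}{dt}\int(u_n-\hat{u}_n)^+\dd x\leq0$; this is (a) for the approximations. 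Then (b) is immediate, (c) follows from $\hat{u}_n\equiv0$ (thanks to $\varphi_n(0)=0$), and (d) follows since constants solve the equation. Testing the equation against $1$ and using $\Levy^{\mu_n}[1]=0$ together with self-adjointness yields (f) for $u_n$. Combined with the time regularity below, these uniform bounds give Kolmogorov-Riesz compactness of $\{u_n\}$ in $L^1_{\mathrm{loc}}(Q_T)$; a subsequence converges a.e.~to some $v$, which Theorem~\ref{stability} identifies as a distributional solution of \eqref{E}-\eqref{IC} (its hypothesis $\Levy^{\mu_n}[\psi]\to\Levy^\mu[\psi]$ in $L^1$ for $\psi\in C_\textup{c}^\infty$ holds by dominated convergence). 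Uniqueness then forces $v=u$, and (a)-(d),(f) pass to the limit by Fatou and dominated convergence.

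\textbf{Time regularity (e).} This I would prove directly for the distributional solution by mollification. Given compact $K\subset\R^N$ and $0<s<t<T$, set $v:=u(\cdot,t)-u(\cdot,s)$, pick $\eta\in C_\textup{c}^\infty(\R^N)$ with $\eta\equiv1$ on $K$ and $0\leq\eta\leq1$, and test the time-mollified distributional formulation against $\phi_\varepsilon:=(\eta\,\sgn v)\ast\omega_\varepsilon$, then remove the time mollification (justified by the $C([0,T];L_\textup{loc}^1)$ regularity coming from Theorem~\ref{existenceparabolic}). This gives
\begin{equation*}
\int_{\R^N}v\,\phi_\varepsilon\dd x=\int_s^t\!\!\int_{\R^N}\varphi(u)\,\Levy^\mu[\phi_\varepsilon]\dd x\dd\tau.
\end{equation*}
On the left, a commutator with the mollifier plus (a) applied to $u$ and its spatial translates (using translation invariance of $\Levy^\mu$) gives the lower bound $\int_K|v|\dd x-2\lambda_{u_0}(\varepsilon)$. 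On the right, splitting $\mu$ into $\{|z|\leq1\}$ and $\{|z|>1\}$ parts and using $\|\phi_\varepsilon\|_\infty\leq1$, $\|D^2\phi_\varepsilon\|_{L^1}\lesssim|K|\varepsilon^{-2}$, $\|\phi_\varepsilon\|_{L^1}\lesssim|K|$ yields
\begin{equation*}
\|\Levy^\mu[\phi_\varepsilon]\|_{L^1(\R^N)}\leq C|K|(\varepsilon^{-2}+1)\int_{|z|>0}(|z|^2\wedge1)\dd\mu(z),
\end{equation*}
and since $\|\varphi(u)\|_\infty\leq\sup_{|r|\leq\|u_0\|_\infty}|\varphi(r)|$ by (d), the right-hand side is bounded by $|t-s|$ times the displayed quantity. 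Choosing $\varepsilon:=|t-s|^{1/3}$ balances the two errors and reproduces exactly the stated estimate.

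\textbf{Main obstacle.} The delicate step is (e): the exponent $1/3$ is dictated precisely by balancing the worst-case mollification error $\sim\varepsilon$ against the factor $|t-s|\varepsilon^{-2}$ produced by the second-order derivatives when $\Levy^\mu$ acts on $\phi_\varepsilon$, and the two remaining terms $\lambda_{u_0}(|t-s|^{1/3})$ and $|t-s|$ arise from keeping track of the $\{|z|\leq1\}$ and $\{|z|>1\}$ contributions separately. A secondary technicality is (f) at the limit level, where the equation must be tested against the constant $1$: this I would handle by using a cutoff $\eta(\cdot/R)$ and sending $R\to\infty$, invoking $|\varphi(r)|\leq L|r|$ near zero together with the $L^1$-bound (c) to make $\Levy^\mu[\varphi(u)]$ integrable and kill the cutoff error.
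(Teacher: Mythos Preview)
Your approach is essentially correct but differs from the paper's in two respects.

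First, the paper regularizes only $\varphi$ (replacing it by a locally Lipschitz $\varphi_\eta$ via mollification) and keeps $\mu$ and $u_0$ fixed; it then invokes the entropy solution theory of \cite{CiJa11,CiJa14,EnJa14} as a black box to obtain existence, the $L^1$-contraction, and the $L^\infty/L^1$ bounds for the approximations $u_\eta$ (Theorem~\ref{propapproxparabolic}). Your triple regularization ($\mu_n$, $\varphi_n$, $u_{0,n}$) together with Picard iteration is more self-contained and avoids the entropy machinery entirely, at the cost of building the approximate theory by hand.

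Second, and more substantially, the paper's proof of (e) (carried out in Step~3 of Lemma~\ref{convuapproxparabolic}) is simpler than yours. Rather than testing against $(\eta\,\sgn v)\ast\omega_\varepsilon$, the paper takes $\psi(y,t)=\Theta(t)\omega_\delta(x-y)$ in the distributional formulation and obtains the pointwise identity
\[
u_\eta^\delta(x,t_2)-u_\eta^\delta(x,t_1)=\int_{t_1}^{t_2}\big(\varphi_\eta(u_\eta(\cdot,\tau))\ast\Levy^\mu[\omega_\delta]\big)(x)\,\dd\tau,
\]
where $u_\eta^\delta:=u_\eta\ast\omega_\delta$. Integrating the modulus over $K$ and combining with the triangle inequality
\[
\|u_\eta(\cdot,t)-u_\eta(\cdot,s)\|_{L^1(K)}\leq 2\lambda_{u_0}(\delta)+\|u_\eta^\delta(\cdot,t)-u_\eta^\delta(\cdot,s)\|_{L^1(K)}
\]
(the first term coming from the $L^1$-contraction applied to spatial translates, exactly as in your commutator step) gives the estimate directly, with the same choice $\delta=|t-s|^{1/3}$. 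This avoids the sign function and the commutator manipulation altogether; in effect the paper moves the mollifier onto $u$ rather than onto the test function.

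One small ordering issue in your writeup: you need the time regularity for the \emph{approximations} $u_n$, not just for the limit $u$, in order to run Kolmogorov--Riesz and extract a convergent subsequence. Your argument for (e) applies verbatim to $u_n$ with $\mu_n$ in place of $\mu$ (since $\int(|z|^2\wedge1)\dd\mu_n\leq\int(|z|^2\wedge1)\dd\mu$), so this is easily fixed, but it should come first. For (f) the paper proceeds exactly as you outline, testing against a spatial cutoff $\mathcal{X}_R$ and a time cutoff $\theta_a$; the key point is that $\varphi(u)\in L^1(Q_T)$ (split into $\{|u|\leq\delta\}$ and $\{|u|>\delta\}$), not that $\Levy^\mu[\varphi(u)]$ is integrable, which need not make sense pointwise.
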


These results are proven in Section \ref{sec:stabexapriori}.

\begin{remark}
The condition $|\varphi(r)|\leq L |r|$ in Theorem \ref{consequences}
(f) is sharp in the following sense: If $\varphi(r)=r^{m}$ for any
$m<1$, then there is $\Levy^\mu=-(-\Delta)^\frac{s}{2}$ such
that positive solutions $u$ of \eqref{E} and \eqref{IC} has extinction in
finite time and hence $\int u\neq \int u_0$. Simply take $N\in\N$ and
$s\in(0,2)$ such that $m\leq \frac{(N-s)^+}{N}$: see \cite{PaQuRoVa12}
for the details. 
\end{remark}

We now present several applications of the previous results.

\subsection{Application  1: Compactness, local limits, continuous dependence} 
\label{sec:rescompconv}
We start by a compactness and convergence result for very general
approximations of \eqref{E} and \eqref{IC}. 
\begin{theorem}[Compactness and convergence]
\label{compactness}
Assume $\Levy:C_\textup{c}^\infty(Q_T)\to L^1(Q_T)$,
$\mu_n$ satisfies \eqref{muassumption},
$\varphi_n$ and $\varphi$ satisfy \eqref{phiassumption}, and
$u_{0,n}\in L^\infty(\R^N)\cap L^1(\R^N)$ for every $n\in\N$.
Then if $\{u_n\}_{n\in\N}$ is a sequence of 
distributional solutions of
\eqref{En} with initial data $\{u_{0,n}\}_{n\in\N}$ in the sense of
Definition \ref{distsol} (b), and
\begin{enumerate}[(i)]
\item $\sup_n\int_{|z|>0}\min\{|z|^2,1\}\dd \mu_n(z)<\infty$;
\smallskip
\item $\sup_n\|u_{0,n}\|_{L^\infty(\R^N)}<\infty$;
\smallskip
\item $\Levy^{\mu_n}[\psi]\to\Levy[\psi]$ in $L^1(\R^N)$ for
  all $ \psi \in C_\textup{c}^\infty(\R^N)$;
\smallskip
\item $\varphi_n\to\varphi$ locally uniformly;
\smallskip
\item $u_{0,n}\to u_0$ in $L_\textup{loc}^1(\R^N)$. 
\end{enumerate}
Then
\begin{enumerate}[(a)]
\item there exist a subsequence $\{u_{n_j}\}_{j\in \N}$ and a $u\in C([0,T];L_\textup{loc}^1(\R^N))$ such that
$$u_{n_j}\to u \qquad\text{in}\qquad C([0,T];L_\textup{loc}^1(\R^N))\qquad\text{as}\qquad j\to\infty;$$
\item the limit $u$ from part (a) is a distributional solution of
  \eqref{limitprob} and \eqref{IC}.
\end{enumerate}
\end{theorem}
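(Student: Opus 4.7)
The plan is to prove (a) by an Arzel\`a--Ascoli compactness argument in $C([0,T];L^1_\textup{loc}(\R^N))$, and then to obtain (b) by combining the stability Theorem \ref{stability} with a short identification of the initial trace.

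For (a), I would begin by collecting uniform a priori estimates from Theorem \ref{consequences}. Assumption (ii) together with the $L^\infty$-bound of Theorem \ref{consequences}(d) gives $\sup_n \|u_n\|_{L^\infty(Q_T)} \leq M < \infty$. Translation invariance of \eqref{En} combined with the $L^1$-contraction of Theorem \ref{consequences}(a), applied to $u_n$ and its spatial translate $u_n(\cdot+\sigma,\cdot)$, yields
\begin{equation*}
\|u_n(\cdot,t)-u_n(\cdot+\sigma,t)\|_{L^1(\R^N)}\leq \|u_{0,n}-u_{0,n}(\cdot+\sigma)\|_{L^1(\R^N)},
\end{equation*}
so the spatial $L^1$-modulus of $u_n(\cdot,t)$ is transferred to that of the initial data, uniformly in $t$. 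The time-regularity Theorem \ref{consequences}(e) provides, for each compact $K\subset\R^N$,
\begin{equation*}
\|u_n(\cdot,t)-u_n(\cdot,s)\|_{L^1(K)}\leq \lambda_{u_{0,n}}(|t-s|^{1/3})+C_{K,\varphi_n,u_{0,n},\mu_n}\bigl(|t-s|^{1/3}+|t-s|\bigr),
\end{equation*}
and under hypotheses (i), (ii), (iv) the constants $C_{K,\varphi_n,u_{0,n},\mu_n}$ are uniformly bounded in $n$ (controlling respectively the L\'evy integral, $\|u_{0,n}\|_{L^\infty}$, and $\sup_{|r|\leq M}|\varphi_n(r)|$ via the locally uniform convergence), while $\lambda_{u_{0,n}}$ becomes uniformly small on compact sets thanks to (v).

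With these estimates in hand, I would apply Arzel\`a--Ascoli in each $C([0,T];L^1(K))$: pointwise-in-$t$ relative compactness of $\{u_n(\cdot,t)\}$ in $L^1(K)$ follows from the $L^\infty$-bound and the uniform spatial modulus via Kolmogorov--Riesz, and equicontinuity in $t$ follows from the time-regularity estimate. A diagonal extraction over an exhaustion $K_\ell \nearrow \R^N$ then produces the claimed subsequence converging in $C([0,T];L^1_\textup{loc}(\R^N))$ to some $u$. For (b), a further subsequence satisfies $u_{n_j}\to u$ pointwise a.e.\ in $Q_T$, after which assumptions (iii), (iv) and the uniform $L^\infty$ bound verify the hypotheses of Theorem \ref{stability}, which immediately identifies $u$ as a distributional solution of \eqref{limitprob}. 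Finally, convergence in $C([0,T];L^1_\textup{loc})$ yields $u_{0,n_j}=u_{n_j}(\cdot,0)\to u(\cdot,0)$ in $L^1_\textup{loc}$, while (v) gives $u_{0,n_j}\to u_0$ in $L^1_\textup{loc}$; hence $u(\cdot,0)=u_0$ a.e., and the continuity of $u$ in $C([0,T];L^1_\textup{loc})$ delivers the initial trace in the sense of Definition \ref{distsol}(b).

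The main obstacle is the spatial equicontinuity step. The $L^1$-contraction reduces the spatial regularity of $u_n(\cdot,t)$ to that of $u_{0,n}$, but the stated hypothesis (v) is only $L^1_\textup{loc}$-convergence, whereas the right-hand side of the modulus bound sits in $L^1(\R^N)$. One must localize carefully --- restricting the left-hand side to a compact $K$ and exploiting the relative compactness of $\{u_{0,n}\}\cup\{u_0\}$ in $L^1(K')$ for a slightly enlarged $K'$ --- to obtain a uniform-in-$n$ modulus on $K$ that feeds into Kolmogorov--Riesz. The parallel bookkeeping for the uniformity of the constants in Theorem \ref{consequences}(e) is precisely what forces the appearance of hypotheses (i), (ii), (iv) in the statement.
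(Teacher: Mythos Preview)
Your approach matches the paper's proof, which for part (a) invokes Lemma \ref{convuapproxparabolic} (the Kolmogorov/Arzel\`a--Ascoli argument you describe) together with Remark \ref{remark:convuapproxparabolic} to accommodate the varying $\mu_n$ under assumption (i), and for part (b) invokes Theorem \ref{stability} plus the $L^1_\textup{loc}$-convergence of the data to identify the initial trace.

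You are right to flag the spatial-equicontinuity step, but the localization you sketch does not work as written. The $L^1$-contraction of Theorem \ref{consequences}(a) is a \emph{global} estimate: restricting its left side to a compact $K$ still leaves the full $\|u_{0,n}(\cdot+\sigma)-u_{0,n}\|_{L^1(\R^N)}$ on the right, and relative compactness of $\{u_{0,n}\}$ in $L^1(K')$ says nothing about that global quantity. The same global modulus $\lambda_{u_{0,n}}$ also sits inside the time estimate of Theorem \ref{consequences}(e), so both equicontinuity ingredients are affected. The paper's terse proof does not address this point either; note, however, that in every application it makes of this theorem (Corollary \ref{convtothelocalprob}, Proposition \ref{semi-discreteapprox}) the initial datum is held fixed, $u_{0,n}\equiv u_0$, so the difficulty is vacuous there. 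If you want the compactness step to go through with genuinely varying data under hypothesis (v) alone you need an additional argument, or a slightly stronger hypothesis such as $u_{0,n}\to u_0$ in $L^1(\R^N)$, which immediately makes $\lambda_{u_{0,n}}$ uniform via $\lambda_{u_{0,n}}(\delta)\leq 2\|u_{0,n}-u_0\|_{L^1(\R^N)}+\lambda_{u_0}(\delta)$.
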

The proof can be found in Section \ref{sec:compconv}.
Using this result, we study the case
$\Levy^\mu=-(-\Delta)^{\frac s2}$, $s\in(0,2)$. As expected, we find
that solutions of the fractional equation \eqref{FPME} converge as
$s\to2^-$ to the solution of the local equation \eqref{GPME}.
Then we obtain a new result about continuous dependence on $(m,s)$ for the
porous medium equation of \cite{PaQuRoVa12}, that is, equation \eqref{FPME2}.

\begin{corollary}\label{convtothelocalprob}
Assume \eqref{phiassumption} and $u_0\in L^\infty(\R^N)\cap L^1(\R^N)$.
\begin{enumerate}[(a)]
\smallskip
\item The distributional solution $u_s$ of \eqref{FPME} and
  \eqref{IC}, converges in $C([0,T];
  L_\textup{loc}^1(\R^N))$ as $s\to2^-$ to a function $u$, and $u$ is a
  distributional solution of \eqref{GPME} and \eqref{IC}.\!\!\!\!\!\!
\medskip
\item Let $u_{n}$ and $\bar u$ be distributional solutions of \eqref{FPME2} and
  \eqref{IC} with $(m,s)=(m_n,s_n)$ and $(m,s)=(\bar m,\bar s)$
  respectively. If 
$$(0,\infty)\times(0,2)\ni(m_n,s_n)\ \longrightarrow\ 
  (\bar m,\bar s)\in(0,\infty)\times(0,2],$$ 
then $u_{n}\to \bar u$ in $C([0,T];L_\textup{loc}^1(\R^N))$.
\end{enumerate}
\end{corollary}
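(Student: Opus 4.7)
Both parts should follow directly from Theorem~\ref{compactness}, the compactness and convergence result.  The task is to verify its five hypotheses for the natural choice of approximations, and to identify the limit problem.

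\emph{Part (a).}  I would set $\dd \mu_n(z) = c_{N,s_n}|z|^{-N-s_n}\dd z$ for $s_n\to 2^-$, and $\varphi_n = \varphi$, $u_{0,n}=u_0$; hypotheses (ii), (iv), (v) are then immediate.  For the uniform L\'evy bound (i), a radial integration shows that $\int_{|z|\leq 1}|z|^2\dd\mu_{s_n}$ and $\int_{|z|>1}\dd\mu_{s_n}$ scale like $c_{N,s_n}/(2-s_n)$ and $c_{N,s_n}/s_n$ respectively, both bounded uniformly in $n$ since $c_{N,s_n} = O(2-s_n)$ as $s_n\to 2^-$.  For (iii), a second-order Taylor expansion of $\psi\in C_\textup{c}^\infty(\R^N)$ about $x$ turns $\mathcal{L}^{\mu_{s_n}}[\psi](x)$ into an integral whose singular part at $z=0$ contains a divergent factor $(2-s_n)^{-1}$ from $\int_0^1 r^{1-s_n}\dd r$; this cancels exactly against $c_{N,s_n}$ to produce $\Delta\psi(x)$, while the tail contribution is $O(c_{N,s_n})\to 0$.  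A uniform-in-$n$ dominant (exploiting that $D^2\psi$ is bounded with compact support) then upgrades pointwise to $L^1$ convergence.  Existence of each $u_{s_n}$ is given by Theorem~\ref{existenceparabolic}, so Theorem~\ref{compactness} furnishes subsequential convergence in $C([0,T];L^1_\textup{loc}(\R^N))$ to a distributional solution of \eqref{GPME} with datum $u_0$.  Uniqueness of bounded distributional solutions of \eqref{GPME} (Brezis--Crandall \cite{BrCr79}) then pins the limit down independently of the subsequence, promoting convergence to the full family $s\to 2^-$.

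\emph{Part (b) and the main obstacle.}  The same scheme applies with $\varphi_n(r)=r|r|^{m_n-1}$ and $\mu_n=\mu_{s_n}$.  Local uniform convergence $\varphi_n\to\bar\varphi$ on any interval $[-R,R]$ holds because $r\mapsto r|r|^{m-1}$ is jointly continuous on $[-R,R]\times(0,\infty)$, and near $r=0$ both $|\varphi_n(r)|$ and $|\bar\varphi(r)|$ are controlled by $|r|^{\bar m-\delta}$ uniformly in $n$ large.  The L\'evy bound (i) is as in part (a), uniformly in $s_n$ ranging in a compact subinterval of $(0,2]$ bounded away from $0$.  For hypothesis (iii): when $\bar s<2$, dominated convergence applied to the singular integral representation of $(-\Delta)^{s_n/2}$ (split at $|z|=1$) yields $\mathcal{L}^{\mu_{s_n}}[\psi]\to\mathcal{L}^{\mu_{\bar s}}[\psi]$ in $L^1$; when $\bar s=2$, the argument from part (a) applies (the remark after Theorem~\ref{compactness} admits $\Delta$ as an acceptable limit operator).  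Theorem~\ref{compactness} together with uniqueness of the limit problem (Corollary~\ref{uniquenessresult} if $\bar s<2$, Brezis--Crandall if $\bar s=2$) then yields convergence of the full sequence.  The main obstacle throughout is the delicate cancellation behind (iii) when $s_n\to 2^-$: the divergent factor $(2-s_n)^{-1}$ coming from the local behaviour of the integrand near $z=0$ must cancel precisely with $c_{N,s_n}=O(2-s_n)$ to reproduce $\Delta\psi$, and this cancellation must be controlled uniformly enough in $x$ to upgrade pointwise convergence into $L^1$-convergence.
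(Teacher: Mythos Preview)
Your proposal is correct and follows essentially the same approach as the paper: apply Theorem~\ref{compactness} after verifying its hypotheses, and then upgrade subsequential to full convergence via uniqueness of the limit problem (Brezis--Crandall when $\bar s=2$, Corollary~\ref{uniquenessresult} when $\bar s<2$). The paper packages your verification of hypothesis~(iii) for $s\to 2^-$ as a separate Lemma~\ref{convergenceoffraclaptolap}, carrying out exactly the Taylor-expansion-plus-$c_{N,s}=O(2-s)$-cancellation argument you outline, and it checks~(i) via the equivalent observation that $\int_{|z|\le 1}|z|^2 c_{N,s}|z|^{-N-s}\dd z\to 1$ and $\int_{|z|>1}c_{N,s}|z|^{-N-s}\dd z\to 0$ as $s\to 2^-$.
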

The proof of this result can also be found in section \ref{sec:compconv}.

\begin{remark}
When $u_0\in L^1(\R^N)$, the authors of \cite{PaQuRoVa12} show continuous
  dependence in $C([0,T];L^1(\R^N))$ for \eqref{FPME2} and \eqref{IC}
  for  $(m,s)\in
  \Big(\frac{(N-s)^+}N,\infty\Big)\times (0,2]$.  When $m\leq\frac{(N-s)^+}N$, we
  are in the fast 
  diffusion range and
  Corollary
  \ref{convtothelocalprob} (b) provides the first continuous dependence
  result for this case.
\end{remark} 

\subsection{Application 2:  
Numerical approximation, convergence, existence}
\label{sec:num}
Surprisingly, our class of operators $\Levy^\mu$ is so wide that it
contains a lot of its own numerical 
discretizations! It even contains common discretizations of local
operators as well. We illustrate this by giving one such discretization,
 a basic and very natural one, and then analyzing the resulting
 semidiscrete numerical method for \eqref{E}, or rather
 \eqref{limiteq}. We prove that it satisfies many 
 properties including convergence, and conclude a second and more general
 existence result. Consider 
\begin{equation}\label{limiteq}
\dell_t
u-\left(L^\sigma+\Levy^\mu\right)[\varphi(u)]=0\qquad\text{in}\qquad Q_T,
\end{equation}
where $\Levy^\mu$ is defined as before and $L^\sigma$ is a possibly degenerate local operator
\begin{equation*}
L^\sigma[\psi](x):=\text{tr}\big[\sigma\sigma^TD^2\psi(x)\big]
\end{equation*}
where $\sigma=(\sigma_1,....,\sigma_P)\in\R^{N\times P}$, $P\in \N$,
and $\sigma_i\in \R^N$. Note that $L^\sigma+\Levy^\mu$ is the generator of a {\em symmetric} L\'evy process, and conversely, any {\em symmetric} L\'evy processes has a generator like $L^\sigma+\Levy^\mu$ (cf. \cite{App09}). Moreover, equation \eqref{E} and \eqref{GPME} are special
cases of \eqref{limiteq} since $\sigma$ and $\mu$ may be degenerate or even zero. 

For any $h>0$, we approximate \eqref{limiteq} in the following way,
\begin{equation}\label{approx}
\dell_tu_h-\left(L_h^\sigma + \Levy_h^\mu\right)[\varphi(u_h)]=0 \qquad\text{in}\qquad Q_T.
\end{equation}
where 
\begin{align}\label{AproxLoc}
L^\sigma_h[\psi](x)&:=\sum_{i=1}^P \frac{\psi(x+\sigma_i h) + \psi(x-\sigma_i h)-2\psi(x)}{h^2},\\
\label{AproxOp}
\Levy^\mu_h[\psi](x)&:=
\sum_{\alpha\not=0}\left(\psi(x+z_\alpha)-\psi(x)\right)\mu\left(z_\alpha+R_h\right),
\end{align}
and $z_\alpha=h\alpha$, $\alpha=(\alpha_1,...,\alpha_N)\in \mathbb{Z}^N$, 
$R_h=\frac{h}{2}[-1,1)^N$. This is a finite difference approximation of $L^\sigma$
and quadrature approximation of $\Levy^\mu$.

\begin{remark}
\begin{enumerate}[(a)]
\item When $\sigma=e_i$, a standard basis vector of $\R^N$, then
  $L^{e_i}=\frac{\partial_i^2}{\partial x_i^2}$ and
  $L^{e_i}_h\psi(x)=\frac{\psi(x+he_i)-2\psi(x)+\psi(x-he_i)}{h^2}$:
  a classical finite difference approximation.\!\! 
\smallskip
\item Both $L_h^\sigma$ and $\Levy_h^\mu$ are in form \eqref{deflevy}
  and satisfy \eqref{muassumption}:
  cf. Lemma \ref{discretelocal} and \ref{discretenonlocal}.
\smallskip 
\item $L^\sigma \psi(x)=\sum_{i=1}^P \sigma_i^T D^2\psi(x) \sigma_i
  =\sum_{i=1}^P (\sigma_i^TD)^2\psi(x)\approx L^\sigma_h\psi(x)$.
\smallskip
\item $\Levy^\mu[\psi](x)=\sum_{\alpha \in \mathbb{Z^N}}\int_{z_\alpha+R_h}
\psi(x+z)-\psi(x) \dd \mu(z)\approx \Levy^\mu_h[\psi](x).$
\smallskip
\item To avoid $\mu(R_h)$ which may be
infinite, we do not sum over $\alpha=0$ in $\Levy^\mu_h$.
\end{enumerate}
\end{remark}

We now show that the scheme has many good properties, including
convergence.

\begin{proposition}[Properties of approximation]\label{semi-discreteapproxproperties}
Assume \eqref{phiassumption}, \eqref{muassumption}, $\sigma\in
\R^{N\times P}$, $u_0, \uu_0\in L^\infty(\R^N)\cap L^1(\R^N)$, and
$h>0$. 
\begin{enumerate}[(a)]
\item \textup{(Existence and uniqueness)} There exists a unique
  distributional solution $u_h\in L^\infty(Q_T)\cap L^1(Q_T)\cap
  C([0,T];L_\textup{loc}^1(\R^N))$ of \eqref{approx} and \eqref{IC}.
\smallskip 
\item \textup{($L^p$-stable)} $\|u_h(\cdot,t)\|_{L^p(\R^N)}\leq
  \|u_0\|_{L^\infty(\R^N)}^{\frac{p-1}{p}}\|u_0\|_{L^1(\R^N)}^{\frac{1}{p}}$,
   $\quad p\in[1,\infty]$, $t\in[0,T]$.
\smallskip
\item \textup{($L^1$-consistent)} For all $\psi\in C_\textup{c}^\infty(\R^N)$ 
$$\|\left(L_h^\sigma + \Levy_h^\mu\right)[\psi]-\left(L^\sigma +
  \Levy^\mu\right)[\psi]\|_{L^1(\R^N)}\to0\qquad\text{as}\qquad
h\to0^+.$$
\item \textup{(Monotone)} If $u_0\leq \uu_0$ a.e. in $\R^N$, then
  $u_h\leq\uu_h$ a.e. in $Q_T$.
\smallskip
\item \textup{(Conservative)} If in addition, there exists $\delta,L>0$ such that $|\varphi(r)|\leq L |r|$ for  $|r|\leq \delta$, then for all $t\in[0,T]$
$$
\int_{\R^N}u_h(x,t)\dd x=\int_{\R^N}u_0(x)\dd x.
$$
\end{enumerate}
\end{proposition}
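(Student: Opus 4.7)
\textbf{Proof plan for Proposition \ref{semi-discreteapproxproperties}.} The key observation is that, by Lemmas \ref{discretelocal} and \ref{discretenonlocal} referenced in the preceding remark, both $L_h^\sigma$ and $\Levy_h^\mu$ are themselves operators of the form \eqref{deflevy} for suitable symmetric Radon measures on $\R^N\setminus\{0\}$ satisfying \eqref{muassumption}. Writing $\nu_h$ for the sum of these two measures, equation \eqref{approx} reads $\partial_t u_h - \Levy^{\nu_h}[\varphi(u_h)] = 0$, which is an instance of \eqref{E}. Parts (a), (d) and (e) are then immediate corollaries of Theorems \ref{existenceparabolic} and \ref{consequences}: existence and uniqueness in the stated class from Theorem \ref{existenceparabolic}, comparison (d) from Theorem \ref{consequences}(b), and mass conservation (e) from Theorem \ref{consequences}(f).

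For part (b), Theorem \ref{consequences}(c)-(d) yields $\|u_h(\cdot,t)\|_{L^1(\R^N)}\leq\|u_0\|_{L^1(\R^N)}$ and $\|u_h(\cdot,t)\|_{L^\infty(\R^N)}\leq\|u_0\|_{L^\infty(\R^N)}$, which are the endpoints $p=1$ and $p=\infty$. For $1<p<\infty$ one interpolates
\[
\|u_h(\cdot,t)\|_{L^p(\R^N)}^p \;\leq\; \|u_h(\cdot,t)\|_{L^\infty(\R^N)}^{p-1}\|u_h(\cdot,t)\|_{L^1(\R^N)} \;\leq\; \|u_0\|_{L^\infty(\R^N)}^{p-1}\|u_0\|_{L^1(\R^N)},
\]
and takes $p$-th roots.

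This leaves part (c) as the substantial work. Split the error as $(L_h^\sigma - L^\sigma)[\psi] + (\Levy_h^\mu - \Levy^\mu)[\psi]$. The local contribution is controlled by a second-order Taylor expansion: for each $i$, $L_h^{\sigma_i}\psi(x) - L^{\sigma_i}\psi(x)= O(h^2\|D^4\psi\|_\infty|\sigma_i|^4)$ pointwise, while $L_h^\sigma[\psi]$ stays supported in a fixed compact set for $h\leq1$, so dominated convergence gives $L^1$ convergence. For the nonlocal part, rewrite
\[
\Levy^\mu[\psi](x) = \sum_{\alpha\in\mathbb{Z}^N}\int_{z_\alpha+R_h}\Big(\psi(x+z)-\psi(x)-z\cdot D\psi(x)\indikator\Big)\dd\mu(z)
\]
and compare with $\Levy_h^\mu[\psi](x)$ cell by cell. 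The cell $\alpha=0$ contributes $O\big(\|D^2\psi\|_\infty\int_{R_h}|z|^2\dd\mu(z)\big)$, which vanishes with $h$ by \eqref{muassumption}. For $\alpha\neq0$ I would split at a scale $r>0$: cells with $|z_\alpha|\leq r$ are treated by a second-order Taylor expansion (the gradient correction disappears after summing, by the symmetry of $\mu$ which is inherited by the cells $z_\alpha+R_h$), yielding a bound $C\|D^2\psi\|_\infty\int_{\{|z|\leq r+h\}}|z|^2\dd\mu(z)$; cells with $|z_\alpha|>r$ are treated by the regularity of $\psi$ together with $\mu(\{|z|>r/2\})<\infty$, yielding a rate of order $h$.

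The main obstacle is to organize the $\alpha\neq0$ sum so that the $L^1$ norm in $x$ is uniformly summable. Since $\psi$ has compact support, each individual term has $L^1_x$ norm dominated by a constant multiple of $\|\psi\|_{L^1(\R^N)}\,\mu(z_\alpha+R_h)$, so the tail $|z_\alpha|>r$ costs at most a constant times $\mu(\{|z|>r/2\})<\infty$ uniformly in $h\leq1$; sending $h\to0^+$ first and then $r\to0^+$ closes the argument and yields part (c), completing the proposition.
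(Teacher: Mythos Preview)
Your proposal is correct and follows essentially the same route as the paper: recognize via Lemmas \ref{discretelocal} and \ref{discretenonlocal} that $L_h^\sigma+\Levy_h^\mu$ is itself an operator of type \eqref{deflevy}--\eqref{muassumption}, then read off (a), (b), (d), (e) from Theorems \ref{existenceparabolic} and \ref{consequences} with interpolation for (b). For (c) the paper simply invokes Lemmas \ref{discretelocal} and \ref{discretenonlocalconv}; your inline sketch reproduces those lemmas with only a cosmetic difference---you split the nonlocal error at a scale $r$ and send $h\to0^+$ then $r\to0^+$, whereas the paper fixes the scale at $1$ and uses dominated convergence (the integrand $h|z|+h^2$ is pointwise $\to 0$ and dominated by $6|z|^2$ on $\{|z|>h/2\}$) to get the near part to vanish directly.
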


\begin{proposition}[Compactness of approximation]\label{semi-discreteapprox}
Assume \eqref{phiassumption}, \eqref{muassumption}, $\sigma\in
\R^{N\times P}$, $u_0\in L^\infty(\R^N)\cap L^1(\R^N)$, and
$h>0$. Then there is subsequence of distributional solutions $u_h$ of
\eqref{approx} and \eqref{IC} that converges in $C([0,T];
L_\textup{loc}^1(\R^N))$ as $h\to0^+$ to some function $u$. Moreover, $u\in L^\infty(Q_T)\cap L^1(Q_T)\cap
C([0,T];L_\textup{loc}^1(\R^N))$ and $u$ is
a distributional solution of \eqref{limiteq} and \eqref{IC}.
\end{proposition}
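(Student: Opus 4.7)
The plan is to reduce Proposition \ref{semi-discreteapprox} to a direct application of Theorem \ref{compactness}. The first step is to recast the full approximating operator as a single L\'evy operator: writing
\[
\mu_h := \sum_{i=1}^P \frac{\delta_{h\sigma_i}+\delta_{-h\sigma_i}}{h^2} + \sum_{\alpha\neq 0}\mu\big(z_\alpha+R_h\big)\,\delta_{z_\alpha},
\]
I get $L_h^\sigma+\Levy_h^\mu=\Levy^{\mu_h}$ as operators in the form \eqref{deflevy}; the drift-compensator terms cancel by the symmetry of $\mu$ together with $\mu_h$ being a sum of symmetric pairs of Diracs (this is exactly what Lemmas \ref{discretelocal} and \ref{discretenonlocal} cited in the remark after the scheme provide). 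In particular each $\mu_h$ satisfies \eqref{muassumption}, so the sequence $\{u_h\}$ is a sequence of distributional solutions of \eqref{En} with $\mu_n=\mu_{h_n}$ and $\varphi_n=\varphi$ for any chosen sequence $h_n\to 0^+$.

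Next I would verify the five hypotheses of Theorem \ref{compactness}. (ii), (iv) and (v) are trivial since $u_{0,n}=u_0\in L^\infty\cap L^1$ and $\varphi_n=\varphi$. Hypothesis (iii) — that $\Levy^{\mu_h}[\psi]\to(L^\sigma+\Levy^\mu)[\psi]$ in $L^1(\R^N)$ for $\psi\in C_\textup{c}^\infty(\R^N)$ — is precisely the $L^1$-consistency statement in Proposition \ref{semi-discreteapproxproperties}(c). Hypothesis (i) requires a uniform L\'evy moment bound: the local part contributes
\[
\sum_{i=1}^P \frac{2\min\{|h\sigma_i|^2,1\}}{h^2}\leq 2\sum_{i=1}^P|\sigma_i|^2,
\]
while the nonlocal part satisfies
\[
\sum_{\alpha\neq 0}\min\{|z_\alpha|^2,1\}\mu(z_\alpha+R_h) \;\leq\; C\int_{|z|>0}\min\{|z|^2,1\}\dd\mu(z),
\]
since each translated cube $z_\alpha+R_h$ has diameter $\sqrt{N}\,h$ and thus $|z_\alpha|^2$ and $|z|^2$ are comparable on it up to the boundary region $|z|\approx 1$, which contributes a bounded correction. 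Both estimates are uniform in $h$.

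With all hypotheses of Theorem \ref{compactness} in place, part (a) yields a subsequence $u_{h_j}\to u$ in $C([0,T];L_\textup{loc}^1(\R^N))$, and part (b) gives that $u$ is a distributional solution of \eqref{limiteq} and \eqref{IC} (noting that Theorem \ref{compactness}(b) and the remark after Theorem \ref{stability} allow the limit operator to include a local Brownian part $L^\sigma$). Finally, the memberships $u\in L^\infty(Q_T)\cap L^1(Q_T)$ are obtained from Proposition \ref{semi-discreteapproxproperties}(b) with $p=\infty$ and $p=1$ by passing to the pointwise a.e.\ limit along a further subsequence and applying Fatou's lemma, so that $\|u(\cdot,t)\|_{L^\infty}\leq \|u_0\|_{L^\infty}$ and $\|u(\cdot,t)\|_{L^1}\leq\|u_0\|_{L^1}$ for a.e.\ $t$.

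The main obstacle I expect is the uniform moment bound in hypothesis (i) for the nonlocal discretization: a naive estimate based on the comparison $|z_\alpha|\approx|z|$ on $z_\alpha+R_h$ fails near $|z|=1$, where the truncation $\min\{|z|^2,1\}$ is non-smooth; handling this cleanly requires splitting the sum into $|z_\alpha|\leq 1-\sqrt N\,h$, $|z_\alpha|\geq 1+\sqrt N\,h$, and the annular boundary layer, and absorbing the layer using $\mu(\{1-\sqrt N h\leq|z|\leq 1+\sqrt N h\})\leq\mu(\{|z|>1/2\})<\infty$ from \eqref{muassumption}. Everything else is essentially bookkeeping around the already-established stability/compactness machinery.
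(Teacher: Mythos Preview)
Your proposal is correct and follows essentially the same route as the paper: reduce to Theorem \ref{compactness} via the identification $L_h^\sigma+\Levy_h^\mu=\Levy^{\mu_h}$ from Lemmas \ref{discretelocal} and \ref{discretenonlocal}, verify hypotheses (i)--(v) (with (iii) coming from Proposition \ref{semi-discreteapproxproperties}(c) and (i) from the moment bounds in those same lemmas), and recover the $L^\infty\cap L^1$ regularity of the limit from Proposition \ref{semi-discreteapproxproperties}(b) and Fatou. The ``main obstacle'' you flag is already dispatched inside Lemma \ref{discretenonlocal}, and in fact more simply than your three-region split: one uses $|z_\alpha|\leq |z|+\sqrt{N}h/2$ on $z_\alpha+R_h$ to bound $\sum_{0<|z_\alpha|\leq 1}|z_\alpha|^2\mu(z_\alpha+R_h)$ by $(1+\sqrt{N})^2\int_{|z|\leq 2}|z|^2\dd\mu(z)$ directly, with the tail $|z_\alpha|>1$ controlled by $\mu(\{|z|>1/2\})$.
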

Note that Proposition
\ref{semi-discreteapprox} also provide a new existence result:

\begin{corollary}[Existence for
  \eqref{limiteq}]\label{generallevyprocessexistence} 
Under the assumptions of Proposition \ref{semi-discreteapprox}, there
exists a distributional solution $u\in L^\infty(Q_T)\cap L^1(Q_T)\cap
C([0,T];L_\textup{loc}^1(\R^N))$ of \eqref{limiteq} and \eqref{IC}.
\end{corollary}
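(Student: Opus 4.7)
The plan is to apply Proposition \ref{semi-discreteapprox} directly; this corollary is just an existence-theoretic reading of that compactness result. Concretely, I would first invoke Proposition \ref{semi-discreteapproxproperties}(a) to produce, for every $h>0$, a distributional solution $u_h\in L^\infty(Q_T)\cap L^1(Q_T)\cap C([0,T];L_\textup{loc}^1(\R^N))$ of the semi-discrete problem \eqref{approx}--\eqref{IC}. Then I would apply Proposition \ref{semi-discreteapprox} to this family: it extracts a subsequence $u_{h_n}$ that converges in $C([0,T];L_\textup{loc}^1(\R^N))$ as $h_n\to 0^+$ to some $u$, and asserts that $u\in L^\infty(Q_T)\cap L^1(Q_T)\cap C([0,T];L_\textup{loc}^1(\R^N))$ and that $u$ is a distributional solution of \eqref{limiteq} and \eqref{IC}. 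Taking this $u$ as the desired solution finishes the proof.

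In other words, no additional argument is needed at the level of the corollary beyond quoting Proposition \ref{semi-discreteapprox}: the corollary is designed to spotlight the existence consequence already contained in the compactness statement. The genuine mathematical content sits one level up, in the proof of Proposition \ref{semi-discreteapprox} itself, which amounts to checking the hypotheses of Theorem \ref{compactness} for the discretizations $L_h^\sigma+\Levy_h^\mu$. That verification uses the fact, recorded in the remarks following \eqref{AproxOp}, that $L_h^\sigma$ and $\Levy_h^\mu$ are themselves operators of the form \eqref{deflevy}; the $L^1$-consistency from Proposition \ref{semi-discreteapproxproperties}(c) supplies condition (iii) of Theorem \ref{compactness}; the choices $\varphi_n\equiv\varphi$ and $u_{0,n}\equiv u_0$ make (iv)--(v) automatic; and one must also verify the uniform bound $\sup_h \int \min\{|z|^2,1\}\dd\mu_h(z)<\infty$ for the measures associated with $L_h^\sigma+\Levy_h^\mu$ together with the uniform $L^\infty$, $L^1$ and time-regularity estimates from Theorem \ref{consequences}, applied to each $u_h$. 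Once those are in place, Theorem \ref{compactness} yields the convergent subsequence and identifies the limit as a distributional solution of \eqref{limiteq}--\eqref{IC}, and the corollary follows immediately.
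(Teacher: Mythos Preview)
Your proposal is correct and matches the paper's approach exactly: the corollary is an immediate consequence of Proposition~\ref{semi-discreteapprox}, since any limit point $u$ produced there is already a distributional solution of \eqref{limiteq}--\eqref{IC} with the stated regularity. The additional commentary you provide about what underlies Proposition~\ref{semi-discreteapprox} is accurate but belongs to that proposition's proof rather than to the corollary itself.
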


In many cases we can combine the compactness result with uniqueness
results for the limit equations, and hence obtain convergence for the
approximation. 

\begin{theorem}[Convergence of approximation]\label{semi-discreteapprox2}
Under the assumptions of Proposition \ref{semi-discreteapprox}, and if
in addition either $\sigma\equiv0$ or $\mu\equiv 0$ and $\sigma=I$ (the identity
matrix), then the distributional solutions $u_h$ of
\eqref{approx} and \eqref{IC} converges in $C([0,T];
L_\textup{loc}^1(\R^N))$ as $h\to0^+$ to the unique distributional solution $u\in
L^\infty(Q_T)\cap L^1(Q_T)\cap C([0,T];L^1_{\mathrm{loc}}(\R^N))$ of
\eqref{limiteq} and \eqref{IC}. 
\end{theorem}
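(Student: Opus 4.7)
The plan is to combine the subsequential compactness result of Proposition \ref{semi-discreteapprox} with a uniqueness result for the limit equation \eqref{limiteq}, and then upgrade subsequential convergence to full convergence by a standard argument. Because Proposition \ref{semi-discreteapprox} already produces a convergent subsequence and identifies its limit as a distributional solution of \eqref{limiteq}--\eqref{IC} with the right integrability, the entire content of Theorem \ref{semi-discreteapprox2} is that under either of the two structural restrictions on $(\sigma,\mu)$ this solution is unique.

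First I would fix an arbitrary sequence $h_n\to 0^+$ and apply Proposition \ref{semi-discreteapprox} to obtain (after passing to a subsequence) a limit $u\in L^\infty(Q_T)\cap L^1(Q_T)\cap C([0,T];L^1_\textup{loc}(\R^N))$ with $u_{h_n}\to u$ in $C([0,T];L^1_\textup{loc}(\R^N))$, such that $u$ is a distributional solution of \eqref{limiteq}--\eqref{IC}. Note that $u-u_0\in L^1(Q_T)$ since $u\in L^1(Q_T)$ and $u_0\in L^1(\R^N)$, which is the integrability hypothesis needed by the available uniqueness results.

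Next I would identify $u$ uniquely in each case. If $\sigma\equiv 0$, then $L^\sigma\equiv 0$ and \eqref{limiteq} is exactly \eqref{E}, so Corollary \ref{uniquenessresult} (whose hypotheses $u\in L^\infty(Q_T)$ and $u-u_0\in L^1(Q_T)$ are met) forces $u$ to be the unique distributional solution of \eqref{E}--\eqref{IC}. If instead $\mu\equiv 0$ and $\sigma=I$, then $\Levy^\mu\equiv 0$ and $L^\sigma[\psi]=\mathrm{tr}[II^TD^2\psi]=\Delta\psi$, so \eqref{limiteq} reduces to the generalized porous medium equation \eqref{GPME}; uniqueness of bounded distributional solutions with $u-u_0\in L^1(Q_T)$ is then the classical Brezis--Crandall theorem \cite{BrCr79}, whose hypotheses on $\varphi$ (continuous, nondecreasing) and $u_0$ ($L^\infty$) match ours.

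Finally I would upgrade from subsequential to full convergence by a standard argument: suppose, for contradiction, that $u_h\not\to u$ in $C([0,T];L^1_\textup{loc}(\R^N))$; then there exist a compact $K\subset\R^N$, an $\varepsilon>0$, and a sequence $h_n\to 0^+$ with $\|u_{h_n}-u\|_{C([0,T];L^1(K))}\geq\varepsilon$. Applying Proposition \ref{semi-discreteapprox} to this sequence and then invoking the uniqueness step above produces a further subsequence converging to the same $u$, contradicting the lower bound. The main (and essentially only nontrivial) obstacle is the uniqueness input in Case 2, which lies outside this paper's own uniqueness theory and must be imported from \cite{BrCr79}; everything else is a transparent consequence of results already proved in the paper.
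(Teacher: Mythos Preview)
Your proposal is correct and follows essentially the same approach as the paper's proof: apply Proposition \ref{semi-discreteapprox} for subsequential compactness, invoke Corollary \ref{uniquenessresult} (when $\sigma\equiv 0$) or the Brezis--Crandall result \cite{BrCr79} (when $\mu\equiv 0$, $\sigma=I$) for uniqueness of the limit, and then upgrade to full convergence via the standard ``every subsequence has a further subsequence converging to the same limit'' argument. The only cosmetic difference is that the paper phrases the last step as ``all subsequential limits coincide and the sequence is bounded,'' while you use the equivalent contradiction formulation.
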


The proofs will be given in Section \ref{sec:pfconvnum}.

\begin{remark}
\begin{enumerate}[(a)]
\item Our approximation is well-defined and converge for {\em any} problem
of the type \eqref{limiteq}, including strongly degenerate
Stefan problems and fast diffusion equations.
The scheme and convergence result thus cover cases that have not
been considered before in the literature. For nonlocal problems of
this type, there are very few results, and only for locally Lipschitz $\varphi$
\cite{TeVa13, CiJa14,Te14}. 
\smallskip
\item To obtain a fully discrete numerical method, it remains to (i)
  restrict the method to some spacial grid and (ii) discretize also
  in time. Time discretization is easier and leads to a problem that no
  longer has the form \eqref{E}; we will discuss it in a future work.
 Restriction to a spacial
  grid can always be done after a change of coordinate system: see
  Section \ref{sec:rems} below. 
\item The existence result is a result where existence for problems
  involving nonlocal operators $\Levy^\mu$ are exported to problems
  involving the ``closure'' of this class of operators -- namely, operators of the form
$L^\sigma+\Levy^\mu$. The proof is completely different from proofs
based on nonlinear semigroup theory; see e.g. Chp. 10 in
\cite{Vaz07}, and \cite{PaQuRoVa12}.
\end{enumerate}
\end{remark}

\subsection{Remarks and extensions}
\label{sec:rems}

\subsubsection*{Alternative definition of distributional solutions}
\begin{enumerate}[(1)]
\item A more compact form  that we will use in the
  proofs  is the following:
\begin{lemma}\label{equivdistsol}
 Assume \eqref{phiassumption}, \eqref{u_0assumption}, \eqref{muassumption}
  and $u\in  L^\infty(Q_T)$. Then $u$ is a distributional
  solution of \eqref{E} and \eqref{IC} if and only if
\begin{equation*}
\begin{split}
\int_0^T \int_{\R^N} \lef u(x,t) \dell_t\psi(x,t) + \varphi(u(x,t)) \Levy^{\mu} [\psi(\cdot,t)](x)\rig\dd x\dd t+\int_{\R^N}u_0(x)\psi(x,0)\dd x=0
\end{split}
\end{equation*}
for all $\psi\in C_\textup{c}^\infty(\R^N\times[0,T))$.
\end{lemma}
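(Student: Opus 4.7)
The plan is to bridge the two formulations by integration by parts in $t$ (moving $\dell_t$ from $u$ onto $\psi$ and picking up an initial boundary term) together with self-adjointness $(\mathcal{L}^\mu)^*=\mathcal{L}^{\mu^*}=\mathcal{L}^\mu$, which holds thanks to symmetry of $\mu$ and is noted in the paper's notation section. The natural device in both directions is a smooth time cutoff $\eta_\epsilon\in C^\infty([0,T])$ with $\eta_\epsilon\equiv 0$ on $[0,\epsilon/2]$, $\eta_\epsilon\equiv 1$ on $[\epsilon,T]$, chosen monotone so that $\eta_\epsilon'\geq 0$, $\int_0^T\eta_\epsilon'=1$, and $\supp\eta_\epsilon'\subset(\epsilon/2,\epsilon)$. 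Throughout, the L\'evy condition \eqref{muassumption} ensures $\mathcal{L}^\mu[\psi(\cdot,t)]\in L^1(\R^N)$ for $\psi\in C_\textup{c}^\infty$, so every integral in sight is well defined.

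For the direction distributional $\Rightarrow$ compact form, take $\psi\in C_\textup{c}^\infty(\R^N\times[0,T))$. Then $\eta_\epsilon\psi\in C_\textup{c}^\infty(Q_T)$, and applying the distributional PDE (with $\mathcal{L}^\mu$ moved to the test function by self-adjointness) gives
\begin{equation*}
\int_0^T\!\!\int_{\R^N}u\,\dell_t(\eta_\epsilon\psi)\,\dd x\,\dd t + \int_0^T\!\!\int_{\R^N}\varphi(u)\,\eta_\epsilon\,\mathcal{L}^\mu[\psi]\,\dd x\,\dd t = 0.
\end{equation*}
Distributing $\dell_t$ and sending $\epsilon\to 0^+$: the two $\eta_\epsilon$-weighted bulk integrals converge to the bulk integrals of the compact form by dominated convergence (using $u,\varphi(u)\in L^\infty$ and the integrable dominators $\dell_t\psi$ and $\mathcal{L}^\mu[\psi]$ on the compact support of $\psi$), while the residual $\int_0^T\eta_\epsilon'(t)\int_{\R^N}u(x,t)\psi(x,t)\,\dd x\,\dd t$ is controlled by $\textup{ess\,sup}_{t\in(0,\epsilon)}\bigl|\int u(x,t)\psi(x,t)\,\dd x-\int u_0(x)\psi(x,0)\,\dd x\bigr|$ (since $\eta_\epsilon'\geq 0$ has unit mass), which tends to zero directly from the initial trace condition \eqref{IC}.

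For compact form $\Rightarrow$ distributional, restricting to $\psi\in C_\textup{c}^\infty(Q_T)$ kills the $\psi(x,0)$ term and reverses the self-adjointness step to recover $\dell_t u-\mathcal{L}^\mu[\varphi(u)]=0$ in $\mathcal{D}'(Q_T)$. To recover the initial trace, I would insert separable test functions $\psi(x,t)=\phi(x)\chi(t)$ with $\phi\in C_\textup{c}^\infty(\R^N)$ and $\chi\in C_\textup{c}^\infty([0,T))$; the compact form then becomes the one-dimensional identity
\begin{equation*}
\int_0^T f(t)\chi'(t)\,\dd t + \int_0^T g(t)\chi(t)\,\dd t + \chi(0)\int_{\R^N}u_0\phi\,\dd x = 0,
\end{equation*}
with $f(t)=\int u(\cdot,t)\phi$ and $g(t)=\int\varphi(u(\cdot,t))\mathcal{L}^\mu[\phi]$, both in $L^\infty(0,T)$. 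This says precisely that $f$ has distributional derivative $g$ on $(0,T)$ with trace $\int u_0\phi$ at $0^+$; a standard 1D calculus argument provides an absolutely continuous representative with $\lim_{t\to 0^+}f(t)=\int u_0\phi$, i.e.\ $\textup{ess\,lim}_{t\to 0^+}\int u(x,t)\phi(x)\,\dd x=\int u_0\phi\,\dd x$. For a general $\psi\in C_\textup{c}^\infty(\R^N\times[0,T))$, the split $\psi(x,t)=\psi(x,0)+[\psi(x,t)-\psi(x,0)]$ reduces to the tensor case (first piece, with $\phi=\psi(\cdot,0)\in C_\textup{c}^\infty(\R^N)$), while the second piece contributes at most $t\,\|\dell_t\psi\|_\infty\,\|u\|_\infty\,|\supp\psi|\to 0$ as $t\to 0^+$.

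The only delicate step is this one-dimensional trace argument in the converse direction, namely upgrading a weak-derivative identity with an external $\chi(0)f_0$-boundary term into the pointwise essential limit. It is routine once $g$ is verified to lie in $L^\infty(0,T)$, which itself rests on $\mathcal{L}^\mu[\phi]\in L^1(\R^N)$ for $\phi\in C_\textup{c}^\infty(\R^N)$, a direct consequence of \eqref{muassumption}; nothing else in the proof requires more than boundedness of $u$ and Fubini.
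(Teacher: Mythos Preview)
Your proof is correct and follows exactly the standard route one would expect for this equivalence; the paper itself omits the proof entirely (``The easy and standard proof is omitted''), so there is nothing to compare against beyond noting that your argument is the natural one. One cosmetic remark: in the forward direction, when you say the residual $\int_0^T\eta_\epsilon'(t)\int u\psi\,\dd x\,\dd t$ ``is controlled by'' the ess sup, it would be cleaner to say explicitly that its \emph{distance from} $\int u_0\psi(\cdot,0)\,\dd x$ is bounded by that ess sup (your intent is clear from context, but the phrasing momentarily suggests the residual itself vanishes).
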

The easy and standard proof is omitted. 
\end{enumerate}

\subsubsection*{About the initial conditions}
\begin{enumerate}[(1)]
\setcounter{enumi}{1}
\item The solutions provided by Theorem \ref{existenceparabolic} belong to
  $C([0,T];L_\textup{loc}^1(\R^N))$ and hence satisfy the initial condition in the
  strong $L^1_{\textup{loc}}$-sense:  For all compact $K\subset\R^N$,
\begin{align*}
\int_K |u(x,t)-u_0(x)|\dd x\ra 0 \qquad \text{as}\qquad t\to0.
\end{align*}

\item If the initial conditions are satisfied in the strong
  $L^1_{\textup{loc}}$-sense, then they are of course also satisfied in the
  distributional sense of Definition \ref{distsol}.
\end{enumerate}

\subsubsection*{Extensions of the uniqueness result Corollary
  \ref{uniquenessresult}}
\begin{enumerate}[(1)]
\setcounter{enumi}{3}
\item With the same proof, we also get uniqueness for the
initial value problem for the
inhomogenenous equation
$$\dell_tu+\Levy^\mu[\varphi(u)]=g(x,t).$$
\item A close inspection of the proof reveals that we can replace continuity of
  $\varphi$ in \eqref{phiassumption} by continuity at zero, Borel
  measurability, and $\varphi(u)\in L^\infty(Q_T)$
  (cf. \cite{BrCr79}).
\end{enumerate}

\subsubsection*{Extensions of the stability result Theorem
  \ref{stability}}\label{extstab}\ 
\begin{enumerate}[(1)]
\setcounter{enumi}{5}
\item When $\varphi_n$ is independent of $n$, we only need weak
  convergence of $\Levy^{\mu_n}$ in (i):
$$\Levy^{\mu_n}[\psi]\to\Levy[\psi]\quad \text{{\em weakly} in
  $L^1(\R^N)$ for all $\psi\in C_\textup{c}^\infty(Q_T)$}.$$ 
Moreover, by considering subsequences we can replace 
  (iii) by $u_n\to u$ in $L_\textup{loc}^1(Q_T)$.
 These observations follow by slight changes in the proof of Theorem \ref{stability} in Section \ref{sec:stabexapriori}.
\item A general condition for $L^1$-weak convergence of $\Levy^{\mu_n}$
  \cite{ChJa15}:
  There exist $\sigma\in\R^{N\times P}$ and a nonnegative Radon
  measure $\mu$ such that for all $A\in\R^{N\times N}$
\smallskip
\begin{enumerate}[(i)]
\item $\sup_n\int_{|z|>0}\min\{|z|^2,1\}\dd \mu_n(z)<\infty;$
\smallskip
\item $\int_{|z|\leq1}zAz^T\dd
  \mu_n(z)\to\textup{tr}\left(\sigma\sigma^TA\right)+\int_{|z|\leq1}zAz^T\dd
  \mu(z);$
\smallskip
\item $\int_{|z|>1}\dd \mu_n(z)\to\int_{|z|>1}\dd\mu(z)$.
\end{enumerate}
\smallskip
Here $\Levy=\mathrm{tr}[\sigma\sigma^TD^2]+\Levy^{\mu}$: see
\cite{ChJa15} for a general discussion and more examples. 
\end{enumerate}

\subsubsection*{Defining the scheme \eqref{approx} on a grid}\label{remgrid}
\begin{enumerate}[(1)]
\setcounter{enumi}{7}
\item By a coordinate transformation $x=Ay$, $L^\sigma+\Levy^\mu$ can be
  transformed into
\begin{equation*}
L^{I_0}+\Levy^{\tilde\mu}\qquad\text{where}\qquad I_0:=\left[\begin{array}{c|c}
I & 0\\
\hline
0 & 0\\
\end{array}\right]\in \R^{N\times N},
\end{equation*}
$I$ is an identity matrix, and $\dd\tilde{\mu}(z)=\dd\mu(A^{-1}z)$
satisfies \eqref{muassumption}. Up to permutations of the components
of $y$, $A=QJ$ where $Q\in\R^{N\times N}$ is orthonormal, 
$Q\sigma\sigma^TQ^{T}=\textup{diag}(\lambda_i)$ for $\lambda_i\geq0$,
and $J=\textup{diag}(\sqrt{c_i})$ where $c_i=1$ if $\lambda_i=0$ and
$c_i=\frac{1}{\lambda_i}$ if $\lambda_i>0$ for $i=1,\ldots,N$.
\item For the new operator $L^{I_0}+\Levy^{\tilde\mu}$, our approximations
produce an operator $L^{I_0}_h+\Levy^{\tilde\mu}_h$ that can be restricted
to the ($y$-)grid $\mathcal{G}_h:=h\mathbb{Z}^N$ ($h>0$), that is
$L^{I_0}_h+\Levy^{\tilde\mu}_h:\R^{\mathcal{G}_h}\to \R^{\mathcal{G}_h}$ is well-defined.
\end{enumerate}


\section{The proof of uniqueness}
\label{proofofuniqueness}

\subsection{Preliminary results}
A crucial part in the proof is played by the following linear elliptic equation 
\begin{equation} \label{elliptic}
\veps v_\veps(x) - \Levy^\mu [v_\veps](x)=g(x) \quad\text{in}\quad \R^N,
\end{equation}
where $\veps>0$ and $\Levy^\mu$ defined by \eqref{deflevy}. Its
solutions will be denoted by  
$$\B[g](x):=v_\veps(x).$$
 Formally,
$B_\veps^\mu=(\veps I- \Levy^\mu)^{-1}$ is the resolvent of
$\Levy^{\mu}$. Note that $\Levy^\mu$ may be very 
degenerate and therefore Fourier techniques do not easily apply
(cf. Example \ref{ex:Liouville}  and Remark \ref{remarkfouriersymbol}
(a) below). 
The main results about equation \eqref{elliptic} are given below,
while most of the proofs will be given in Section \ref{ellipticsection}. Note that in \cite{BrCr79} such results are easy in view of an
explicit representation formula for $B_\veps^\mu$. Here, on the other
hand, they are not easy and we have to work quite a lot to prove
these estimates. The method of proof is different, more nonlocal, and
requires less of the operator.

\begin{theorem}[Classical and distributional solutions]\label{collectedresultselliptic}
Assume \eqref{muassumption} and $\veps>0$.
\begin{enumerate}[(a)]
\item If $g\in C_\textup{b}^\infty(\R^N)$, then there exists a unique classical
  solution $\B[g]\in C_\textup{b}^\infty(\R^N)$ of \eqref{elliptic}. Moreover, for each multiindex $\alpha\in \mathbb{N}^N$,
$$
\veps\|D^\alpha \B[g]\|_{L^\infty}\leq\|D^\alpha g\|_{L^\infty}.
$$
\item If $g\in L^1(\R^N)$, then there exists a unique distributional solution $\B[g]\in L^1(\R^N)$ of \eqref{elliptic}. Moreover,
\begin{equation*}
\veps\| \B[g]\|_{L^1(\R^N)}\leq \|g\|_{L^1(\R^N)}.
\end{equation*}
\item If $g\in L^\infty(\R^N)$, then there exists a unique distributional solution $\B[g]\in L^\infty(\R^N)$ of \eqref{elliptic}. Moreover,
\begin{equation*}
\veps\| \B[g]\|_{L^\infty(\R^N)}\leq \|g\|_{L^\infty(\R^N)}.
\end{equation*}
\end{enumerate}
\end{theorem}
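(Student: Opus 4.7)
The plan is to prove (a) first and then bootstrap it for (b) and (c). For (a), I would represent $\B[g]$ via the symmetric convolution semigroup $\{P_t\}_{t\ge 0}$ generated by $\Levy^\mu$, i.e.\ $P_t g = p_t\ast g$ with $p_t$ a probability measure on $\R^N$ (this is the content of the L\'evy--Khintchine theorem for the symmetric pure-jump generators covered by \eqref{muassumption}). Setting
\[
\B[g](x) := \int_0^\infty e^{-\veps t} P_t g(x)\,\dd t,
\]
one verifies the resolvent equation by the standard identity $\tfrac{\dd}{\dd t}P_tg = \Levy^\mu P_tg$, valid for $g\in C_\textup{b}^\infty$, together with an integration by parts in $t$. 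Since convolution with a probability measure commutes with $D^\alpha$ and is an $L^\infty$-contraction, $\|D^\alpha P_tg\|_{L^\infty}\le \|D^\alpha g\|_{L^\infty}$, and integrating $e^{-\veps t}$ yields the claimed bound $\veps\|D^\alpha\B[g]\|_{L^\infty}\le \|D^\alpha g\|_{L^\infty}$. Alternatively, and more in the spirit of ``developing the theory from scratch'' advertised in the introduction, I would truncate: set $\mu^r := \mu|_{\{|z|>r\}}$ so $\Levy^{\mu^r}$ is a bounded operator, solve the resolvent equation by a Neumann series, and pass to the limit $r\downarrow 0$ using the uniform $C_\textup{b}^\infty$-bounds (which come from the same convolution/max-principle reasoning). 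Uniqueness in (a) follows from the representation $v(x) = e^{-\veps t}P_t v(x)$ for any classical $C_\textup{b}^\infty$ solution of the homogeneous equation, letting $t\to\infty$.

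For (b) and (c), I would approximate the data: pick $g_n\in C_\textup{c}^\infty$ with $g_n\to g$ in $L^1(\R^N)$ in case (b), or with $g_n\to g$ weakly-$\ast$ in $L^\infty(\R^N)$ and $\|g_n\|_{L^\infty}\le \|g\|_{L^\infty}$ in case (c), and form the classical solutions $v_n := \B[g_n]$ from part (a). The a priori estimates are again direct consequences of $p_t$ being a probability measure: $\|P_tg_n\|_{L^p}\le \|g_n\|_{L^p}$ for $p\in\{1,\infty\}$, hence $\veps\|v_n\|_{L^p}\le\|g_n\|_{L^p}$. By linearity the same bound applies to differences $v_n-v_m = \B[g_n-g_m]$, so $\{v_n\}$ is Cauchy in $L^1$ in case (b) and bounded in $L^\infty$ in case (c); passing to the limit in the distributional formulation (using $\Levy^\mu\psi\in L^1\cap L^\infty$ for $\psi\in C_\textup{c}^\infty$, from \eqref{deflevy} and \eqref{muassumption}) produces a distributional solution with the stated bound.

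The most delicate step is uniqueness in (b) and (c), which I would handle by duality with (a). Suppose $v\in L^1$ (resp.\ $L^\infty$) is a distributional solution with $g=0$, and let $f\in C_\textup{c}^\infty$ be arbitrary. By part (a) and the symmetry $(\Levy^\mu)^*=\Levy^\mu$, there exists $\phi:=\B[f]\in C_\textup{b}^\infty$ with $\veps\phi-\Levy^\mu[\phi]=f$, and formally $\int vf\,\dd x = \int v(\veps\phi-\Levy^\mu\phi)\,\dd x = 0$. The catch is that $\phi$ is not compactly supported, so we cannot plug it directly into the definition of a distributional solution. I would introduce a cutoff $\chi_R\in C_\textup{c}^\infty$ with $\chi_R\equiv 1$ on $B(0,R)$, $\supp\chi_R\subset B(0,2R)$, test with the admissible function $\chi_R\phi$, and write
\[
0 \;=\; \int v\bigl(\veps(\chi_R\phi)-\Levy^\mu[\chi_R\phi]\bigr)\,\dd x \;=\; \int v\chi_R f\,\dd x \;-\; \int v\,E_R\,\dd x,
\]
where $E_R := \Levy^\mu[\chi_R\phi] - \chi_R\Levy^\mu[\phi]$ is a commutator. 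The first term tends to $\int vf$ by dominated convergence, so the whole argument reduces to showing that $\int vE_R\,\dd x\to 0$ as $R\to\infty$. This is the main obstacle: one has to expose cancellations in $E_R$ using the $L^\infty$-bounds on $\phi$ and $D\phi$ from (a), the L\'evy integrability $\int(|z|^2\wedge 1)\,\dd\mu(z)<\infty$, the symmetry of $\mu$ (which kills the leading drift piece), and the integrability (resp.\ tail control via $f\in C_\textup{c}^\infty$ and the representation of $\phi$) of $v$. Once this commutator estimate is established, $\int vf\,\dd x=0$ for every $f\in C_\textup{c}^\infty$, giving $v=0$ and thus uniqueness in both (b) and (c).
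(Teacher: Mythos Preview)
Your approach is correct but genuinely different from the paper's. You lean on the abstract L\'evy semigroup $P_t=p_t\ast\cdot$ to get the resolvent formula, the $L^p$-contractions, and uniqueness in (a); the paper deliberately avoids any semigroup/fundamental-solution representation and builds everything from the truncated operators $\Levy^{\mu_r}$. For the crucial $L^1$-bound and uniqueness in (b)--(c), the paper does \emph{not} use duality: it mollifies the distributional solution to a classical one, multiplies the equation by a smooth approximation of $\sgn^+(v)$, and discards the nonlocal term via a Stroock--Varopoulos inequality (Lemma~\ref{Stroock-Var}, Proposition~\ref{l1+estimate}). This yields $\veps\int(v_\veps)^+\le\int(g)^+$, from which both the $L^1$-estimate and comparison (hence uniqueness in $L^1$ and $L^\infty$) follow directly. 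For existence in (c) the paper does not approximate the data but rather keeps $g\in L^\infty$ fixed, solves with $\Levy^{\mu_r}$, and extracts a weak-$\ast$ limit.

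Your duality/cutoff route works, but be precise about the dependencies. For uniqueness in (b) you only need $\phi=\B[f]\in C_\textup{b}^\infty$ from (a): then $E_R$ is uniformly bounded and $E_R(x)\to0$ pointwise, so $\int vE_R\to0$ by dominated convergence since $v\in L^1$. For uniqueness in (c), however, you need $\|E_R\|_{L^1}\to0$, and this genuinely requires $\phi\in L^1$ (and even $D\phi\in L^1$ for the singular piece); you get this from the \emph{existence and bound} part of (b) applied to $f,Df\in C_\textup{c}^\infty\subset L^1$, so make sure you order the argument as: (a) $\to$ existence/bound in (b) $\to$ uniqueness in (b) $\to$ uniqueness in (c). Your parenthetical ``tail control via the representation of $\phi$'' is exactly this, but it should be stated rather than hinted at.

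What each approach buys: yours is shorter and leverages standard Markov-semigroup machinery; the paper's is self-contained (no L\'evy--Khintchine invoked), and the tools it develops---the Stroock--Varopoulos inequality and the comparison principle for \eqref{elliptic}---are reused elsewhere in the uniqueness proof for the nonlinear problem. The paper's choice is consistent with its stated aim of ``developing this part of the theory from scratch''; your semigroup argument would be a legitimate and more economical alternative if one is willing to import that background.
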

\begin{remark} If $g\in L^1\cap L^\infty$, then
  $\veps\|\B[g]\|_{L^p}\leq\|g\|_{L^\infty}^{\frac{p-1}{p}}\|g\|^{\frac{1}{p}}_{L^1}$
  for any $p\in(1,\infty)$.
\end{remark}

When a smooth $g$ depends also on time, then $\B[g]$ will be smooth in time
and space. 
\begin{corollary}\label{propertiesofB2}
Assume \eqref{muassumption}, $\veps>0$, and $\gamma\in
C_\textup{c}^\infty(\R^N\times[0,T))$. Then  
\begin{enumerate}[(a)]
\item $\B[\gamma]\in C_\textup{b}^\infty(\R^N\times[0,T))$.
\item $\B[\gamma](x,\cdot)$ is compactly supported in $[0,T)$.
\item $\dell_t(\B[\gamma])=\B[\dell_t\gamma]$\quad and\quad $\B[\gamma], \B[\dell_t\gamma], \Levy^\mu \left[ \B[\gamma] \right] \in L^1(Q_T)$.
\end{enumerate}
\end{corollary}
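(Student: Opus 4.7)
The strategy is to apply Theorem \ref{collectedresultselliptic} pointwise in time, and then use uniqueness and the stability estimates there to upgrade to joint regularity and integrability. Throughout, linearity of $\B$ plays a central role: if $g_1,g_2 \in C_\textup{b}^\infty(\R^N)$ (or $L^1$, or $L^\infty$), then by uniqueness the function $\B[g_1]+\B[g_2]$ solves the resolvent equation \eqref{elliptic} with datum $g_1+g_2$, so $\B[g_1+g_2]=\B[g_1]+\B[g_2]$. The only step that needs real care is the identity $\dell_t\B[\gamma] = \B[\dell_t\gamma]$, which will be obtained through a difference-quotient argument justified by the $L^\infty$-contraction in Theorem \ref{collectedresultselliptic}(c).

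\textbf{Part (a).} Since $\gamma(\cdot,t) \in C_\textup{c}^\infty(\R^N) \subset C_\textup{b}^\infty(\R^N)$ for every $t \in [0,T)$, Theorem \ref{collectedresultselliptic}(a) provides $\B[\gamma(\cdot,t)] \in C_\textup{b}^\infty(\R^N)$ with $\veps\|D^\alpha_x \B[\gamma(\cdot,t)]\|_{L^\infty(\R^N)} \leq \|D^\alpha_x \gamma(\cdot,t)\|_{L^\infty(\R^N)}$ for each multiindex $\alpha$. Since $\gamma \in C_\textup{c}^\infty$, these bounds are uniform in $t$, giving the spatial part of $C_\textup{b}^\infty(\R^N\times[0,T))$-regularity. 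For joint smoothness in $(x,t)$, I combine this with Part (c) below.

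\textbf{Part (b).} Fix $t_0 \in [0,T)$ with $\supp\gamma \subset \R^N \times [0,t_0]$. For $t \in (t_0,T)$ the datum $\gamma(\cdot,t) \equiv 0$, and uniqueness in Theorem \ref{collectedresultselliptic}(a) (or (c)) forces $\B[\gamma(\cdot,t)] \equiv 0$. Hence $\B[\gamma](x,\cdot)$ vanishes on $(t_0,T)$.

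\textbf{Part (c).} By linearity, for $h \neq 0$ small,
\begin{equation*}
\frac{\B[\gamma](x,t+h)-\B[\gamma](x,t)}{h} = \B\!\left[\frac{\gamma(\cdot,t+h)-\gamma(\cdot,t)}{h}\right](x).
\end{equation*}
Since $\gamma \in C_\textup{c}^\infty(\R^N\times[0,T))$, the difference quotient converges uniformly on $\R^N$ to $\dell_t\gamma(\cdot,t)$ as $h\to 0$. Applying the $L^\infty$-contraction from Theorem \ref{collectedresultselliptic}(c) to the difference of the data $\tfrac{\gamma(\cdot,t+h)-\gamma(\cdot,t)}{h}-\dell_t\gamma(\cdot,t)$ (again using linearity) yields $\dell_t\B[\gamma] = \B[\dell_t\gamma]$ pointwise, with uniform convergence in $x$. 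Iterating the argument on spatial/temporal difference quotients gives the missing mixed derivatives and completes (a). For the $L^1(Q_T)$ claims, apply Theorem \ref{collectedresultselliptic}(b) at each $t$ to get
\begin{equation*}
\veps\|\B[\gamma](\cdot,t)\|_{L^1(\R^N)} \leq \|\gamma(\cdot,t)\|_{L^1(\R^N)},\qquad \veps\|\B[\dell_t\gamma](\cdot,t)\|_{L^1(\R^N)} \leq \|\dell_t\gamma(\cdot,t)\|_{L^1(\R^N)},
\end{equation*}
and integrate in $t \in [0,t_0]$; finiteness of both right-hand sides follows from compact support of $\gamma$. Finally, the resolvent equation rearranged reads $\Levy^\mu[\B[\gamma]] = \veps \B[\gamma]-\gamma$, and both terms on the right are in $L^1(Q_T)$, so $\Levy^\mu[\B[\gamma]] \in L^1(Q_T)$.

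\textbf{Main obstacle.} The only delicate point is justifying the commutation $\dell_t \B[\gamma] = \B[\dell_t\gamma]$: one must exchange a limit with the (possibly singular) resolvent operator $\B$. Without an explicit fundamental solution available (unlike \cite{BrCr79}), this is done indirectly via the linearity of $\B$ and the $L^\infty$-stability estimate of Theorem \ref{collectedresultselliptic}, which turns uniform convergence of data into uniform convergence of $\B$-images. Once this is in hand, everything else is bookkeeping with the three estimates of Theorem \ref{collectedresultselliptic}.
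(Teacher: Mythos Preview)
Your proof is correct and follows essentially the same approach as the paper: difference quotients plus linearity and the $L^\infty$-stability of $\B$ to obtain $\dell_t\B[\gamma]=\B[\dell_t\gamma]$ (and by iteration all mixed derivatives), $\B[0]=0$ for compact support in time, the $L^1$-bound of Theorem~\ref{collectedresultselliptic}(b) integrated in $t$ for the $L^1(Q_T)$ claims, and the resolvent identity $\Levy^\mu[\B[\gamma]]=\veps\B[\gamma]-\gamma$ for the final one. The only cosmetic difference is that the paper invokes the $L^\infty$-bound from part~(a) of Theorem~\ref{collectedresultselliptic} rather than part~(c), which is immaterial since the data are smooth.
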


\begin{proof}
(a) A standard
argument using difference quotients, linearity and uniqueness of the
problem, the $L^\infty$-bound of Theorem \ref{collectedresultselliptic} (a), and induction on $n$, gives
that 
\begin{align}
\label{ppp}
\dell_t^n D^\alpha \B[\gamma]=\B[\dell_t^n D^\alpha
\gamma]\qquad\text{in}\qquad Q_T
\end{align}
for every $n\in\N$ and $\alpha \in \mathbb{N}^N$. This argument is
almost exactly the same as the one given in the proof of Proposition
\ref{existapproxelliptic} (d) below.
Then by Theorem \ref{collectedresultselliptic} (a),
$$\veps\|\dell_t^n D^\alpha \B[\gamma]\|_{L^\infty(Q_T)}\leq
\|\dell_t^n D^\alp\gamma\|_{L^\infty(Q_T)}.$$

\smallskip
\noindent(b) Holds since $\B$ is an operator in the spatial variable
$x$ and $\B[0]=0$.

\smallskip
\noindent(c) Note that $\dell_t \B[\gamma]=\B[\dell_t \gamma]$ by \eqref{ppp}, and by Theorem \ref{collectedresultselliptic} (b) and the time continuity of $\gamma$ and $\B[\gamma]$,
$$
\veps\|\B[\gamma] \|_{L^1(Q_T)}\leq \|\gamma\|_{L^1(Q_T)},
$$ 
which is finite because $\gamma\in  C_\textup{c}^\infty(Q_T)$. Hence it follows that
$$
\veps\|\dell_t(\B[\gamma])\|_{L^1(Q_T)}=\veps\|\B[\dell_t\gamma]\|_{L^1(Q_T)}\leq\|\dell_t\gamma\|_{L^1(Q_T)},
$$ 
By equation \eqref{elliptic},
$\Levy^\mu[\B[\gamma]]=\veps\B[\gamma]-\gamma$ for all $(x,t)\in
Q_T$. Since both $\B[\gamma]$ and $\gamma$ are in $L^1(Q_T)$, it
follows that also $\Levy^\mu[\B[\gamma]]\in L^1(Q_T)$.
\end{proof}

The operator $\B$ is self-adjoint in the following sense:
\begin{lemma}\label{symmetryB}
Assume \eqref{muassumption}, $g \in L^\infty(\R^N)$, $f \in L^1(\R^N)$, and $\veps>0$. Then
\begin{equation*}
\int_{\R^N}\B[g](x)f(x) \dd x=\int_{\R^N}g(x)\B[f](x) \dd x.
\end{equation*}
\end{lemma}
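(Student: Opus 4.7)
My plan is to exploit the formal self-adjointness of $\Levy^\mu$ coming from the symmetry of $\mu$, together with the bounds from Theorem \ref{collectedresultselliptic}, to reduce the claim to the case of smooth compactly supported data and then pass to the limit.

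First I would prove the ``Green's identity''
\begin{equation*}
\int_{\R^N}\Levy^\mu[\phi]\,\psi\dd x \;=\; \int_{\R^N}\phi\,\Levy^\mu[\psi]\dd x \qquad \text{for all } \phi,\psi\in C_\textup{c}^\infty(\R^N).
\end{equation*}
Since $\mu$ is symmetric, the linear correction in \eqref{deflevy} drops out and one can rewrite
\begin{equation*}
\Levy^\mu[\phi](x)=\tfrac{1}{2}\int_{\R^N\setminus\{0\}}\bigl(\phi(x+z)+\phi(x-z)-2\phi(x)\bigr)\dd\mu(z).
\end{equation*}
Multiplying by $\psi$ and applying Fubini---justified by the pointwise bound $|\phi(x+z)+\phi(x-z)-2\phi(x)|\leq C_\phi \min\{|z|^2,1\}$, the $L^1$-membership of $\psi$, and the L\'evy condition \eqref{muassumption}---followed by the changes of variables $y=x\pm z$ in the two summands and a symmetric second Fubini, yields the identity. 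An inspection of this argument shows that it extends with no change whenever $\phi\in C_\textup{b}^\infty$ has bounded second derivatives and $\psi\in L^1$.

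Next I handle the special case $g,f\in C_\textup{c}^\infty(\R^N)$. By Theorem \ref{collectedresultselliptic} (a)--(b), both $\B[g]$ and $\B[f]$ lie in $C_\textup{b}^\infty\cap L^1\cap L^\infty$ with bounded second derivatives, so the extended Green's identity gives
\begin{equation*}
\int_{\R^N}\Levy^\mu[\B[g]]\,\B[f]\dd x \;=\; \int_{\R^N}\B[g]\,\Levy^\mu[\B[f]]\dd x.
\end{equation*}
Substituting $\Levy^\mu[\B[g]]=\veps\B[g]-g$ and $\Levy^\mu[\B[f]]=\veps\B[f]-f$ from \eqref{elliptic}, the symmetric terms $\veps\B[g]\B[f]$ cancel and one is left with the desired identity $\int\B[g]f\dd x=\int g\,\B[f]\dd x$.

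Finally I extend by a two-step approximation. Given $g\in L^\infty$ and $f\in C_\textup{c}^\infty$, choose $g_n\in C_\textup{c}^\infty$ with $\|g_n\|_\infty\leq\|g\|_\infty$ and $g_n\to g$ a.e.; Theorem \ref{collectedresultselliptic} (c) makes $\{\B[g_n]\}$ bounded in $L^\infty$, so along a subsequence it converges weakly-$\ast$ in $L^\infty$ to some $h^\ast$. Testing the resolvent equation for $g_n$ against any $\eta\in C_\textup{c}^\infty$, moving $\Levy^\mu$ onto $\eta$ via step one (note $\Levy^\mu[\eta]\in L^1$), and passing to the limit identifies $h^\ast$ as a distributional $L^\infty$ solution of \eqref{elliptic} with right-hand side $g$; uniqueness in Theorem \ref{collectedresultselliptic} (c) then forces $h^\ast=\B[g]$. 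Passing to the limit in $\int\B[g_n]f\dd x=\int g_n\B[f]\dd x$---weak-$\ast$ against $f\in L^1$ on the left, dominated convergence using $\B[f]\in L^1$ on the right---yields the identity for all $g\in L^\infty$ and $f\in C_\textup{c}^\infty$. A final approximation of $f\in L^1$ by $f_n\in C_\textup{c}^\infty$ in $L^1$, combined with the $L^1$-contraction of $\B$ from Theorem \ref{collectedresultselliptic} (b), completes the proof. The most delicate step is the initial Fubini/symmetry calculation and the integrability it requires; once that is in place the rest is essentially bookkeeping.
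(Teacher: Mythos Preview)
Your argument is correct and follows the same essential strategy as the paper: establish self-adjointness of $\Levy^\mu$ on smooth functions, use it to get the identity for smooth data, then approximate. One caveat: your stated extension of the Green's identity to ``$\phi\in C_\textup{b}^\infty$, $\psi\in L^1$'' is too strong as written, since $\Levy^\mu[\psi]$ is not defined for merely $L^1$ functions when $\mu$ is singular at the origin; fortunately you only invoke it when both functions lie in $C_\textup{b}^\infty$ with derivatives in $L^1$ (step~2) or when one of them is in $C_\textup{c}^\infty$ (step~3), so the issue is cosmetic.

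The paper's route is a bit more economical. It mollifies $f$ and $g$ simultaneously to $f_\delta,g_\delta\in C_\textup{b}^\infty$, observes that $\B[f_\delta]\in C_\textup{b}^\infty\cap W^{2,1}$ and $\B[g_\delta]\in C_\textup{b}^\infty\subset W^{2,\infty}$, and then applies Lemma~\ref{propnonlocal}(c) directly to obtain $\int f_\delta\,\B[g_\delta]\dd x=\int g_\delta\,\B[f_\delta]\dd x$. A single limit $\delta\to0^+$ finishes the job: $L^1$-convergence $\B[f_\delta]\to\B[f]$ (from the $L^1$-contraction in Theorem~\ref{collectedresultselliptic}(b)) handles one side, and dominated convergence (using $\veps\|\B[g_\delta]\|_{L^\infty}\leq\|g\|_{L^\infty}$ together with $g_\delta\to g$ a.e.) handles the other. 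Your two-stage extension, with a weak-$\ast$ compactness argument to identify the limit of $\B[g_n]$ via uniqueness, is a valid alternative but somewhat longer; the mollification approach avoids that detour entirely.
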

The proof is given in section \ref{ellipticsection}.
To prove these and other results in this paper, we will need some
properties of the nonlocal operator $\Levy^\mu$ that are given below.
\begin{lemma}\label{propnonlocal}
Assume \eqref{muassumption}.
\begin{enumerate}[(a)]
\item If $\psi\in C^2(\R^N)\cap L^\infty(\R^N)$, then
\begin{equation*}
\begin{split}
|\Levy^\mu[\psi](x)|\leq\frac{1}{2}\max_{|z|\leq1}|D^2\psi(x+z)|\int_{|z|\leq1}|z|^2\dd \mu(z)+2\|\psi\|_{L^\infty(\R^N)}\int_{|z|>1}\dd\mu(z).
\end{split}
\end{equation*}
\item Let $p\in\{1,\infty\}$ be fixed. If $\psi\in W^{2,p}(\R^N)$, then
$$
\|\Levy^\mu[\psi]\|_{L^p(\R^N)}\leq \frac{1}{2}\|D^2\psi\|_{L^p(\R^N)}\int_{|z|\leq1}|z|^2\dd\mu(z)+2\|\psi\|_{L^p(\R^N)}\int_{|z|>1}\dd\mu(z).
$$
\item If $\psi_1\in W^{2,1}(\R^N)$ and $\psi_2\in W^{2,\infty}(\R^N)$, then
$$
\int_{\R^N}\psi_1\Levy^\mu[\psi_2]\dd x = \int_{\R^N}\Levy^\mu[\psi_1]\psi_2\dd x.
$$
\end{enumerate}
\end{lemma}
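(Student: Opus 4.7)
The plan for all three parts is to exploit the symmetry $\mu^*=\mu$ to rewrite $\Levy^\mu[\psi]$ in the symmetric second-difference form. Indeed, for test functions one readily checks that the drift correction $-z\cdot D\psi(x)\mathbf 1_{|z|\leq 1}$ is odd in $z$ and thus vanishes when integrated against the even measure obtained by symmetrizing $\mu$ with itself; combined with the substitution $z\mapsto -z$ in the part $|z|>1$, this yields the identity
\begin{equation*}
\Levy^\mu[\psi](x)=\tfrac12\int_{\R^N\setminus\{0\}}\bigl(\psi(x+z)+\psi(x-z)-2\psi(x)\bigr)\dd\mu(z),
\end{equation*}
valid whenever both forms make sense (this requires only a $C^2$ bound near $x$ and boundedness of $\psi$ at infinity). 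This representation removes the gradient term entirely and is the workhorse for the whole lemma.

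For part (a), I would split the symmetric integral at $|z|=1$. On $\{|z|\leq 1\}$, a second-order Taylor expansion with integral remainder gives
\begin{equation*}
\psi(x+z)+\psi(x-z)-2\psi(x)=\int_0^1(1-s)\,z^\top\bigl(D^2\psi(x+sz)+D^2\psi(x-sz)\bigr)z\dd s,
\end{equation*}
which is bounded in absolute value by $|z|^2\max_{|w|\leq 1}|D^2\psi(x+w)|$. On $\{|z|>1\}$ the integrand is simply bounded by $4\|\psi\|_{L^\infty}$. Integrating against $\mu$ and dividing by $2$ gives exactly the claimed estimate.

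For part (b), the case $p=\infty$ follows by taking $\esssup_x$ of the pointwise bound from (a) (since $\max_{|z|\leq 1}|D^2\psi(x+z)|\leq\|D^2\psi\|_{L^\infty}$). For $p=1$, I would use the Tonelli/Minkowski exchange on the symmetric representation to get
\begin{equation*}
\|\Levy^\mu[\psi]\|_{L^1(\R^N)}\leq\tfrac12\int_{\R^N\setminus\{0\}}\bigl\|\psi(\cdot+z)+\psi(\cdot-z)-2\psi(\cdot)\bigr\|_{L^1(\R^N)}\dd\mu(z),
\end{equation*}
and then estimate the inner $L^1$ norm by the same Taylor identity (on $|z|\leq 1$) and translation invariance of Lebesgue measure, yielding the bound $|z|^2\|D^2\psi\|_{L^1}$; on $|z|>1$ it is just $4\|\psi\|_{L^1}$. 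The resulting estimate matches the statement. A single Fubini justification, using the \eqref{muassumption} integrability of $|z|^2\wedge 1$, handles both the exchange and the finiteness.

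For part (c), I would start again from the symmetric form, which is available for both $\psi_1$ and $\psi_2$ thanks to the $W^{2,p}$ hypotheses and Sobolev embedding (so both are $C^1$ at least after mollification, and bounded, which suffices to apply the symmetrization). Writing out $\int\psi_1\Levy^\mu[\psi_2]\dd x$ with the symmetric integrand, applying Fubini (the double integral converges absolutely by the bounds already proven in (a)--(b) since $\psi_1\in L^1$, $\psi_2\in W^{2,\infty}$), and then performing the translations $x\mapsto x\mp z$ in the two pieces gives
\begin{equation*}
\int_{\R^N}\psi_1(x)\Levy^\mu[\psi_2](x)\dd x=\tfrac12\int_{\R^N}\psi_2(x)\int_{\R^N\setminus\{0\}}\bigl(\psi_1(x-z)+\psi_1(x+z)-2\psi_1(x)\bigr)\dd\mu(z)\dd x,
\end{equation*}
and the inner integral is $2\Levy^\mu[\psi_1](x)$ by the same symmetric representation applied to $\psi_1$. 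The main obstacle, as often with these operators, is the clean bookkeeping of the symmetrization and Fubini justification; once the symmetric form is accepted, all three estimates become short.
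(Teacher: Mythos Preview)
The paper does not supply its own proof of this lemma; it simply refers the reader to \cite{AlCiJa12}. Your argument via the symmetric second-difference representation is correct and is the standard route for symmetric L\'evy measures: the symmetrization kills the gradient corrector, Taylor with integral remainder gives the $|z|^2$ bound near the origin, and Fubini with translation invariance handles both the $L^1$ estimate and the self-adjointness.

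One small correction in part (c): the Sobolev embedding you invoke is not quite right in general dimension ($W^{2,1}(\R^N)$ does not embed into $C^1$ for $N\geq 2$), so the symmetric pointwise formula for $\psi_1$ is not immediate. The clean way is exactly the density step you also mention: prove the identity first for $\psi_1,\psi_2\in C_\textup{c}^\infty(\R^N)$ (where everything is classical), and then pass to the limit using part (b), which gives continuity of $\Levy^\mu$ from $W^{2,1}$ to $L^1$ and from $W^{2,\infty}$ to $L^\infty$. With that adjustment the argument is complete.
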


\begin{remark}\label{nonlocalwelldefined}
\begin{enumerate}[(a)]
\item If $\psi\in C^2(\R^N)\cap L^\infty(\R^N)$, then $\Levy^\mu[\psi](x)$
is well-defined by (a).

\item If $\mu(\R^N)<\infty$, a density argument and the symmetry of
$\mu$ reveals that
$$\Levy^{\mu}[\phi](x)=\int_{|z|>0}\lef\phi(x+z)-\phi(x)\rig \dd \mu(z),$$
and the assumptions of Lemma \ref{symmetryB} can be relaxed to
$g\in L^\infty(\R^N)$, $f\in L^p(\R^N)$ for $p\in\{1,\infty\}$,
and $\psi_1\in L^1(\R^N)$ and $\psi_2\in L^\infty(\R^N)$ respectively
in (a), (b), and (c). The second derivative part of the estimates in
(a) and (b) then have to be dropped and the remaining term modified
accordingly.
\end{enumerate}
\end{remark}

A proof of Lemma \ref{propnonlocal} can be found e.g. in Sections 1 and 4 in \cite{AlCiJa12}.

\begin{lemma}\label{fouriersymbol}
Assume \eqref{muassumption} and $\psi\in C_\textup{c}^\infty(\R^N)$. Then
$$
\mathcal{F}(\Levy^\mu[\psi])(\xi)=-\sigma_{\Levy^\mu}(\xi)\mathcal{F}(\psi)(\xi),
$$
where
$$
\sigma_{\Levy^\mu}(\xi):=\int_{|z|>0}1- \cos(z\cdot\xi)\dd\mu(z).
$$
Moreover, $\sigma_{\Levy^\mu}(\xi)\geq0$ and 
$$
\Big(\psi,\Levy^\mu[\psi]\Big)_{L^2(\R^N)}=-\left\| (\Levy^\mu)^{\frac{1}{2}}[\psi] \right\|_{L^2(\R^N)}^2.
$$
\end{lemma}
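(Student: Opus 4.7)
The strategy is to compute $\mathcal{F}(\Levy^\mu[\psi])$ by applying Fubini to exchange the $\mu$-integral defining $\Levy^\mu$ with the Fourier integral in $x$, then reduce the resulting complex integrand to $\cos(z\cdot\xi)-1$ using the symmetry of $\mu$. Once the Fourier symbol is identified, nonnegativity is immediate, and the identity for the quadratic form follows from Plancherel after defining $(\Levy^\mu)^{1/2}$ as the Fourier multiplier with symbol $\sqrt{\sigma_{\Levy^\mu}}$.

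\textbf{Step 1: Fubini and pointwise Fourier computation.} Write
$$\Levy^\mu[\psi](x)=\int_{\R^N\setminus\{0\}}\delta_\psi(x,z)\,\dd\mu(z),\qquad \delta_\psi(x,z):=\psi(x+z)-\psi(x)-z\cdot D\psi(x)\indikator.$$
For $|z|\leq 1$, a second-order Taylor expansion gives $|\delta_\psi(x,z)|\leq \tfrac12|z|^2\|D^2\psi\|_{L^\infty}$, and for $|z|>1$ we simply have $|\delta_\psi(x,z)|\leq |\psi(x+z)|+|\psi(x)|$. Since $\psi\in C_\textup{c}^\infty(\R^N)$, these bounds and \eqref{muassumption} yield $\int_{\R^N}\int_{|z|>0}|\delta_\psi(x,z)|\dd\mu(z)\dd x<\infty$, so Fubini applies:
$$\mathcal{F}(\Levy^\mu[\psi])(\xi)=\int_{|z|>0}\mathcal{F}(\delta_\psi(\cdot,z))(\xi)\dd\mu(z)=\mathcal{F}(\psi)(\xi)\int_{|z|>0}\left(e^{iz\cdot\xi}-1-iz\cdot\xi\indikator\right)\dd\mu(z).$$

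\textbf{Step 2: Symmetrization.} Split the inner integrand into real and imaginary parts. The imaginary part is $\sin(z\cdot\xi)-z\cdot\xi\,\indikator$, and by $\mu^*=\mu$ and oddness in $z$, each piece is $\mu$-integrable and has zero integral; this is the main place where the L\'evy cancellation and symmetry interact, and is the step I expect to require the most care (absolute integrability of each piece must be verified via $|\sin(z\cdot\xi)-z\cdot\xi|\leq \tfrac16|\xi|^3|z|^3$ on $|z|\leq 1$, and separately on $|z|>1$). The real part $\cos(z\cdot\xi)-1$ is nonpositive and absolutely $\mu$-integrable by the same bounds, giving
$$\mathcal{F}(\Levy^\mu[\psi])(\xi)=\mathcal{F}(\psi)(\xi)\int_{|z|>0}\bigl(\cos(z\cdot\xi)-1\bigr)\dd\mu(z)=-\sigma_{\Levy^\mu}(\xi)\mathcal{F}(\psi)(\xi).$$

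\textbf{Step 3: Nonnegativity and quadratic form.} Nonnegativity of $\sigma_{\Levy^\mu}$ is immediate from $1-\cos(z\cdot\xi)\geq 0$ and $\mu\geq 0$. For the $L^2$-identity, note that since $\psi\in C_\textup{c}^\infty$ Lemma \ref{propnonlocal}(b) gives $\Levy^\mu[\psi]\in L^2(\R^N)$, so Plancherel applies and
$$\bigl(\psi,\Levy^\mu[\psi]\bigr)_{L^2(\R^N)}=\bigl(\mathcal{F}(\psi),\mathcal{F}(\Levy^\mu[\psi])\bigr)_{L^2(\R^N)}=-\int_{\R^N}\sigma_{\Levy^\mu}(\xi)\,|\mathcal{F}(\psi)(\xi)|^2\dd\xi,$$
up to the standard normalization constant. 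Define $(\Levy^\mu)^{\frac12}$ as the Fourier multiplier with (nonnegative) symbol $\sqrt{\sigma_{\Levy^\mu}(\xi)}$; by Plancherel again, $\|(\Levy^\mu)^{\frac12}[\psi]\|_{L^2}^2=\int \sigma_{\Levy^\mu}(\xi)|\mathcal{F}(\psi)(\xi)|^2\dd\xi$, whence the claimed identity. The only real obstacle is the Fubini/symmetrization bookkeeping in Step 1--2; everything else is a direct consequence of Plancherel.
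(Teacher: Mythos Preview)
Your proof is correct and follows essentially the same route as the paper: Fubini to swap the $x$- and $z$-integrals, symmetry of $\mu$ to kill the imaginary part $\sin(z\cdot\xi)-z\cdot\xi\,\indikator$, and then Plancherel for the quadratic form. Your Fubini and symmetrization bookkeeping is actually more explicit than the paper's (which just cites ``Fubini's theorem and the symmetry of $\mu$''), and the only extra detail the paper spells out that you leave implicit is the observation $(\sigma_{\Levy^\mu})^{1/2}\mathcal{F}(\psi)\in L^2$ via $2ab\leq a^2+b^2$, which justifies that $(\Levy^\mu)^{1/2}[\psi]$ is a bona fide $L^2$ function.
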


\begin{remark}\label{remarkfouriersymbol}
\begin{enumerate}[(a)]
\item $\sigma_{\Levy^\mu}$ is the Fourier symbol of $\Levy^\mu$. In
  our generality it may not be invertible or have any smoothing
  properties. An extreme example is $\mu=\delta_{z_0}$ for $z_0\not=0$, where
  $\sigma_{\Levy^\mu}(\xi)=1-\cos z_0\cdot\xi$; this is a bounded function
  with infinitly many zeros.
\item If $\psi, \Levy^\mu[\psi] \in L^2(\R^N)$, then a density argument
  shows that the Fourier symbol exists and the conclusions of Lemma
  \ref{fouriersymbol} still hold. 
 \item The notation $ (\Levy^\mu)^{\frac{1}{2}}$ is used to denote the square root of the operator $ \Levy^\mu$ in the Fourier transform sense. 
 \end{enumerate}
\end{remark}

\begin{proof}
By the definition of $\Levy^\mu$, Fubini's theorem, and the symmetry of $\mu$,
\begin{equation*}
\begin{split}
\mathcal{F}(\Levy^\mu[\psi])(\xi)=&\,(2\pi)^{-\frac{N}{2}}\int_{\R^N}\textup{e}^{-\textup{i}x\cdot\xi}\int_{|z|>0}\\
&\qquad\qquad\qquad\lef\psi(x+z)-\psi(x)-z\cdot D\psi(x)\indikator\rig \dd\mu(z)\dd x\\
=&\, \int_{|z|>0} \lef\textup{e}^{\textup{i} z\cdot \xi} \mathcal{F}(\psi)(\xi)-\mathcal{F}(\psi)(\xi)-\textup{i}z\cdot\xi\indikator  \mathcal{F}(\psi)(\xi)\rig\dd\mu(z)\\
=&\, \mathcal{F}(\psi)(\xi) \int_{|z|>0} \lef\cos(z\cdot\xi)-1\rig\dd\mu(z).
\end{split}
\end{equation*}

To show the second part of the lemma, note that $\sigma_{\Levy^\mu}\geq0$ and $\psi, \Levy^\mu[\psi]\in L^2(\R^N)$ (cf. Lemma \ref{propnonlocal} (b)). It follows that $\mathcal{F}(\psi), \sigma_{\Levy^\mu}\mathcal{F}(\psi)\in L^2(\R^N)$, and then by the inequality $2ab\leq a^2 + b^2$, $(\sigma_{\Levy^\mu})^\frac{1}{2}\mathcal{F}(\psi)\in L^2(\R^N)$. By Plancherel's theorem,
\begin{equation*}
\begin{split}
\Big(\psi,\Levy^\mu[\psi]\Big)_{L^2(\R^N)}=&\,\Big(\mathcal{F}(\psi),\mathcal{F}(\Levy^\mu[\psi])\Big)_{L^2(\R^N)}=\Big(\mathcal{F}(\psi),-\sigma_{\Levy^\mu}\mathcal{F}(\psi)\Big)_{L^2(\R^N)}\\
=&\, -\Big((\sigma_{\Levy^\mu})^{\frac{1}{2}}\mathcal{F}(\psi),(\sigma_{\Levy^\mu})^{\frac{1}{2}}\mathcal{F}(\psi)\Big)_{L^2(\R^N)}=-\left\| (\Levy^\mu)^{\frac{1}{2}}[\psi] \right\|_{L^2(\R^N)}^2, 
\end{split}
\end{equation*}
which completes the proof.
\end{proof}

The following theorem is a key technical tool in our uniqueness argument.
\begin{theorem}\label{Liouville}
Assume \eqref{muassumption} and $\supp \mu\neq \emptyset$. If $v\in C_0(\R^N)$ solves
$$
\Levy^{\mu}[v]=0\qquad\text{in}\qquad \mathcal{D}'(\R^N),
$$
then $v\equiv0$ for all $x\in\R^N$.
\end{theorem}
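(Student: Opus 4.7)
The plan is to mollify $v$, transfer the distributional equation to a pointwise one for the regularisation $v_\delta$, run a nonlocal strong maximum principle argument at an extremum of $v_\delta$, and close using the $C_0$-decay together with the subgroup of $\R^N$ generated by $\supp\mu$.

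Let $\omega_\delta$ be the symmetric mollifier from \eqref{mollifierspace} and set $v_\delta:=v\ast\omega_\delta$. Since $v\in C_0(\R^N)\subset L^\infty(\R^N)$, we have $v_\delta\in C_\textup{b}^\infty(\R^N)\cap C_0(\R^N)$ and $v_\delta\to v$ uniformly as $\delta\to 0^+$. For any $\psi\in C_\textup{c}^\infty(\R^N)$, Fubini's theorem (justified by the L\'evy condition in \eqref{muassumption}), the symmetry of $\mu$ and of $\omega_\delta$, and the commutation of $\Levy^\mu$ with convolution give
\[
\int_{\R^N}\Levy^\mu[v_\delta]\,\psi\,\dd x=\int_{\R^N}v_\delta\,\Levy^\mu[\psi]\,\dd x=\int_{\R^N}v\,\Levy^\mu[\psi\ast\omega_\delta]\,\dd x=0,
\]
since $\psi\ast\omega_\delta\in C_\textup{c}^\infty(\R^N)$ and $\Levy^\mu[v]=0$ in $\mathcal{D}'(\R^N)$. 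Because $\Levy^\mu[v_\delta]$ is continuous on $\R^N$ by Lemma \ref{propnonlocal}(a) combined with dominated convergence, this forces $\Levy^\mu[v_\delta]\equiv 0$ pointwise on $\R^N$.

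Suppose for contradiction that $\sup v_\delta>0$. Since $v_\delta\in C_0(\R^N)$, the supremum $M_\delta:=\sup v_\delta$ is attained at some $x_\delta\in\R^N$ (the superlevel set $\{v_\delta\geq M_\delta/2\}$ is nonempty, closed, and bounded, hence compact). At $x_\delta$ we have $Dv_\delta(x_\delta)=0$ and $v_\delta(x_\delta+z)-M_\delta\leq 0$ for every $z$, so
\[
0=\Levy^\mu[v_\delta](x_\delta)=\int_{|z|>0}\bigl(v_\delta(x_\delta+z)-M_\delta\bigr)\dd\mu(z),
\]
and the nonpositive integrand must vanish $\mu$-a.e.; by continuity of $v_\delta$, $v_\delta\equiv M_\delta$ on $x_\delta+\supp\mu$. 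Every point of this set is itself a global maximum of $v_\delta$, so the argument iterates, and using $\supp\mu=-\supp\mu$ (by symmetry of $\mu$) yields $v_\delta\equiv M_\delta$ on $x_\delta+G$, where $G$ is the additive subgroup of $\R^N$ generated by $\supp\mu$. Since $\supp\mu\neq\emptyset$ and $\mu$ lives on $\R^N\setminus\{0\}$, there exists $z_0\in\supp\mu$ with $z_0\neq 0$, so $G\supset\mathbb{Z}z_0$ is unbounded. Combined with $v_\delta\in C_0(\R^N)$, this forces $M_\delta=0$, contradicting $M_\delta>0$. Hence $\sup v_\delta\leq 0$, and the same reasoning applied to $-v_\delta$ gives $\inf v_\delta\geq 0$, so $v_\delta\equiv 0$. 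Letting $\delta\to 0^+$ and using uniform convergence, $v\equiv 0$.

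The main obstacle I anticipate is the rigorous transfer in the first step from the distributional equation for $v$ to a pointwise equation for $v_\delta$: Fubini must be applied carefully because of the second-order correction $z\cdot D\psi\,\mathbf{1}_{|z|\leq 1}$ in \eqref{deflevy}, and one has to exploit the symmetry of both $\mu$ and $\omega_\delta$ in order to commute $\Levy^\mu$ past the mollifier. Once the pointwise equation is available, the iteration of the nonlocal strong maximum principle against $C_0$-decay is a clean finishing move.
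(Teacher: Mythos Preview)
Your proof is correct and follows essentially the same approach as the paper: mollify to obtain a pointwise equation for $v_\delta$, apply a nonlocal strong maximum principle at a global extremum, and iterate along $\supp\mu$ to contradict the $C_0$-decay. Your handling of the maximum-principle step is slightly more streamlined than the paper's---you use $Dv_\delta(x_\delta)=0$ to drop the gradient correction and conclude $v_\delta\equiv M_\delta$ on $x_\delta+\supp\mu$ in one stroke, whereas the paper performs an explicit $\kappa$-splitting of the integral and a limiting argument to reach the same conclusion one point $z_0\in\supp\mu$ at a time.
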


We give the proof of Theorem \ref{Liouville} in Appendix 
\ref{appendix}. In the local case \cite{BrCr79} such a result follows
for example from the Liouville theorem for the Laplacian. On one hand, our result
is much weaker since we need to ask for some kind of decay at
infinity. On the other hand, Theorem \ref{Liouville} covers very degenerate operators
$\Levy^{\mu}$ which do not satisfy any sort of Liouville theorem.   

\begin{example}
\label{ex:Liouville}
Let $\mu=\delta_{2\pi}+\delta_{-2\pi}$. Note that \eqref{muassumption} holds and that for smooth functions $v$,
\begin{equation*}
\Levy^{\mu}[v](x)=v(x+2\pi)-2v(x)+v(x-2\pi).
\end{equation*}
The function $v=\cos\in C_\textup{b}^\infty(\R)$ is an example of a
nonconstant  function that satisfies $\Levy^{\mu}[v](x)=0$ in $\R$, and
hence the Liouville theorem does not hold for $\Levy^{\mu}$.
\end{example}

\subsection{The proof of Theorem \ref{uniqueness}}
We define
$$U(x,t):=u(x,t)-\hat u(x,t)\qquad\text{and}\qquad
\Phi(x,t):=\varphi(u(x,t))-\varphi(\hat u(x,t)).$$ 
By the assumptions \eqref{cond1}, \eqref{cond3}, 
and \eqref{phiassumption}, 
$$U\in L^1(Q_T)\cap L^\infty(Q_T),\quad \Phi\in L^\infty(Q_T),$$
and by \eqref{cond2}, \eqref{cond4}, and Lemma \ref{equivdistsol}
\begin{equation}\label{diffparabolic}
\int_0^T\int_{\R^N}\lef U\dell_t\psi +\Phi\Levy^\mu[\psi]\rig\dd x \dd t=0\qquad\text{for all}\qquad \psi\in C_\textup{c}^\infty(\R^N\times[0,T)).
\end{equation}
We emphasize that this equation also incorporates a zero intitial
condition for $U$.

We now define the function $h_\veps(t)$ which will play the main role in the proof:
\begin{equation}\label{defh}
h_\veps(t):=\left(\B[U](\cdot,t),U(\cdot,t)\right)=\int_{\R^N}\B[U(\cdot,t)](x)U(x,t)\dd x.
\end{equation}
Note that $h_\veps\in L^1((0,T))$ since $\|h_\veps\|_{L^1((0,T))}\leq
\frac{1}{\veps}\|U\|_{L^\infty(Q_T)}\|U\|_{L^1(Q_T)}$ by Theorem
\ref{collectedresultselliptic} (b). For the proof of Theorem
\ref{uniqueness}, we will now show that there is a sequence $\veps_n\to
0^+$ such that $\lim_{\veps_n\to0^+}h_{\veps_n}(t)=0$. To do that we start by the
following lemma:

\begin{lemma}\label{dissolprop}
Assume  \eqref{muassumption}, $U\in L^1(Q_T)\cap L^\infty(Q_T)$, $\Phi\in L^\infty(Q_T)$, and  \eqref{diffparabolic} holds. Then 
\begin{enumerate}[(a)]
\item $\displaystyle \iint_{Q_T}\lef\B [U]\dell_t\psi + (\veps \B[\Phi] -\Phi)\psi\rig
\dd x \dd t=0\quad\text{for all}\quad
\psi\in C_\textup{c}^\infty(\R^N\times [0,T))$.
\item $\displaystyle
  \B[U(\cdot,t)](x)=\int_0^t\lef\veps\B[\Phi(\cdot,s)](x)-\Phi(x,s)\rig \dd s
  \quad\text{a.e.}\quad (x,t)\in\R^N\times(0,T)$.
\item For a.e. $t\in(0,T)$, $\|\B[U](\cdot,t)\|_{L^\infty(\R^N)}\leq2
  t\|\Phi\|_{L^\infty(Q_T)}$. 
\end{enumerate}
\end{lemma}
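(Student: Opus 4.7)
The identity in (a) is exactly what one obtains by inserting $\hat\psi := B_\varepsilon^\mu[\psi]$ (the resolvent applied to the test function) into the weak formulation \eqref{diffparabolic}, using the self-adjointness of $B_\varepsilon^\mu$ (Lemma \ref{symmetryB}), the commutation $\dell_t B_\varepsilon^\mu[\psi] = B_\varepsilon^\mu[\dell_t\psi]$ (Corollary \ref{propertiesofB2}), and the resolvent identity $\Levy^\mu[B_\varepsilon^\mu[\psi]] = \varepsilon B_\varepsilon^\mu[\psi] - \psi$. Once (a) is in hand, (b) follows by a one-variable integration-by-parts in $t$ combined with the weak initial condition, and (c) is then just a bound on the representation formula.

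\textbf{Step 1: proof of (a).} Given $\psi \in C_\textup{c}^\infty(\R^N\times[0,T))$, Corollary \ref{propertiesofB2} guarantees that $\hat\psi = B_\varepsilon^\mu[\psi]$ belongs to $C_\textup{b}^\infty(\R^N\times[0,T))$, is compactly supported in $t$, satisfies $\dell_t\hat\psi=B_\varepsilon^\mu[\dell_t\psi]$, and lies in $L^1(Q_T)$, as do $\dell_t\hat\psi$ and $\Levy^\mu[\hat\psi]$. The only obstruction to plugging $\hat\psi$ directly into \eqref{diffparabolic} is that $\hat\psi$ lacks compact support in $x$. I would resolve this by introducing a smooth spatial cut-off $\xi_R\in C_\textup{c}^\infty(\R^N)$ with $\xi_R\equiv1$ on $B_R$, $\xi_R\equiv0$ off $B_{2R}$ and $|D^k\xi_R|\leq C R^{-k}$, using $\psi_R:=\hat\psi\,\xi_R\in C_\textup{c}^\infty(\R^N\times[0,T))$ as a legitimate test function in \eqref{diffparabolic}, and sending $R\to\infty$. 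For the time term, $U\dell_t\psi_R=U\,\xi_R\,B_\varepsilon^\mu[\dell_t\psi]$ converges in $L^1(Q_T)$ to $U\,B_\varepsilon^\mu[\dell_t\psi]$ by dominated convergence, and then Lemma \ref{symmetryB} rewrites this as $\iint \B[U]\dell_t\psi\dd x\dd t$. For the nonlocal term, expand
\[
\Levy^\mu[\hat\psi\,\xi_R](x) = \xi_R(x)\Levy^\mu[\hat\psi](x)+\hat\psi(x)\Levy^\mu[\xi_R](x)+\int_{|z|>0}(\hat\psi(x{+}z)-\hat\psi(x))(\xi_R(x{+}z)-\xi_R(x))\dd\mu(z),
\]
valid because $\hat\psi,\xi_R\in C_\textup{b}^\infty(\R^N)$. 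Using Lemma \ref{propnonlocal}(a) and \eqref{muassumption}, the last two terms vanish pointwise as $R\to\infty$ and admit $L^\infty$ bounds that are uniform in $R$; so by dominated convergence $\iint \Phi\Levy^\mu[\psi_R]\dd x\dd t \to \iint\Phi\Levy^\mu[\hat\psi]\dd x\dd t$. Finally, $\Levy^\mu[\hat\psi]=\varepsilon\hat\psi-\psi$ by \eqref{elliptic}, and Lemma \ref{symmetryB} turns $\iint\Phi(\varepsilon\hat\psi-\psi)\dd x\dd t$ into $\iint(\varepsilon\B[\Phi]-\Phi)\psi\dd x\dd t$, proving (a). The chief technical obstacle is controlling $\Levy^\mu[\xi_R]$ and the nonlocal commutator term uniformly in $R$ in the presence of a possibly singular measure $\mu$; the key is that the integrand in the commutator is supported where $\xi_R$ varies and is dominated by quantities integrable against $\dd\mu$ thanks to the Lévy condition.

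\textbf{Step 2: proof of (b).} Specialize (a) to tensor test functions $\psi(x,t)=\alpha(x)\beta(t)$ with $\alpha\in C_\textup{c}^\infty(\R^N)$, $\beta\in C_\textup{c}^\infty([0,T))$. Setting $F_\alpha(t):=\int\alpha(x)\B[U(\cdot,t)](x)\dd x$ and $H_\alpha(t):=\int\alpha(x)(\varepsilon\B[\Phi(\cdot,t)]-\Phi(\cdot,t))(x)\dd x$, (a) reads $\int_0^T(\beta'F_\alpha+\beta H_\alpha)\dd t=0$, i.e. $F_\alpha'=H_\alpha$ in $\mathcal{D}'((0,T))$. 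Since $H_\alpha\in L^\infty((0,T))$ by Theorem \ref{collectedresultselliptic}(c), $F_\alpha$ is absolutely continuous on $[0,T)$ with $F_\alpha(t)=F_\alpha(0^+)+\int_0^t H_\alpha(s)\dd s$. To see $F_\alpha(0^+)=0$, rewrite $F_\alpha(t)=\int \B[\alpha](x)U(x,t)\dd x$ via Lemma \ref{symmetryB}, split $\B[\alpha]=\B[\alpha]\chi_R+\B[\alpha](1-\chi_R)$ for a spatial cut-off $\chi_R$, approximate $\B[\alpha]\chi_R$ by $C_\textup{c}^\infty$ functions and apply the weak initial condition \eqref{cond4} to the first piece, while bounding the tail by $\|U\|_{L^\infty}\int_{|x|>R}|\B[\alpha]|\dd x\to0$ using $\B[\alpha]\in L^1(\R^N)$ (Theorem \ref{collectedresultselliptic}(b)). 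Consequently $F_\alpha(t)=\int\alpha(x)\bigl(\int_0^t(\varepsilon\B[\Phi(\cdot,s)]-\Phi(\cdot,s))(x)\dd s\bigr)\dd x$ for every $\alpha\in C_\textup{c}^\infty(\R^N)$ (and a.e.\ $t$), which by the fundamental lemma of the calculus of variations yields the pointwise identity in (b).

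\textbf{Step 3: proof of (c).} From (b) and Theorem \ref{collectedresultselliptic}(c),
\[
\|\B[U(\cdot,t)]\|_{L^\infty(\R^N)}\leq \int_0^t\bigl(\varepsilon\|\B[\Phi(\cdot,s)]\|_{L^\infty}+\|\Phi(\cdot,s)\|_{L^\infty}\bigr)\dd s\leq 2t\|\Phi\|_{L^\infty(Q_T)},
\]
since $\varepsilon\|\B[\Phi(\cdot,s)]\|_{L^\infty}\leq\|\Phi(\cdot,s)\|_{L^\infty}\leq\|\Phi\|_{L^\infty(Q_T)}$. The hardest step is clearly (a): justifying the use of $\B[\psi]$ as a test function despite its lack of spatial compact support, i.e.\ passing to the limit in the nonlocal term $\iint\Phi\Levy^\mu[\hat\psi\,\xi_R]\dd x\dd t$ with a singular Lévy measure. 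Everything downstream is routine integration by parts together with the already-established mapping properties of $\B$.
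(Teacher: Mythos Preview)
Your proof is correct and follows the same strategy as the paper. For (a), both you and the paper insert $\B[\psi]$ into \eqref{diffparabolic} and justify this by a density/cutoff argument (the paper simply says ``admissible by a density argument using Corollary \ref{propertiesofB2}'' while you spell out the spatial cutoff explicitly; note incidentally that since $D^\alpha\B[\psi]=\B[D^\alpha\psi]\in L^1$ by Theorem \ref{collectedresultselliptic}(b), one has $\hat\psi\in W^{2,1}$, which lets Lemma \ref{propnonlocal}(b) handle the $L^1$ convergence of $\Levy^\mu[\hat\psi\xi_R]$ more directly than your commutator expansion). For (b), the paper instead takes the time test function to be a mollified approximate step function $\theta_{a,\delta}(t)$ and passes to the limit via Lebesgue's differentiation theorem, while you argue through absolute continuity of $t\mapsto F_\alpha(t)$; these are equivalent standard devices. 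One remark: your detour through \eqref{cond4} to obtain $F_\alpha(0^+)=0$ is unnecessary, and strictly speaking \eqref{cond4} is not among the lemma's stated hypotheses --- since (a) already holds for test functions with $\beta(0)\neq 0$, a single integration by parts in $t$ (using the absolute continuity of $F_\alpha$ you just established) yields $F_\alpha(0^+)=0$ directly. Part (c) is handled identically in both.
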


\begin{proof}
(a) We fix $\gamma \in C_\textup{c}^\infty(\R^N\times[0,T))$ and take
$\psi=\B[\gamma]$ as a test function in \eqref{diffparabolic}.
Note that $\psi$ is an admissible test function by a density argument
using Corollary \ref{propertiesofB2} (a)--(c) and $U,\Phi\in
L^\infty(Q_T)$. Then by  \eqref{elliptic} and Corollary 
\ref{propertiesofB2} (c), 
\begin{equation*}
\begin{split}
 0=&\iint_{Q_T} \lef U \dell_t(\B[\gamma]) + \Phi \Levy^{\mu} \left[ \B[\gamma] \right]\rig\dd x\dd t\\
=&  \iint_{Q_T} \lef U \B[\dell_t\gamma] + \Phi \lef \veps \B[\gamma]-\gamma\rig\rig\dd x\dd t.
\end{split}
\end{equation*}
Finally, the self-adjointness of $\B$ (cf. Lemma \ref{symmetryB}) yields
\[
\int_0^T \int_{\R^N}\lef \B[U] \dell_t\gamma + \lef \veps \B[\Phi]-\Phi \rig\gamma \rig \dd x\dd t=0,
\]
which completes the proof.

\smallskip
\noindent (b) This result follows from (a) and a special choice of test
function. For $0< s<T$, $a>0$, and $0<\delta<T-a$, we define 
\begin{equation*}
\theta_a(t)=\begin{cases}
1 & t\leq s-a\\
1-\frac{1}{a}(t-s+a) & s-a< t<s\\
0 & t\geq s
\end{cases}\qquad \text{and}\qquad \theta_{a,\delta}(t)=\theta_a\ast\rho_\delta(t),
\end{equation*}
where the mollifier $\rho_\delta$ is defined in \eqref{mollifiertime}.
Then $\theta_{a,\delta}\in C_\textup{b}^\infty((0,T))\cap L^1((0,T))$ and
$\textup{supp}\{\theta_{a,\delta}\}\subset[-\infty,T)$. Let
$\gamma\in C_\textup{c}^{\infty}(\R^N)$ and take
$\psi(x,t)=\theta_{a,\delta}(t)\gamma(x)\in C_\textup{c}^{\infty}(\R^N\times
[0,T))$ as a test function in part (a). Then we use properties of
mollifiers and Lebesgue's dominated convergence theorem to send
$\delta\to0^+$ and get
\begin{equation*}
\iint_{Q_T}\Big(\B[U]\theta_{a}' + (\veps
\B[\Phi] -\Phi)\theta_{a}\Big)\gamma\ \dd x\dd t=0.
\end{equation*}
By Fubini's theorem and since $\theta'_a(t)=-\frac{1}{a}\mathbf{1}_{s-a<t<s}$ and
$\textup{supp}\{\theta_{a}\}=[0,s]$, we find that
\begin{equation*}
\int_{\R^N}\bigg(\frac1a\int_{s-a}^s \B[U]\dd t + \int_0^s(\veps
\B[\Phi] -\Phi)\theta_{a}\dd t\bigg)\gamma\ \dd x=0.
\end{equation*}
We now send $a\to0^+$.
Since $\int_{\R^N}\B[U(\cdot,t)](x)\gamma(x) \dd x \in L^1(0,T)$ by Fubini's theorem, 
$$\frac1a\int_{s-a}^s \int_{\R^N}\B[U(\cdot,t)](x)\gamma(x) \dd x\dd t \to
\int_{\R^N}\B[U(\cdot,s)](x)\gamma(x)\dd x\quad\text{as}\quad a\to0^+$$
for a.e. $s$ by Lebesgue's differentiation theorem. For the other term,
we may use Lebesgue's dominated convergence theorem to pass to the
limit. Since $\theta_a\to \mathbf{1}_{[0,s)}$ pointwise, we find that for a.e. $s\in[0,T]$,
\begin{equation*}
\int_{\R^N} \bigg(\B[U(\cdot,s)](x) + \int_0^s\lef\veps
\B[\Phi(\cdot,t)](x) -\Phi(x,t)\rig\dd t\bigg)\gamma(x)\ \dd x=0.
\end{equation*}
Since $\gamma\in C_\textup{c}^\infty(\R^N)$ is arbitrary, part (b) follows.

\smallskip
\noindent(c) By part (b) and Theorem \ref{collectedresultselliptic} (c), $\|\B[U](\cdot,t)\|_{L^\infty(\R^N)}\leq2 t\|\Phi\|_{L^\infty(Q_T)}$ a.e.
\end{proof}

\begin{proposition}\label{habscont}
Assume \eqref{muassumption}, $U\in L^1(Q_T)\cap L^\infty(Q_T)$,
$\Phi\in L^\infty(Q_T)$, and \eqref{diffparabolic} holds. Then $h_\veps(t)$ defined by \eqref{defh} is absolutely continuous and
\begin{equation*}
h_\veps'(t)=2\big(\veps\B[\Phi](\cdot,t)-\Phi(\cdot,t),U(\cdot,t)\big)\qquad\text{in}\qquad \mathcal{D}'\left((0,T)\right).
\end{equation*}
\end{proposition}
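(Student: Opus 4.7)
The strategy is to test \eqref{diffparabolic} against the candidate function $\psi(x,t):=\B[U(\cdot,t)](x)\eta(t)$ for $\eta\in C_\textup{c}^\infty((0,T))$ and read off the resulting distributional identity for $h_\veps$. Setting $v(x,t):=\B[U(\cdot,t)](x)$ and $F(x,t):=\veps\B[\Phi(\cdot,t)](x)-\Phi(x,t)$, Lemma \ref{dissolprop}(b) yields the representation $v(x,t)=\int_0^tF(x,s)\,\dd s$, so $v$ is Lipschitz in $t$ with $\dell_tv=F\in L^\infty(Q_T)$ (the latter by Theorem \ref{collectedresultselliptic}(c) applied to $\B[\Phi]$). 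Meanwhile, the elliptic equation \eqref{elliptic} gives $\Levy^\mu[v(\cdot,t)]=\veps v(\cdot,t)-U(\cdot,t)\in L^1(\R^N)\cap L^\infty(\R^N)$.

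Granting momentarily that $v\eta$ is admissible in \eqref{diffparabolic}, one has $\dell_t(v\eta)=v\eta'+F\eta$ and $\Levy^\mu[v\eta]=\eta(\veps v-U)$, so \eqref{diffparabolic} becomes
\begin{equation*}
\iint_{Q_T}\bigl\{Uv\eta'+UF\eta+\Phi\eta(\veps v-U)\bigr\}\,\dd x\,\dd t=0.
\end{equation*}
The self-adjointness of $\B$ (Lemma \ref{symmetryB}) rewrites $\veps\int\Phi v\,\dd x=\veps\int U\B[\Phi]\,\dd x$, so that $\int\eta\Phi(\veps v-U)\,\dd x=\int\eta U(\veps\B[\Phi]-\Phi)\,\dd x=\int\eta UF\,\dd x$, and the identity above reduces to
\begin{equation*}
\int_0^T\bigl(U(\cdot,t),v(\cdot,t)\bigr)\eta'(t)\,\dd t=-2\int_0^T\bigl(U(\cdot,t),F(\cdot,t)\bigr)\eta(t)\,\dd t,
\end{equation*}
which is precisely the claimed distributional derivative $h_\veps'=2(\veps\B[\Phi]-\Phi,U)$ on $(0,T)$. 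Since $|(U(\cdot,t),F(\cdot,t))|\le\|F\|_{L^\infty(Q_T)}\|U(\cdot,t)\|_{L^1(\R^N)}$ with $U\in L^1(Q_T)$, this derivative lies in $L^1((0,T))$; together with $h_\veps\in L^1((0,T))$ (via the bound $\|v(\cdot,t)\|_{L^\infty}\le 2t\|\Phi\|_{L^\infty(Q_T)}$ of Lemma \ref{dissolprop}(c) and $U\in L^1(Q_T)$), this yields $h_\veps\in W^{1,1}((0,T))$ and hence the asserted absolute continuity.

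The main obstacle is that $v$ is only $L^1\cap L^\infty$ in $x$ and merely Lipschitz in $t$, so $v\eta\notin C_\textup{c}^\infty(\R^N\times[0,T))$. This is overcome by a standard density argument: mollify $U$ and $\Phi$ in space and time to obtain smooth approximations $U_\delta,\Phi_\delta$; by Theorem \ref{collectedresultselliptic}(a) and Corollary \ref{propertiesofB2}, $\B[U_\delta]$ is then in $C_\textup{b}^\infty(Q_T)$; introducing a spatial cutoff $\xi_R\in C_\textup{c}^\infty(\R^N)$, the function $\psi_{\delta,R}:=\xi_R\B[U_\delta]\eta$ is admissible in \eqref{diffparabolic}. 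The limit $R\to\infty$ is handled by dominated convergence, using the uniform $L^p$-bounds of $\B[U_\delta]$ and $\Levy^\mu[\B[U_\delta]]=\veps\B[U_\delta]-U_\delta$ from Theorem \ref{collectedresultselliptic} together with $U,\Phi\in L^1\cap L^\infty$; the subsequent limit $\delta\to 0$ uses the $L^p$-continuity of $\B$ from Theorem \ref{collectedresultselliptic} and $U_\delta\to U$, $\Phi_\delta\to\Phi$ in $L^1$ and almost everywhere, recovering the formal identity displayed above.
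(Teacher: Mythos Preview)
Your formal computation is correct, and the route is genuinely different from the paper's. The paper does not test \eqref{diffparabolic} against $\B[U]\eta$. Instead it mollifies $U$ in \emph{time only} to $\bar U_\delta$, uses the self-adjointness of $\B$ to write $\frac{\dd}{\dd t}(\B[\bar U_\delta],\bar U_\delta)=2(\partial_t\B[\bar U_\delta],\bar U_\delta)$, and then identifies $\partial_t\B[\bar U_\delta]=(\veps\B[\Phi]-\Phi)\ast_t\rho_\delta$ via Lemma \ref{dissolprop}(a) together with the commutation identity $\B[\bar U_\delta]=\B[U]\ast_t\rho_\delta$ (which it proves separately). So in the paper the factor $2$ comes from the symmetry of the bilinear form; in your argument it emerges from two distinct terms, one from $U\partial_tv=UF$ and one from $\Phi\Levy^\mu[v]=\Phi(\veps v-U)$ after invoking Lemma \ref{symmetryB}. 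Your approach is more directly equation-based and arguably cleaner once justified.

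That said, your density sketch has loose ends. First, you should not mollify $\Phi$: the test function is built from $U$ alone, and in any case $\Phi$ is only $L^\infty$, so the stated convergence $\Phi_\delta\to\Phi$ in $L^1$ is false. Second, after writing $\psi_{\delta,R}=\xi_R\B[U_\delta]\eta$, the key step is to identify $\partial_t\B[U_\delta]$ with (a mollified) $F$. This is not automatic from smoothness of $U_\delta$; it requires first the commutation $\B[U_\delta]=\B[U]\ast(\omega_\sigma\otimes\rho_\delta)$ (space mollification commutes with $\B$ by linearity and uniqueness, time by the argument around \eqref{movingBinsideintegral}) and then Lemma \ref{dissolprop}(b) giving $\partial_t\B[U]=F$ a.e., whence $\partial_t\B[U_\delta]=F\ast(\omega_\sigma\otimes\rho_\delta)$. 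Third, removing the spatial cutoff produces a commutator $\Levy^\mu[\xi_R\B[U_\delta]]-\xi_R\Levy^\mu[\B[U_\delta]]$ which must be shown to vanish in the limit; this follows since $\B[U_\delta]\in W^{2,1}\cap W^{2,\infty}$ for the space-mollified $U_\delta$ and hence $\xi_R\B[U_\delta]\to\B[U_\delta]$ in $W^{2,1}$, but it deserves a sentence. With these fixes your argument goes through.
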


The proof below is an adaptation of the proof in \cite[pp. 157--158]{BrCr79}.

\begin{proof}
 Let the mollifier $\rho_\delta=\rho_\delta(t)$ be defined in \eqref{mollifiertime}, the extension $\bar{U}$ be $U$ on $Q_T$ and zero outside $Q_T$, and
$$
\bar{U}_\delta(x,t):=\bar{U}(x,\cdot)\ast\rho_\delta(t)=\int_{\R}\bar{U}(x,s)\rho_\delta(t-s)\dd s.
$$ 
By Young's inequality, $\|\bar{U}_\delta\|_{L^\infty(Q_T)}\leq\|U\|_{L^\infty(Q_T)}$ and $\|\bar{U}_\delta\|_{L^1(Q_T)}\leq\|U\|_{L^1(Q_T)}$.
Moreover, the time continuity of $\bar{U}_\delta$, Corollary \ref{propertiesofB2} (c), and Lemma \ref{symmetryB} yields
\begin{align}
\label{ttt}
\frac{\dd }{\dd t}\int_{\R^N}\B[\bar{U}_\delta]\bar{U}_\delta\dd x=2\int_{\R^N}\dell_t\left(\B\left[ \bar{U}_\delta\right]\right)\bar{U}_\delta\dd x=2\int_{\R^N}\dell_t(\bar{U}_\delta)\B[\bar{U}_\delta]\dd x
\end{align}
for $t\in\R$.

Let us show that
\begin{equation}\label{movingBinsideintegral}
\B[\bar{U}_{\delta}(\cdot,t)](x)=\int_{\R}
\B[\bar{U}(\cdot,s)](x)\rho_\delta(t-s)\dd s \qquad\text{in}\qquad Q_T.
\end{equation}
First assume that $\bar{U}\in C_\textup{b}^\infty(Q_T)\cap
L^1(Q_T)$. Then $\B[\bar{U}(\cdot,t)]\in C_\textup{b}^\infty(\R^N)\cap L^1(\R^N)$ for
 $t\in[0,T]$, and thus, it solves \eqref{elliptic} pointwise in
$\R^N$. Multiply this equation by $\rho_\delta(s-t)$, integrate over
$\R$, and use Fubini's theorem and the uniqueness in Theorem
\ref{collectedresultselliptic} (b) and (c) to find that
\eqref{movingBinsideintegral} holds. A density/mollification argument
using uniqueness and $L^1(\R^N)$ and 
$L^\infty(\R^N)$ estimates from Theorem \ref{collectedresultselliptic}
then shows that \eqref{movingBinsideintegral} also holds (a.e.!) for
$\bar U\in
L^1(Q_T)\cap L^\infty(Q_T)$.  

Let the extension $\bar{\Phi}$ be $\Phi$ on $Q_T$ and zero outside
$Q_T$. Using Lemma \ref{dissolprop} (a) with test functions $\psi\in
C_\textup{c}^\infty(\R^N\times(\delta,T-\delta))$ we get that
$$
\dell_t\B[\bar{U}_\delta(\cdot,t)](x)=\bigg(\big(\veps\B[\bar{\Phi}]-\bar{\Phi}\big)(x,\cdot)\ast\rho_\delta\bigg)(t)\quad\text{a.e. in}\quad \R^N\times(\delta,T-\delta).
$$
For any $\Theta\in C_\textup{c}^\infty((0,T))$ and sufficiently small
$\delta$, we then conclude from \eqref{ttt} that
\begin{equation*}
\begin{split}
-\int_0^T\big(\B[\bar{U}_\delta](\cdot,t), \bar{U}_\delta(\cdot,t)\big)\Theta'(t)\dd t=2\int_0^T\big((\veps\B[\bar{\Phi}]-\bar{\Phi})\ast\rho_\delta(t),\bar{U}_\delta(\cdot, t)\big)\Theta(s)\dd t.
\end{split}
\end{equation*}
By properties of mollifiers and Theorem \ref{collectedresultselliptic}
(b) and (c),
\begin{equation*}
\begin{split}
&\bar{U}_\delta\to
U\qquad\text{in}\qquad L^1(Q_T),\\
&(\veps\B[\bar{\Phi}]-\bar{\Phi})\ast\rho_\delta\to\veps\B[\Phi]-\Phi
\qquad\text{a.e. in}\qquad Q_T,\\
&\veps\|\B[\bar{U}_\delta]\|_{L^\infty(Q_T)}\leq\|U\|_{L^\infty(Q_T)},\\
&|(\veps\B[\bar{\Phi}]-\bar{\Phi})\ast\rho_\delta|\leq2\|\Phi\|_{L^\infty(Q_T)}.
\end{split}
\end{equation*}
Now we send $\delta\to0^+$ using Lebesgue's dominated convergence theorem,
and then by the definition of $h_\veps$, we find that 
\begin{equation*}
-\int_0^Th_\veps(t)\Theta'(t)\dd t=2\int_0^T\big(\veps\B[\Phi](\cdot,t)-\Phi(\cdot,t),U(\cdot,t)\big)\Theta(t)\dd t.
\end{equation*}
That is, $h_\veps$ is weakly differentiable and the weak derivative is
\[
h_\veps'(t)=2\big(\veps\B[\Phi](\cdot,t)-\Phi(\cdot,t),U(\cdot,t)\big).
\]
Moreover, $h_\veps'\in L^1((0,T))$ since by Theorem
\ref{collectedresultselliptic} (c),
\begin{equation*}
\begin{split}
\int_0^T|h'_\veps(t)|\dd t \leq4\|\Phi\|_{L^\infty(Q_T)}\|U\|_{L^1(Q_T)}.
\end{split}
\end{equation*}
Hence, $h_\veps(t)$ is absolutely
continuous, and the proof is complete. 
\end{proof}

\begin{proposition}\label{proph}
Assume \eqref{phiassumption}, \eqref{muassumption},
$U\in L^1(Q_T)\cap L^\infty(Q_T)$, $\Phi\in L^\infty(Q_T)$ and
\eqref{diffparabolic} holds. Then
\begin{enumerate}[(a)]
\item For a.e. $t\in[0,T]$
\begin{equation*}
h_\veps(t)=\veps\|\B[U](\cdot,t)\|_{L^2}^2+\|(\Levy^\mu)^{\frac{1}{2}}[\B[U]](\cdot,t)\|_{L^2}^2.
\end{equation*}
\item If a sequence $\veps_n\Bn[U]\to0$ a.e. in $Q_T$ as $\veps_n\to0^+$, then for a.e. $t\in[0,T]$,
\begin{equation*}
\lim_{\veps_n\to0^+}h_{\veps_n}(t)= 0.
\end{equation*}
\end{enumerate}
\end{proposition}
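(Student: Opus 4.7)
The approach is direct substitution. I would plug the resolvent equation $U=\varepsilon\B[U]-\Levy^{\mu}[\B[U]]$ into the definition $h_\varepsilon(t)=(\B[U](\cdot,t),U(\cdot,t))_{L^2(\R^N)}$ to obtain
\[
h_\varepsilon(t)=\varepsilon\|\B[U](\cdot,t)\|_{L^2}^2-(\B[U],\Levy^{\mu}[\B[U]])_{L^2}.
\]
To recognize the cross term as $\|(\Levy^{\mu})^{1/2}[\B[U]](\cdot,t)\|_{L^2}^2$ I invoke Lemma~\ref{fouriersymbol}, using the extension in Remark~\ref{remarkfouriersymbol}(b). The required $L^2$-regularity is automatic: Theorem~\ref{collectedresultselliptic}(b),(c) places $\B[U](\cdot,t)\in L^1(\R^N)\cap L^\infty(\R^N)\subset L^2(\R^N)$ for a.e.\ $t$, and hence also $\Levy^{\mu}[\B[U]]=\varepsilon\B[U]-U\in L^2(\R^N)$.

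\textbf{Part (b).} The plan is to integrate the identity of Proposition~\ref{habscont} in time and then pass to the limit using a sandwich argument. First I verify $h_\varepsilon(0^+)=0$ from the uniform bound
\[
|h_\varepsilon(t)|\leq\|U(\cdot,t)\|_{L^1}\|\B[U](\cdot,t)\|_{L^\infty}\leq 2t\,\|\Phi\|_{L^\infty(Q_T)}\|U\|_{L^1(Q_T)}
\]
(via Lemma~\ref{dissolprop}(c)). Absolute continuity and the formula for $h_\varepsilon'$ then yield
\[
h_\varepsilon(t)=2\int_0^t\!\!\int_{\R^N}(\varepsilon\B[\Phi]-\Phi)\,U\,\dd x\,\dd s,
\]
which by the self-adjointness of $\B$ (Lemma~\ref{symmetryB}) also equals $2\int_0^t\!\int_{\R^N}\Phi\,(\varepsilon\B[U]-U)\,\dd x\,\dd s$.

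Letting $\varepsilon=\varepsilon_n\to 0^+$ in the first form, I would pass to the limit in $\int_0^t\!\int(\varepsilon_n\Bn[\Phi])\,U\,\dd x\,\dd s$ by dominated convergence with dominant $\|\Phi\|_{L^\infty}|U|\in L^1(Q_T)$ (using Theorem~\ref{collectedresultselliptic}(c) and $U\in L^1(Q_T)$); the pointwise vanishing of the integrand must be deduced from the hypothesis $\varepsilon_n\Bn[U]\to 0$ a.e.\ together with the structural coupling between $U$ and $\Phi$ provided by $\dell_tU=\Levy^{\mu}[\Phi]$. This produces
\[
\lim_{\varepsilon_n\to 0^+}h_{\varepsilon_n}(t)=-2\int_0^t\!\!\int_{\R^N}\Phi\,U\,\dd x\,\dd s,
\]
which is $\leq 0$ because $\Phi\,U\geq 0$ (monotonicity of $\varphi$); combined with $h_{\varepsilon_n}(t)\geq 0$ from part~(a), the limit must vanish, proving~(b).

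\textbf{Main obstacle.} The substantive technical step is this last limit passage. Naive dominated convergence applied directly to $\int\Phi\,\varepsilon_n\Bn[U]\,\dd x$ fails because the natural dominant $\|\Phi\|_{L^\infty}\|U\|_{L^\infty}$ is not integrable over $\R^N$, while the self-adjoint reformulation $\int(\varepsilon_n\Bn[\Phi])\,U\,\dd x$ supplies an integrable dominant only at the cost of needing to identify the a.e.\ pointwise limit of $\varepsilon_n\Bn[\Phi]$, which is not directly furnished by the hypothesis. Bridging this gap---whether by truncating $\Phi$ against the support of $U$, by transferring convergence through the equation $\dell_tU=\Levy^{\mu}[\Phi]$, or by exploiting the monotone sandwich produced jointly by $h_\varepsilon\geq 0$ and $\Phi U\geq 0$---is the heart of the argument.
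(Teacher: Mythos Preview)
Your Part (a) is correct and matches the paper's argument.

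Your Part (b) has the right overall architecture (show $h_\veps(0^+)=0$, integrate Proposition~\ref{habscont}, sandwich with $h_\veps\geq0$), but the step you flag as the ``main obstacle'' is a genuine gap that none of your three suggested bridges actually closes. In particular, ``transferring convergence through $\dell_tU=\Levy^\mu[\Phi]$'' gives no a.e.\ information on $\veps_n\Bn[\Phi]$, and the ``monotone sandwich'' only works \emph{after} you have shown the $\veps_n\Bn[\Phi]$-term tends to zero---it cannot itself produce that limit.

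The paper's resolution is a level-set splitting that you did not find. After using self-adjointness to write $(\veps\B[\Phi],U)=\int_{\R^N}\Phi\cdot\veps\B[U]\dd x$, one splits $\R^N$ into $\{|\Phi|\leq\xi\}$ and $\{|\Phi|>\xi\}$. On the first set the integrand is controlled by $\xi\,|\veps\B[U]|$, whose integral is at most $\xi\|U(\cdot,t)\|_{L^1}$ by Theorem~\ref{collectedresultselliptic}(b), hence small. On the second set the key input is Lemma~\ref{contfiniteset}: since $\varphi$ is \emph{continuous} and $U\in L^1(Q_T)$, the set $\{|\Phi|>\xi\}$ has finite measure. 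There one uses the $L^\infty$-bound $|\veps\B[U]|\leq\|U\|_{L^\infty}$, so the integrand is dominated by $\|U\|_{L^\infty}\mathbf{1}_{\{|\Phi|>\xi\}}\in L^1$, and dominated convergence with the hypothesis $\veps_n\Bn[U]\to0$ a.e.\ finishes. This is precisely the ``truncation of $\Phi$'' you gestured at, but the crucial missing ingredient is that the superlevel sets of $|\Phi|$ have finite measure---this is where the continuity assumption on $\varphi$ enters the uniqueness proof.

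A minor slip: your bound $|h_\veps(t)|\leq 2t\|\Phi\|_{L^\infty(Q_T)}\|U\|_{L^1(Q_T)}$ is wrong, since $\|U(\cdot,t)\|_{L^1(\R^N)}$ is not controlled by the space--time norm $\|U\|_{L^1(Q_T)}$. The paper instead averages $h_\veps$ over $(0,t)$ and uses $\int_0^T\|U(\cdot,s)\|_{L^1(\R^N)}\mathbf{1}_{(0,t)}(s)\dd s\to0$ by dominated convergence.
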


We need a technical lemma (cf. \cite{BrCr79}).

\begin{lemma}\label{contfiniteset}
Assume \eqref{phiassumption} and \eqref{cond3}. Then the Lebesgue measure of the set 
$$
S^\xi:=\{(x,t)\in Q_T: |\varphi(u(x,t))-\varphi(\uu(x,t))|>\xi\},
$$
is finite for all $\xi>0$.
\end{lemma}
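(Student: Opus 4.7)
The plan is to exploit that $u$ and $\uu$ are essentially bounded (assumption \eqref{cond1}), reduce the control of $|\varphi(u)-\varphi(\uu)|$ to the control of $|u-\uu|$ via uniform continuity of $\varphi$ on a compact interval, and then conclude by a Markov-type inequality using \eqref{cond3}.

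First, set $M:=\|u\|_{L^\infty(Q_T)}+\|\uu\|_{L^\infty(Q_T)}<\infty$, so that $u(x,t),\uu(x,t)\in[-M,M]$ for a.e.\ $(x,t)\in Q_T$. By \eqref{phiassumption}, $\varphi$ is continuous on the compact interval $[-M,M]$, hence uniformly continuous there. Consequently, for the given $\xi>0$, there exists $\eta=\eta(\xi,M,\varphi)>0$ such that
\begin{equation*}
a,b\in[-M,M],\ |a-b|<\eta\ \Longrightarrow\ |\varphi(a)-\varphi(b)|\leq\xi.
\end{equation*}
Equivalently, if $(x,t)\in S^\xi$, i.e.\ $|\varphi(u(x,t))-\varphi(\uu(x,t))|>\xi$, then necessarily $|u(x,t)-\uu(x,t)|\geq\eta$. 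Thus $S^\xi\subseteq\{(x,t)\in Q_T:|u(x,t)-\uu(x,t)|\geq\eta\}$ up to a null set.

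Finally, by Markov's inequality and \eqref{cond3},
\begin{equation*}
|S^\xi|\leq\left|\{|u-\uu|\geq\eta\}\right|\leq\frac{1}{\eta}\iint_{Q_T}|u(x,t)-\uu(x,t)|\,\dd x\,\dd t=\frac{\|u-\uu\|_{L^1(Q_T)}}{\eta(\xi)}<\infty,
\end{equation*}
which is the desired conclusion. The only delicate point is that uniform continuity must be invoked on a compact set, which is why the essential boundedness of $u,\uu$ (implicit through \eqref{cond1}) is needed; everything else is routine.
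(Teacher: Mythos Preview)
Your proof is correct and follows essentially the same route as the paper's: show $S^\xi$ is contained in a superlevel set of $|u-\uu|$, then apply Markov's inequality using \eqref{cond3}. In fact your version is more careful: you explicitly invoke uniform continuity of $\varphi$ on the compact interval $[-M,M]$ (via \eqref{cond1}) to obtain a single $\eta$ valid for all points, whereas the paper's proof writes only ``by the continuity of $\varphi$ there exists a $\delta>0$'' and leaves the uniformity implicit.
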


\begin{proof}
Define the set
$$
S_u^{\delta}=\{(x,t)\in Q_T:|u(x,t)-\uu(x,t)|>\delta\}.
$$
If $(x,t)\in S^\xi$, then by the continuity of $\varphi$ there exists a $\delta>0$ such that $|u(x,t)-\uu(x,t)|>\delta$, that is, $S^\xi\subset S_u^{\delta}$. By \eqref{cond3}, 
\begin{equation*}
\begin{split}
\delta|S_u^{\delta}|<\iint_{Q_T}|u(x,t)-\uu(x,t)|\dd x \dd t<\infty,
\end{split}
\end{equation*}
and thus, $S^\xi$ also has finite Lebesgue measure. 
\end{proof}

\begin{proof}[Proof of Proposition \ref{proph}]
(a) By the assumptions, Theorem \ref{collectedresultselliptic} (b) and
(c), interpolation between $L^1(\R^N)$ and $L^\infty(\R^N)$, and Fubini's theorem, we have for a.e. $t\in[0,T]$ that $U,\B[U]\in L^2(\R^N)$ and
\begin{equation}\label{disteqpoint}
\veps \B[U]-\Levy^\mu[\B[U]]=U\qquad\text{in}\qquad\mathcal{D}'(\R^N).
\end{equation}
Hence it follows that $\Levyd^\mu[\B[U]]\in L^2(\R^N)$, where $\Levyd^\mu$ is defined through the relation
\begin{equation*}
\int_{\R^N}\Levyd^\mu[\B[U]]\psi \dd x\dd t=\int_{\R^N}\B[U] \Levy^\mu[\psi]\dd x\dd t \qquad \text{for all}\qquad \psi \in C_\textup{c}^\infty(\R^N).
\end{equation*}

Using Plancherel's theorem and Lemma \ref{fouriersymbol}, we then find that for any $\psi\in C_\textup{c}^\infty(\R^N)$,
\begin{equation*}
\begin{split}
\int_{\R^N}\mathcal{F}\big(\Levyd^\mu[\B[U]]\big)\mathcal{F}(\psi )\dd \xi&=\int_{\R^N}\mathcal{F}(\B[U]) \mathcal{F}(\Levy^\mu[\psi])\dd \xi\\
&=-\int_{\R^N}\mathcal{F}(\B[U]) \sigma_{\Levy^\mu}(\xi)\mathcal{F}(\psi) \dd \xi,
\end{split}
\end{equation*}
and hence
\[
\int_{\R^N}\mathcal{F}(\psi )(\xi)\Big( \mathcal{F}\big(\Levyd^\mu[\B[U]]\big)(\xi)+ \sigma_{\Levy^\mu}(\xi)\mathcal{F}(\B[U])(\xi) \Big)\dd \xi=0.
\]
Then by a density argument, we conclude that 
$$ 
\mathcal{F}\big(\Levyd^\mu[\B[U]]\big)(\xi)=-\sigma_{\Levy^\mu}(\xi)\mathcal{F}(\B[U])(\xi)\qquad\text{in}\qquad L^2(\R^N),
$$ 
and thus, for a.e. $t\in [0,T]$, we have $\Levyd^\mu[\B[U]]=\Levy^\mu[\B[U]]$ in $L^2(\R^N)$.

Since $U, \B[U], \Levy^\mu[\B[U]]\in L^2(\R^N)$, equation \eqref{disteqpoint} holds in $L^2(\R^N)$. By Lemma \ref{fouriersymbol}, Remark \ref{remarkfouriersymbol} (b), and the definition of $h_\veps$ (see \eqref{defh}), we have for a.e. $t\in[0,T]$ that
\begin{equation*}
\begin{split}
h_\veps(t)=\,&\big( \B[U](\cdot,t),U(\cdot,t) \big)_{L^2(\R^N)}\\
=&\,\big(\B[U](\cdot,t),\veps \B[U](\cdot,t)-\Levy^\mu[\B[U]](\cdot,t) \big)_{L^2(\R^N)}\\
=&\,\veps\|\B[U](\cdot,t)\|_{L^2(\R^N)}^2-\big(\B[U](\cdot,t),\Levy^\mu[\B[U]](\cdot,t) \big)_{L^2(\R^N)}.\\
=&\,\veps\|\B[U](\cdot,t)\|_{L^2(\R^N)}^2+\|(\Levy^\mu)^{\frac{1}{2}}[\B[U]]\|_{L^2(\R^N)}^2.
\end{split}
\end{equation*}

\smallskip
\noindent
(b) By part (a), Proposition \ref{habscont}, and $U\Phi=(u-\uu)(\varphi(u)-\varphi(\uu))\geq0$,
\begin{equation}\label{hposabscont}
\begin{split}
0\leq h_\veps(t)=&\,h_\veps(0+)+\int_0^th_\veps'(s)\dd s\\
\leq&\, h_\veps(0+) +2\int_0^t \big( \veps \B[\Phi](\cdot,s), U(\cdot,s) \big) \dd s.
\end{split}
\end{equation}

By the (absolute) continuity of $h_\veps$, H\"older's inequality,
Lemma \ref{dissolprop} (c), and Lebesgue's dominated convergence
theorem (valid since $U\in L^1(Q_T)$), 
\begin{equation*}
\begin{split}
h_\veps(0+)&=\lim_{t\to0^+}\frac{1}{t}\int_{0}^th_\veps(s)\dd s\leq\lim_{t\to0^+}\frac{1}{t}\int_0^t\|\B[U](\cdot,s)\|_{L^\infty(\R^N)}\|U(\cdot,s)\|_{L^1(\R^N)}\dd s\\
&\leq2\|\Phi\|_{L^\infty(Q_T)}\lim_{t\to0^+}\int_0^T\|U(\cdot,s)\|_{L^1(\R^N)}{\mathbf
  1}_{(0,t)}(s)\dd s=0.\\
\end{split}
\end{equation*}

Let $\xi>0$. By the self-adjointness of $\B$ (cf. Lemma \ref{symmetryB}) and Theorem \ref{collectedresultselliptic} (b), we get for a.e. $t\in[0,T]$
\begin{equation*}
\begin{split}
&\big(\veps\B[\Phi](\cdot,t), U(\cdot,t) \big)=\int_{\R^N} \Phi(x,t) \veps\B[U(\cdot,t)](x)\dd x\\
&\leq\, \|\Phi\|_{L^\infty}\int_{\{|\Phi(x,t)|>\xi\}}\left|\veps  \B[U]\right|\dd x+\xi \int_{\{|\Phi(x,t)|\leq\,\xi\}}\left|\veps\B[U]\right|\dd x\\
&\leq\,\|\Phi\|_{L^\infty}\int_{\R^N}\left|\veps  \B[U(\cdot,t)]\right|\mathbf{1}_{|\Phi(x,t)|>\xi}\dd x+\xi \|U(\cdot,t)\|_{L^1(\R^N)}.
\end{split}
\end{equation*}
Let $t$ be a point where this inequality holds and $\veps_n
B_{\veps_n}^\mu[U(\cdot,t)]\to0$ a.e. $x$ and $|\veps  \B[U(\cdot,t)](x)|\leq
\|U\|_{L^\infty(Q_T)}$ a.e. $x$ (using Theorem
\ref{collectedresultselliptic}  (c)). For any $\eta>0$, take $\xi$ such that
$\xi\|U(\cdot,t)\|_{L^1}<\frac{1}{2}\eta$. Then note that
$\left|\veps  \B[U]\right|\mathbf{1}_{|\Phi(x,t)|>\xi}$ is dominated
by $\|U\|_{L^\infty}\mathbf{1}_{|\Phi(x,t)|>\xi}$ which is integrable
by Lemma \ref{contfiniteset}.  By Lebesgue's dominated convergence theorem it then follows that
 $\int_{\R^N}|\veps_n
 B_{\veps_n}^\mu[U(\cdot,t)]|\mathbf{1}_{|\Phi(x,t)|>\xi}\dd
 x$ $<\frac{1}{2}\eta$ when $\veps_n$ is small enough. Since this
 holds for a.e. $t\in[0,T]$, we have proven that
\[
\lim_{\veps_n\to0^+}\big(  \veps_n B_{\veps_n}^\mu[\Phi](\cdot,t), U(\cdot,t) \big)\leq0\qquad\text{for a.e.}\qquad t\in[0,T].
\]

We conclude the proof using Lebesgue's dominated convergence
theorem to send $\veps_n\to0^+$ in $\eqref{hposabscont}$ (the integrand is
dominated by $\|\Phi\|_{L^\infty(Q_T)}\|U(\cdot,t)\|_{L^1(\R^N)}\in L^1((0,T))$ since $U\in L^1(Q_T)\cap L^\infty(Q_T)$).
\end{proof} 

\begin{proposition}\label{goestozero}
Assume \eqref{muassumption}, $\supp \mu\neq\emptyset$, and $g\in L^1(\R^N)\cap L^\infty(\R^N)$. Then there exists a sequence such that $\veps_n\Bn[g]\to0$ a.e. in $\R^N$ as $\veps_n\to0^+$.
\end{proposition}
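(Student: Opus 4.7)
The plan is to first handle the smooth case $g\in C_\textup{c}^\infty(\R^N)$ by Arzel\`a--Ascoli compactness together with the nonlocal Liouville Theorem \ref{Liouville}, then extend to general $g\in L^1(\R^N)\cap L^\infty(\R^N)$ by density and a Cantor diagonal argument. The key observation is that $w_\veps:=\veps B_\veps^\mu[g]$ satisfies the rescaled identity
\begin{equation*}
\Levy^\mu[w_\veps]=\veps(w_\veps-g)\qquad\text{in}\qquad\mathcal{D}'(\R^N),
\end{equation*}
obtained by multiplying \eqref{elliptic} by $\veps$ and using linearity of $\Levy^\mu$.

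In the smooth case, Theorem \ref{collectedresultselliptic}(a)--(b) delivers the uniform bounds $\|w_\veps\|_{L^\infty}\leq\|g\|_{L^\infty}$, $\|Dw_\veps\|_{L^\infty}\leq\|Dg\|_{L^\infty}$, and $\|w_\veps\|_{L^1}\leq\|g\|_{L^1}$. Arzel\`a--Ascoli then produces a subsequence $\veps_n\to 0^+$ and a bounded Lipschitz limit $w$ with $w_{\veps_n}\to w$ locally uniformly. Fatou gives $w\in L^1(\R^N)$, so, being both Lipschitz and integrable, $w\in C_0(\R^N)$. Next I would test the rescaled identity against $\psi\in C_\textup{c}^\infty(\R^N)$, use symmetry of $\mu$ (Lemma \ref{propnonlocal}(c)) and $\Levy^\mu[\psi]\in L^1\cap L^\infty$ (Lemma \ref{propnonlocal}(b)) to rewrite it as
\begin{equation*}
\int_{\R^N}w_{\veps_n}\Levy^\mu[\psi]\dd x=\veps_n\int_{\R^N}(w_{\veps_n}-g)\psi\dd x\longrightarrow 0,
\end{equation*}
and pass to the limit on the left by dominated convergence to obtain $\Levy^\mu[w]=0$ in $\mathcal{D}'(\R^N)$. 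Theorem \ref{Liouville} then forces $w\equiv 0$, so $\veps_n B_{\veps_n}^\mu[g]\to 0$ locally uniformly, hence a.e.

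For general $g\in L^1\cap L^\infty$, I would pick $g_k\in C_\textup{c}^\infty(\R^N)$ with $g_k\to g$ in $L^1(\R^N)$ and $\|g_k\|_{L^\infty}\leq\|g\|_{L^\infty}$ (e.g.\ by truncation and mollification). Linearity and Theorem \ref{collectedresultselliptic}(b) give $\|\veps B_\veps^\mu[g]-\veps B_\veps^\mu[g_k]\|_{L^1}\leq \|g-g_k\|_{L^1}$ uniformly in $\veps$. For each $k$ the smooth case supplies a subsequence along which $\veps_n B_{\veps_n}^\mu[g_k]\to 0$ a.e.; a Cantor diagonal extraction produces a single $\veps_n\to 0^+$ that works for every $k$. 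On any ball $K\subset\R^N$ the triangle inequality then yields
\begin{equation*}
\limsup_{n\to\infty}\int_K|\veps_n B_{\veps_n}^\mu[g]|\dd x \leq \|g-g_k\|_{L^1},
\end{equation*}
using dominated convergence on the $g_k$-term (via the uniform $L^\infty$-bound). Sending $k\to\infty$ gives $L_\textup{loc}^1$-convergence to zero, and a further subsequence converges a.e.

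The hard part will be the smooth-case identification of the limit: one must extract a subsequence in a topology strong enough to pass to the limit in the distributional form of the equation, and simultaneously guarantee that the limit has enough decay for the nonlocal Liouville Theorem \ref{Liouville} to apply. The uniform $W^{1,\infty}\cap L^1$ estimates from Theorem \ref{collectedresultselliptic}, together with the symmetry of $\mu$ that makes $\Levy^\mu$ formally self-adjoint, deliver exactly what is needed; the subsequent density step is routine once the smooth case is settled.
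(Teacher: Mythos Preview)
Your proposal is correct. The smooth case is essentially identical to the paper's Lemma \ref{convdistributionaleq} combined with Theorem \ref{Liouville}: uniform $W^{1,\infty}\cap L^1$ bounds, Arzel\`a--Ascoli, the observation that a Lipschitz $L^1$ function lies in $C_0$ (the paper names this Barb\u{a}lat's lemma), passage to the limit in the weak formulation, and then Liouville to identify the limit as zero.

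For the extension to general $g\in L^1\cap L^\infty$ you take a genuinely different route. The paper first invokes Kolmogorov compactness (Lemma \ref{existenceofsubsequence}) to extract an $L^1_{\textup{loc}}$-convergent subsequence of $\veps_n\Bn[g]$, and then identifies the limit as zero via the \emph{duality} identity $\int \veps_n\Bn[g]\,\gamma = \int g\,\veps_n\Bn[\gamma]$ coming from the self-adjointness of $\Bn$ (Lemma \ref{symmetryB}); the right-hand side tends to zero by the smooth case and dominated convergence, so the distributional limit is zero and hence coincides with the $L^1_{\textup{loc}}$ limit. Your approach instead approximates $g$ by $g_k\in C_\textup{c}^\infty$ in $L^1$ and uses the uniform-in-$\veps$ contraction $\|\veps\B[g]-\veps\B[g_k]\|_{L^1}\leq\|g-g_k\|_{L^1}$ together with the smooth case. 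This is more elementary: it avoids both Kolmogorov compactness and the self-adjointness lemma, at the cost of a density/diagonal layer. One simplification you could make: in the smooth case the Liouville theorem forces \emph{every} subsequential limit to be zero, so by boundedness the full family $\veps\B[g_k]\to 0$ as $\veps\to 0^+$; hence no Cantor diagonal is actually needed, and any sequence $\veps_n\to 0^+$ already works for all $k$ simultaneously.
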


This proposition will be proven later in this section. We are now ready to prove our main result.

\begin{proof}[Proof of Theorem \ref{uniqueness}]
In the case that $\supp \mu=\emptyset$, $\mu\equiv0$ and
$\Levy^\mu\equiv0$. Then equation \eqref{E} becomes the ODE $u_t=0$, and
uniqueness follows by standard arguments (e.g. one can easily deduce
that $\int_{\R^N} |u(x,t)-\hat u(x,t)| \dd x\leq\int_{\R^N} |u(x,0)-\hat u(x,0)| \dd x$).

Now consider the case $\supp \mu\neq\emptyset$.
By Proposition \ref{goestozero} and \ref{proph} (a) and (b),  there is
a sequence such that for a.e. $t\in[0,T]$,
\begin{equation}\label{normstozero}
\veps_n\|\Bn[U](\cdot,t)\|_{L^2}^2+\|(\Levy^\mu)^{\frac{1}{2}}[\Bn[U]](\cdot,t)\|_{L^2}^2\to0\qquad\text{as}\qquad \veps_n\to0^+.
\end{equation}

Let $\psi\in C_\textup{c}^\infty(\R^N)$. By Plancherel's theorem, Lemma
\ref{fouriersymbol}, and Cauchy-Schwarz' inequality, and finally, by
\eqref{normstozero}, we get for a.e. $t\in[0,T]$ that
\begin{equation*}
\begin{split}
\left| \int_{\R^N} \Bn[U]\Levy^\mu[\psi]\dd x\right|&=\left|- \int_{\R^N} (\Levy^\mu)^{\frac{1}{2}}[\Bn[U]](\Levy^\mu)^{\frac{1}{2}}[\psi]\dd x\right|\\
&\leq\|(\Levy^\mu)^{\frac{1}{2}}[\Bn[U]]\|_{L^2(\R^N)}\|(\Levy^\mu)^{\frac{1}{2}}[\psi]\|_{L^2(\R^N)}\to0
\end{split}
\end{equation*}
as $\veps_n\to0^+$. Moreover, by Cauchy-Schwarz' inequality and \eqref{normstozero}, we have for a.e. $t\in[0,T]$
\begin{equation*}
\begin{split}
\left|\int_{\R^N}\veps_n\Bn[U]\psi\dd x\right|\leq\|\veps_n\Bn[U]\|_{L^2(\R^N)}\|\psi\|_{L^2(\R^N)}\to0\quad\text{as}\quad \veps_n\to0^+.
\end{split}
\end{equation*}
Hence we conclude that as $\veps_n\to0^+$,
$$
U=\veps_n\Bn[U]-\Levy^\mu[\Bn[U]]\to0 \qquad\text{in}\qquad \mathcal{D}'(\R^N),
$$
for a.e. $t\in[0,T]$. That is, 
$$u-\hat u=U\equiv 0  \qquad\text{in} \qquad \mathcal{D}'(\R^N)$$ 
for a.e. $t\in[0,T]$, and then a.e. in $Q_T$ by
du Bois-Reymond's lemma.
\end{proof}

In the rest of this section, we prove Proposition \ref{goestozero}. For $\gamma\in C_\textup{c}^\infty(\R^N)$, we let $v_\veps:=\veps\B[\gamma]$ be the unique smooth classical solution (see Theorem \ref{collectedresultselliptic} (a) and Corollary
\ref{propertiesofB2} (a)) of  
\begin{equation}\label{classicalhelpingeq}
\veps v_\veps(x)-\Levy^\mu[v_\veps](x)=\veps\gamma(x)\qquad\text{for all}\qquad x\in\R^N.
\end{equation}
We want to prove that there exists a sequence such that $v_{\veps_n}={\veps_n}B_{\veps_n}^\mu[\gamma]\to0$ as $\veps_n\to0^+$ for every $x\in\R^N$ and every $\gamma\in C_\textup{c}^\infty(\R^N)$ .

\begin{lemma}\label{convdistributionaleq}
Assume \eqref{muassumption} and $\gamma\in
C_\textup{c}^\infty(\R^N)$. Then there exists a sequence $\left\{\veps_n\Bn[\gamma]\right\}_{n\in \mathbb{N}}$
that converges  locally uniformly in $\R^N$ as 
$\veps_n\to0^+$. Moreover, the corresponding limit $v$ is uniformly
continuous, $\lim_{|x|\to \infty}v=0$ and satisfies 
$$
\Levy^\mu[v](x)=0 \qquad\text{in}\qquad \mathcal{D}'(\R^N).
$$
\end{lemma}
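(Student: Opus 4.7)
The plan is to obtain enough uniform estimates on $v_\varepsilon=\varepsilon B_\varepsilon^\mu[\gamma]$ to extract a locally uniformly convergent subsequence via Arzel\`a--Ascoli, and then to pass to the limit in \eqref{classicalhelpingeq} in the sense of distributions.

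\textbf{Step 1: Uniform bounds.} Since $\gamma\in C_\textup{c}^\infty(\R^N)$, Theorem \ref{collectedresultselliptic}(a) applied to each $D^\alpha$ (together with the linearity of $B_\varepsilon^\mu$) gives
\[
\|D^\alpha v_\varepsilon\|_{L^\infty(\R^N)}=\varepsilon\|D^\alpha B_\varepsilon^\mu[\gamma]\|_{L^\infty(\R^N)}\leq \|D^\alpha\gamma\|_{L^\infty(\R^N)}
\]
for every multiindex $\alpha$, so $\{v_\varepsilon\}_\varepsilon$ is bounded in $C_\textup{b}^k(\R^N)$ for every $k$. Similarly, Theorem \ref{collectedresultselliptic}(b) yields $\|v_\varepsilon\|_{L^1(\R^N)}\leq \|\gamma\|_{L^1(\R^N)}$, uniformly in $\varepsilon$.

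\textbf{Step 2: Extraction of a limit.} The uniform Lipschitz bound from Step 1 combined with the uniform $L^\infty$-bound allows Arzel\`a--Ascoli on an exhausting sequence of balls. A standard diagonal argument then produces a sequence $\varepsilon_n\to 0^+$ and a function $v\in C_\textup{b}(\R^N)$ such that $v_{\varepsilon_n}\to v$ locally uniformly in $\R^N$. The uniform Lipschitz bound passes to the limit, so $v$ is uniformly continuous (in fact globally Lipschitz). By Fatou's lemma applied to the (locally uniformly, hence pointwise) convergent subsequence, $\|v\|_{L^1(\R^N)}\leq\|\gamma\|_{L^1(\R^N)}$. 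A uniformly continuous $L^1$-function on $\R^N$ vanishes at infinity, so $v\in C_0(\R^N)$.

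\textbf{Step 3: Passing to the limit in the equation.} Fix $\psi\in C_\textup{c}^\infty(\R^N)$. By the symmetry of $\mu$ and Lemma \ref{propnonlocal}(c) (with $\psi_1=v_\varepsilon\in W^{2,\infty}$, $\psi_2=\psi\in W^{2,1}$, swapping roles is legitimate thanks to the $L^1$-bound on $v_\varepsilon$), equation \eqref{classicalhelpingeq} tested against $\psi$ becomes
\[
\varepsilon\int_{\R^N} v_\varepsilon\psi\dd x-\int_{\R^N} v_\varepsilon\,\Levy^\mu[\psi]\dd x=\varepsilon\int_{\R^N}\gamma\psi\dd x.
\]
Along $\varepsilon_n\to 0^+$ the $\varepsilon$-prefactored terms vanish since $\|v_{\varepsilon_n}\|_{L^\infty}\leq\|\gamma\|_{L^\infty}$ and $\psi,\gamma$ are compactly supported. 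For the remaining term, $\Levy^\mu[\psi]\in L^1(\R^N)$ by Lemma \ref{propnonlocal}(b), and the integrand $v_{\varepsilon_n}\Levy^\mu[\psi]$ converges pointwise to $v\,\Levy^\mu[\psi]$ and is dominated by $\|\gamma\|_{L^\infty}|\Levy^\mu[\psi]|\in L^1(\R^N)$, so Lebesgue's dominated convergence theorem gives
\[
\int_{\R^N} v_{\varepsilon_n}\Levy^\mu[\psi]\dd x\ \longrightarrow\ \int_{\R^N} v\,\Levy^\mu[\psi]\dd x.
\]
Thus $\int_{\R^N} v\,\Levy^\mu[\psi]\dd x=0$ for every $\psi\in C_\textup{c}^\infty(\R^N)$, which is exactly $\Levy^\mu[v]=0$ in $\mathcal{D}'(\R^N)$.

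\textbf{Anticipated difficulty.} The only nontrivial point is justifying the vanishing at infinity of $v$ and the legitimacy of the dominated convergence argument on the full space: both rely essentially on the joint $L^1$ plus uniform regularity bounds of Step 1, and on the integrability of $\Levy^\mu[\psi]$ from Lemma \ref{propnonlocal}(b); without these Step 3 would require a cutoff and a more delicate tail estimate. Everything else is straightforward Arzel\`a--Ascoli and linearity.
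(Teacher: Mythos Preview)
Your proof is correct and follows essentially the same approach as the paper: uniform $C^k_\textup{b}$ bounds from Theorem \ref{collectedresultselliptic}(a), Arzel\`a--Ascoli for a locally uniformly convergent subsequence, the $L^1$ bound plus uniform continuity to get $v\in C_0(\R^N)$ (the paper cites this as Barb\u{a}lat's lemma), and dominated convergence in the weak formulation. The only cosmetic point is that in invoking Lemma \ref{propnonlocal}(c) you have the roles of $\psi_1,\psi_2$ swapped relative to the stated hypotheses; since $\psi\in C_\textup{c}^\infty\subset W^{2,1}$ and $v_\varepsilon\in C_\textup{b}^\infty\subset W^{2,\infty}$, you should simply take $\psi_1=\psi$ and $\psi_2=v_\varepsilon$.
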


\begin{lemma}[Barb\u{a}lat]\label{Barbalat}
If $\psi\in L^1(\R^N)$ is uniformly continuous,
then\linebreak $\displaystyle\lim_{|x|\to\infty}\psi(x)=0$. 
\end{lemma}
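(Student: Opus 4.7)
\begin{proof}[Proof proposal for Lemma \ref{Barbalat}]
The plan is to argue by contradiction and exploit uniform continuity to convert a pointwise non-decay into a lower bound on $\|\psi\|_{L^1}$ that contradicts integrability. Suppose the conclusion fails. Then there exist $\eta>0$ and a sequence $\{x_n\}_{n\in\N}\subset\R^N$ with $|x_n|\to\infty$ such that $|\psi(x_n)|\geq\eta$ for every $n$.

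By uniform continuity of $\psi$, there is $\delta>0$ such that $|y-z|<\delta$ implies $|\psi(y)-\psi(z)|<\eta/2$. In particular, for every $n$ and every $y\in B(x_n,\delta)$ we have
\begin{equation*}
|\psi(y)|\geq|\psi(x_n)|-|\psi(y)-\psi(x_n)|\geq \eta-\tfrac{\eta}{2}=\tfrac{\eta}{2}.
\end{equation*}
Since $|x_n|\to\infty$, I can extract a subsequence $\{x_{n_k}\}_{k\in\N}$ such that the balls $B(x_{n_k},\delta)$ are pairwise disjoint (choose inductively: given $x_{n_1},\dots,x_{n_k}$, pick $x_{n_{k+1}}$ with $|x_{n_{k+1}}|>|x_{n_k}|+2\delta+\max_{j\leq k}|x_{n_j}|$, which is possible since $|x_n|\to\infty$).

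The disjointness yields
\begin{equation*}
\|\psi\|_{L^1(\R^N)}\geq \sum_{k=1}^{\infty}\int_{B(x_{n_k},\delta)}|\psi(y)|\dd y\geq \sum_{k=1}^{\infty}\tfrac{\eta}{2}\,|B(0,\delta)|=+\infty,
\end{equation*}
contradicting $\psi\in L^1(\R^N)$. Hence $\lim_{|x|\to\infty}\psi(x)=0$.
\end{proof}

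The only real subtlety is the disjointness extraction; nothing else is delicate, because uniform continuity converts a single bad point into a whole ball of mass $\geq \eta/2 \cdot |B(0,\delta)|$, and $L^1$ integrability then forbids infinitely many such disjoint balls. I do not anticipate any obstacle: the argument is entirely elementary and does not interact with the nonlocal machinery of the paper (it will be applied with $\psi = v$ from Lemma \ref{convdistributionaleq}, where uniform continuity must be established separately as part of that lemma, but that is not part of the present statement).
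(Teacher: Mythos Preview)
Your proof is correct. The paper does not prove this lemma at all but simply cites an external reference (Lemma 5.2 in \cite{KaLaQu15}); your direct contradiction argument is the standard elementary proof and is perfectly suitable here.
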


For a proof, see e.g. Lemma 5.2 in \cite{KaLaQu15} (take $G=\R^N$
and $B=\R$). 

\begin{proof}[Proof of Lemma \ref{convdistributionaleq}]
We recall that $v_\veps:=\veps\B[\gamma]$. By Theorem \ref{collectedresultselliptic} (a),
$$
\|D^\alpha v_\veps\|_{L^\infty(\R^N)}\leq\|D^\alpha \gamma\|_{L^\infty(\R^N)}
$$
for each multiindex $\alpha\in \mathbb{N}^N$. So, then any sequence $\{v_{\veps_n}\}_{n\in\N}$ is equibounded and equilipschitz. By
Arzel\`a-Ascoli's theorem, there exists a subsequence such that
$v_{\veps_n}\to v$ locally uniformly as $n\to\infty$. Since
$v_{\veps_n}$ is uniformly continuous (the derivative of $v_{\veps_n}$
exists and is bounded) and by the local uniform convergence, for every $\eta>0$ and $R>0$ we can
find some $n>0$  such that $\max\{|v(x)-v_{\veps_n}(x)|
: |x|\leq R\}<\eta$. Thus, we have the following estimate for every
$R>0$ and $|x|,|y|\leq R$,
\begin{equation*}
\begin{split}
|v(x)-v(y)|\leq&\, |v(x)-v_{\veps_n}(x)|+|v_{\veps_n}(x)-v_{\veps_n}(y)|+|v_{\veps_n}(y)-v(y)|\\
\leq&\, 2\eta+ \|D\gamma\|_{L^\infty(\R^N)}|x-y|
\end{split}
\end{equation*}
As $R$ is arbitrary, $v$ is Lipschitz continuous with Lipschitz constant $\|D\gamma\|_{L^\infty(\R^N)}$, and thus, uniformly continuous. Furthermore, Fatou's lemma and Theorem \ref{collectedresultselliptic} (b) give that $\|v\|_{L^1}\leq\liminf_{n\to\infty}\|v_{\veps_n}\|_{L^1}\leq\|\gamma\|_{L^1}$. By Lemma \ref{Barbalat}, $\lim_{|x|\to\infty}v(x)=0$.

Multiplying \eqref{classicalhelpingeq} by a test function, integrating over $\R^N$, and using self-adjointness (cf. Lemma \ref{propnonlocal}) of $\Levy^\mu$ we get
$$
\veps_n\int_{\R^N}v_{\veps_n}\psi\dd x-\int_{\R^N}v_{\veps_n}\Levy^\mu[\psi]\dd x={\veps_n}\int_{\R^N}\gamma\psi\dd x \qquad\text{for all}\qquad \psi\in C_\textup{c}^\infty(\R^N).
$$
Since $\|v_{\veps_n}\|_{L^\infty}\leq\|\gamma\|_{L^\infty}$ by Theorem \ref{collectedresultselliptic} (c), we use Lebesgue's dominated convergence theorem to take the limit as $\veps_n\to0^+$ , to find that
$$
0=\lim_{\veps_n\to0^+}\int_{\R^N}v_{\veps_n}\Levy^\mu[\psi]\dd x=\int_{\R^N}v\Levy^\mu[\psi]\dd x \qquad\text{for all}\qquad \psi\in C_\textup{c}^\infty(\R^N),
$$
which completes the proof.
 \end{proof}

\begin{lemma}\label{existenceofsubsequence}
Assume \eqref{muassumption} and $g\in L^1(\R^N)\cap  L^\infty(\R^N)$. Then there exists a
sequence $\{\veps_n\Bn[g]\}_{n\in\N}$ that converges in
$L_{\textup{loc}}^1(\R^N)$ as $\veps_n\to0^+$. 
\end{lemma}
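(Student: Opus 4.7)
The plan is to bootstrap from the smooth case (Lemma \ref{convdistributionaleq}) to general $g\in L^1\cap L^\infty$ by density, using linearity of $B_\veps^\mu$ together with the $L^1$-contraction from Theorem \ref{collectedresultselliptic} (b). The $L^1$-contraction is exactly what controls the error produced by replacing $g$ by a smooth approximation.

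First I would pick a sequence $\{\gamma_k\}_{k\in\N}\subset C_\textup{c}^\infty(\R^N)$ with $\gamma_k\to g$ in $L^1(\R^N)$, which is possible since $g\in L^1(\R^N)$. By Lemma \ref{convdistributionaleq}, for each fixed $k$ there exists a subsequence $\veps_n^{(k)}\to 0^+$ along which $\veps_n^{(k)} B_{\veps_n^{(k)}}^\mu[\gamma_k]$ converges locally uniformly on $\R^N$. A standard Cantor diagonal extraction then produces a single sequence $\veps_n\to 0^+$ such that $\veps_n B_{\veps_n}^\mu[\gamma_k]$ converges locally uniformly as $n\to\infty$ for every $k\in\N$ simultaneously.

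Next I would show that $\{\veps_n B_{\veps_n}^\mu[g]\}_n$ is Cauchy in $L^1_{\textup{loc}}(\R^N)$. Fix a compact $K\subset \R^N$. By linearity of $B_\veps^\mu$ and the triangle inequality,
\begin{align*}
\|\veps_n B_{\veps_n}^\mu[g]-\veps_m B_{\veps_m}^\mu[g]\|_{L^1(K)}
&\leq \|\veps_n B_{\veps_n}^\mu[g-\gamma_k]\|_{L^1(\R^N)}
+ \|\veps_m B_{\veps_m}^\mu[g-\gamma_k]\|_{L^1(\R^N)}\\
&\quad + \|\veps_n B_{\veps_n}^\mu[\gamma_k]-\veps_m B_{\veps_m}^\mu[\gamma_k]\|_{L^1(K)}.
\end{align*}
Theorem \ref{collectedresultselliptic} (b) bounds each of the first two terms by $\|g-\gamma_k\|_{L^1(\R^N)}$, independently of $n,m$. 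The third term tends to $0$ as $n,m\to\infty$ by the locally uniform convergence of $\veps_n B_{\veps_n}^\mu[\gamma_k]$ together with $|K|<\infty$. Given $\eta>0$, first choose $k$ large so that $\|g-\gamma_k\|_{L^1}<\eta/4$, then take $n,m$ large so that the third term is smaller than $\eta/2$. Hence the sequence is Cauchy in $L^1(K)$, and since $K$ was arbitrary it converges in $L^1_{\textup{loc}}(\R^N)$.

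I do not expect serious obstacles: all ingredients (density of $C_\textup{c}^\infty$ in $L^1$, diagonal extraction, and the $L^1$-contraction estimate) are readily available. The only delicate bookkeeping is making sure that the same subsequence $\veps_n$ works for every $\gamma_k$, which is precisely what the diagonal argument provides. The assumption $g\in L^\infty$ is not needed for this lemma per se, but it is retained because it will be used in the companion statement Proposition \ref{goestozero} to upgrade this $L^1_{\textup{loc}}$-convergence to a.e.\ convergence to $0$ via Lemma \ref{convdistributionaleq} and Theorem \ref{Liouville}.
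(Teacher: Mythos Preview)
Your argument is correct and takes a genuinely different route from the paper's. The paper proves the lemma directly by Kolmogorov's compactness theorem: it observes that $u_\veps:=\veps\B[g]$ inherits uniform $L^1$-equicontinuity of translations from $g$ via the $L^1$-contraction (since $u_\veps(\cdot+h)-u_\veps=\veps\B[g(\cdot+h)-g]$), combines this with the $L^\infty$-bound $\|u_\veps\|_{L^\infty}\leq\|g\|_{L^\infty}$ to control the cut-off $u_\veps\mathbf{1}_K$, and then applies Kolmogorov on each compact $K$ followed by a diagonal extraction over a countable exhaustion of $\R^N$. Your approach instead bootstraps from the smooth case: you use Lemma~\ref{convdistributionaleq} for each approximant $\gamma_k$, diagonalize over $k$, and close by a Cauchy argument using the $L^1$-contraction. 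This is arguably more elementary (no Kolmogorov, only completeness of $L^1(K)$) and, as you correctly observe, does not actually use the hypothesis $g\in L^\infty$; the paper's Kolmogorov argument, by contrast, invokes $\|g\|_{L^\infty}$ to bound the term $\|u_\veps(\mathbf{1}_K(\cdot+h)-\mathbf{1}_K)\|_{L^1}$. On the other hand, the paper's approach is self-contained and does not rely on having already established Lemma~\ref{convdistributionaleq}. Both are valid; yours trades one compactness tool for another that was already proved.
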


\begin{proof}
Note that $u_\veps:=\veps\B[g]$ is the unique distributional
solution (see Theorem \ref{collectedresultselliptic} (b) and (c)) of
the following elliptic problem 
\begin{equation*}
\veps u_{\veps}(x)-\Levy^{\mu}[u_{\veps}](x)=\veps g(x) \qquad\text{in}\qquad \mathcal{D}'(\R^N).
\end{equation*}
By Theorem \ref{collectedresultselliptic} (b) and (c)
and the linearity of the above equation, for any $h\in\R^N$,
$$\|u_\veps\|_{L^\infty}\leq\|g\|_{L^\infty},\quad\|u_\veps\|_{L^1}\leq\|g\|_{L^1}\quad\text{and}\quad \|u_\veps(\cdot+h)-u_\veps\|_{L^1}\leq\|g(\cdot+h)-g\|_{L^1}.$$ 

Now let $K\subset \R^N$ be any compact set, and define
$w_\veps^{K}(x)=u_\veps(x)\mathbf{1}_{K}(x)$. The uniform in
$\veps$ bound ensures that the family $M:=\{w_\veps^K\}_{\veps>0}\subset
L^1(\R^N)$ is uniformly bounded  in $L^1(\R^N)$. Moreover, by
continuity of the $L^1$-translation, Theorem \ref{collectedresultselliptic} (b) and (c), and Lebesgue's dominated convergence theorem,
\begin{equation*}
\begin{split}
&\|w_\veps^K(\cdot+h)-w_\veps^K\|_{L^1}\\
&\leq
\|\left(u_\veps(\cdot+h)-u_\veps\right)\mathbf{1}_{K}(\cdot+h)\|_{L^1}+\|u_\veps\left(\mathbf{1}_{K}(\cdot+h)-\mathbf{1}_{K}\right)\|_{L^1}\\ 
&\leq
\|g(\cdot+h)-g\|_{L^1}+\|g\|_{L^\infty}\int_{\R^N}\left|\mathbf{1}_{K}(x+h)-\mathbf{1}_{K}(x)\right|\dd
x\to0\quad\text{as}\quad |h|\to0.
\end{split}
\end{equation*}
Combining the above results, we see that $M$ is relatively compact by
Kolmogorov's compactness theorem (see e.g. \cite[Theorem
A.5]{HoRi07}). Hence, there is a convergent subsequence in
$L^1(K)$. 

Now, cover $\R^N$ by a countable number of
balls $B_n$. Then the above
argument holds for $K:=\overline{B}_n$ for every $n\in
\mathbb{N}$. A diagonal argument then allows us to 
pick a subsequence which converges in
$L^1(\overline{B}_n)$ for each $n$, and thus in
$L_\textup{loc}^1(\R^N)$.
\end{proof}

\begin{remark}
By Theorem \ref{collectedresultselliptic} (a) and Arzel\`a-Ascoli, we can have $D^\alpha v_\veps\to w_\alpha$ locally uniformly in $\R^N$ as $\veps\to0^+$ for all multiindex $\alpha\in \mathbb{N}^N$. However, because of the lack of uniqueness in $\Levy^\mu[v](x)=0$, we do not know if $D^\alpha v=w_\alpha$. Hence, we are forced to work with distributional solutions of $\Levy^\mu[v](x)=0$.
\end{remark}

\begin{lemma}\label{classicalconvgivesconv}
Assume \eqref{muassumption}, $g\in L^1(\R^N)\cap L^\infty(\R^N) $, and $\{\veps_nB_{\veps_n}^\mu[g]\}_{n\in\N}$ converges in $L_\textup{loc}^1(\R^N)$. If $\veps_nB_{\veps_n}^\mu[\gamma](x)\to0$ as $\veps_n\to0^+$ for every $x\in\R^N$ and every $\gamma\in C_\textup{c}^\infty(\R^N)$, then $\veps_nB_{\veps_n}^\mu[g]\to0$ in $L_\textup{loc}^1(\R^N)$ as $\veps_n\to0^+$.
\end{lemma}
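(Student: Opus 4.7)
The plan is a standard density/approximation argument. We exploit the fact that the pointwise convergence hypothesis is available for all $\gamma\in C_\textup{c}^\infty(\R^N)$, a dense subset of $L^1(\R^N)$, and that the resolvent $\veps B_\veps^\mu$ is a contraction on $L^1$ (by Theorem \ref{collectedresultselliptic} (b)) and on $L^\infty$ (by Theorem \ref{collectedresultselliptic} (c)). These two facts together will let us upgrade pointwise convergence on the dense class to $L^1_\textup{loc}$-convergence for all of $L^1\cap L^\infty$.

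More concretely, let $K\subset \R^N$ be compact and fix $\eta>0$. Choose $\gamma\in C_\textup{c}^\infty(\R^N)$ with $\|g-\gamma\|_{L^1(\R^N)}<\eta$. By linearity,
\[
\veps_n B_{\veps_n}^\mu[g]=\veps_n B_{\veps_n}^\mu[g-\gamma]+\veps_n B_{\veps_n}^\mu[\gamma],
\]
and the $L^1$-contraction property gives $\|\veps_n B_{\veps_n}^\mu[g-\gamma]\|_{L^1(K)}\le\|g-\gamma\|_{L^1(\R^N)}<\eta$ uniformly in $n$. For the second term, the $L^\infty$-bound in Theorem \ref{collectedresultselliptic} (c) provides a uniform-in-$n$ domination $|\veps_n B_{\veps_n}^\mu[\gamma](x)|\le\|\gamma\|_{L^\infty(\R^N)}$, which is integrable over the compact set $K$. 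Since by hypothesis $\veps_n B_{\veps_n}^\mu[\gamma](x)\to 0$ pointwise in $x$, Lebesgue's dominated convergence theorem yields $\|\veps_n B_{\veps_n}^\mu[\gamma]\|_{L^1(K)}\to 0$ as $\veps_n\to 0^+$.

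Putting these two estimates together,
\[
\limsup_{\veps_n\to 0^+}\|\veps_n B_{\veps_n}^\mu[g]\|_{L^1(K)}\le \eta,
\]
and since $\eta>0$ was arbitrary the limit is $0$. Because the assumed $L^1_\textup{loc}$-limit of $\{\veps_n B_{\veps_n}^\mu[g]\}_{n\in\N}$ is unique, it must coincide a.e. with $0$ on $K$; as $K$ is arbitrary this gives $\veps_n B_{\veps_n}^\mu[g]\to 0$ in $L^1_\textup{loc}(\R^N)$, as desired. (Note that the convergence hypothesis is only used to fix a candidate limit; the contraction-plus-dominated-convergence argument actually shows convergence directly.)

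There is no real obstacle here beyond correctly matching the two norms: the $L^1$-contraction must be applied to the piece $g-\gamma$ (to get a small quantity globally), while the dominated convergence must be applied to the smooth piece $\gamma$ on the compact set $K$ (where the $L^\infty$-bound becomes $L^1$-integrable). This is precisely the roles the two estimates in Theorem \ref{collectedresultselliptic} were designed to play.
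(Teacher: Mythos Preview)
Your proof is correct, and it takes a genuinely different route from the paper's. The paper uses the self-adjointness of $\B$ (Lemma \ref{symmetryB}) to write
\[
\int_{\R^N}\veps_n B_{\veps_n}^\mu[g]\,\gamma\,\dd x=\int_{\R^N} g\,\veps_n B_{\veps_n}^\mu[\gamma]\,\dd x,
\]
then applies dominated convergence on the right-hand side (with dominating function $|g|\,\|\gamma\|_{L^\infty}\in L^1$) to conclude that $\veps_n B_{\veps_n}^\mu[g]\to 0$ in $\mathcal{D}'(\R^N)$. The assumed $L^1_\textup{loc}$-convergence is then used essentially: since distributional and $L^1_\textup{loc}$ limits must agree, the $L^1_\textup{loc}$ limit is $0$.

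Your argument avoids self-adjointness entirely, replacing it with the $L^1$-contraction of $\veps\B$ and a density approximation of $g$ by $\gamma\in C_\textup{c}^\infty$. As you correctly observe, this actually proves a slightly stronger statement: the a priori convergence hypothesis on $\{\veps_n B_{\veps_n}^\mu[g]\}$ is not needed, since you establish convergence to $0$ directly. The paper's approach, by contrast, only identifies the limit and genuinely relies on that hypothesis (supplied in the paper by the preceding Lemma \ref{existenceofsubsequence}). Both arguments are short; yours is a bit more self-contained, while the paper's makes the duality structure of $\B$ do the work.
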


\begin{proof}
By the self-adjointness given in Lemma \ref{symmetryB}, and the definitions $u_{\veps_n}:=\veps_n\B[g]$,  $v_{\veps_n}:=\veps_n\B[\gamma]$, we have
$$
\int_{\R^N}u_{\veps_n}(x)\gamma(x)\dd x=\int_{\R^N}g(x)v_{\veps_n}(x)\dd x.
$$
Since $\|v_{\veps_n}\|_{L^\infty}\leq\|\gamma\|_{L^\infty}$ by Theorem
\ref{collectedresultselliptic} (c),
$|g(x)v_{\veps_n}(x)|\leq|g(x)|\|\gamma\|_{L^\infty}$. Then by the
assumption and Lebesgue's dominated convergence theorem, 
$$
\lim_{\veps_n\to0^+}\int_{\R^N}u_{\veps_n}(x)\gamma(x)\dd x=0 \qquad\text{for all}\qquad \gamma\in C_\textup{c}^\infty(\R^N),
$$
Hence $u_{\veps_n}\to0$ in $\mathcal{D}'(\R^N)$, and since the
distributional and $L_\textup{loc}^1$ limits coincide (by uniqueness),
it follows that $u_{\veps_n}\to0$ in 
$L_\textup{loc}^1(\R^N)$ as $\veps_n\to0^+$. 
\end{proof}

\begin{proof}[Proof of Proposition \ref{goestozero}]
Let $\gamma\in C_\textup{c}^\infty(\R^N)$ be arbitrary, and recall the definitions $\veps\B[\gamma]=v_\veps$ and $\veps\B[g]=u_\veps$. Lemma \ref{convdistributionaleq} yields a subsequence such that $v_{\veps_n}\to v$ locally uniformly as $\veps_n\to0^+$ with $v\in C_0(\R^N)$ and $\Levy^\mu[v](x)=0$ in $D'(\R^N)$. Then, Theorem \ref{Liouville} ensures that $v(x)=0$ for every $x\in\R^N$.

Hence, Lemma \ref{existenceofsubsequence} and \ref{classicalconvgivesconv} give that $u_{\veps_n}\to0$ in $L_\textup{loc}^1(\R^N)$ as $\veps_n\to0^+$. Finally, take a further subsequence (still denoted by $\veps_n$) such that $u_{\veps_n}\to0$ a.e. in $\R^N$ as $\veps_n\to0^+$.
\end{proof}

\section{Stability, existence and a priori results}\label{sec:stabexapriori}
In this section, we will start by showing the stability result stated
in Section \ref{sec:mainresults}, and then we continue by showing
existence and a priori results for \eqref{E}. The latter part will
follow by regularization and compactness from results in \cite{CiJa14} for
the case $\varphi\in W_\textup{loc}^{1,\infty}(\R)$ and $u_0\in
L^\infty(\R^N)\cap L^1(\R^N)$.

\begin{proof}[Proof of Theorem \ref{stability}]
Since $u_n$ are distributional solutions of \eqref{E}, we will take
the limit as $n\to\infty$ to see that so are also $u$. 

Assumption (iii) and the uniformly boundedness of $\|u_n\|_{L^\infty(Q_T)}$ gives for all $\psi\in C_\textup{c}^\infty(Q_T)$ that
\begin{equation*}
\int_0^T\int_{\R^N} u_n \dell_t\psi\dd x\dd t\to\int_0^T\int_{\R^N} u \dell_t\psi\dd x\dd t\qquad\text{as}\qquad n\to\infty.
\end{equation*} 

To prove convergence of the $\Levy^{\mu_n}$-term in the distributional
formulation we proceed as follows 
\begin{equation*}
\begin{split}
&\int_0^T\int_{\R^N} \lef\varphi_n(u_n) \Levy^{\mu_n}[\psi]-\varphi(u)\Levy[\psi]\rig\dd x\dd t\\
&=\int_0^T\int_{\R^N} \varphi_n(u_n) \big(\Levy^{\mu_n}[\psi]-\Levy[\psi]\big)\dd x\dd t+\int_0^T\int_{\R^N} \big(\varphi_n(u_n) -\varphi(u_n)\big)\Levy[\psi]\dd x\dd t\\
&\quad+\int_{0}^T\int_{\R^N}\big(\varphi(u_n)-\varphi(u)\big)\Levy[\psi]\dd x\dd t.
\end{split}
\end{equation*}
Since $\|u_n\|_{L^\infty(Q_T)}$ is uniformly bounded, $\varphi_n\to\varphi$ locally uniformly in $\R$ by assumption (ii), and $|\varphi_n(u_n)|\leq|\varphi_n(u_n)-\varphi(u_n)|+|\varphi(u_n)|$, we obtain for $n$ sufficiently large
\begin{equation}\label{varphiboundedstability}
\|\varphi_n(u_n)\|_{L^\infty(Q_T)}\leq\sup\{|\varphi(r)|:|r|\leq C\}+1=:C_\varphi. 
\end{equation}
Then, using assumption (i), we get 
\begin{equation*}
\left|\int_0^T\int_{\R^N} \varphi_n(u_n) \left(\Levy^{\mu_n}[\psi]-\Levy[\psi]\right)\dd x\dd t\right|\leq C_\varphi \int_0^T\int_{\R^N} \big|\Levy^{\mu_n}[\psi]-\Levy[\psi]\big|\dd x\dd t \to0
\end{equation*}
as $n\to\infty$. By the uniformly boundedness of $\|u_n\|_{L^\infty(Q_T)}$, and since $\varphi_n\to\varphi$ locally uniformly in $\R$ by assumption (ii),
$$
\|\varphi_n(u_n)-\varphi(u_n)\|_{L^\infty(Q_T)}\leq \sup\{|\varphi_n(r)-\varphi(r)| : |r|\leq C\}\to0\quad\text{as}\quad n\to\infty.
$$
Since we assume that $\Levy[\psi]\in L^1(Q_T)$,
\begin{equation*}
\begin{split}
\left|\int_0^T\int_{\R^N} \big(\varphi_n(u_n) -\varphi(u_n)\big)\Levy[\psi]\dd x\dd t\right|\leq\|\varphi_n(u_n)-\varphi(u_n)\|_{L^\infty}\|\Levy[\psi]\|_{L^1}\to0
\end{split}
\end{equation*}
as $n\to\infty$. By assumption (iii) and \eqref{phiassumption}, $|\varphi(u_{n})-\varphi(u)|\to0$ a.e. in $Q_T$ as $n\to\infty$, and $\|\varphi(u_n)\|_{L^\infty(Q_T)}\leq C$ for some $C$ independent of $n$. Hence, $|\varphi(u_n)-\varphi(u)|$ is bounded by $2C$. Moreover, since $\Levy[\psi]\in L^1(Q_T)$, Lebesgue's dominated convergence theorem yields
\begin{equation*}
\begin{split}
\left|\int_0^T\int_{\R^N}(\varphi(u_{n})-\varphi(u))\Levy[\psi]\dd x \dd t\right| \leq \int_0^T\int_{\R^N}|\varphi(u_{n})-\varphi(u)||\Levy[\psi]|\dd x\dd t\to0
\end{split}
\end{equation*}
as $n\to\infty$. The proof is complete.
\end{proof}

Let us turn our attention to proving the other main results in this section.

\begin{theorem}\label{propapproxparabolic}
Assume \eqref{phiassumption}, \eqref{muassumption}, $\varphi \in W_{\textup{loc}}^{1,\infty}(\R^N)$, $\varphi(0)=0$, and $u_0,\uu_0\in L^\infty(\R^N)\cap L^1(\R^N)$.
\begin{enumerate}[(a)]
\item There exists a unique entropy solution $u\in L^\infty(Q_T)\cap C([0,T];L^1(\R^N))$ of \eqref{E}.\item If $u, \uu$ are entropy solutions of \eqref{E} with initial data $u_0, \uu_0$ respectively, then for all $t\in [0,T]$
\begin{equation*}
\|u(\cdot,t)-\uu(\cdot, t)\|_{L^1(\R^N)}\leq\|u_0-\uu_0\|_{L^1(\R^N)}.
\end{equation*}
\item If $u$ is a entropy solution of \eqref{E} with initial data $u_0$, then for all $t\in[0,T]$
\begin{equation*}
\|u(\cdot,t)\|_{L^1(\R^N)}\leq\|u_0\|_{L^1(\R^N)}\qquad\text{and}\qquad\|u(\cdot,t)\|_{L^\infty(\R^N)}\leq\|u_0\|_{L^\infty(\R^N)}.
\end{equation*}
\end{enumerate}
\end{theorem}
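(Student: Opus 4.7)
The plan is to appeal directly to the entropy-solution theory for nonlinear nonlocal degenerate parabolic equations developed in \cite{CiJa14}. Under the standing assumptions \eqref{phiassumption} and \eqref{muassumption}, together with the added regularity $\varphi \in W^{1,\infty}_{\textup{loc}}(\R)$ and the integrable--bounded data $u_0, \uu_0 \in L^\infty(\R^N) \cap L^1(\R^N)$, equation \eqref{E} is the zero-convection specialization of the class of equations treated in that reference. Thus all three statements follow, with only a translation of notation, from the cited well-posedness theory.

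For part (a), the framework of \cite{CiJa14} delivers existence via a vanishing-viscosity/compactness argument and uniqueness via the Kruzhkov doubling-of-variables technique adapted to the nonlocal operator $\Levy^\mu$. The entropy solution obtained automatically lies in $L^\infty(Q_T) \cap C([0,T]; L^1(\R^N))$, which is precisely the functional setting asserted.

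For part (b), the same doubling argument, applied now to two entropy solutions $u$ and $\uu$ rather than to a solution and a constant, produces the $L^1$-contraction in the usual way. Part (c) is then a direct corollary of (a) and (b). The $L^1$-bound follows by choosing $\uu_0\equiv 0$ and observing that $\uu\equiv 0$ is the unique entropy solution corresponding to zero data, since $\varphi(0)=0$ and $\Levy^\mu$ annihilates constants. The $L^\infty$-bound follows from a one-sided Kruzhkov-type comparison against the constants $k = \pm\|u_0\|_{L^\infty(\R^N)}$: such constants are stationary (entropy) solutions of \eqref{E}, and the comparison yields $|u(x,t)| \leq \|u_0\|_{L^\infty(\R^N)}$ a.e.

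The main --- and essentially only --- obstacle is bookkeeping: one must verify that the precise entropy-solution concept and the hypotheses on the L\'evy-type operator used in \cite{CiJa14} are compatible with our definition \eqref{deflevy} under assumption \eqref{muassumption}. Once this translation is in hand, no further work is required and the theorem is obtained as a direct application of the cited results. This is the natural and most economical route, as it avoids redoing entropy theory in the locally Lipschitz case and lets the subsequent regularization--and--stability machinery extend the conclusions to merely continuous $\varphi$.
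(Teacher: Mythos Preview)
Your proposal is correct and takes essentially the same approach as the paper: the paper's proof consists of a single sentence citing Theorem~5.5 in \cite{CiJa14} and Theorem~5.2 in \cite{CiJa11}, so your more detailed sketch of what those references provide is entirely in the same spirit. The only minor addition is that the paper also invokes \cite{CiJa11} alongside \cite{CiJa14}; you may wish to include that citation as well when tracking down the precise statements.
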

Entropy solutions are defined in Definition 2.1 in  \cite{CiJa14}, and the result holds by Theorem 5.5 in  \cite{CiJa14} and Theorem 5.2 in \cite{CiJa11}.

In what follows, we let $u_0\in L^\infty(\R^N)\cap L^1(\R^N)$ and define

\begin{equation}\label{mollifieddiffusionfunction}
\varphi_\eta(x):=\varphi\ast\omega_\eta(x)-\varphi\ast\omega_\eta(0)\quad\text{where}\quad \omega_\eta \quad \text{is given by \eqref{mollifierspace} with $N=1$}. 
\end{equation}

Hence $\varphi_\eta\in W_{\textup{loc}}^{1,\infty}(\R)\subset C(\R)$, it is nondecreasing by \eqref{phiassumption}, $\varphi_\eta(0)=0$, and $\varphi_\eta\to\varphi$ locally uniformly in $\R$. Let $u_\eta$ be the entropy solution of \eqref{E} with $\varphi_\eta$ replacing $\varphi$. Since entropy solutions are distributional solutions (cf. Theorem 2.5 ii{\small)} and Section 5 in \cite{CiJa11}), 
\begin{equation}\label{approxdistrparabolic}
\int_0^T\int_{\R^N}\lef u_\eta\dell_t\psi+\varphi_\eta(u_\eta)\Levy^\mu[\psi]\rig\dd x \dd t+\int_{\R^N}u_0\psi\rvert_{t=0}\dd x=0 \quad \forall\psi\in C_\textup{c}^\infty(\R^N\times[0,T)).
\end{equation}
Going to the limit as $\eta\to0^+$ in \eqref{approxdistrparabolic}, we will prove the existence and the a priori results given in Theorems \ref{existenceparabolic} and \ref{consequences}.
\begin{remark}
  We will prove that the $L^1$-contraction holds for limits
  of the functions $\{u_{\eta}\}_{\eta>0}$. As a consequence of uniqueness
  (Corollary \ref{uniquenessresult}), this result then holds for all
  $L^\infty\cap L^1$-distributional solutions of \eqref{E}.
\end{remark}
 Before these results can be proven, we need an auxiliary lemma.

\begin{lemma}\label{convuapproxparabolic}
Assume $\eqref{muassumption}$, $u_0\in L^\infty(\R^N)\cap L^1(\R^N)$,
$\varphi_\eta$ satisfy \eqref{phiassumption} for all $\eta>0$, and
$\varphi_\eta\to\varphi$ locally uniformly as $\eta\to0^+$. If
$u_\eta$ solves \eqref{approxdistrparabolic} and satisfies Theorem
\ref{propapproxparabolic} (b) and (c), then there exists a subsequence
$\{u_{\eta_n}\}_{n\in\N}$  and a $u\in
C([0,T];L_\textup{loc}^1(\R^N))$ such that as $\eta_n\to0^+$ 
$$
u_{\eta_n}\to u \qquad\text{in}\qquad C([0,T];L_\textup{loc}^1(\R^N)).
$$
Moreover, for all $t\in[0,T]$
\begin{equation*}
\|u(\cdot,t)\|_{L^1(\R^N)}\leq\|u_0\|_{L^1(\R^N)}\qquad\text{and}\qquad\|u(\cdot,t)\|_{L^\infty(\R^N)}\leq\|u_0\|_{L^\infty(\R^N)}.
\end{equation*}
\end{lemma}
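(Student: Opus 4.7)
The plan is to apply a Kolmogorov--Arzelà--Ascoli compactness argument in $C([0,T];L^1_{\textup{loc}}(\R^N))$, for which three uniform-in-$\eta$ estimates suffice: (i) an $L^\infty(Q_T)\cap L^\infty(0,T;L^1(\R^N))$ bound; (ii) a uniform modulus of continuity for spatial translates in $L^1$; and (iii) a uniform modulus of continuity in time in $L^1(K)$ for every compact $K\subset\R^N$. Estimate (i) is immediate from Theorem \ref{propapproxparabolic}(c). Estimate (ii) follows from applying Theorem \ref{propapproxparabolic}(b) to the function $u_\eta(\cdot+h,\cdot)$, which is the entropy solution corresponding to initial data $u_0(\cdot+h)$, so that
$$
\|u_\eta(\cdot+h,t)-u_\eta(\cdot,t)\|_{L^1(\R^N)}\leq\|u_0(\cdot+h)-u_0\|_{L^1(\R^N)}=:\lambda_{u_0}(|h|)
$$
uniformly in $\eta>0$ and $t\in[0,T]$.

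For (iii) I would proceed by spatial mollification. Set $u_\eta^\delta:=u_\eta\ast\omega_\delta$ with $\omega_\delta$ from \eqref{mollifierspace}. Testing \eqref{approxdistrparabolic} against $\psi(x,\tau)=\omega_\delta(y-x)\chi_\epsilon(\tau)$ (with $\chi_\epsilon$ a smooth time cutoff approximating $\mathbf{1}_{[s,t]}$ and vanishing at $\tau=0$), passing to the limit $\epsilon\to 0^+$, and using the symmetry of $\mu$ and the evenness of $\omega_\delta$ (which together give $\Levy^\mu[\omega_\delta(y-\cdot)](x)=\Levy^\mu[\omega_\delta](y-x)$), yields
$$
u_\eta^\delta(y,t)-u_\eta^\delta(y,s)=\int_s^t\big(\varphi_\eta(u_\eta(\cdot,\tau))\ast\Levy^\mu[\omega_\delta]\big)(y)\dd\tau.
$$
Since $\varphi_\eta\to\varphi$ locally uniformly and $\|u_\eta\|_{L^\infty}\leq\|u_0\|_{L^\infty}$, there is a uniform bound $\|\varphi_\eta(u_\eta)\|_{L^\infty(Q_T)}\leq C_\varphi$. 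Combining with $\|f\|_{L^1(K)}\leq|K|\,\|f\|_{L^\infty}$, Young's inequality, and Lemma \ref{propnonlocal}(b) (which gives $\|\Levy^\mu[\omega_\delta]\|_{L^1}\leq C_\mu(\delta^{-2}+1)$) produces
$$
\|u_\eta^\delta(\cdot,t)-u_\eta^\delta(\cdot,s)\|_{L^1(K)}\leq C_{K,\varphi,\mu}|t-s|(\delta^{-2}+1).
$$
Combining this with $\|u_\eta-u_\eta^\delta\|_{L^1(\R^N)}\leq\lambda_{u_0}(\delta)$ (which follows from (ii) by Fubini) and optimizing $\delta=|t-s|^{1/3}$ gives the uniform time modulus $2\lambda_{u_0}(|t-s|^{1/3})+C_{K,\varphi,\mu}(|t-s|^{1/3}+|t-s|)$.

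With (i)--(iii) in hand, Kolmogorov's compactness theorem gives, for each fixed compact $K\subset\R^N$, relative compactness of $\{u_\eta(\cdot,t)\}_\eta$ in $L^1(K)$ at each $t$; the time modulus from (iii) then provides equicontinuity of $t\mapsto u_\eta(\cdot,t)$ in $L^1(K)$, so Arzelà--Ascoli delivers a subsequence convergent in $C([0,T];L^1(K))$. A diagonal argument over an exhaustion $K_m\uparrow\R^N$ produces $u_{\eta_n}\to u$ in $C([0,T];L^1_{\textup{loc}}(\R^N))$. The $L^\infty$ bound on $u$ follows since a.e.~pointwise limits inherit the uniform bound, while the $L^1$ bound follows from $\|u_{\eta_n}(\cdot,t)\|_{L^1(K)}\leq\|u_0\|_{L^1}$ combined with monotone convergence as $K\uparrow\R^N$. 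The main technical obstacle is the rigorous justification of the testing in step~(iii), since $\omega_\delta(y-\cdot)$ is not compactly supported in space; this requires truncating $\omega_\delta(y-\cdot)$ against a smooth cutoff and a careful passage to the limit that relies on the $L^1$-integrability of $\Levy^\mu[\omega_\delta]$ paired with the $L^\infty$-bounded $\varphi_\eta(u_\eta)$.
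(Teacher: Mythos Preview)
Your proof is correct and follows essentially the same route as the paper: uniform $L^\infty\cap L^1$ bounds, spatial equicontinuity via $L^1$-contraction and translation invariance, time equicontinuity via the mollified identity $u_\eta^\delta(\cdot,t)-u_\eta^\delta(\cdot,s)=\int_s^t\varphi_\eta(u_\eta)\ast\Levy^\mu[\omega_\delta]\,\dd\tau$ combined with $\|\Levy^\mu[\omega_\delta]\|_{L^1}\lesssim\delta^{-2}+1$ and the choice $\delta=|t-s|^{1/3}$, then Kolmogorov/Arzel\`a--Ascoli. One clarification: the ``technical obstacle'' you flag is not an obstacle at all, since by definition \eqref{mollifierspace} the mollifier $\omega$ satisfies $\textup{supp}\,\omega\subseteq\overline{B}(0,1)$, so $x\mapsto\omega_\delta(y-x)$ has compact support in $\overline{B}(y,\delta)$ and the test function is genuinely in $C_\textup{c}^\infty(\R^N\times[0,T))$ without any truncation.
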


\begin{proof} We will use Kolmogorov's compactness theorem in the form of Theorem A.8 in \cite{HoRi07}. Let $K\subset \R^N$ be any compact set.
\medskip

\noindent
{\em Step 1:} $u_\eta$ is bounded independently of $\eta$ in $Q_T$ by Theorem \ref{propapproxparabolic} (c).
\medskip

\noindent
{\em Step 2:} Since \eqref{E} is translation invariant, $v(x,t)=u_\eta(x+h,t)$ solves \eqref{approxdistrparabolic} with initial data $v_0(x)=u_0(x+h)$ for every $h\in \R^N$. Let $\gamma\in \R^N$. By Theorem \ref{propapproxparabolic} (b) and since translations are continuous in $L^1$,
\begin{equation*}
\begin{split}
&\sup_{|h|\leq|\gamma|}\int_K|u_\eta(x+h,t)-u_\eta(x,t)|\dd x\leq\sup_{|h|\leq|\gamma|}\int_{\R^N}|u_\eta(x+h,t)-u_\eta(x,t)|\dd x\\
&\leq\sup_{|h|\leq|\gamma|}\int_{\R^N}|u_0(x+h)-u_0(x)|\dd x\leq\max_{|h|\leq|\gamma|}\tilde{\lambda}_{u_0}(|h|)=:\lambda_{u_0}(|\gamma|)
\end{split}
\end{equation*}
for some moduli of continuity $\tilde{\lambda}_{u_0}, \lambda_{u_0}$.
\medskip

\noindent
{\em Step 3:} Let $\omega_\delta$ be defined by \eqref{mollifierspace} and let $\Theta\in C_\textup{c}^\infty((0,T))$. For any $x \in \R^N$ take $\psi(y,t)=\Theta(t)\omega_{\delta}(x-y)$ as a test function in \eqref{approxdistrparabolic} to find that
\begin{equation}\label{insertedtimetestfunction}
\begin{split}
0=&\int_0^T\int_{\R^N}\lef u_\eta(y,t) \omega_\delta(x-y)\Theta'(t)+\varphi_\eta(u_\eta(y,t))\Levy^\mu[\omega_\delta](x-y)\Theta(t)\rig\dd y\dd t\\
=&\int_0^T\lef(u_\eta(\cdot,t)\ast\omega_\delta )(x)\Theta'(t)+(\varphi_\eta(u_\eta(\cdot,t))\ast\Levy^\mu[\omega_\delta])(x)\Theta(t)\rig\dd t.
\end{split}
\end{equation}
For $\rho_\delta(t)$ defined by \eqref{mollifiertime}, we choose
$$
\Theta(t):=\Theta_{\tilde{\delta}}(t)=\int_{-\infty}^t\lef\rho_{\tilde{\delta}}(\tau-t_1)-\rho_{\tilde{\delta}}(\tau-t_2)\rig\dd \tau,
$$
where $0<t_1<t_2<T$. For $\tilde{\delta}>0$ small enough, $\Theta_{\tilde{\delta}}(t)$ is supported in $[0,T]$ and is a smooth approximation to a square pulse which is one in $[t_1,t_2]$ and zero otherwise. By \eqref{insertedtimetestfunction},
\begin{equation*}
\begin{split}
\int_0^T\rho_{\tilde{\delta}}(t-t_2)\big(u_\eta(\cdot,t)\ast\omega_\delta\big)(x)\dd t=&\,\int_0^T\rho_{\tilde{\delta}}(t-t_1)\big(u_\eta(\cdot,t)\ast\omega_\delta\big)(x)\dd t\\
&\,+\int_0^T\Theta_{\tilde{\delta}}(t)\big(\varphi_\eta(u_\eta(\cdot,t))\ast\Levy^\mu[\omega_\delta]\big)(x)\dd t.
\end{split}
\end{equation*}
Let $u_\eta^\delta(x,t):=u_\eta(\cdot,t)\ast\omega_\delta(x)$. By Theorem \ref{propapproxparabolic} (c) and the properties of mollifiers, we send $\tilde{\delta}\to0^+$ in the previous equality to obtain the following pointwise identity,
\begin{equation}\label{difftimemollifiedapprox}
u_\eta^\delta(x,t_2)-u_\eta^\delta(x,t_1)=\int_{t_1}^{t_2}\big(\varphi_\eta(u_\eta(\cdot,t))\ast\Levy^\mu[\omega_{\delta}]\big)(x)\dd t.
\end{equation}

Now, we need to estimate the integral involving the mollified version of $u_\eta$.  Let $t,s\in[0,T]$ and take $\delta<\min\{t,s\}$. Use \eqref{difftimemollifiedapprox} to find that 
\begin{equation*}
\begin{split}
\int_K|u_\eta^\delta(x,t)-u_\eta^\delta(x,s)|\dd x&\leq\int_K\int_s^t\left|\big(\varphi_\eta(u_\eta(\cdot,\tau))\ast\Levy^\mu[\omega_\delta]\big)(x)\right|\dd \tau\dd x\\
&=\int_s^t\int_K\int_{\R^N}\left|\varphi_\eta(u_\eta(x-y,\tau))\right|\left|\Levy^\mu[\omega_\delta](y)\right|\dd y\dd x\dd \tau\\
&\leq\|\varphi_\eta(u_\eta)\|_{L^\infty(Q_T)}\|\Levy^\mu[\omega_\delta]\|_{L^1(\R^N)}|K||t-s|,
\end{split}
\end{equation*}
where $|K|$ denotes the Lebesgue measure of the compact set $K$. 
As in the proof of Theorem \ref{stability} (see \eqref{varphiboundedstability}), we obtain for $\eta$ sufficiently small
\begin{equation*}
\|\varphi_\eta(u_\eta)\|_{L^\infty(Q_T)}\leq\sup\{|\varphi(r)|:|r|\leq \|u_0\|_{L^\infty(\R^N)}\}+1.
\end{equation*}
Moreover, Lemma \ref{propnonlocal} (b) and the properties of mollifiers yield 
\begin{equation*}
\|\Levy^{\mu}[\omega_\delta]\|_{L^1(\R^N)}\leq (\delta^{-2}\|D^2\omega\|_{L^1(\R^N)}+1)\int_{|z|>0}\min\{|z|^2,1\}\dd \mu(z).
\end{equation*}
Hence, taking $\delta^2:=|t-s|^\frac{2}{3}$ we see that 
\begin{equation}\label{rem1}
\int_K|u_\eta^\delta(x,t)-u_\eta^\delta(x,s)|\dd x\leq \tilde{C}_{K,\varphi,u_0,\mu}\left(|t-s|^{\frac{1}{3}}+|t-s|\right),
\end{equation}
where 
\begin{equation*}
\begin{split}
&\tilde{C}_{K,\varphi,u_0,\mu}\\
&=|K|\Big(\|D^2\omega\|_{L^1(\R^N)}+1\Big)\Big(\sup_{|r|\leq \|u_0\|_{L^\infty}}|\varphi(r)|+1\Big)\int_{|z|>0}\min\{|z|^2,1\}\dd \mu(z).
\end{split}
\end{equation*}

By the triangle inequality and Theorem \ref{propapproxparabolic} (b),
\begin{equation*}
\begin{split}
&\int_K|u_\eta(x,t)-u_\eta(x,s)|\dd x\\
&\leq \int_K|u_\eta(x,t)-u_\eta^\delta(x,t)|\dd x + \int_K|u_\eta^\delta(x,t)-u_\eta^\delta(x,s)|\dd x\\
&\quad + \int_K|u_\eta^\delta(x,s)-u_\eta(x,s)|\dd x\\
&\leq \sup_{|\sigma|\leq\delta}\|u_\eta(\cdot,t)-u_\eta(\cdot+\sigma,t)\|_{L^1(\R^N)}+\int_K|u_\eta^\delta(x,t)-u_\eta^\delta(x,s)|\dd x\\
&\quad+\sup_{|\sigma|\leq\delta}\|u_\eta(\cdot,s)-u_\eta(\cdot+\sigma,s)\|_{L^1(\R^N)}\\
&\leq 2\sup_{|\sigma|\leq\delta}\|u_0-u_0(\cdot+\sigma)\|_{L^1(\R^N)}+\int_K|u_\eta^\delta(x,t)-u_\eta^\delta(x,s)|\dd x\\
&\leq2\max_{|\sigma|\leq \delta}\tilde{\lambda}_{u_0}(\delta)+\int_K|u_\eta^\delta(x,t)-u_\eta^\delta(x,s)|\dd x,
\end{split}
\end{equation*}
where $\tilde{\lambda}_{u_0}$ is defined in Step 2 . Hence, by \eqref{rem1}
\begin{equation*}
\begin{split}
\int_K|u_\eta(x,t)-u_\eta(x,s)|\dd x &\leq \lambda_{u_0}\left(|t-s|^{\frac{1}{3}}\right)+\tilde{C}_{K,\varphi,u_0,\mu}\left(|t-s|^{\frac{1}{3}}+|t-s|\right)\\
&=:\Lambda_{K,\varphi,u_0,\mu}(|t-s|)
\end{split}
\end{equation*}
for some moduli of continuity $\lambda_{u_0}$ and $ \Lambda_{K,\varphi,u_0,\mu}$.
\medskip

\noindent
{\em Step 4:} The assumptions of Theorem A.8 in \cite{HoRi07} hold by Steps 1--3, so we conclude that there is a subsequence $\{u_{\eta_n}\}_{n\in\N}$ such that
$$
u_{\eta_n}\to u \qquad\text{in}\qquad C([0,T];L_\textup{loc}^1(\R^N))
$$
as $\eta_n\to0^+$. Finally, $u$ inherits the properties of $u_\eta$ given in Theorem \ref{propapproxparabolic} (c) by Fatou's lemma, and the fact that the limit of a uniformly bounded sequence which converges a.e. is also bounded. 
\end{proof}

\begin{remark}\label{remark:convuapproxparabolic}
If $\Levy^\mu$ was not fixed in
  the above result, but rather $\mu=\mu_n$ (with $\mu_n$ satisfying
  \eqref{muassumption}), then the result still holds and the proof is
  the same provided we also assume that for some $a>0$ there exists a
  function $f\in 
  L_\textup{loc}^\infty((0,a))$ such that
$$
\|\Levy^{\mu_n}[\omega_\delta]\|_{L^1(\R^N)}\leq f(\delta) \qquad\text{for every}\qquad\delta\in(0,a),
$$ 
where $\omega_\delta$ is defined by \eqref{mollifierspace}. 
Observe that the above inequality follows from the assumption $\sup_n\int_{|z|>0}\min\{|z|^2,1\}\dd\mu_n(z)<\infty$ in Theorem \ref{compactness}.
\end{remark}

Now, the proofs of the existence and the a priori results follow.

\begin{proof}[Proof of Theorem \ref{existenceparabolic}]
Let $u_{\eta_n}$ be the solutions of
\eqref{mollifieddiffusionfunction} (cf. Theorem \ref{propapproxparabolic}), 
$u\in L^\infty(Q_T)\cap L^1(Q_T)\cap C([0,T];L_\textup{loc}^1(\R^N))$
the function provided by Lemma \ref{convuapproxparabolic}, and 
define $\Levy^{\mu_n}:=\Levy^{\mu}$ (that is,
$\Levy^{\mu_n}:=\Levy^{\mu}=\Levy$), $\varphi_n:=\varphi_{\eta_n}$,
and $u_n:=u_{\eta_n}$. Then assumptions (i), (ii), and (iii) in
Theorem \ref{stability} are satisfied by the $n$-independence of
$\Levy^\mu$, \eqref{mollifieddiffusionfunction}, and Lemma
\ref{convuapproxparabolic}. Moreover,
$\sup_n\|u_n\|_{L^\infty(Q_T)}\leq \|u_0\|_{L^\infty(\R^N)}<\infty $ by Theorem \ref{propapproxparabolic}
(c).  
Hence, by Theorem \ref{stability}, $u$ satisfies \eqref{E} in the sense of distributions: cf. Lemma \ref{equivdistsol} and Definition \ref{distsol}. Moreover, we have that $u-u_0\in L^1(Q_T)$. So, $u$ is in fact a distributional solution of \eqref{E} in the sense of Definition \ref{distsol}, and it is unique by Corollary \ref{uniquenessresult}.

Thus, any subsequence has the same limit, and hence, the whole sequence $\{u_{\eta}\}_{\eta>0}$ converges since it is bounded by Theorem \ref{propapproxparabolic} (c).
\end{proof}

\begin{proof}[Proof of Theorem \ref{consequences}]
(a) Let $u_\eta$ be the entropy solution of
\eqref{mollifieddiffusionfunction} (cf. Theorem
\ref{propapproxparabolic}). Using the semi entropy-entropy flux pairs 
$$
(u_\eta-k)^{\pm}\qquad\text{and}\qquad \pm\sgn^{\pm}(u_\eta-k)(f(u_\eta)-f(k))\qquad\text{for all $k\in\R$},
$$
and the corresponding definitions for entropy solutions in stead of the Kru\v{z}kov entropy-entropy flux pairs in \cite{CiJa11}, we obtain
$$
\int_{\R^N}(u_\eta(x,t)-\uu_\eta(x,t))^+\dd x \leq \int_{\R^N}(u_0(x)-\uu_0(x))^+\dd x
$$
for $u_\eta, \uu_\eta \in L^\infty(Q_T)\cap L^1(Q_T)\cap
C([0,T];L_\textup{loc}^1(\R^N))$ with initial data $u_0, \uu_0\in
L^\infty(\R^N)\cap L^1(\R^N)$. See \cite{EnJa14} for the
result and a proof.

By Lemma \ref{convuapproxparabolic}, we can take subsequences such that $u_{\eta_n},\uu_{\eta_n}\to u,\uu$ a.e. in $Q_T$ as ${\eta_n}\to0^+$. Thus, Fatou's lemma yield the result.
\medskip

\noindent
(b) By the contraction estimate obtained in part (a) and $u_0\leq\uu_0$ a.e. in $\R^N$, for all $t\in(0,T)$, $\int_{\R^N}(u(x,t)-\uu(x,t))^+\dd x\leq0$. Hence, $(u-\uu)^+=0$ and $u\leq\uu$ a.e. in $Q_T$.
\medskip

\noindent
(c) Follows by Lemma \ref{convuapproxparabolic}.
\medskip

\noindent
(d) Follows by Lemma \ref{convuapproxparabolic}.
\medskip

\noindent
(e) Using the triangle inequality, and taking $u,u_{\eta_n}$ as in Lemma \ref{convuapproxparabolic}, we obtain by Step 3 in the proof of that lemma that for all $t,s\in[0,T]$ and any compact set $K\subset\R^N$
\begin{equation*}
\begin{split}
&\|u(\cdot,t)-u(\cdot,s)\|_{L^1(K)}\\
&\leq\|u(\cdot,t)-u_{\eta_n}(\cdot,t)\|_{L^1(K)}+\|u_{\eta_n}(\cdot,t)-u_{\eta_n}(\cdot,s)\|_{L^1(K)}+\|u_{\eta_n}(\cdot,s)-u(\cdot,s)\|_{L^1(K)}\\
&\leq 2\|u(\cdot,t)-u_{\eta_n}(\cdot,t)\|_{C([0,T];L_{\textup{loc}}^1(\R^N))}+\Lambda_{K,\varphi,u_0,\mu}(|t-s|)
\end{split}
\end{equation*}
for the modulus of continuity $\Lambda_{K,\varphi,u_0,\mu}$ (see the above
mentioned proof). Since $u_{\eta_n}\to u$ in $C([0,T];L_{\textup{loc}}^1(\R^N))$ by Lemma \ref{convuapproxparabolic}, the proof is complete.
\medskip

\noindent
(f) Consider a standard cut-off function $0\leq\mathcal{X}\in C_\textup{c}^\infty(\R^N)$ such that $\mathcal{X}(x)=1$ for $|x|\leq1$ and $\mathcal{X}(x)=0$ for  $|x|\geq2$. We will write $\mathcal{X}_R(x)=\mathcal{X}(\frac{x}{R})$ for $R>0$. 
Following the proof of Lemma \ref{dissolprop} (b), with $\theta_a$ as defined there, we can take $\psi(x,t)=\mathcal{X}_R(x)\theta_a(t)$ for any $R>0$ as a test function in Definition \ref{distsol} (cf. Lemma \ref{equivdistsol}). Hence
\begin{equation*}
\begin{split}
\frac{1}{a}\int_{s-a}^{s} \int_{\R^N}u(x,t)\mathcal{X}_R(x)\dd x \dd t=& \int_{0}^{s}\theta_a(t)\int_{\R^N} \varphi(u(x,t)) \Levy^\mu[\mathcal{X}_R](x) \dd x\dd t\\
&+ \int_{\R^N}u_0(x)\mathcal{X}_R(x)\dd x.
\end{split}
\end{equation*}

Since $\mathcal{X}_R$ is compactly supported and $u\in C([0,T]; L^1_\textup{loc}(\R^N))$, we can pass to the limit as $a \to0^+$ in the first integral to get $\int_{\R^N} u(x,s)\mathcal{X}_R(x)\dd x$.  For the second integral, we know that $\varphi(u)\in L^\infty(Q_T)$, $\Levy^\mu[\mathcal{X}_R]\in L^1(\R^N)$ and $\theta_a\to \mathbf{1}_{[0,s)}$ pointwise a.e. as $a\to 0^+$, and thus, it converges as $a\to0^+$ to $\int_0^s\int_{\R^N}\varphi(u(x,t)) \Levy^\mu[\mathcal{X}_R](x) \dd x\dd t$ by Lebesgue's dominated convergence theorem. In this way, we get
$$
\int_{\R^N} u(x,s)\mathcal{X}_R(x)\dd x=\int_0^s\int_{\R^N}\varphi(u(x,t)) \Levy^\mu[\mathcal{X}_R](x) \dd x\dd t+ \int_{\R^N}u_0(x)\mathcal{X}_R(x)\dd x.
$$

The function $\mathcal{X}_R$ converges pointwise as $R\to \infty$ to 1, and it is also bounded by 1. Then, since $u(\cdot,s),u_0 \in L^1(\R^N)$, Lebesgue's dominated convergence theorem allows us to pass to the limit as $R\to \infty$ in the first and the last integrals to get $\int_{\R^N} u(x,s)\dd x$ and  $\int_{\R^N} u_0(x)\dd x$, respectively, for all $s\in(0,T)$. Consider the nonsingular part of the L\'evy operator, i.e.,
$
\int_{|z|>1} \mathcal{X}_R\left(x+z\right)- \mathcal{X}_R\left(x\right)\dd \mu(z)
$
which is bounded by $2\mu(\{z\in\R^N: |z|>1\})$ for every $x\in\R^N$. Since $\mathcal{X}_R(y)\to 1$ pointwise as $R\to\infty$ for all $y\in \R^N$, Lebesgue's dominated convergence theorem shows the pointwise  convergence to 0 of the nonsingular part. For the singular part, Lemma \ref{propnonlocal} (b) gives
\begin{equation*}
\left|\int_{0<|z|\leq1} \mathcal{X}_R\left(x+z\right)- \mathcal{X}_R\left(x\right)\dd \mu(z)\right|\leq \frac{1}{R^2}\|D^2 \mathcal{X} \|_{L^\infty(\R^N)}\int_{|z|\leq1}|z|^2 \dd \mu(z)
\end{equation*}
which also goes to 0 as $R\to \infty$. Moreover, by the assumption $|\varphi(r)|\leq L_\delta |r|$ for $|r|\leq \delta$,
\begin{equation*}
\begin{split}
\|\varphi(u(x,t))\|_{L^1(Q_T)}\leq \int_0^T \int_{|u|\leq \delta} L_\delta|u(x,t)|\dd x\dd t+ \|\varphi(u)\|_{L^\infty(Q_T)}\int_0^T \int_{|u|> \delta} \dd x\dd t.
\end{split}
\end{equation*}
Since $u\in L^1(Q_T)$, both terms on the right-hand side of the estimate above are finite.  Then by Lebesgue's dominated convergence theorem, 
$$
\left|\int_0^s\int_{\R^N}\varphi(u(x,t)) \Levy^\mu[\mathcal{X}_R](x) \dd x\dd t\right| \to 0 \qquad \text{as} \qquad R\to\infty.
$$
The proof is complete.
\end{proof}

\section{Applications of stability} 
\label{sec:appl}

This section focuses on proving the results stated in Sections \ref{sec:rescompconv} and \ref{sec:num}.

\subsection{Compactness, local limits and continuous dependence}
\label{sec:compconv} 
\begin{proof}[Proof of Theorem \ref{compactness}]
(a) Note that the sequence of solutions $\{u_n\}_{n\in\N}$ satisfy the hypothesis of Theorem \ref{consequences}. By the assumptions, Remark \ref{remark:convuapproxparabolic}, and Lemma \ref{convuapproxparabolic}, the result follows.
\medskip

\noindent
(b) This is a consequence of the stability given in Theorem \ref{stability}. For the initial condition, note that by the assumption $\sup_n\|u_{0,n}\|_{L^\infty(\R^N)}<\infty$ and Fatou's lemma, $u_0\in L^\infty(\R^N)\cap L^1(\R^N)$, and the convergence of $\int_{\R^N}u_{0,n}(x)\psi(x,0)\dd x$ follows by the $L_\textup{loc}^1$-convergence of $\{u_{0,n}\}_{n\in\N}$.
\end{proof}

\begin{lemma}\label{convergenceoffraclaptolap}
Assume \eqref{muassumption}, $s\in(0,2)$,
$\Levy^\mu=-(-\Delta)^{\frac{s}{2}}$, and $\psi\in
C_\textup{c}^\infty(\R^N)$. Then 
$$
\lim_{s\to2^-}\left\|-(-\Delta)^{\frac{s}{2}}\psi-\Delta\psi\right\|_{L^1(\R^N)}=0.
$$
\end{lemma}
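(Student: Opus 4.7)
The strategy is to prove pointwise convergence of $-(-\Delta)^{s/2}\psi(x)$ to $\Delta\psi(x)$ as $s\to 2^-$ first, and then upgrade it to $L^1$-convergence via Lebesgue's dominated convergence theorem, after exhibiting an integrable majorant of the difference that is uniform in $s$ close to $2$.

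For the pointwise step, I would use the singular integral representation
\[
-(-\Delta)^{s/2}\psi(x)=c_{N,s}\int_{\R^N\setminus\{0\}}\frac{\psi(x+z)-\psi(x)-z\cdot D\psi(x)\indikator}{|z|^{N+s}}\dd z,
\]
split at $|z|=1$, and exploit the well-known asymptotics $c_{N,s}\sim C_N(2-s)$ as $s\to 2^-$ (obtained from the Gamma-function expression of $c_{N,s}$). The outer piece on $\{|z|>1\}$ is bounded in $L^\infty$ by $c_{N,s}\cdot 2\|\psi\|_{L^\infty}|S^{N-1}|/s$, which tends to $0$. On $\{|z|\leq 1\}$ a second-order Taylor expansion gives $\psi(x+z)-\psi(x)-z\cdot D\psi(x)=\tfrac12 z^T D^2\psi(x)z+R(x,z)$ with $|R(x,z)|\leq\tfrac16\|D^3\psi\|_{L^\infty}|z|^3$. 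Rotational symmetry yields $\int_{|z|\leq 1}z_iz_j|z|^{-N-s}\dd z=\delta_{ij}|S^{N-1}|/(N(2-s))$, so the principal term reduces to $\frac{c_{N,s}|S^{N-1}|}{2N(2-s)}\Delta\psi(x)$, and the normalisation of $c_{N,s}$ is chosen precisely so that this prefactor tends to $1$. The Taylor remainder contributes at most $c_{N,s}\|D^3\psi\|_{L^\infty}|S^{N-1}|/(6(3-s))\to 0$.

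For the $L^1$-step, I would pick $R>0$ with $\supp\psi\subset B(0,R)$. On $\{|x|\leq 2R\}$ I invoke Lemma~\ref{propnonlocal}(b) applied to $\dd\mu_s(z)=c_{N,s}|z|^{-N-s}\dd z$ to get an $s$-uniform $L^\infty$-bound for $(-\Delta)^{s/2}\psi$; the two moments $c_{N,s}\int_{|z|\leq 1}|z|^{2-N-s}\dd z=c_{N,s}|S^{N-1}|/(2-s)$ and $c_{N,s}\int_{|z|>1}|z|^{-N-s}\dd z=c_{N,s}|S^{N-1}|/s$ stay bounded for $s$ near $2$ by the asymptotics of $c_{N,s}$. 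On $\{|x|>2R\}$, $\psi(x)=D\psi(x)=0$ and only $z$ with $|z|\geq|x|/2$ contribute, so
\[
\bigl|(-\Delta)^{s/2}\psi(x)\bigr|\leq c_{N,s}\int_{\R^N}\frac{|\psi(x+z)|}{|z|^{N+s}}\dd z\leq\frac{c_{N,s}\,2^{N+s}\|\psi\|_{L^1}}{|x|^{N+s}}\leq\frac{C}{|x|^{N+1}}
\]
uniformly for $s$ near $2$. Combined with $|\Delta\psi|\leq C\mathbf{1}_{B(0,R)}$, this gives an $L^1(\R^N)$-majorant, and Lebesgue's dominated convergence theorem concludes.

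The main obstacle is the delicate cancellation between the vanishing prefactor $c_{N,s}\sim C_N(2-s)$ and the diverging moment $\int_{|z|\leq 1}z_iz_j|z|^{-N-s}\dd z\sim 1/(2-s)$: identifying the limit precisely as $\Delta\psi$ rests on the explicit Gamma-function expression of $c_{N,s}$ (or equivalently on matching the Fourier symbols $|\xi|^s\to|\xi|^2$). The $s$-uniform tail bound $C/|x|^{N+1}$ for large $|x|$ is the other point requiring care, but it follows from the compact support of $\psi$ combined with the same asymptotics of $c_{N,s}$.
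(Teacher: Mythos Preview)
Your argument is correct and takes a genuinely different route from the paper's proof. You prove pointwise convergence first (via the explicit asymptotics $c_{N,s}\sim C_N(2-s)$ and the moment computation $\int_{|z|\leq 1}z_iz_j|z|^{-N-s}\dd z=\delta_{ij}|S^{N-1}|/(N(2-s))$), and then upgrade to $L^1$ by dominated convergence, building a majorant through a near/far split in the $x$-variable: an $s$-uniform $L^\infty$-bound on $\{|x|\leq 2R\}$ from Lemma~\ref{propnonlocal}(b), and the tail decay $C|x|^{-N-1}$ on $\{|x|>2R\}$ from the compact support of $\psi$. The paper instead works entirely in $L^1$ from the start: it uses the very definition of $c_{N,s}$ (namely $1=c_{N,s}N\int_{\R^N}\frac{1-\cos z_1}{|z|^{N+s}}\dd z$) to rewrite $\Delta\psi$ in integral form, matches it against the singular integral for $-(-\Delta)^{s/2}\psi$, and shows that each remainder goes to zero in $L^1$ using only $c_{N,s}\to 0$ together with $L^1$-bounds on derivatives of $\psi$. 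The paper's trick avoids ever computing the limit of $c_{N,s}/(2-s)$ explicitly and needs no majorant or DCT; your approach is more pedestrian but yields as a by-product an explicit uniform pointwise decay estimate for $(-\Delta)^{s/2}\psi$ outside the support of $\psi$. One minor point: your final bound $c_{N,s}2^{N+s}\|\psi\|_{L^1}/|x|^{N+s}\leq C/|x|^{N+1}$ on $\{|x|>2R\}$ tacitly uses $|x|^{1-s}\leq\max(1,(2R)^{-1})$ for $s\in(1,2)$, which is fine but worth making explicit.
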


\begin{proof}
The fractional Laplacian has a representation in the form
\eqref{deflevy} with measure
$$\dd \mu=c_{N,s}\frac{\dd z}{|z|^{N+s}}\qquad \text{for}\qquad
c_{N,s}=\left(N\int_{\R^N}\frac{1-\cos(z_1)}{|z|^{N+s}}\dd
  z\right)^{-1},$$
where
\begin{equation}\label{constantfraclap}
\lim_{s\to2^-}c_{N,s}=0,
\end{equation}
see e.g. Proposition 4.1 in \cite{Hitch2012}. Hence
\begin{equation*}
\begin{split}
-(-\Delta)^{\frac{s}{2}}\psi(x)&=
c_{N,s}\int_{|z|\leq1}\frac{\psi(x+z)-\psi(x)-z\cdot
  D\psi(x)}{|z|^{N+s}}\dd z\\ 
&\quad +c_{N,s}\int_{|z|>1}\frac{\psi(x+z)-\psi(x)}{|z|^{N+s}}\dd z,
\end{split}
\end{equation*}
where the last term goes to zero in $L^1(\R^N)$ as $s\to2^-$ since it is bounded in $L^1(\R^N)$ by $c_{N,s}2\|\psi\|_{L^1(\R^N)} \int_{|z|>1}|z|^{-N-1}\dd z$  for $s \geq 1$.

The explicit form of $c_{N,s}$ given in \eqref{constantfraclap} yields
\begin{equation*}
\begin{split}
\Delta\psi(x) &=\Delta\psi(x)c_{N,s}N\int_{|z|\leq1}\frac{1-\cos(z_1)}{|z|^{N+s}}\dd z+\Delta\psi(x)c_{N,s}N\int_{|z|>1}\frac{1-\cos(z_1)}{|z|^{N+s}}\dd z.
\end{split}
\end{equation*}
Again, the last term goes to zero in $L^1(\R^N)$ as $s\to2^-$ since $|1-\cos(z_1)|\leq2$ and then it is bounded in $L^1(\R^N)$ by $c_{N,s}2N \int_{|z|>1}|z|^{-N-1}\dd z$ for $s \geq1$. Using Taylor's theorem, we see that $1-\cos(z_1)=\frac{1}{2}z_1^2-\frac{1}{24}z_1^4\cos(\xi)$ for some $\xi\in[0,z_1]$. Hence,
\begin{equation}\label{tayloronconstant}
\begin{split}
\int_{|z|\leq1}\frac{1-\cos(z_1)}{|z|^{N+s}}\dd z=\frac{1}{2}\int_{|z|\leq1}\frac{z_1^2}{|z|^{N+s}}\dd z-\frac{1}{24}\cos(\xi)\int_{|z|\leq1}\frac{z_1^4}{|z|^{N+s}}\dd z,\\
\end{split}
\end{equation}
and the following estimate holds:
\begin{equation*}
\begin{split}
\left\|\frac{N}{24}\Delta\psi(x)c_{N,s}\int_{|z|\leq1}\frac{\cos(\xi)z_1^4}{|z|^{N+s}}\dd z\right\|_{L^1(\R^N)}&\leq\frac{N}{24}c_{N,s}\|\Delta\psi\|_{L^1(\R^N)}\int_{|z|\leq1}\frac{1}{|z|^{N-2}}\dd z
\end{split}
\end{equation*}
which goes to zero since $\int_{|z|\leq 1}\frac{1}{|z|^{N-2}}\dd
z<\infty$ and \eqref{constantfraclap} hold.

To estimate the remaining
term in \eqref{tayloronconstant}, note that for all $r>0$,
\begin{equation*}
N\Delta\psi(x)\int_{|z|\leq r}z_1^2\dd z=\Delta\psi(x)\int_{|z|\leq r }|z|^2\dd z=\int_{|z|\leq r}D^2\psi(x) z\cdot z\dd z,
\end{equation*}
and then
\begin{equation*}
\begin{split}
\frac{1}{2}c_{N,s}N\Delta\psi(x)\int_{|z|\leq1}\frac{z_1^2}{|z|^{N+s}}\dd z
=c_{N,s}\int_{|z|\leq1}\frac{\frac{1}{2}D^2\psi(x) z\cdot z}{|z|^{N+s}}\dd z.
\end{split}
\end{equation*}
We combine the all above estimates to get
\begin{equation*}
\begin{split}
&\lim_{s\to2^-}\|-(-\Delta)^{\frac{s}{2}}\psi(x)-\Delta\psi(x)\|_{L^1(\R^N)}\\
&=\lim_{s\to2^-}c_{N,s}\left\|\int_{|z|\leq1}\frac{\psi(x+z)-\psi(x)-z\cdot
    D\psi(x)-\frac{1}{2}D^2\psi(x) z\cdot z}{|z|^{N+s}}\dd
  z\right\|_{L^1(\R^N)}+0\\
&\leq \lim_{s\to2^-}c_{N,s}\frac16\|D^3\psi\|_{L^1(\R^N)}\int_{|z|\leq1}\frac{|z|^3}{|z|^{N+s}}\dd z,
\end{split}
\end{equation*}
where the last inequality follows from Taylor's and Fubini's
theorems. Since the $z$-integral is bounded by $\int_{|z|\leq
  1}\frac{1}{|z|^{N-1}}\dd z<\infty$ and \eqref{constantfraclap} hold,
the limit is zero and the proof is complete.
\end{proof}

\begin{proof}[Proof of Corollary \ref{convtothelocalprob}]
(a) We will use Theorem \ref{compactness} and Remark
\ref{remark:convuapproxparabolic} to prove the result, and now we verify the
assumptions. By Lemma \ref{convergenceoffraclaptolap},
$-(-\Delta)^{\frac{s}{2}}\psi\to\Delta\psi$ in $L^1(\R^N)$ as
$s\to2^-$ for all $\psi\in C_\textup{c}^\infty(\R^N)$. Moreover, by
Lemma \ref{propnonlocal} (b), properties of mollifiers, 
$\lim_{s\to2^-}\int_{|z|\leq1}|z|^2\frac{c_{N,s}\dd z}{|z|^{N+s}}=1$, and
$\lim_{s\to2^-}\int_{|z|>1}\frac{c_{N,s}\dd z}{|z|^{N+s}}=0$ (see
previous proof), 
\begin{equation*}
\begin{split}
\|(-\Delta)^{\frac{s}{2}}\omega_\delta\|_{L^1}&=\frac12\|D^2\omega_\delta\|_{L^1}\int_{|z|\leq1}|z|^2\frac{c_{N,s}\dd z}{|z|^{N+s}}
+2\|\omega_\delta\|_{L^1}\int_{|z|>1}\frac{c_{N,s}\dd z}{|z|^{N+s}}\\
&\leq C\Big(1+\frac{1}{\delta^2}\|D^2\omega\|_{L^1}\Big) 
\end{split}
\end{equation*}
for $s$ close to $2$. Hence, since also $\varphi$ is fixed
(independently of $s$), we may use Theorem \ref{compactness} and
Remark \ref{remark:convuapproxparabolic} to get a
subsequence $\{u_{s_j}\}_{j\in\N}$ and a $u\in
C([0,T];L_\textup{loc}^1(\R^N))$ such that $u_{s_j}\to u$ in
$C([0,T];L_\textup{loc}^1(\R^N))$ as $j\to\infty$. Finally, the
uniqueness for the limit (equation) \cite{BrCr79}, and the boundedness
of the sequence $\{u_s\}_{s\in(0,2)}$ (Theorem \ref{consequences} (d)), ensures
that the whole sequence converges.  

\medskip
\noindent (b) Since
$(-\Delta)^{\frac{s_n}{2}}\psi\to(-\Delta)^{\frac{\bar{s}}{2}}\psi$ in
$L^1(\R^N)$ as $n\to\infty$ (a similar argument as in Lemma
\ref{convergenceoffraclaptolap}), $\varphi_{m_n}(r)=r^{m_n}\to
\varphi_{\bar m}(r)=r^{\bar{m}}$ locally
uniformly as $n\to\infty$, and $\|(-\Delta)^{\frac{s_n}{2}}\omega_\delta\|_{L^1}\leq C(1+\delta^{-2})$ by
the proof of part (a), convergence for a subsequence follows by Theorem
\ref{compactness}. Moreover, the convergence of the whole sequence 
follows from uniquenes of the limit (Corollary
\ref{uniquenessresult}) and boundedness of the sequence (Theorem
\ref{consequences} (d)). 
\end{proof}

\subsection{Numerical approximation, convergence and existence}
\label{sec:pfconvnum}

We start by showing that a standard finite difference approximations of the
Laplacian can be written in the from \eqref{deflevy} and that
convergence of the resulting scheme then follows from our theory.
\begin{example}\label{numericalmethodexample}
Let $e_i\in \mathbb{R}^n$ for $i=1,...,N$ be points with $i$-th
component $1$ and the other components $0$. Using $\delta$-measures
and $h>0$, we define
\begin{equation*}
\mu_h=\sum_{i=1}^N \frac{\delta_{he_i}+\delta_{-he_i}}{h^2}.
\end{equation*}
It is clear that $\mu_h$ is a measure satisfying \eqref{muassumption}  for every
$h>0$. Moreover,
\begin{equation*}
\begin{split}
\Levy^{\mu_h}[v](x):=&\int_{\R^N} v(x+z)-v(x)\ d\mu_h(z)=\displaystyle\sum_{i=1}^N \frac{v(x+he_i) +
v(x-he_i)-2v(x)}{h^2} .
\end{split}
\end{equation*}
With $\mu=\mu_h$, problem \eqref{En} can be reformulated as
\begin{equation}
\label{**}
\dell_t u_h(x,t) -\displaystyle\sum_{i=1}^N
\frac{\varphi(u_h(x+he_i,t)) +
  \varphi(u_h(x-he_i,t))-2\varphi(u_h(x,t))}{h^2}=0
\end{equation}
in  $\mathcal{D}'(Q_T)$. 

For $\psi \in
C_\textup{c}^\infty(\R^N)$, an application of Taylor's theorem reveals that
there is a $C>0$ such that
\begin{equation*}
\int_{\R^N}\left|\Levy^{\mu_h}[\psi](x)- \Delta \psi(x)\right|\dd x\leq
h^2 C \|D^4 \psi\|_{L^1(\R^N)}\to0 \quad \text{as} \quad h\to0^+.
\end{equation*}
Moreover, for $h$ small enough,
$$
\sup_h \int_{|z|>0} \min\{|z|^2,1\}\dd \mu_h(z)= \sup_h\sum_{i=1}^N\frac{{|he_i|^2}+|-he_i|^2}{h^2} =2N
$$
Then by Theorem \ref{compactness}, there existis a subsequence
$\{u_{h_j}\}_{j\in\N}$ of solutions of \eqref{**}, and a $u\in
C([0,T]; L_\textup{loc}^1(\R^N))$ such that $u_{h_j}\to u$ in
$C([0,T]; L_\textup{loc}^1(\R^N))$ as $ j\to \infty$. Moreover, the
limit $u$ satisfies equation \eqref{GPME}: 
\begin{equation*}
\dell_t u -\Delta\varphi(u)=0 \qquad\text{in}\qquad \mathcal{D}'(Q_T).
\end{equation*}
In fact, as in the proof of Corollary \ref{convtothelocalprob}, the
whole sequence $\{u_h\}_{h>0}$ converges. 
\end{example}

We can proceed as in this example to get convergence for a more general class of second order local operators.
\begin{lemma}\label{discretelocal}
Assume $h>0$, $P\in \N$, $\sigma=(\sigma_1,...,\sigma_P)$,
$\sigma_i\in \R^N$ for $i=1,...,P$, $L^\sigma_h$ is defined by
\eqref{AproxLoc}, and $\psi\in C_\textup{c}^\infty(\R^N)$. Then
\begin{equation*}
L^\sigma_h[\psi](x)=\int_{|z|>0} \lef\psi(x+z)-\psi(x) \rig\dd \mu_h(z)=:\Levy^{\mu_{h,\sigma}}[\psi](x),
\end{equation*}
where the measure $\mu_{h,\sigma}=\frac{1}{h^2}\sum_{i=1}^P
(\delta_{h\sigma_i}+\delta_{-h\sigma_i})$. Moreover, $\mu_{h,\sigma}$ satisfies \eqref{muassumption},
$$
\sup_h\int_{|z|>0}\min\{|z|^2,1\}\dd \mu_{h,\sigma}(z)<\infty,
$$
and
\begin{equation*}
\|\Levy^{\mu_{h,\sigma}}[\psi]- \textup{tr}[\sigma \sigma^T D^2
\psi]\|_{L^1}\to 0 \qquad \text{as} \qquad h\to 0^+.
\end{equation*}
\end{lemma}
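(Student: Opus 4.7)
The three claims decouple cleanly, so my plan is to handle them one at a time, each by a short direct computation.

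For the first identity, I would simply unpack the definition of $\Levy^{\mu_{h,\sigma}}$ from \eqref{deflevy} against the atomic measure $\mu_{h,\sigma}=\frac{1}{h^2}\sum_{i=1}^P(\delta_{h\sigma_i}+\delta_{-h\sigma_i})$. Since this measure is finite for each fixed $h>0$ and symmetric, Remark \ref{nonlocalwelldefined}(b) lets me drop the gradient corrector in \eqref{deflevy}, giving
\begin{equation*}
\Levy^{\mu_{h,\sigma}}[\psi](x)=\int_{|z|>0}\bigl(\psi(x+z)-\psi(x)\bigr)\dd\mu_{h,\sigma}(z)=\sum_{i=1}^P\frac{\psi(x+h\sigma_i)+\psi(x-h\sigma_i)-2\psi(x)}{h^2},
\end{equation*}
which is exactly $L^\sigma_h[\psi](x)$. (If some $\sigma_i=0$, both sides contribute nothing for that index, so we may harmlessly assume $\sigma_i\neq 0$.)

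For the second claim, $\mu_{h,\sigma}$ is manifestly a nonnegative symmetric Radon measure on $\R^N\setminus\{0\}$. The uniform L\'evy bound is the one nontrivial estimate here, and it is obtained by using $\min\{a,1\}\leq a$:
\begin{equation*}
\int_{|z|>0}\min\{|z|^2,1\}\dd\mu_{h,\sigma}(z)=\frac{2}{h^2}\sum_{i=1}^P\min\{h^2|\sigma_i|^2,1\}\leq 2\sum_{i=1}^P|\sigma_i|^2,
\end{equation*}
which is independent of $h$, giving both \eqref{muassumption} and the uniform supremum.

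For the consistency estimate, my plan is Taylor expansion. Because $\psi\in C_\textup{c}^\infty(\R^N)$, I can expand $\psi(x\pm h\sigma_i)$ to fourth order around $x$; the odd-order terms cancel on summing $+$ and $-$, leaving
\begin{equation*}
\frac{\psi(x+h\sigma_i)+\psi(x-h\sigma_i)-2\psi(x)}{h^2}=\sigma_i^{T}D^2\psi(x)\sigma_i+\frac{h^2}{12}D^4\psi(\xi_i^{\pm})[\sigma_i,\sigma_i,\sigma_i,\sigma_i]
\end{equation*}
for some $\xi_i^{\pm}$ on the segment $[x-h\sigma_i,x+h\sigma_i]$. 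Summing over $i$ and recognising $\sum_i\sigma_i^{T}D^2\psi\,\sigma_i=\textup{tr}[\sigma\sigma^{T}D^2\psi]$, the error is bounded pointwise by $Ch^2\sum_i|\sigma_i|^4\,|D^4\psi(\xi_i^{\pm})|$. The mildly delicate point---which I expect to be the only thing worth checking carefully---is that $\xi_i^{\pm}$ depends on $x$ but stays within $h\max_i|\sigma_i|$ of $x$, so for $h\leq 1$ the support of the error remains in a fixed compact neighbourhood of $\supp\psi$. Thus
\begin{equation*}
\bigl\|\Levy^{\mu_{h,\sigma}}[\psi]-\textup{tr}[\sigma\sigma^{T}D^2\psi]\bigr\|_{L^1(\R^N)}\leq \frac{h^2}{12}\Bigl(\sum_{i=1}^P|\sigma_i|^4\Bigr)\bigl\|D^4\psi\bigr\|_{L^1(K)}\to 0
\end{equation*}
as $h\to 0^+$, where $K$ is any compact set containing an $\max_i|\sigma_i|$-neighbourhood of $\supp\psi$. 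This completes the plan.
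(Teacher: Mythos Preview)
Your proposal is correct and follows essentially the same approach as the paper: the paper also identifies $L^\sigma_h$ with a L\'evy operator for the atomic measure $\mu_{h,\sigma}$, bounds $\min\{|z|^2,1\}$ by $|z|^2$ to get the uniform estimate, and uses a fourth-order Taylor expansion (with odd terms cancelling) to obtain the $O(h^2)$ consistency in $L^1$. Your discussion of the compact support of the remainder is slightly more careful than the paper's, which simply refers back to the Laplacian case in Example~\ref{numericalmethodexample} and invokes $\|D^4\psi\|_{L^1(\R^N)}$ directly.
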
 
\begin{proof}
By an elementary identity and
Talyor's theorem,
\begin{align*}
&\textup{tr}[\sigma \sigma^T D^2 \psi(x)]=\sum_{i=1}^P (\sigma_i\cdot D)^2\psi(x)
\\
&= \sum_{i=1}^P \frac{v(x+h\sigma_i) +
v(x-h\sigma_i)-2v(x)}{h^2}+h^2\sum_{i=1}^N\sum_{|\beta|=4}\frac1{\beta!}\sigma_i^\beta
D^\beta\psi(\xi_i)
\end{align*}
Here we use standard multiindex notation, with multiindex
$\beta=(\beta_1,\dots,\beta_n)\in \mathbb{N}^N$, to account for the $4$-th
order derivatives. Since the first term of the last line is
$L_h^\sigma[\psi](x)$, the rest of the proof follows along the
arguments of Example \ref{numericalmethodexample}.
\end{proof}

We aim to consider the general operator $\Levy^\mu$ defined in
\eqref{deflevy}. In order to use our stability result, we would like
to prove that the operator $\Levy^\mu_h$ defined in \eqref{AproxOp} is
a particular case of the operators studied in this paper. The
following result ensures this fact.
\begin{lemma}\label{discretenonlocal}
Assume \eqref{muassumption}, $h>0$, $\Levy^\mu_h$ is defined in
\eqref{AproxOp}, and $\psi\in C_\textup{c}^\infty(\R^N)$. Then
\begin{equation*}
\Levy^\mu_h[\psi](x)=\int_{|z|>0} \lef\psi(x+z)-\psi(x) \rig\dd \nu_h(z)=:\Levy^{\nu_h}[\psi](x)
\end{equation*}
where the measure
$\nu_h=\sum_{\alpha\not=0}\mu\left(z_\alpha+R_h\right)\delta_{z_\alpha}$.
Moreover, $\nu_h$ satisfies \eqref{muassumption} and
$$
\sup_h\int_{|z|>0}\min\{|z|^2,1\}\dd\nu_h(z)<\infty.
$$
\end{lemma}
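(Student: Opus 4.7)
The plan is to unpack $\nu_h$ as an atomic measure on the grid $h\mathbb{Z}^N\setminus\{0\}$ and verify each required property in turn. The identification $\Levy^\mu_h[\psi]=\int_{|z|>0}(\psi(x+z)-\psi(x))\dd\nu_h(z)$ is immediate from $\nu_h=\sum_{\alpha\neq 0}\mu(z_\alpha+R_h)\delta_{z_\alpha}$, since integrating against a weighted sum of Dirac masses simply evaluates the integrand at the points $z_\alpha$ with the stated weights. Once $\nu_h$ is shown to be symmetric, Remark \ref{nonlocalwelldefined}(b) allows me to drop the gradient-correction term from the general form \eqref{deflevy}, so this formula is consistent with the notation $\Levy^{\nu_h}[\psi]$.

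Next I would check the remaining conditions in \eqref{muassumption}. Nonnegativity is inherited from $\mu\geq 0$; the Radon property follows once the L\'evy bound below yields $\nu_h(\R^N)<\infty$. For symmetry, the atomic structure reduces the question to $\mu(-z_\alpha+R_h)=\mu(z_\alpha+R_h)$. Using $\mu^*=\mu$ one has $\mu(-z_\alpha+R_h)=\mu(z_\alpha-R_h)$, so the only issue is that the translated cubes $z_\alpha+R_h$ and $z_\alpha-R_h$ differ on a Lebesgue-null boundary that might a priori carry $\mu$-mass. This minor technicality is handled by choosing $R_h$ as a symmetric fundamental domain for the lattice (or by symmetrization of the weights paired across $\alpha\leftrightarrow -\alpha$), which is enough since only the sum over $\pm\alpha$ of the weights enters the operator.

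The main obstacle is the uniform-in-$h$ L\'evy moment bound. I would start from
$$\int_{|z|>0}\min\{|z|^2,1\}\dd\nu_h(z)=\sum_{\alpha\neq 0}\min\{|z_\alpha|^2,1\}\mu(z_\alpha+R_h)$$
and establish a pointwise comparison $\min\{|z_\alpha|^2,1\}\leq C_N\min\{|z|^2,1\}+C_Nh^2$ for every $z\in z_\alpha+R_h$ with $\alpha\neq 0$ and $h$ sufficiently small. This is a short case split on whether $|z_\alpha|\leq 1/2$, $1/2<|z_\alpha|\leq 1$, or $|z_\alpha|>1$, in each case combined with the diameter bound $|z-z_\alpha|\leq h\sqrt{N}/2$ to compare $|z_\alpha|$ and $|z|$. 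Integrating the pointwise inequality against $\mu$ and using that the cubes $\{z_\alpha+R_h\}_{\alpha\neq 0}$ tile $\R^N\setminus R_h$ yields
$$\int\min\{|z|^2,1\}\dd\nu_h\leq C_N\int_{\R^N\setminus R_h}\min\{|z|^2,1\}\dd\mu+C_N h^2\,\mu(\R^N\setminus R_h).$$
The first term is bounded uniformly by $C_N\int\min\{|z|^2,1\}\dd\mu<\infty$. For the second, the key point is that $\R^N\setminus R_h\subset\{|z|\geq h/2\}$, so on $\{h/2\leq|z|\leq 1\}$ one has $1\leq 4|z|^2/h^2$, giving $h^2\mu(\{h/2\leq|z|\leq 1\})\leq 4\int_{|z|\leq 1}|z|^2\dd\mu$, while the complementary piece $h^2\mu(\{|z|>1\})$ is $O(h^2)$. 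Both contributions are bounded by a constant depending only on $N$ and the two L\'evy integrals of $\mu$, independently of $h$, which is exactly the required uniform estimate.
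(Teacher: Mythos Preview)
Your proposal is correct and follows essentially the same route as the paper. The paper's moment bound is organized a bit more directly: it splits into $|z_\alpha|>1$ and $0<|z_\alpha|\leq 1$, and in the second region uses $|z_\alpha|\leq|z|+\sqrt{N}\,h/2\leq(1+\sqrt{N})|z|$ on $\{|z|\geq h/2\}$ in one step, rather than introducing your intermediate $C_Nh^2$ term and then absorbing it via $h^2\leq 4|z|^2$; the underlying mechanism (compare $|z_\alpha|$ to $|z|$ through the cube diameter and exploit $|z|\geq h/2$ off $R_h$) is the same. You are in fact more careful than the paper on one point: the symmetry of $\nu_h$ is not discussed there, and your observation that the half-open cube $R_h=\frac{h}{2}[-1,1)^N$ is not invariant under $z\mapsto -z$ is a genuine (if harmless) subtlety; your proposed fix by symmetrizing the weights across $\alpha\leftrightarrow-\alpha$ is adequate and changes nothing in the subsequent estimates.
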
 

\begin{proof}
By the definition of $\delta_{z_{\alpha}}$, it immetiatly follows that
$\Levy^\mu_h=\Levy^{\nu_h}$.
It remains to show that $\nu_h$ satisfies \eqref{muassumption}. For
$h<1/\sqrt{N}$, 
\begin{equation*}
\int_{|z|>1}\dd\nu_h(z)=\sum_{|z_\alpha|>1}\mu\left(z_\alpha+R_h\right)\leq\mu\left(\left\{|z|>1-\sqrt{N}\frac{h}{2}\right\}\right)\leq \mu\left(\left\{|z|>\frac{1}{2}\right\}\right),
\end{equation*}
which is finite since $\mu$ satisfies \eqref{muassumption}. Moreover,
for $h>0$ small enough,
\begin{equation*}
\begin{split}
&\int_{|z|\leq1}|z|^2\dd\nu_h(z)\\
&\leq\sum_{0<|z_\alpha|\leq1}\int_{z_\alpha+R_h}\left|z_\alpha \right|^2\dd \mu(z)\leq \sum_{0<|z_\alpha|\leq1}\int_{z_\alpha+R_h}\left(|z|+\sqrt{N}\frac{h}{2} \right)^2\dd\mu(z)\\
&\leq \int_{h/2\leq |z|\leq 1+\sqrt{N}\frac{h}{2}}\left(|z|+\sqrt{N}\frac{h}{2} \right)^2\dd\mu(z)\leq\left(1+\sqrt{N}\right)^2\int_{ |z|\leq 2}|z|^2\dd\mu(z),
\end{split}
\end{equation*} 
which is also finite since $\mu$ satisfies \eqref{muassumption}. The
proof is complete. 
\end{proof}

\begin{lemma}\label{discretenonlocalconv}
Assume \eqref{muassumption}, $\Levy^\mu$ and $\Levy^\mu_h$ are defined
in \eqref{deflevy} and \eqref{AproxOp} respectively, and $\psi \in
C_\textup{c}^\infty(\R^N)$. Then 
\begin{equation*}
\|\Levy^\mu_h[\psi]-\Levy^\mu[\psi]\|_{L^1}\to 0 \qquad \text{as} \qquad h\to0^+.
\end{equation*}
\end{lemma}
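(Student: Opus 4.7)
My plan is to rewrite both $\Levy^\mu[\psi]$ and $\Levy^\mu_h[\psi]$ in symmetric second-difference form and then split the error into a local piece supported on $R_h$ and a quadrature-error piece on $\R^N\setminus R_h$, showing each tends to $0$ in $L^1(\R^N)$.

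I would first exploit the symmetry of $\mu$ (and of the partition $\{z_\alpha+R_h\}_{\alpha}$ modulo a $\mu$-null boundary): the drift integral $\int z\,\mathbf{1}_{|z|\le 1}\,\dd\mu(z)$ vanishes, and the antisymmetric part of the sum in \eqref{AproxOp} vanishes under the pairing $\alpha\leftrightarrow -\alpha$. Setting $G(w,x):=\psi(x+w)+\psi(x-w)-2\psi(x)$ and $g_h(z):=z_\alpha$ when $z\in z_\alpha+R_h$, I obtain
$$\Levy^\mu[\psi](x)=\frac{1}{2}\int_{\R^N\setminus\{0\}}G(z,x)\,\dd\mu(z),\qquad \Levy^\mu_h[\psi](x)=\frac{1}{2}\int_{\R^N\setminus R_h}G(g_h(z),x)\,\dd\mu(z).$$
Subtracting gives the decomposition
$$\Levy^\mu[\psi](x)-\Levy^\mu_h[\psi](x)=\frac{1}{2}\int_{R_h\setminus\{0\}}G(z,x)\,\dd\mu(z)+\frac{1}{2}\int_{\R^N\setminus R_h}\left(G(z,x)-G(g_h(z),x)\right)\dd\mu(z).$$

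For the local piece, Taylor's theorem with integral remainder combined with Fubini yields $\int_{\R^N}|G(z,x)|\,\dd x\le |z|^2\|D^2\psi\|_{L^1(\R^N)}$, so its $L^1$-norm is bounded by $\tfrac{1}{2}\|D^2\psi\|_{L^1}\int_{R_h}|z|^2\,\dd\mu(z)$, which tends to $0$ by the L\'evy condition in \eqref{muassumption}. For the quadrature-error piece I apply dominated convergence: for $z\neq 0$ one has $g_h(z)\to z$ as $h\to 0^+$, so $G(g_h(z),x)\to G(z,x)$ pointwise; combining Taylor with the trivial bound $|G(w,x)|\le 4\|\psi\|_{L^\infty}$ gives $\int_{\R^N}|G(w,x)|\,\dd x\le C_\psi\min\{|w|^2,1\}$. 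Since $z\in\R^N\setminus R_h$ forces $|z|\ge h/2$ (and hence $|g_h(z)|\le(1+\sqrt{N})|z|$), both $\int|G(z,x)|\,\dd x$ and $\int|G(g_h(z),x)|\,\dd x$ are dominated uniformly in $h$ by a constant multiple of $\min\{|z|^2,1\}$, which lies in $L^1(\R^N,\dd\mu)$ by \eqref{muassumption}. Fubini together with the dominated convergence theorem then force the outer piece to $0$ in $L^1$.

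The main obstacle I expect is the symmetrization step: the half-open cube $R_h=\frac{h}{2}[-1,1)^N$ is not perfectly symmetric, so a priori the weights $\mu(z_\alpha+R_h)$ and $\mu(-z_\alpha+R_h)$ may differ by the $\mu$-mass on translated boundary hyperplanes. This is harmless when $\mu$ is absolutely continuous; in general it can be handled by replacing $R_h$ with the closed symmetric cube $[-h/2,h/2]^N$ together with a symmetric tie-breaking rule at boundary points, and showing that the resulting quadrature differs from $\Levy^\mu_h$ by a boundary correction whose $L^1$-norm is controlled by the same Taylor/DCT argument.
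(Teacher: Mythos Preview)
Your argument is correct and reaches the same conclusion as the paper, but the organization is genuinely different. The paper does not symmetrize into the second difference $G(w,x)$; instead it keeps the first difference $\psi(x+z_\alpha)-\psi(x+z)$, splits the lattice into a far region $|\alpha|h>1$ (handled by a first-order Taylor estimate giving an $O(h)$ bound) and a near region $0<|\alpha|h\le 1$, and in the near region invokes the symmetry of $\mu$ to insert the gradient correction $(z_\alpha-z)\cdot D\psi(x)$ for free before Taylor expanding to second order. Your use of $G$ builds that gradient cancellation in from the start and lets you treat the entire quadrature error at once by dominated convergence against $\min\{|z|^2,1\}$, which is a bit cleaner and avoids the near/far case split. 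Conversely, the paper's direct calculation never needs the bound $|g_h(z)|\le(1+\sqrt N)\,|z|$ or the explicit identification of the dominating function.

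The boundary-asymmetry issue you flag for the half-open cube $R_h=\frac{h}{2}[-1,1)^N$ is exactly the same issue that lurks in the paper's symmetry step: their claim that the gradient correction integrates to zero over the near region needs $\sum_{\alpha}\int_{z_\alpha+R_h}(z_\alpha-z)\,\dd\mu(z)=0$, which under the pairing $\alpha\leftrightarrow-\alpha$ reduces to $\mu$ placing no mass on the translated boundary faces. The paper does not comment on this, so your proposal is in fact more careful here; your suggested fix via a symmetric cube with a symmetric tie-breaking rule is a reasonable way to close the gap in both arguments.
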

\begin{proof}
The following inequality is just a use of the definitions,
\begin{equation*}
\begin{split}
&\int_{\R^N}\left| \Levy_{h}^\mu[\psi](x)-\Levy^\mu[\psi](x) \right|\dd x\\
&= \int_{\R^N} \Bigg|\sum_{\alpha\not=0}\left(\psi(x+z_\alpha) -\psi(x)\right)\int_{z_\alpha+R_h}\dd \mu(z)\\
&\qquad-\sum_{\alpha\in \mathbb{Z}^N}\int_{z_\alpha+R_h}\lef\psi(x+z) -\psi(x)\rig\dd \mu(z)\Bigg|\dd x\\
 &\leq \int_{\R^N}\Bigg(\left| \int_{R_h}\lef\psi(x+z) -\psi(x)\rig\dd \mu(z)  \right|\\
 &\qquad+ \left|\sum_{\alpha\not=0} \int_{z_\alpha+R_h}\lef\psi(x+z_\alpha) -\psi(x+z)\rig\dd \mu(z)\right|\Bigg)\dd x.
\end{split}
\end{equation*}
We will show that both terms go to zero with $h$. Indeed, for $|z|\leq1$ we have that $|z|^2 \mathbf{1}_{R_h}(z)\to 0$ pointwise as $h\to0^+$. Then, by Lebesgue's dominated convergence theorem, \eqref{muassumption}, and  Lemma \ref{propnonlocal} (b), we have as $h\to0^+$
\[
\int_{\R^N}\left| \int_{R_h}\lef\psi(x+z) -\psi(x)\rig\dd \mu(z)  \right|\dd x\leq \frac{1}{2}\|D^2\psi\|_{L^1}\int_{|z|\leq1}|z|^2\mathbf{1}_{R_h}(z)\dd \mu(z)\to0.
\]

For the second term, we need to consider separately the cases when
when we are close or far from the origin. First note that for any $z
\in z_\alpha+{R_h}$ we have that $|z_\alpha-z| \leq
\sqrt{N}\frac{h}{2} $. Since $\mu$ satisfies \eqref{muassumption} and
$\psi\in C_\textup{c}^\infty(\R^N)$, 
\begin{equation*}
\begin{split}
I_{\text{ext}}&:=\int_{\R^N}\left|\sum_{|\alpha|h>1}\int_{z_\alpha+{R_h}}\lef\psi(x+z_\alpha) -\psi(x+z)\rig\dd \mu(z)\right|\dd x\\
&\leq\|D\psi\|_{L^1(\R^N)}\sum_{|\alpha|h>1}\int_{z_\alpha+{R_h}}|z_\alpha-z|\dd \mu(z)\\
&\leq h \frac{\sqrt{N}}{2} \|D\psi\|_{L^1(\R^N)}\int_{|z|>1/2}\dd \mu(z)\to 0 \quad \text{as} \quad h\to0^+.
\end{split}
\end{equation*}
On the other hand, by the symmetry of $\mu$ and also of the term in the sum, we have that 
\begin{equation*}
\begin{split}
I_{\text{int}}&:=\int_{\R^N}\left|\sum_{0<|\alpha|h\leq1}\int_{z_\alpha+{R_h}}\lef\psi(x+z_\alpha) -\psi(x+z)\rig\dd \mu(z)\right|\dd x\\
&\leq \int_{\R^N}\left|\sum_{0<|\alpha|h\leq1}\int_{z_\alpha+{R_h}}\lef\psi(x+z_\alpha) -\psi(x+z)-(z_\alpha-z)\cdot D \psi(x)\rig \dd \mu(z)\right|\dd x.
\end{split}
\end{equation*}
We make use of the Taylor expansions
\begin{equation*}
\begin{split}
\psi(x+z_\alpha)&=\psi(x+z)+D \psi(x+z)\cdot(z_\alpha-z)+ G_1(x,z,z_\alpha)(z_\alpha-z)\cdot (z_\alpha-z) \\
D\psi(x+z)&=D\psi(x)+G_2(x,z)\cdot z
\end{split}
\end{equation*}
where   $\| G_1\|_{L^1(\R^N,\dd x)}+\| G_2\|_{L^1(\R^N, \dd x)}\leq C \|D^2\psi\|_{L^1(\R^N)}$ for some constant $C>0$. In this way,
\begin{equation*}
\begin{split}
I_{\text{int}}&\leq \int_{\R^N}\left|\sum_{0<|\alpha|h\leq1}\int_{z_\alpha+{R_h}}\lef G_2 z\cdot (z_\alpha-z) +G_1 (z_\alpha-z)\cdot (z_\alpha-z)\rig \dd \mu(z)\right|\dd x\\
&=C \|D^2\psi\|_{L^1(\R^N)}\sum_{0<|\alpha|h\leq1}\int_{z_\alpha+{R_h}}\lef |z||z_\alpha-z|+|z_\alpha-z|^2\rig\dd\mu(z)\\
&\leq C \|D^2\psi\|_{L^1(\R^N)} \int_{\frac{h}{2}<|z|\leq1+ \sqrt{N}\frac{h}{2}}\frac{h}{2}\left(|z|+\frac{h}{2}\right)^2\dd\mu(z) \to0 \quad \text{as} \quad h \to0^+.
\end{split}
\end{equation*}
Since the integrand is dominated by $2|z|^2$  which is an integrable function with respect to the measure $\mu$ on the set $\{z\in\R^N : |z|\leq1\}$ by \eqref{muassumption}, the last term goes to zero by Lebesgue's dominated convergence theorem.
\end{proof}

\begin{proof}[Proof of Proposition \ref{semi-discreteapproxproperties}]
Note that by Lemmas \ref{discretelocal} and \ref{discretenonlocal}, $L_h^\sigma$
and $\Levy_h^\mu$ are in the class of operators defined by
\eqref{deflevy} and \eqref{muassumption}.
\medskip 

\noindent(a) Existence, uniqueness and regularity follow from Theorem
\ref{existenceparabolic}. 

\medskip 

\noindent(b) Follows from Theorem \ref{consequences} (c) and (d) and interpolation.

\medskip 

\noindent(c) Lemmas \ref{discretelocal} and \ref{discretenonlocalconv} ensure the $L^1$-consistency.

\medskip 

\noindent(d) Follows from Theorem \ref{consequences} (b).

\medskip 

\noindent(e) Follows from Theorem \ref{consequences} (f).
\end{proof}

\begin{proof} [Proof of Proposition \ref{semi-discreteapprox}]
By Lemmas \ref{discretelocal} and \ref{discretenonlocal} and
Proposition \ref{semi-discreteapproxproperties} (c),
\begin{gather*}
\sup_{h}\int_{|z|>0}\min\{|z|^2,1\}\dd
(\mu_{h,\sigma}+\nu_h)(z)<\infty,
\intertext{and}
\|\left(L_h^\sigma + \Levy_h^\mu\right)[\psi]-\left(L^\sigma +
  \Levy^\mu\right)[\psi]\|_{L^1}\to 0\qquad\text{as}\qquad h\to 0^+.
\end{gather*}
Since also $\varphi$ and $u_0$ are fixed (that is, independent of
$h$), by Theorem \ref{compactness}
there is a subsequence $\{u_{h_n}\}_{n\in\N}$ of solutions of
\eqref{En}, that converge
in $C([0,T];L^1_{\mathrm{loc}}(\R^N))$ to
a function $u$. Moreover, this function $u$ is a
distributional solution of \eqref{limitprob}. Finally, $u$ also
belongs to $L^\infty(Q_T)\cap L^1(Q_T)$ by Proposition
  \ref{semi-discreteapproxproperties} (b) and Fatou's lemma.
\end{proof}

\begin{proof}[Proof of Corollary \ref{generallevyprocessexistence}]
  Any limit point $u$ from Proposition \ref{semi-discreteapprox} is a
  distributional solution of \eqref{limiteq} and \eqref{IC}. 
\end{proof}

\begin{proof}[Proof of Theorem \ref{semi-discreteapprox2}]
By Proposition \ref{semi-discreteapprox} 
 there is a converging subsequence with a
limit $u$ which has the right regularity and is a 
distributional solution of \eqref{limitprob}. Assume there is a
subsequence that converge to another limit $v$. Then by Proposition
\ref{semi-discreteapprox} again, there is a subsubsequence that
converge to a limit which is a distributional solution. By
uniqueness 
of the limit, $v$ is a distributional solution. But then $v=u$ by the
uniqueness given in Corollary \ref{uniquenessresult} for the case 
$\sigma\equiv0$ or the local result in \cite{BrCr79}. Hence all
subsequence limits are equal to $u$ and since the sequence
itself is bounded (Proposition \ref{semi-discreteapproxproperties}
(b)), the whole sequence converges to $u$.
\end{proof}

\section{Auxiliary elliptic equation}
\label{ellipticsection}

In this section we study the elliptic equation \eqref{elliptic}
introduced in Section \ref{proofofuniqueness} with the ultimate goal 
to prove Theorem \ref{collectedresultselliptic} and Lemma
\ref{symmetryB}.  
We will also need the following approximation of \eqref{elliptic} where the measure $\mu$ is replaced by $\mu_r:=\mathbf{1}_{|z|>r}\mu$:
\begin{equation}\label{approxelliptic}
\veps v_{\veps,r}(x)-\Levy^{\mu_r}[v_{\veps,r}](x)=g(x)  \quad\text{in}\quad \R^N,
\end{equation}
with $\veps>0$,
$$
\Levy^{\mu_r}[\psi](x)=\int_{|z|>0} \lef\psi(x+z)-\psi(x)\rig\dd\mu_r(z).$$
 Note that for any $r>0$, the operator $\Levy^{\mu_r}[\psi]$ is well-defined for merely bounded $\psi$, and that Lemma \ref{propnonlocal} also holds for $\Levy^{\mu_r}$: see Remark \ref{nonlocalwelldefined} (b).
Also recall the notation $B_\veps^\mu=(\veps 
I- \Levy^\mu)^{-1}$ and define $B_{\veps}^{\mu_r}:=(\veps I-\Levy^{\mu_r})^{-1}$.
\begin{remark}\label{propapproxelliptic}
\begin{enumerate}[(a)]
\item Since \eqref{elliptic} and \eqref{approxelliptic} are linear
  equations, we have formally, for any multiindex $\alpha\in
  \mathbb{N}^N$, that $D^\alpha v$ is a solution of \eqref{elliptic}
  or \eqref{approxelliptic} with right hand side $D^\alpha g$ if $v$
  is a solution of the same equation with right hand side $g$.
\item Let $\psi\in C_\textup{b}^2(\R^N)$, and let $p\in\{1,\infty\}$. Since 
$$
\left(\Levy^\mu-\Levy^{\mu_r}\right)[\psi](x)=\int_{|z|\leq r}\lef\psi(x+z)-\psi(x)-z\cdot D\psi(x)\rig\dd\mu(z),
$$
we have that $\Levy^{\mu_r}[\psi]\to\Levy^\mu[\psi]$ in $L^p(\R^N)$ as $r\to0^+$ by Lemma \ref{propnonlocal} (b) and Lebesgue's dominated convergence theorem.
\end{enumerate}
\end{remark}

\subsection{Preliminary results}
We will state and prove a very general Stroock-Varopoulos type of inequality
which is of independent interest. First we consider the bounded
operators $\Levy^{\mu_r}$.

\begin{lemma}\label{Stroock-Var}
Assume \eqref{muassumption}, $\psi\in L^\infty(\R^N)\cap L^1(\R^N)$ and $\zeta\in C(\R)$ is nondecreasing. Then for any $r>0$ we have,
\begin{equation*}
\begin{split}
I_r&:=\int_{\R^N} \zeta(\psi(x)) \Levy^{\mu_r} [\psi](x) \dd x\\
&=-\frac{1}{2}\int_{\R^N}\int_{|z|>0} \big(\zeta(\psi(x+z))-\zeta(\psi(x)) \big)\big(\psi(x+z)-\psi(x)\big)\dd\mu_r(z)\dd x,
\end{split}
\end{equation*}
and in particular, $I_r\leq0$. 
\end{lemma}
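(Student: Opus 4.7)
The plan is to rewrite $I_r$ as a symmetric double integral and then use the monotonicity of $\zeta$ pointwise. The key technical point is that $\mu_r$ is a \emph{finite} measure: indeed $\mu_r(\R^N)=\mu(\{|z|>r\})\leq r^{-2}\int_{|z|\le 1}|z|^2\dd\mu(z)+\int_{|z|>1}\dd\mu(z)<\infty$ by \eqref{muassumption}, and $\mu_r$ inherits symmetry from $\mu$ since the set $\{|z|>r\}$ is symmetric about $0$. Hence $\Levy^{\mu_r}$ admits the nonsingular representation recalled in Remark \ref{nonlocalwelldefined} (b).

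First I would unfold the definition, writing
\begin{equation*}
I_r=\int_{\R^N}\int_{|z|>0}\zeta(\psi(x))\bigl(\psi(x+z)-\psi(x)\bigr)\dd\mu_r(z)\dd x.
\end{equation*}
Because $\psi\in L^1\cap L^\infty$, $\zeta(\psi)\in L^\infty$, and $\mu_r$ is finite, the integrand is absolutely integrable (bounded in $L^1(\R^N\times\R^N,\dd x\otimes\dd\mu_r)$ by $\|\zeta(\psi)\|_{L^\infty}(\|\psi\|_{L^1}\mu_r(\R^N)+\|\psi\|_{L^\infty}|\supp\psi|\mu_r(\R^N))$, or more cleanly by splitting the two terms), so Fubini applies.

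Next I would perform the translation $y=x+z$ in the inner integral (for each fixed $z$) and then use the symmetry of $\mu_r$ (replacing $z$ by $-z$) to rewrite
\begin{equation*}
I_r=-\int_{\R^N}\int_{|z|>0}\zeta(\psi(x+z))\bigl(\psi(x+z)-\psi(x)\bigr)\dd\mu_r(z)\dd x.
\end{equation*}
Averaging this identity with the original expression for $I_r$ produces the desired symmetrized formula
\begin{equation*}
I_r=-\frac{1}{2}\int_{\R^N}\int_{|z|>0}\bigl(\zeta(\psi(x+z))-\zeta(\psi(x))\bigr)\bigl(\psi(x+z)-\psi(x)\bigr)\dd\mu_r(z)\dd x.
\end{equation*}
Finally, since $\zeta$ is nondecreasing we have $(\zeta(a)-\zeta(b))(a-b)\ge 0$ for all $a,b\in\R$, so the integrand on the right is nonnegative; combined with the minus sign this gives $I_r\leq 0$.

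No step is genuinely hard; the only thing to be careful about is the justification of Fubini and of the translation/symmetrization (which is where one really uses both the finiteness of $\mu_r$ and the symmetry of the truncated measure). Everything else is bookkeeping.
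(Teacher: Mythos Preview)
Your argument is correct and in fact more direct than the paper's. Both proofs arrive at the symmetrized formula by writing $I_r$ in two equivalent ways and averaging, but they implement the change of variables differently. You swap the order of integration via Fubini, then for each fixed $z$ translate in the Lebesgue variable $x\mapsto x-z$, and finally use the symmetry $z\mapsto -z$ of $\mu_r$. The paper instead rewrites the integral in terms of the translated measure $\mu_{-x}$ (so that the nonlocal variable becomes an absolute point rather than a displacement), relabels $x\leftrightarrow z$, and then appeals to a separate measure-theoretic lemma (Lemma \ref{lemmameasure}, proved in the appendix) to justify that $\int\!\!\int f\,\dd\mu_{-z}(x)\dd z=\int\!\!\int f\,\dd\mu_{-x}(z)\dd x$. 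Your route avoids this auxiliary lemma entirely, relying only on translation invariance of Lebesgue measure and symmetry of $\mu_r$; what the paper's formulation buys is perhaps a template that generalizes to situations where the product structure $\dd x\otimes\dd\mu_r$ is less explicit, but for the present statement your version is cleaner. One minor remark: your first bound invoking $|\supp\psi|$ is not quite right since $\psi$ need not be compactly supported, but your parenthetical ``splitting the two terms'' gives the correct estimate $2\|\zeta(\psi)\|_{L^\infty}\|\psi\|_{L^1}\,\mu_r(\R^N)$.
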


\begin{remark}\label{stroockwelldef}
More generally, the above lemma holds as long as the integral $I_r$ is
well-defined for $\psi$ and $\zeta(\psi)$. 
\end{remark}

In the proof we need a technical lemma
which wil be proven in Appendix \ref{appendix}. 

\begin{lemma}\label{lemmameasure}
Assume $\nu$ is a nonnegative, symmetric and locally finite Borel measure on $\R^N$. Let $\As,\Bs$ be Borel sets on $\R^N$, and let
\begin{equation*}
M_{1}(\As,\Bs)=\int_{\As}\left( \int_{\Bs-z}\dd\nu(x)\right)\dd z=\int_{\As}\nu(\Bs-z)\dd z.
\end{equation*}
\begin{equation*}
M_2(\As,\Bs)=\int_{\Bs}\left( \int_{\As-z}\dd\nu(x)\right)\dd z=\int_{\Bs}\nu(\As-z)\dd z.
\end{equation*}
Then $M_{1}(\As,\Bs)=M_2(\As,\Bs)$.
\end{lemma}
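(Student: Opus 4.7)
The plan is to reduce both $M_1(A,B)$ and $M_2(A,B)$ to a common double integral and then identify them via the symmetry $\nu=\nu^\ast$. Since $\nu$ is locally finite on the $\sigma$-compact space $\R^N$ (write $\R^N=\bigcup_n B(0,n)$), it is $\sigma$-finite, so Tonelli's theorem applies freely to nonnegative Borel integrands throughout.

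First I would rewrite the inner set integrals as integrals of indicator functions, namely $\nu(B-z)=\int_{\R^N}\mathbf{1}_B(x+z)\,d\nu(x)$, and then swap the order of integration via Tonelli:
$$M_1(A,B)=\int_A\int_{\R^N}\mathbf{1}_B(x+z)\,d\nu(x)\,dz=\int_{\R^N}\int_A\mathbf{1}_B(x+z)\,dz\,d\nu(x).$$
The translation invariance of Lebesgue measure (via $w=x+z$) converts the inner integral into $|B\cap(A+x)|$, so $M_1(A,B)=\int_{\R^N}|B\cap(A+x)|\,d\nu(x)$. Exactly the same argument with the roles of $A$ and $B$ swapped gives $M_2(A,B)=\int_{\R^N}|A\cap(B+x)|\,d\nu(x)$.

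The remaining step is to match these two expressions. Substituting $x\mapsto -x$ in the formula for $M_1$ and using $\nu^\ast=\nu$ yields
$$M_1(A,B)=\int_{\R^N}|B\cap(A-x)|\,d\nu(x).$$
The elementary translation identity $|B\cap(A-x)|=|(B+x)\cap A|$ (apply the translation $y\mapsto y+x$ inside the Lebesgue measure) then identifies the integrand with that of $M_2(A,B)$, completing the proof.

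The argument has no deep content beyond Tonelli and symmetry, so the main thing to be careful about is the bookkeeping: verifying $\sigma$-finiteness of $\nu$ to license Tonelli, tracking the reflection change of variables correctly to apply $\nu^\ast=\nu$, and keeping the Lebesgue translation invariance distinct from the symmetry of $\nu$ itself. No integrability hypothesis on the indicators is needed because everything is nonnegative, and both sides may a priori be $+\infty$ simultaneously, which is consistent with the claimed equality.
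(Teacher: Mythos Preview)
Your proof is correct and follows essentially the same approach as the paper's: both arguments use Tonelli to swap the order of integration, reduce to an integral of the form $\int_{\R^N}|A\cap(B\pm x)|\,d\nu(x)$ via translation invariance of Lebesgue measure, and then match the two expressions using the symmetry $\nu=\nu^\ast$. The only cosmetic difference is that the paper introduces auxiliary sets $S$, $G_x$, $\hat S$, $\hat G_x$ and applies the symmetry of $\nu$ to $M_2$ before Fubini, whereas you integrate over all of $\R^N$ directly and apply the symmetry to $M_1$ after Fubini; your presentation is slightly more streamlined but the substance is identical.
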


\begin{proof}[Proof of Lemma \ref{Stroock-Var}]
Observe that $\zeta(\psi)\in L^\infty(\R^N)$, and since $\int_{\R^N}|\Levy^{\mu_r}[\psi]|\dd x\leq2\|\psi\|_{L^1}\int_{|z|>r}\dd\mu(z)$, $\Levy^{\mu_r}[\psi]\in L^1(\R^N)$. Hence $I_r$ is well-defined.

By the symmetry of $\mu$, the gradient term in the nonlocal operator vanishes. Fubini's theorem and a relabelling of the variables gives
\begin{equation*}
\begin{split}
I_r
=&\,\int_{\R^N}  \zeta(\psi(x)) \int_{|z|>0} \lef\psi(x+z)-\psi(x)\rig  \dd\mu_r(z)\dd x
\\
=&\,\int_{\R^N}\int_{|z-x|>0} \zeta(\psi(x))\big(\psi(z))-\psi(x)  \big)\mathbf{1}_{|z-x|>r}\dd\mu_{-x}(z)\dd x
\\
=&\,\int_{\R^N}\int_{|x-z|>0} \zeta(\psi(z))\big(\psi(x)-\psi(z)\big)\mathbf{1}_{|x-z|>r}\dd\mu_{-z}(x)\dd z.
\end{split}
\end{equation*}
Since $\mu$ is a nonnegative, symmetric and finite Radon measure
on $\R^N$ (and hence a Borel measures), we can use Lemma
\ref{lemmameasure} to see that
\begin{align*}&\int_{\R^N}\int_{|x-z|>0}
  \zeta(\psi(z))\big(\psi(x)-\psi(z)\big)\mathbf{1}_{|x-z|>r}\dd\mu_{-z}(x)\dd
  z\\
&=\int_{\R^N}\int_{|x-z|>0}
\zeta(\psi(z))\big(\psi(x)-\psi(z)\big)\mathbf{1}_{|x-z|>r}\dd\mu_{-x}(z)\dd
x.
\end{align*}
It then follows that
\begin{equation*}
\begin{split}
2I_r=&\, -\int_{\R^N}\int_{|z-x|>0} \zeta(\psi(x))\big(\psi(x))-\psi(z)  \big)\mathbf{1}_{|z-x|>r}\dd\mu_{-x}(z)\dd x\\
&\,+\int_{\R^N}\int_{|x-z|>0} \zeta(\psi(z))\big(\psi(x)-\psi(z)\big)\mathbf{1}_{|x-z|>r}\dd\mu_{-x}(z)\dd x\\
=&-\int_{\R^N}\int_{|z-x|>0}\big(\zeta(\psi(x))-\zeta(\psi(z)) \big)\big(\psi(x)-\psi(z)\big)\mathbf{1}_{|z-x|>r}\dd\mu_{-x}(z) \dd x.
\end{split}
\end{equation*}
Since $ \big(\zeta(\psi(x))-\zeta(\psi(z)) \big)\big(\psi(x)-\psi(z)\big)\geq0$ for all $x,z\in \R^N$, $I_r\leq0$.
\end{proof}

Now we give the general result, considering the general nonlocal operator $\Levy^\mu$. 

\begin{corollary}[General Stroock-Varopoulos]
Assume \eqref{muassumption}, and $\zeta\in C^1(\R)$ such that $\zeta'\geq0$.
\begin{enumerate}[(a)]
\item Let $\psi\in W^{1,\infty}(\R^N)\cap W^{1,1}(\R^N)$. Then
\begin{equation*}
\begin{split}
I&:=\int_{\R^N} \zeta(\psi(x)) \Levy^{\mu} [\psi](x) \dd x\\
&=-\frac{1}{2}\int_{\R^N}\int_{|z-x|>0} \big(\zeta(\psi(z))-\zeta(\psi(x)) \big)\big(\psi(z)-\psi(x)\big)\dd\mu(z)\dd x\\
&\leq 0.
\end{split}
\end{equation*}
\item Let $\psi\in W^{2,\infty}(\R^N)\cap W^{2,1}(\R^N)$. If $Z\in C^2(\R)$ is such that $Z(0)=0$ and $(Z')^2=\zeta'$, then
\begin{equation*}
\begin{split}
\int_{\R^N} \zeta(\psi(x)) \Levy^{\mu} [\psi](x) \dd x\leq&-\frac{1}{2}\int_{\R^N}\int_{|z-x|>0} \big(Z(\psi(z))-Z(\psi(x)) \big)^2 d\mu_{-x}(z)\dd x.
\end{split}
\end{equation*}
Moreover,
\begin{equation*}
\begin{split}
\Big(Z(\psi),\Levy^\mu[Z(\psi)]\Big)_{L^2(\R^N)}&=-\frac{1}{2}\int_{\R^N}\int_{|z-x|>0} \big(Z(\psi(z))-Z(\psi(x)) \big)^2 d\mu_{-x}(z)\dd x\\
&=-\left\| (\Levy^\mu)^{\frac{1}{2}}[Z(\psi)]\right\|_{L^2}^2.
\end{split}
\end{equation*}
\end{enumerate}
\end{corollary}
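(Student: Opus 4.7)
The plan is to reduce part (a) to the already-proven bounded case (Lemma \ref{Stroock-Var}) by truncating the measure as $\mu_r=\mathbf{1}_{|z|>r}\mu$ and then letting $r\to 0^+$; part (b) will then follow from part (a) via a pointwise Cauchy–Schwarz identity relating $\zeta$ and $Z$.

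For part (a), I would first apply Lemma \ref{Stroock-Var} with $\mu$ replaced by $\mu_r$ (valid because $\psi\in W^{1,\infty}\cap W^{1,1}\subset L^\infty\cap L^1$, so $\Levy^{\mu_r}[\psi]\in L^1$ while $\zeta(\psi)\in L^\infty$, cf.\ Remark \ref{stroockwelldef}) to obtain
\begin{equation*}
\int_{\R^N}\zeta(\psi)\Levy^{\mu_r}[\psi]\dd x
=-\tfrac{1}{2}\int_{\R^N}\!\int_{|z-x|>0}\!\big(\zeta(\psi(z))-\zeta(\psi(x))\big)\big(\psi(z)-\psi(x)\big)\mathbf{1}_{|z-x|>r}\dd\mu_{-x}(z)\dd x.
\end{equation*}
The integrand on the right is nonnegative (since $\zeta$ is nondecreasing), so monotone convergence identifies its $r\to 0^+$ limit with the claimed full integral. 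To check finiteness, I would split at $|y|=1$ (with $y=z-x$): on $\{|y|\leq 1\}$ use $|\zeta(\psi(x+y))-\zeta(\psi(x))|\leq \|\zeta'\|_{L^\infty(I)}\|D\psi\|_{L^\infty}|y|$ with $I=[-\|\psi\|_{L^\infty},\|\psi\|_{L^\infty}]$, combined with the $W^{1,1}$-bound $\int_{\R^N}|\psi(x+y)-\psi(x)|\dd x\leq \|D\psi\|_{L^1}|y|$, to dominate by $\|\zeta'\|_{L^\infty(I)}\|D\psi\|_{L^\infty}\|D\psi\|_{L^1}\int_{|y|\leq1}|y|^2\dd\mu(y)<\infty$; on $\{|y|>1\}$ dominate by $4\|\zeta(\psi)\|_{L^\infty}\|\psi\|_{L^1}\mu(\{|y|>1\})<\infty$. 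For the left side, for smooth $\psi$ the convergence $\Levy^{\mu_r}[\psi]\to \Levy^\mu[\psi]$ in $L^1$ from Remark \ref{propapproxelliptic}(b) immediately passes the limit; the general case is recovered by mollification $\psi_\delta=\psi\ast\omega_\delta$ and a second limit $\delta\to 0^+$, using the uniform bounds just derived.

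Part (b) follows by combining part (a) with the pointwise inequality
\begin{equation*}
(Z(b)-Z(a))^2=\Big(\int_a^b Z'(s)\dd s\Big)^2\leq |b-a|\int_a^b (Z'(s))^2\dd s=(b-a)\big(\zeta(b)-\zeta(a)\big),
\end{equation*}
which is Cauchy–Schwarz together with $(Z')^2=\zeta'$. Substituting $(a,b)=(\psi(x),\psi(z))$ into the right-hand side of part (a) gives the displayed inequality. For the two identities that follow, I would apply part (a) with the choice $\zeta(r)=r$ to the function $Z(\psi)$ in place of $\psi$; note $Z(\psi)\in W^{1,\infty}\cap W^{1,1}$ because $Z\in C^2$ with $Z(0)=0$ and $\psi\in W^{1,\infty}\cap W^{1,1}$. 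This yields the first equality. The final identification with $-\|(\Levy^\mu)^{1/2}[Z(\psi)]\|_{L^2}^2$ comes from Lemma \ref{fouriersymbol} via Remark \ref{remarkfouriersymbol}(b), after checking that $Z(\psi)\in L^2(\R^N)$ (since $|Z(\psi)|\leq C|\psi|$ and $L^1\cap L^\infty\subset L^2$) and $\Levy^\mu[Z(\psi)]\in L^2(\R^N)$ (since $\psi\in W^{2,\infty}\cap W^{2,1}\subset W^{2,2}$, the chain rule gives $Z(\psi)\in W^{2,2}$, and Lemma \ref{propnonlocal}(b) applies).

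The main obstacle will be the left-hand limit in part (a) for rough $\psi$, since at $W^{1,\infty}\cap W^{1,1}$ regularity the singular integral $\Levy^\mu[\psi]$ need not exist pointwise; the mollification argument needs the quadratic translation estimate $\int|\psi(x+y)-\psi(x)|^2\dd x\leq \|D\psi\|_{L^\infty}\|D\psi\|_{L^1}|y|^2$, with the Lebesgue-integrable majorant $|y|^2$ on $\{|y|\leq 1\}$ supplied by \eqref{muassumption}, to simultaneously control both sides uniformly in $\delta$. Alternatively the statement can be read as \emph{defining} the left side by the symmetric double integral on the right, which is unambiguously finite for all $\psi\in W^{1,\infty}\cap W^{1,1}$.
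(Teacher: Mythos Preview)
Your proposal is correct and follows essentially the same route as the paper: truncate $\mu$ to $\mu_r$, invoke Lemma~\ref{Stroock-Var}, and pass to the limit $r\to 0^+$ using the very integrability bound you wrote down (this is exactly the paper's estimate~\eqref{integrabilityofzetapsi}); for part~(b) the paper uses the same Cauchy--Schwarz/Jensen pointwise inequality~\eqref{fundineq}, then applies part~(a) to $Z(\psi)$ with $\zeta=\mathrm{id}$ and finishes via Lemma~\ref{fouriersymbol} and Remark~\ref{remarkfouriersymbol}(b), just as you do. Your extra care about the left-hand side in~(a)---noting that Remark~\ref{propapproxelliptic}(b) literally needs $C_\textup{b}^2$ and proposing a mollification step (or reading $I$ as defined by the right-hand side)---is a point the paper glosses over, so if anything your version is slightly more rigorous there.
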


\begin{remark}
The (energy) norm in part (b) is much studied when
$\Levy^\mu=-(-\Delta)^\frac{s}{2}$, $s\in(0,2)$, and $Z=I$
(see \cite{App09,Hitch2012}). In this case 
\[
\Big(\psi,(-\Delta)^\frac{s}{2}\psi\Big)_{L^2(\R^N)}=\frac{1}{2}\int_{\R^N}\int_{|z-x|>0} \frac{\big(\psi(z)-\psi(x) \big)^2}{|z-x|^{N+s}}\dd z\dd x=\Big\|(-\Delta)^{\frac{s}{4}}\psi\Big\|_{L^2(\R^N)}^2.
\]
This is called the Gagliardo (semi)norm of $\psi$ and is denoted by $[\psi]_{W^{\frac{s}{2},2}(\R^N)}$.
\end{remark}

\begin{proof}
(a) By Remark \ref{propapproxelliptic} (b),
\begin{equation*}
\left|\int_{\R^N}\zeta(\psi)(\Levy^{\mu}-\Levy^{\mu_r})[\psi]\dd x\right|\leq\|\zeta(\psi)\|_{L^\infty}\|(\Levy^{\mu}-\Levy^{\mu_r})[\psi]\|_{L^1}\to0 \qquad\text{as}\qquad r\to0^+,
\end{equation*}
and we may send $r\to0^+$ in Lemma \ref{Stroock-Var} to get
\begin{equation*}
\begin{split}
&\int_{\R^N} \zeta(\psi(x)) \Levy^{\mu} [\psi](x) \dd x\\
&=-\frac{1}{2}\lim_{r\to0^+}\int_{\R^N}\int_{|z-x|>r} \big(\zeta(\psi(z))-\zeta(\psi(x)) \big)\big(\psi(z)-\psi(x)\big)\dd\mu_{-x}(z)\dd x.
\end{split}
\end{equation*}
By the assumptions on $\zeta, \psi$ and \eqref{muassumption}, $\big(\zeta(\psi(z))-\zeta(\psi(x)) \big)\big(\psi(z)-\psi(x)\big)\geq0$ is integrable with respect to $\dd\mu_{-x}(z)\dd x$ on $\R^N\times \R^N\backslash \{0\}$ since 
\begin{equation}\label{integrabilityofzetapsi}
\begin{split}
&\int_{\R^N}\left|\int_{|z-x|>0}\big(\zeta(\psi(z))-\zeta(\psi(x)) \big)\big(\psi(z)-\psi(x)\big)\dd\mu_{-x}(z)\right|\dd x\\
&\leq \|\zeta'(\psi)\|_{L^\infty}\|D\psi\|_{L^\infty}\|D\psi\|_{L^1}\int_{|z|\leq1}|z|^2\dd\mu(z)\\
&\qquad+4\|\zeta(\psi)\|_{L^\infty}\|\psi\|_{L^1}\int_{|z|>1}\dd \mu(z).
\end{split}
\end{equation}
Thus, Lebesgue's dominated convergence theorem gives the desired result.
\medskip

\noindent
(b) For $a,b\in \R$, the Fundamental Theorem of Calculus and Jensen's inequality gives the following pointwise inequality:
\begin{equation}\label{fundineq}
\begin{split}
\left(Z(b)-Z(a) \right)^2=&\left(\int_a^b Z'(t)\dd t\right)^2\leq (b-a) \int_a^b \big(Z'(t)\big)^2\dd t\\
=&\,(b-a) \int_a^b \zeta'(t)\dd t =(b-a)(\zeta(b)-\zeta(a)).
\end{split}
\end{equation}

By the assumptions, we can easily check that $Z(\psi)\in W^{2,\infty}(\R^N)\cap W^{2,1}(\R^N)$. So the integral
$$
-\frac{1}{2}\int_{\R^N}\int_{|z-x|>0} \big(Z(\psi(z))-Z(\psi(x)) \big)^2 d\mu_{-x}(z)\dd x
$$
is well-defined using a similar argument as in \eqref{integrabilityofzetapsi}. Then, part (a) and \eqref{fundineq} gives the first result of part (b).

Next, part (a) yields
\[
\Big(Z(\psi),\Levy^\mu[Z(\psi)]\Big)_{L^2(\R^N)}=-\frac{1}{2}\int_{\R^N}\int_{|z-x|>0} \big(Z(\psi(z))-Z(\psi(x)) \big)^2 d\mu_{-x}(z)\dd x.
\]
Moreover, since $Z(\psi)\in W^{2,\infty}(\R^N)\cap W^{2,1}(\R^N)$, then by Lemma  \ref{propnonlocal} (b) and interpolation, both $Z(\psi)$ and $\Levy^\mu[Z(\psi)]$ are in $L^2(\R^N)$. We then conclude the proof by application of Lemma \ref{fouriersymbol} and Remark \ref{remarkfouriersymbol} (b).
\end{proof}

\subsection{Results for the approximate elliptic equation \eqref{approxelliptic}}
We will now focus on proving some a priori, uniqueness, existence, and
stability results for \eqref{approxelliptic}. 


\begin{proposition}\label{compapproxelliptic}
Assume \eqref{muassumption}.
\begin{enumerate}[(a)]
\item If $g\in L^\infty(\R^N)$ and $v_{\veps,r}\in L^\infty(\R^N)$ solves $\veps
  v_{\veps,r}-\Levy^{\mu_r}[v_{\veps,r}]\leq g$ a.e., then
$$
\veps\|(v_{\veps,r})^+\|_{L^\infty(\R^N)}\leq\|(g)^+\|_{L^\infty(\R^N)}
.$$
\item If $g\in L^\infty(\R^N)$ and $v_{\veps,r}\in L^\infty(\R^N)$ is
  an a.e. solution of \eqref{approxelliptic}, then
$$\veps\|v_{\veps,r}\|_{L^\infty(\R^N)}\leq \|g\|_{L^\infty(\R^N)}.$$
\item Let $g, \hat{g},v_{\veps,r},\vv_{\veps,r}\in L^\infty(\R^N)$, $\veps
  v_{\veps,r}-\Levy^{\mu_r}[v_{\veps,r}]\leq g$ a.e. and $\veps
  \vv_{\veps,r}-\Levy^{\mu_r}[\vv_{\veps,r}]\geq \hat g$ a.e. If $g\leq \hat{g}$ a.e., then
  $v_{\veps,r}\leq\vv_{\veps,r}$ a.e.
\end{enumerate}
\end{proposition}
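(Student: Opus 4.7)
The plan is to prove (a) by a nonlocal ``essential-maximum'' argument exploiting the fact that $\Levy^{\mu_r}$ is a bounded operator on $L^\infty(\R^N)$: since $\mu_r=\mathbf{1}_{|z|>r}\mu$, assumption \eqref{muassumption} gives $\mu_r(\R^N)<\infty$, and by symmetry of $\mu$ one has the representation
$$\Levy^{\mu_r}[v_{\veps,r}](x)=(v_{\veps,r}\ast\mu_r)(x)-v_{\veps,r}(x)\,\mu_r(\R^N)\qquad\text{for a.e. }x\in\R^N.$$
Once (a) is in hand, (b) follows from applying (a) to both $\pm v_{\veps,r}$, and (c) follows by applying (a) to the difference $v_{\veps,r}-\vv_{\veps,r}$, which solves an analogous inequality with nonpositive right-hand side.

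For (a), I would set $M:=\|(v_{\veps,r})^+\|_{L^\infty(\R^N)}$ and assume $M>0$ (otherwise there is nothing to prove). Fix $0<\delta<M$, and let $A_\delta:=\{x:v_{\veps,r}(x)>M-\delta\}$, which has positive Lebesgue measure by definition of the essential supremum. Since $v_{\veps,r}\leq M$ a.e.\ and $\mu_r\geq 0$, Fubini's theorem applied to the convolution yields $(v_{\veps,r}\ast\mu_r)(x)\leq M\mu_r(\R^N)$ for a.e.\ $x$, hence by the identity above
$$\Levy^{\mu_r}[v_{\veps,r}](x)\leq\bigl(M-v_{\veps,r}(x)\bigr)\mu_r(\R^N)\qquad\text{for a.e. }x\in\R^N.$$
Combining this with the hypothesis $\veps v_{\veps,r}\leq g+\Levy^{\mu_r}[v_{\veps,r}]$ and restricting to a.e.\ $x\in A_\delta$ (where $v_{\veps,r}(x)>M-\delta$), I obtain
$$\veps(M-\delta)<\veps v_{\veps,r}(x)\leq g(x)+\bigl(M-v_{\veps,r}(x)\bigr)\mu_r(\R^N)\leq\|(g)^+\|_{L^\infty}+\delta\,\mu_r(\R^N).$$
Sending $\delta\to 0^+$ yields $\veps M\leq\|(g)^+\|_{L^\infty}$.

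For (b), since $-v_{\veps,r}$ satisfies $\veps(-v_{\veps,r})-\Levy^{\mu_r}[-v_{\veps,r}]=-g$ a.e., applying (a) to both $v_{\veps,r}$ and $-v_{\veps,r}$ gives $\veps\|v_{\veps,r}^\pm\|_{L^\infty}\leq\|(g)^\pm\|_{L^\infty}\leq\|g\|_{L^\infty}$, hence $\veps\|v_{\veps,r}\|_{L^\infty}\leq\|g\|_{L^\infty}$. For (c), the difference $w:=v_{\veps,r}-\vv_{\veps,r}$ satisfies $\veps w-\Levy^{\mu_r}[w]\leq g-\hat g\leq 0$ a.e.\ by linearity, so (a) gives $\veps\|w^+\|_{L^\infty}\leq 0$, i.e.\ $v_{\veps,r}\leq\vv_{\veps,r}$ a.e. The only subtle point is the convolution representation used in (a): since $v_{\veps,r}$ is defined only up to Lebesgue-null sets while $\mu_r$ need not be absolutely continuous, one must justify that the pointwise integral $\int v_{\veps,r}(x+z)\,\dd\mu_r(z)$ equals $(v_{\veps,r}\ast\mu_r)(x)$ for a.e.\ $x$ (Lebesgue); this is immediate from Fubini once $\mu_r(\R^N)<\infty$ is observed, and it is what makes the essential-supremum argument go through in the absence of any pointwise regularity of $v_{\veps,r}$.
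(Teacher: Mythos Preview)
Your proof is correct and uses the same underlying maximum-principle idea as the paper, but the technical implementation differs in a way worth noting. The paper proves (a) in two steps: first it assumes $g,v_{\veps,r}\in C_\textup{b}(\R^N)$, picks a near-maximum point $x_\delta$ with $v_{\veps,r}(x_\delta)+\delta>\sup v_{\veps,r}$, and reads off the bound from the equation at $x_\delta$; then it reduces the general $L^\infty$ case to the continuous one by mollifying $v_{\veps,r}$ and the inequality. You instead work directly in $L^\infty$, replacing the near-maximum point by the positive-measure set $A_\delta=\{v_{\veps,r}>M-\delta\}$ and using Fubini (with $\mu_r(\R^N)<\infty$) to get the a.e.\ bound $\Levy^{\mu_r}[v_{\veps,r}](x)\leq (M-v_{\veps,r}(x))\mu_r(\R^N)$ without any regularity on $v_{\veps,r}$. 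Your route is a bit more direct since it avoids the mollification step; the paper's route has the advantage of making the pointwise argument completely classical before passing to the limit. Parts (b) and (c) are derived from (a) in exactly the same way in both arguments.
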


\begin{proof}
(a) Assume first that $g,v_{\veps,r}\in
C_\textup{b}(\R^N)$. Then for all $\delta >0$ there exists a $x_\delta \in \R^N$ such that
\[
v_{\veps,r}(x_\delta)+\delta> \sup\{v_{\veps,r}\}.
\]
Then, since $v_{\veps,r}$ is an a.e. solution, 
\begin{equation*}
\begin{split}
\veps v_{\veps,r}(x_\delta)&\leq g(x_\delta)+\int_{|z|>0}\lef v_{\veps,r}(x_\delta+z)-v_{\veps,r}(x_\delta)\rig\dd \mu_r(z)\\
&\leq \|(g)^+\|_{L^\infty(\R^N)}+\int_{|z|>r} \lef\sup\{v_{\veps,r}\}-v_{\veps,r}(x_\delta)\rig\dd \mu(z)\\
&\leq  \|(g)^+\|_{L^\infty(\R^N)} +\delta \mu(\{z\in \R^N: |z|>r\}).
\end{split}
\end{equation*}
Hence,
\[
\veps  \sup\{v_{\veps,r}\} < \veps v_{\veps,r}(x)+\veps \delta \leq   \|(g)^+\|_{L^\infty(\R^N)} +\delta \big(\veps+\mu(\{z\in \R^N: |z|>r\})\big),
\]
and we pass to the limit as $\delta \to 0^+$ to get
\[
\veps  \sup\{v_{\veps,r}\} \leq \|(g)^+\|_{L^\infty}.
\]

In the general case, when $g,v_{\veps,r}\in L^{\infty}(\R^N)$, we need
a regularization argument. Let $v_{\veps,r}^\delta:=\omega_\delta\ast
v_{\veps,r}$ and mollify the inequality to see that 
$$\veps v_{\veps,r}^\delta-\Levy^{\mu_r}[v_{\veps,r}^\delta]\leq
g_\delta\qquad\text{in}\qquad \R^N.$$
By the first part of the proof and
the properties of mollifiers,
\begin{equation*}
v_{\veps,r}(x)\leq|v_{\veps,r}^\delta(x)-v_{\veps,r}(x)|+v_{\veps,r}^\delta(x)\leq o(1) + \frac{1}{\veps}\|(g)^+\|_{L^\infty(\R^N)} \quad \text{as} \quad \delta \to 0^+
\end{equation*}
for a.e. $x\in \R^N$. Part (a) follows.
\medskip

\noindent (b)
In a similar way as in (a), we find that $\veps  \sup\{-v_{\veps,r}\} \leq
\|(g)^-\|_{L^\infty(\R^N)}$ and combine with (a) to conclude that $\veps \|v_{\veps,r}\|_{L^\infty(\R^N)}\leq \|g\|_{L^\infty(\R^N)}$.
\medskip

\noindent (c) Since $w=v_{\veps,r}-\hat v_{\veps,r}$ solves $\veps
w-\Levy^{\mu_r}[w] \leq g-\hat g$, by (a) and the
assumptions, it follows that $\veps\sup \{w\}\leq \|(g-\hat g)^+\|_{L^\infty(\R^N)}=0$.
\end{proof}

\begin{proposition}[Existence and
  uniqueness]\label{existapproxelliptic}
Assume \eqref{muassumption}.
\begin{enumerate}[(a)]
\item If $g\in C_\textup{b}(\R^N)$, then there exists a unique classical solution $v_{\veps,r}\in C_\textup{b}(\R^N)$ of \eqref{approxelliptic}.
\item If $g\in L^\infty(\R^N)$, then there exists a unique a.e. solution $v_{\veps,r}\in L^\infty(\R^N)$ of \eqref{approxelliptic}.
\item If $g\in L^1(\R^N)$, then there exists a unique a.e. solution $v_{\veps,r}\in L^1(\R^N)$ of \eqref{approxelliptic}.
\item If $g\in C_\textup{b}^\infty(\R^N)$, then there exists a unique classical solution $v_{\veps,r}\in C_\textup{b}^\infty(\R^N)$ of \eqref{approxelliptic}. Moreover,
$$
\veps\|D^\alpha v_{\veps,r}\|_{L^\infty}\leq\|D^\alpha g\|_{L^\infty}
$$
for each multiindex $\alpha\in \mathbb{N}^N$.
\end{enumerate}
\end{proposition}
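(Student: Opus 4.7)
The plan is to exploit the Lévy condition \eqref{muassumption}: since $\mu_r=\mathbf{1}_{|z|>r}\mu$, the constant
$$C_r := \mu(\{|z|>r\})<\infty,$$
so $\mu_r$ is a \emph{finite} measure. The operator $\Levy^{\mu_r}$ is therefore bounded on each of $C_\textup{b}(\R^N)$, $L^\infty(\R^N)$, and $L^1(\R^N)$, and the four parts should follow from a Neumann series / contraction argument. Specifically, I would rewrite \eqref{approxelliptic} as
$$(\veps+C_r)\,v_{\veps,r}-T_r v_{\veps,r}=g, \qquad T_r\psi(x):=\int_{|z|>r}\psi(x+z)\,\dd\mu(z),$$
and first verify that $T_r$ is a bounded linear operator with operator norm at most $C_r$ on each of the three spaces above: the $L^\infty$ and $C_\textup{b}$ bounds are immediate from $|T_r\psi(x)|\leq C_r\|\psi\|_{L^\infty}$ (continuity of $T_r\psi$ when $\psi\in C_\textup{b}$ comes from Lebesgue dominated convergence against the finite measure $\mu_r$), while $\|T_r\psi\|_{L^1}\leq C_r\|\psi\|_{L^1}$ follows from Fubini and translation invariance.

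For existence in parts (a), (b), (c), since $\veps+C_r>C_r\geq\|T_r\|$, the Neumann series
$$v_{\veps,r}=\frac{1}{\veps+C_r}\sum_{k=0}^{\infty}\frac{T_r^k g}{(\veps+C_r)^k}$$
converges absolutely in the relevant space ($C_\textup{b}$, $L^\infty$, or $L^1$) and furnishes a solution in that space. For uniqueness in (a) and (b) I would simply invoke Proposition \ref{compapproxelliptic} (b), applied to the difference of two solutions with $g\equiv 0$. For the $L^1$ case (c), the comparison result does not directly apply, but if $w\in L^1$ satisfies $\veps w-\Levy^{\mu_r}[w]=0$ a.e., then $(\veps+C_r)w=T_r w$ a.e., so $(\veps+C_r)\|w\|_{L^1}=\|T_r w\|_{L^1}\leq C_r\|w\|_{L^1}$, forcing $w\equiv 0$.

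For part (d), assuming $g\in C_\textup{b}^\infty(\R^N)$, I would differentiate the Neumann series term-by-term. The key point is that differentiation commutes with $T_r$: for any multiindex $\alpha$,
$$D^\alpha(T_r\psi)(x)=\int_{|z|>r}D^\alpha\psi(x+z)\,\dd\mu(z)=T_r(D^\alpha\psi)(x),$$
by dominated convergence (valid because $\mu_r$ is finite and every $D^\beta g$ is bounded). Hence $v_{\veps,r}\in C_\textup{b}^\infty(\R^N)$ and $D^\alpha v_{\veps,r}$ is the (unique) $C_\textup{b}^\infty$ solution of \eqref{approxelliptic} with $g$ replaced by $D^\alpha g$, as observed in Remark \ref{propapproxelliptic} (a). The bound $\veps\|D^\alpha v_{\veps,r}\|_{L^\infty}\leq\|D^\alpha g\|_{L^\infty}$ is then a direct application of Proposition \ref{compapproxelliptic} (b) to $D^\alpha v_{\veps,r}$.

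I do not expect any serious obstacle: the proof is essentially fixed-point theory once the boundedness of $T_r$ is set up. The only genuinely delicate points are (i) verifying that $T_r$ preserves continuity (so that the limit of the Neumann series lies in $C_\textup{b}$ rather than just $L^\infty$) and (ii) the justification of term-by-term differentiation in (d), both handled by dominated convergence against the finite measure $\mu_r$.
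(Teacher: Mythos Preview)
Your proposal is correct and matches the paper's approach closely. The paper also rewrites \eqref{approxelliptic} as a fixed-point equation for the map $v\mapsto\frac{1}{\veps+C_r}(T_r v+g)$ and invokes Banach's fixed point theorem on $C_\textup{b}$, $L^\infty$, and $L^1$ (your Neumann series is just the Picard iteration written out); the only minor difference is in part (d), where the paper passes to derivatives via difference quotients and the $L^\infty$ bound from Proposition \ref{compapproxelliptic} (b) rather than differentiating the series term-by-term, but the underlying mechanism---$D^\alpha$ commutes with the solution operator---is the same.
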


\begin{proof}
The proofs of (a), (b), and (c) follow from standard
  arguments using Banach's fixed point theorem. Let $X$ denote any of
  one of the spaces $C_\textup{b}(\R^N), L^\infty(\R^N)$, and
  $L^1(\R^N)$, and note that $X$ is a Banach space. Let the operator
  $T$ be such that \eqref{approxelliptic} is equivalent to the fixed
  point equation $T[u]=u$: 
$$
T[v_\veps](x):=\frac{1}{\veps+\int_{|z|>r}\dd \mu(z)}\left(\int_{|z|>r}v_\veps(x+z)\dd\mu(z)+g(x)\right).
$$
It is easy to check that $T$ is a bounded linear operator on
$X$, and straightforward computations also shows it is a contraction:
$$
\|T[v_\veps]-T[\hat{v}_\veps]\|_{X}\leq \alp\|v_\veps-\hat{v}_\veps\|_{X}\quad\text{for}\quad\alp=\frac{\int_{|z|>r}\dd\mu(z)}{\veps+\int_{|z|>r}\dd \mu(z)}<1.
$$
Hence by Banach's fixed point theorem there exists a unique
$v_\veps\in X$ such that $v_\veps=T[v_\veps]$ in $X$ and then also
a.e. (everywhere if $X=C_b$).
\medskip

\noindent (d) Let $v_{\veps,r}=B_{\veps}^{\mu_r}[g]$ and define $\delta_{i,h}\psi$ by
$$
\delta_{i,h}\psi(x)=\frac{\psi(x+he_i)-\psi(x)}{h}.
$$

By part (a), we have uniqueness for $C_\textup{b}(\R^N)$ solutions of \eqref{approxelliptic}. Hence, $v_{\veps,r}(x+he_i)=B_{\veps}^{\mu_r}[g(\cdot+he_i)](x)$, and then by uniqueness and linearity $\delta_{i,h}v=B_{\veps}^{\mu_r}[\delta_{i,h}g]$. In addition, there exists a unique $w_{i,\veps,r}\in C_\textup{b}(\R^N)$ such that $w_{i,\veps,r}=B_{\veps}^{\mu_r}[\dell_{x_i}g]$.

Using linearity and Proposition \ref{compapproxelliptic} (b), we get
$$
\veps\|w_{i,\veps,r}-\delta_{i,h}v_{\veps,r}\|_{L^\infty}\leq\|\dell_{x_i} g-\delta_{i,h}g\|_{L^\infty}.
$$
When $h\to0^+$, $\delta_{i,h}g\to\dell_{x_i}g$ uniformly on $\R^N$, and hence $\delta_{i,h}v_{\veps,r}\to w_{i,\veps,r}$ in $L^\infty$. This implies that $\dell_{x_i}v_{\veps,r}=w_{i,\veps,r}$. Moreover, by Proposition \ref{compapproxelliptic} (b), 
$$
\|\dell_{x_i}v_{\veps,r}\|_{L^\infty}=\|w_{i,\veps,r}\|_{L^\infty}\leq\|\dell_{x_i}g\|_{L^\infty}.
$$

A similar argument shows that for each multiindex $\alpha\in \mathbb{N}^N$, $D^\alpha v_{\veps,r}=B_{\veps}^{\mu_r}[D^\alpha g]$, and hence belongs to $C_\textup{b}(\R^N)$.
\end{proof}

\begin{corollary}\label{g+}
Assume \eqref{muassumption} and $g\in C_\textup{b}(\R^N)$. If $(g)^+ \in L^1(\R^N)$, then $(B_\veps^{{\mu}_r}[g])^+\in L^1(\R^N)$.
\end{corollary}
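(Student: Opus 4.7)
The plan is to use the comparison principle together with the existence theorem for $L^1$-data to dominate $v_{\veps,r}=B_\veps^{\mu_r}[g]$ by the solution corresponding to the positive part $(g)^+$. Since $g\in C_\textup{b}(\R^N)$, the function $(g)^+$ is again continuous and bounded, and by assumption it belongs to $L^1(\R^N)$; thus it lies in $C_\textup{b}(\R^N)\cap L^\infty(\R^N)\cap L^1(\R^N)$.

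First I would set $w:=B_\veps^{\mu_r}[(g)^+]$. Proposition \ref{existapproxelliptic} (a)--(c) and the uniqueness in each space guarantee that $w$ is well defined as an element of $C_\textup{b}(\R^N)\cap L^\infty(\R^N)\cap L^1(\R^N)$ simultaneously; in particular, $w$ solves $\veps w-\Levy^{\mu_r}[w]=(g)^+$ classically (and a.e.). Next I would apply the comparison principle in Proposition \ref{compapproxelliptic} (c) twice. On one hand, since $g\leq (g)^+$, comparing the two resolvent equations yields $v_{\veps,r}\leq w$ a.e. On the other hand, since $0\leq(g)^+$ and the zero function trivially satisfies $\veps\cdot 0-\Levy^{\mu_r}[0]=0\leq(g)^+$, the same principle gives $0\leq w$.

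Combining these two inequalities produces the pointwise bound
\begin{equation*}
0\leq (v_{\veps,r})^+\leq w \qquad\text{a.e. in}\qquad \R^N,
\end{equation*}
and since $w\in L^1(\R^N)$, we conclude $(B_\veps^{\mu_r}[g])^+=(v_{\veps,r})^+\in L^1(\R^N)$, as desired.

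There is no real obstacle here; the only mild subtlety is checking that all the regularity/integrability hypotheses needed to apply Propositions \ref{existapproxelliptic} and \ref{compapproxelliptic} hold simultaneously for $(g)^+$, which is immediate from $g\in C_\textup{b}(\R^N)$ together with the assumption $(g)^+\in L^1(\R^N)$.
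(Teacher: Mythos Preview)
Your proof is correct and follows essentially the same strategy as the paper: both dominate $(B_\veps^{\mu_r}[g])^+$ by $w=B_\veps^{\mu_r}[(g)^+]\in L^1(\R^N)$ after observing $w\geq 0$. The only minor difference is that the paper reaches the inequality $B_\veps^{\mu_r}[g]\leq w$ via the linear decomposition $B_\veps^{\mu_r}[g]=B_\veps^{\mu_r}[(g)^+]-B_\veps^{\mu_r}[(g)^-]$ together with $B_\veps^{\mu_r}[(g)^-]\geq 0$, whereas you obtain it in one step from the comparison principle applied directly to $g\leq(g)^+$; your route is marginally more economical since it never introduces $B_\veps^{\mu_r}[(g)^-]$.
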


\begin{proof}
Note that $g\in C_\textup{b}(\R^N)$ implies that $(g)^+\in C_\textup{b}(\R^N)$. By Proposition \ref{existapproxelliptic} (a) and (c), and the assumption on $(g)^+$, we have that have that $B_\veps^{{\mu}_r}[g]\in C_\textup{b}(\R^N)$ and $B_\veps^{{\mu}_r}[(g)^+] \in L^1(\R^N)\cap C_\textup{b}(\R^N)$ are the unique classical solutions of \eqref{approxelliptic} with right-hand sides $g, (g)^+$, respectively. Proposition \ref{compapproxelliptic} (c) ensures that $B_\veps^{{\mu}_r}[(g)^+] \geq 0$ since $(g)^+\geq0$. In the same way, we get $B_\veps^{{\mu}_r}[-(g)^-] \in C_\textup{b}(\R^N) $ and $B_\veps^{{\mu}_r}[-(g)^-] \leq 0$. 

Adding the equations for $B_{\veps}^{\mu_r}[(g)^+]$ and $B_{\veps}^{\mu_r}[-(g)^-]$, and noting that $(g)^+-(g)^-=g \in C_\textup{b}(\R^N)$, we get
\[
\veps \Big(B_\veps^{{\mu}_r}[(g)^+]- B_\veps^{{\mu}_r}[(g)^-]  \Big)-\Levy^{\mu_r}\Big[ B_\veps^{{\mu}_r}[(g)^+]- B_\veps^{{\mu}_r}[(g)^-] \Big]= g.
\]
It follows that $B_\veps^{{\mu}_r}[g]=B_\veps^{{\mu}_r}[(g)^+]- B_\veps^{{\mu}_r}[(g)^-]$ by uniqueness. We conclude that $0\leq (B_\veps^{{\mu}_r}[g])^+\leq B_\veps^{{\mu}_r}[(g)^+]$, and thus, $(B_\veps^{{\mu}_r}[g])^+\in L^1(\R^N)$.
\end{proof}

\subsection{Results for the elliptic equation \eqref{elliptic}}
Now, we state and prove comparison, uniqueness and existence results
for classical solutions of \eqref{elliptic}. 
These results will be
obtained from the corresponding results for \eqref{approxelliptic} and
limit procedures. 

\begin{lemma}[Comparison]\label{classicalcompelliptic}
Assume \eqref{muassumption}, $g,\hat{g}\in L^\infty(\R^N)$, and $v_\veps, \vv_\veps\in C^2(\R^N)\cap L^\infty(\R^N)$ are solutions of \eqref{elliptic} with right-hand sides $g,\hat{g}$ respectively. If $g\leq \hat{g}$ a.e., then $v_\veps\leq \vv_\veps$ in $\R^N$.
\end{lemma}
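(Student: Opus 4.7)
The plan is to reduce to the case of smooth bounded solutions via mollification, and then to approximate $\Levy^\mu$ by the bounded operators $\Levy^{\mu_r}$ from \eqref{approxelliptic}, for which the comparison principle in Proposition \ref{compapproxelliptic} already applies. Setting $w:=v_\veps-\vv_\veps\in C^2(\R^N)\cap L^\infty(\R^N)$, linearity gives $\veps w-\Levy^\mu[w]=g-\hat{g}\leq 0$ a.e.\ in $\R^N$, and the goal is $w\leq 0$ everywhere.

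The main obstacle is that only $C^2\cap L^\infty$ regularity of $w$ is assumed, with no global bound on $D^2 w$, so the remainder in the naive splitting $\Levy^\mu=\Levy^{\mu_r}+(\Levy^\mu-\Levy^{\mu_r})$ cannot be made uniformly small on $\R^N$. To bypass this I would mollify and set $w^\delta:=w*\omega_\delta$, which lies in $C^\infty(\R^N)\cap L^\infty(\R^N)$ with all derivatives globally bounded. Lemma \ref{propnonlocal} (a) yields the pointwise integrability needed to apply Fubini and conclude that $\Levy^\mu$ commutes with convolution, so
\[
\veps w^\delta-\Levy^\mu[w^\delta]=(g-\hat{g})*\omega_\delta\leq 0\qquad\text{pointwise in }\R^N,
\]
the last inequality being a consequence of $\omega_\delta\geq 0$.

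With the regularized problem in hand, I would write $\Levy^\mu[w^\delta]=\Levy^{\mu_r}[w^\delta]+(\Levy^\mu-\Levy^{\mu_r})[w^\delta]$ and use a second-order Taylor expansion together with $D^2w^\delta\in L^\infty(\R^N)$ to obtain
\[
\big\|(\Levy^\mu-\Levy^{\mu_r})[w^\delta]\big\|_{L^\infty(\R^N)}\leq \tfrac{1}{2}\|D^2w^\delta\|_{L^\infty(\R^N)}\int_{|z|\leq r}|z|^2\dd\mu(z)=:\eta_r(\delta),
\]
where $\eta_r(\delta)\to 0$ as $r\to 0^+$ by \eqref{muassumption}. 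Then $\veps w^\delta-\Levy^{\mu_r}[w^\delta]\leq\eta_r(\delta)$ in $\R^N$, and Proposition \ref{compapproxelliptic} (a) gives $\veps\|(w^\delta)^+\|_{L^\infty(\R^N)}\leq\eta_r(\delta)$. Letting $r\to 0^+$ yields $w^\delta\leq 0$ on $\R^N$; since $w$ is continuous and $w^\delta\to w$ pointwise as $\delta\to 0^+$, this concludes $w\leq 0$, i.e.\ $v_\veps\leq\vv_\veps$ in $\R^N$.
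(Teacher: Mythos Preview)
Your proof is correct and follows essentially the same approach as the paper: mollify $w$ to gain global bounds on second derivatives, split $\Levy^\mu=\Levy^{\mu_r}+(\Levy^\mu-\Levy^{\mu_r})$, apply Proposition \ref{compapproxelliptic} (a), and send $r\to0^+$. The only cosmetic difference is that the paper first treats the case $w\in C_\textup{b}^2(\R^N)$ and then reduces the general $C^2\cap L^\infty$ case to it by mollification, whereas you mollify from the outset; the underlying argument is identical.
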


\begin{proof} Note that $w=v_\veps-\vv_\veps$ solves $\veps w-\Levy^{\mu}[w]\leq
0$, and hence, also 
$$\veps w-\Levy^{\mu_r}[w]\leq
\|(\Levy^{\mu}-\Levy^{\mu_r})[w]\|_{L^\infty(\R^N)}.$$
By Proposition \ref{compapproxelliptic} (a), it then follows that
$$\veps \|(w)^+\|_{L^\infty(\R^N)}\leq \|(\Levy^{\mu}-\Levy^{\mu_r})[w]\|_{L^\infty(\R^N)}.$$
Assume for moment that $w\in C_\textup{b}^2(\R^N)$. Then  by
Remark \ref{propapproxelliptic} (b), 
$$\|(\Levy^{\mu}-\Levy^{\mu_r})[w]\|_{L^\infty(\R^N)}\to
0 \qquad\text{as}\qquad r\to 0^+,$$
and we conclude that $w\leq0$.

The general case follows by mollification:
$w_\delta=w\ast\omega_\delta$ (cf. \eqref{mollifierspace})
satisfies $\veps w_\delta-\Levy^{\mu}[w_\delta]\leq 0$ and hence by
the first part of the proof and properties of mollifiers,
$$w(x)\leq w_\delta(x)+|w(x)-w_{\delta}(x)|\leq 0 + o(1)
\qquad\text{as}\qquad\delta\to 0^+$$
for every $x\in\R^N$. The proof is complete.
\end{proof}

\begin{corollary}[Uniqueness]\label{classicaluniqueelliptic}
  Assume \eqref{muassumption}, and $g\in L^\infty(\R^N)$. Then there
  is at most one classical solution 
  $v_\veps\in C^2(\R^N)\cap L^\infty(\R^N)$ of \eqref{elliptic}.
\end{corollary}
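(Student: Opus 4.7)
The plan is to derive this uniqueness statement as an immediate consequence of the comparison principle in Lemma \ref{classicalcompelliptic}. The setup is the standard one for linear elliptic problems: if two solutions $v_\veps, \hat v_\veps \in C^2(\R^N)\cap L^\infty(\R^N)$ both solve \eqref{elliptic} with the same right-hand side $g$, then I apply the comparison lemma twice.

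First I would invoke Lemma \ref{classicalcompelliptic} with $v_\veps$ as the subsolution (right-hand side $g$) and $\hat v_\veps$ as the supersolution (right-hand side $\hat g := g$). Since trivially $g \leq \hat g$ a.e., the lemma gives $v_\veps \leq \hat v_\veps$ pointwise in $\R^N$. Then I swap the roles of the two solutions in the hypothesis of the same lemma, yielding $\hat v_\veps \leq v_\veps$ in $\R^N$. Combining these inequalities gives $v_\veps = \hat v_\veps$ everywhere, which is the claim.

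There is no real obstacle here: the work has already been done in establishing the comparison lemma (which itself relied on the bounded-operator comparison from Proposition \ref{compapproxelliptic} (a) together with the regularization via mollifiers and the convergence $\Levy^{\mu_r}[w]\to\Levy^\mu[w]$ from Remark \ref{propapproxelliptic} (b)). Since both hypothesized solutions lie in $C^2\cap L^\infty$, the difference $w = v_\veps-\hat v_\veps$ belongs to the same class, so Lemma \ref{classicalcompelliptic} applies directly without any further regularization. The corollary therefore requires only the two applications of comparison described above; no new estimates, no passage to the limit, and no additional structural assumption on $\mu$ beyond \eqref{muassumption} are needed.
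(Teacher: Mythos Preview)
Your proposal is correct and follows exactly the paper's approach: the corollary is obtained as a one-line consequence of the comparison principle in Lemma \ref{classicalcompelliptic}, applied twice with the roles of $v_\veps$ and $\hat v_\veps$ interchanged.
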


\begin{proof}
If $g=\hat{g}$ a.e., then Lemma \ref{classicalcompelliptic} gives $v_\veps=\vv_\veps$ in $\R^N$.
\end{proof}

\begin{proposition}[Existence and Stability]\label{approxtoelliptic}
Assume \eqref{muassumption}, $g\in C_\textup{b}^\infty(\R^N)$, $\veps>0$. 
\begin{enumerate}[(a)]
\item There exists a unique classical solution $\B[g]=v_\veps\in C_\textup{b}^2$ of \eqref{elliptic}.
\item Any sequence $\{v_{\veps,r_n}\}_{n\in\N}$ of solutions of \eqref{approxelliptic} converges locally uniformly to $v_\veps=\B[g]$ of part (a) as $r_n\to0^+$.
\end{enumerate}
\end{proposition}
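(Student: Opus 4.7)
The plan is to construct $v_\veps$ as a locally uniform limit of the approximate solutions $v_{\veps, r}$ of \eqref{approxelliptic}, and then invoke the uniqueness from Corollary \ref{classicaluniqueelliptic} to promote subsequential convergence to full convergence. The backbone is the uniform-in-$r$ derivative estimate from Proposition \ref{existapproxelliptic} (d): since $g \in C_\textup{b}^\infty(\R^N)$, for every $r > 0$ there is a classical solution $v_{\veps, r} \in C_\textup{b}^\infty(\R^N)$ of \eqref{approxelliptic} satisfying
\[
\veps \|D^\alpha v_{\veps, r}\|_{L^\infty(\R^N)} \leq \|D^\alpha g\|_{L^\infty(\R^N)} \quad \text{for every multiindex } \alpha \in \mathbb{N}^N.
\]

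First I would fix an arbitrary sequence $r_n \to 0^+$ and apply the Arzel\`a--Ascoli theorem together with a diagonal extraction: using the above bounds for $|\alpha| \leq 2$, the family $\{v_{\veps, r_n}\}$ together with its first and second derivatives is equibounded and equicontinuous on every compact subset of $\R^N$, so a subsequence (not relabeled) satisfies $D^\alpha v_{\veps, r_n} \to D^\alpha v_\veps$ locally uniformly for $|\alpha|\leq 2$, with $v_\veps \in C^2(\R^N)\cap L^\infty(\R^N)$ inheriting the bounds $\veps\|D^\alpha v_\veps\|_{L^\infty}\leq \|D^\alpha g\|_{L^\infty}$.

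The heart of the argument is passing to the limit in the equation at an arbitrary fixed $x \in \R^N$. Writing
\[
\Levy^{\mu_{r_n}}[v_{\veps, r_n}](x) = \int_{|z|>0}\bigl(v_{\veps, r_n}(x+z)-v_{\veps, r_n}(x) - z\cdot Dv_{\veps, r_n}(x) \indikator\bigr)\mathbf{1}_{|z|>r_n}\dd\mu(z),
\]
the integrand converges pointwise in $z$ to the corresponding integrand for $v_\veps$ thanks to the locally uniform convergence of $v_{\veps,r_n}$ and $Dv_{\veps,r_n}$. For $|z|\leq 1$, a second-order Taylor expansion bounds the integrand by $\tfrac12 |z|^2 \sup_n\|D^2 v_{\veps,r_n}\|_{L^\infty} \leq \tfrac{1}{2\veps}\|D^2 g\|_{L^\infty}|z|^2$; for $|z|>1$, it is dominated by $2\sup_n\|v_{\veps,r_n}\|_{L^\infty}\leq \tfrac{2}{\veps}\|g\|_{L^\infty}$. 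Both dominators are $\mu$-integrable by \eqref{muassumption}, so Lebesgue's dominated convergence theorem gives $\Levy^{\mu_{r_n}}[v_{\veps, r_n}](x) \to \Levy^\mu[v_\veps](x)$. Combined with $\veps v_{\veps,r_n}(x) \to \veps v_\veps(x)$, this yields $\veps v_\veps - \Levy^\mu[v_\veps] = g$ pointwise, proving (a).

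For part (b) and uniqueness, I would argue as follows: the limit $v_\veps$ from the extraction above satisfies $v_\veps \in C^2(\R^N)\cap L^\infty(\R^N)$ and solves \eqref{elliptic} classically, so by Corollary \ref{classicaluniqueelliptic} it is unique and independent of the extracted subsequence. A standard subsequence-of-subsequence argument then upgrades subsequential locally uniform convergence to locally uniform convergence of the full sequence $\{v_{\veps, r_n}\}$ to $v_\veps$. The main obstacle is really just the pointwise passage to the limit inside the singular integral, which is why the uniform $D^2$ estimate from Proposition \ref{existapproxelliptic} (d) is indispensable — without it, the small-$|z|$ part of $\Levy^{\mu_{r_n}}$ cannot be controlled as $r_n\to 0^+$.
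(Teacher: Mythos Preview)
Your proposal is correct and follows essentially the same route as the paper: uniform-in-$r$ derivative bounds from Proposition \ref{existapproxelliptic} (d), Arzel\`a--Ascoli extraction up to second derivatives, pointwise passage to the limit in $\Levy^{\mu_{r_n}}$ via dominated convergence, and then uniqueness from Corollary \ref{classicaluniqueelliptic} to upgrade to full-sequence convergence. One small slip: to get equicontinuity of $D^2 v_{\veps,r_n}$ you need the $|\alpha|=3$ bound, not just $|\alpha|\leq 2$ --- the paper uses this explicitly (and also spells out via a Taylor expansion that the limits of $Dv_n$, $D^2v_n$ really are $D\bar v$, $D^2\bar v$), but since your estimates hold for every $\alpha$ this is a trivial fix.
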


\begin{proof}
(a) Let $0<r_n\to0^+$ as $n\to\infty$, and let $v_n:=v_{\veps,r_n}\in C_\textup{b}^\infty(\R^N)$ be the unique solution of $\eqref{approxelliptic}$ given by Proposition \ref{existapproxelliptic} (d). Moreover, for all $n>0$,
\begin{equation*}
\begin{split}
\veps\|v_{n}\|_{L^\infty}\leq\|g\|_{L^\infty}, &\qquad\veps\|Dv_{n}\|_{L^\infty}\leq\|Dg\|_{L^\infty},\\ 
\veps\|D^2v_{n}\|_{L^\infty}\leq\|D^2g\|_{L^\infty}, &\qquad\veps\|D^3 v_{n}\|_{L^\infty}\leq\|D^3g\|_{L^\infty}.
\end{split}
\end{equation*}
The sequences $\{v_{n}\}_{n>0}$, $\{Dv_{n}\}_{n>0}$ and $\{D^2v_{n}\}_{n>0}$ are thus equibounded and equilipschitz. By Arzel\`a-Ascoli's theorem there exists a subsequence (still denoted by $v_{n}$, $Dv_{n}$ and $D^2v_{n}$) such that $(v_{n}, Dv_{n}, D^2v_{n})$ converges locally uniformly (and hence a.e.) as $n\to\infty$ to a limit $(\overline{v}, \overline{Dv}, \overline{D^2v})$ which is bounded and continuous. 

We check that $D\overline{v}=\overline{Dv}$ and $D^2\overline{v}=\overline{D^2v}$. Let $\alpha \in \mathbb{N}^N$ denote a multiindex. By Taylor's theorem
\begin{equation}\label{taylorapproxelliptic}
\begin{split}
v_n(y)=&\,v_n(x)+Dv_n(x)\cdot(y-x)+\frac{1}{2}D^2v_n(x)(y-x)\cdot(y-x)\\
&\,+\sum_{|\alpha|=3}\frac{3}{\alpha!}(y-x)^{\alpha}\int_0^1(1-t)^2D^\alpha v_n(x+t(y-x))\dd t.
\end{split}
\end{equation}
Since
$$
\left|\sum_{|\alpha|=3}\frac{3}{\alpha!}(y-x)^{\alpha}\int_0^1(1-t)^2D^\alpha v_n(x+t(y-x))\dd t\right|\leq\frac{1}{\veps}\|D^3 g\|_{L^\infty}\sum_{|\alpha|=3}\frac{|y-x|^\alpha}{\alpha !},
$$
we can take the locally uniform limit in \eqref{taylorapproxelliptic} as $n\to\infty$ to obtain that
\begin{equation*}
\begin{split}
\overline{v}(y)=\overline{v}(x)+\overline{Dv}(x)\cdot(y-x)+\frac{1}{2}\overline{D^2v}(x)(y-x)\cdot(y-x)+o(|y-x|^2)\quad\text{as}\quad y\to x.
\end{split}
\end{equation*}
By definition, it then follows that $D\overline{v}=\overline{Dv}$ and $D^2\overline{v}=\overline{D^2v}$.

We now go to the limit in \eqref{approxelliptic} as $r_n\to0^+$, and we may assume that $r_n<1$. In order to show the convergence, the nonlocal operator in \eqref{approxelliptic} will be written as
\begin{equation*}
\begin{split}
\Levy^{\mu_{r_n}}[v_n](x)=\Levy_1^{\mu_{r_n}}[v_n](x)+\int_{|z|>1}\lef v_n(x+z)-v_n(x)\rig\dd\mu(z),
\end{split}
\end{equation*}
with
$$
\Levy_1^{\mu_{r_n}}[v_n](x):=\int_{|z|\leq1}\lef v_n(x+z)-v_n(x)-z\cdot Dv_n(x)\rig\dd\mu_{r_n}(z).
$$

By the triangle inequality and Lemma \ref{propnonlocal} (a),
\begin{equation*}
\begin{split}
&\left|\Levy_1^{\mu_{r_n}}[v_n](x)-\Levy_1^{\mu}[\overline{v}](x)\right|\\
&\leq\left|\Levy_1^{\mu_{r_n}}[v_n-\overline{v}](x)\right|+\left|(\Levy_1^{\mu_{r_n}}-\Levy_1^{\mu})[\overline{v}](x)\right|\\
&\leq \frac{1}{2}\max_{|z|\leq1}\left|D^2v_n(x+z)-D^2\overline{v}(x+z)\right|\int_{|z|\leq1}|z|^2\dd\mu(z)\\
&\quad+\frac{1}{2}\max_{|z|\leq1}\left|D^2\overline{v}(x+z)\right|\int_{|z|\leq1}|z|^2\mathbf{1}_{|z|\leq {r_n}}\dd \mu(z).
\end{split}
\end{equation*}
So, the local uniform convergence and Lebesgue's dominated convergence theorem ensures that $\left|\Levy_1^{\mu_{r_n}}[v_n](x)-\Levy_1^{\mu}[\overline{v}](x)\right|\to0$ as ${r_n}\to0^+$ for all $x\in \R^N$. The remaining term in the nonlocal operator also converges by Lebesgue's dominated convergence theorem:
$$
\int_{|z|>1}\lef v_n(x+z)-v_n(x)\rig\dd\mu(z)\to\int_{|z|>1}\lef\overline{v}(x+z)-\overline{v}(x)\rig\dd\mu(z)\qquad\text{as}\qquad r_n\to0^+.
$$ 
Sending ${r_n}\to0^+$ in \eqref{approxelliptic} then shows that $\overline{v}$ solves \eqref{elliptic}. Moreover, the limit is unique by Corollary \ref{classicaluniqueelliptic}.
\medskip

\noindent
(b) In fact, part (a) shows that all limit points of the sequences $\{v_{\veps,r_n}\}_{n\in\N}$ coincide by uniqueness (see Corollary \ref{classicaluniqueelliptic}). By Proposition \ref{existapproxelliptic} (d), every sequence is bounded, and hence, the whole sequence converge locally uniformly to the solution of \eqref{elliptic} as $r_n\to0^+$.
\end{proof}

\begin{proposition}\label{l1+estimate}
Assume \eqref{muassumption}, $g\in C_\textup{b}^\infty(\R^N)$, $\veps>0$, and $v_\veps=\B[g]$. If $(g)^+\in L^1(\R^N)$, then
\begin{equation*}
\veps\int_{\R^N} (v_\veps)^+\dd x  \leq  \int_{\R^N} (g)^+\dd x.
\end{equation*}
\end{proposition}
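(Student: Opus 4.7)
The plan is to first establish the inequality for the approximate elliptic problem \eqref{approxelliptic} with the bounded operator $\Levy^{\mu_r}$, where the extra integrability available (via Corollary \ref{g+}) makes a direct Stroock--Varopoulos type computation possible, and then to pass to the limit $r\to 0^+$ using the locally uniform convergence from Proposition \ref{approxtoelliptic} (b) together with Fatou's lemma.

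For the approximate problem, I would introduce a smooth nondecreasing convex approximation $\Phi_\eta\in C^1(\R)$ of $r\mapsto r^+$, with $\Phi_\eta(r)=0$ for $r\leq 0$, $\Phi_\eta(r)\leq r^+$, $\Phi_\eta(r)\uparrow r^+$ as $\eta\to 0^+$, and with derivative $\phi_\eta=\Phi_\eta'\in[0,1]$ increasing pointwise to $\sgn^+$. Convexity of $\Phi_\eta$ yields the pointwise Kato-type inequality
$$\Phi_\eta(v_{\veps,r}(x+z))-\Phi_\eta(v_{\veps,r}(x))\geq \phi_\eta(v_{\veps,r}(x))\bigl(v_{\veps,r}(x+z)-v_{\veps,r}(x)\bigr),$$
which, upon integrating against $\dd\mu_r(z)$, gives $\Levy^{\mu_r}[\Phi_\eta(v_{\veps,r})]\geq \phi_\eta(v_{\veps,r})\,\Levy^{\mu_r}[v_{\veps,r}]$ pointwise (equivalently, this is the nonpositivity assertion of Lemma \ref{Stroock-Var} applied with $\zeta=\phi_\eta$). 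Since $(v_{\veps,r})^+\in L^1(\R^N)$ by Corollary \ref{g+}, we have $\Phi_\eta(v_{\veps,r})\in L^1(\R^N)$, and a Fubini argument (legitimate because $\mu_r$ is a finite measure) shows $\int_{\R^N}\Levy^{\mu_r}[\Phi_\eta(v_{\veps,r})]\dd x=0$. Hence $\int_{\R^N}\phi_\eta(v_{\veps,r})\,\Levy^{\mu_r}[v_{\veps,r}]\dd x\leq 0$.

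Next, I would multiply \eqref{approxelliptic} by $\phi_\eta(v_{\veps,r})$ and integrate (integrability being guaranteed since $\phi_\eta(v_{\veps,r})\leq (v_{\veps,r})^+/\eta+\mathbf{1}_{\{v_{\veps,r}>\eta\}}\in L^1$), obtaining
$$\veps\int_{\R^N} v_{\veps,r}\,\phi_\eta(v_{\veps,r})\dd x\leq \int_{\R^N} g\,\phi_\eta(v_{\veps,r})\dd x\leq \int_{\R^N}(g)^+\dd x,$$
where the last bound uses that $\phi_\eta(v_{\veps,r})\in[0,1]$ is supported in $\{v_{\veps,r}>0\}$. Monotone convergence as $\eta\to 0^+$ (since $r\phi_\eta(r)\uparrow r^+$) yields $\veps\int_{\R^N}(v_{\veps,r})^+\dd x\leq \int_{\R^N}(g)^+\dd x$. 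Finally, Proposition \ref{approxtoelliptic} (b) provides a sequence $r_n\to 0^+$ with $v_{\veps,r_n}\to v_\veps$ locally uniformly in $\R^N$, hence $(v_{\veps,r_n})^+\to (v_\veps)^+$ a.e., and Fatou's lemma concludes the proof.

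The main obstacle is managing integrability: the gradient term forbids a direct Kato inequality at the level of $\Levy^\mu$ (since $(v_\veps)^+$ is not $C^2$), which is why the approximation by the bounded operator $\Levy^{\mu_r}$ is essential, and why the $L^1$-bound on $(v_{\veps,r})^+$ furnished by Corollary \ref{g+} is the crucial input enabling both the Fubini step and the applicability of the Stroock--Varopoulos inequality outside the $L^1\cap L^\infty$ framework of Lemma \ref{Stroock-Var}.
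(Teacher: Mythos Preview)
Your argument is correct and follows the same overall strategy as the paper: establish the estimate for the approximate problem with $\Levy^{\mu_r}$ by testing with a smooth approximation of $\sgn^+$, use a Kato/Stroock--Varopoulos argument to dispose of the nonlocal term, pass $\eta\to0^+$, and finally send $r\to0^+$ via Proposition~\ref{approxtoelliptic}~(b) and Fatou.

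The genuine difference is that the paper inserts an additional spatial cutoff: it first solves $\veps u_R-\Levy^{\mu_r}[u_R]=g\mathcal{X}_R$ with $\mathcal{X}_R$ a compactly supported cutoff, obtains the estimate for $u_R$, and then sends $R\to\infty$ before $r\to0^+$. The reason is that the paper's $\zeta_\delta$ is taken strictly positive on $(0,\infty)$, so $\zeta_\delta(v_{\veps,r})$ need not lie in $L^1$ and the integral $\int g\,\zeta_\delta(v_{\veps,r})\dd x$ cannot be bounded directly; the cutoff forces this term to be finite. You sidestep this extra layer by exploiting Corollary~\ref{g+} more fully: once $(v_{\veps,r})^+\in L^1$, you can choose $\phi_\eta$ so that $\phi_\eta(v_{\veps,r})\in L^1$ (your stated bound requires $\phi_\eta(s)\leq s/\eta$ on $[0,\eta]$, which is not automatic but is easily arranged---e.g.\ by taking $\phi_\eta$ supported in $[\eta/2,\infty)$, whence $\phi_\eta(v_{\veps,r})\leq\mathbf{1}_{\{v_{\veps,r}>\eta/2\}}\in L^1$ by Chebyshev). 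With this, all three terms in the multiplied equation are integrable over $\R^N$, and no cutoff is needed. This eliminates one limiting procedure from the proof.

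One minor point: for the $\eta\to0^+$ step, dominated convergence (with dominator $(v_{\veps,r})^+\in L^1$) already suffices; monotonicity of $r\phi_\eta(r)$ in $\eta$ is not required.
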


\begin{proof}
By Proposition \ref{existapproxelliptic} (d), for any $r>0$, there exists a unique function $v_{\veps,r}\in C_\textup{b}^\infty(\R^N)$ such that
\begin{equation*}
\veps v_{\veps,r}(x) - \Levy^{\mu_r}[v_{\veps,r}](x)= g(x)\qquad\text{in}\qquad \R^N.
\end{equation*} 

Consider $\mathcal{X}\in C_\textup{c}^\infty(\R^N)$ such that $0\leq\mathcal{X}$,
\begin{equation*}
\mathcal{X}(x)=\begin{cases}
1 & |x|\leq1\\
0 & |x|>2
\end{cases},
\end{equation*}
and define $\mathcal{X}_R(x)=\mathcal{X}(\frac{x}{R})$ for $R>0$. Then for every $r>0$, by Proposition \ref{existapproxelliptic} (d), there exists a function $u_R\in C_\textup{b}^\infty(\R^N) $ such that 
\begin{equation}\label{eqaux13}
\veps u_R(x) - \Levy^{\mu_r}[u_R](x)= g(x)\mathcal{X}_R(x) \qquad \text{for all}\qquad x\in \R^N.
\end{equation}

Let $\zeta_\delta:\R\to\R_+$ be a smooth approximation of the $\sgn^+$ function. More precisely, $\zeta_\delta(x)=0$ for $x\leq 0$, $\zeta_\delta'(x)\geq0$ and $0<\zeta_\delta(x)\leq1$ for $x>0$. Since $0\leq(g\mathcal{X}_R)^+\leq (g)^+\in L^1(\R^N)$, $(u_R)^+\in L^1(\R^N)$ by Corollary \ref{g+}, and
\begin{equation*}
\begin{split}
\left| \int_{\R^N}u_R \zeta_\delta(u_R)\dd x \right|&\leq \|(u_R)^+\|_{L^1}\|\zeta_\delta(u_R)\|_{L^\infty}\\
\left| \int_{\R^N}g\mathcal{X}_R \zeta_\delta(u_R)\dd x \right|&\leq \|g\|_{L^\infty}  \int_{|x|\leq2R} \dd x.
\end{split}
\end{equation*}
Then by \eqref{eqaux13}, $\left|\int_{\R^N}\Levy^{\mu_r}[u_R]\zeta_\delta(u_R)\dd x\right|<\infty$, and we may multiply \eqref{eqaux13} by $\zeta_\delta$ and integrate over $\R^N$ to find that
\begin{equation*}
 \veps\int_{\R^N} u_{R}\zeta_\delta(u_{R}) \dd x =  \underbrace{\int_{\R^N}   \Levy^{\mu_r} [u_{R}] \zeta_\delta(u_{R}) \dd x}_{I_r} + \int_{\R^N} g\mathcal{X}_R \zeta_\delta(u_{R}) \dd x.
\end{equation*}
So, Lemma \ref{Stroock-Var} and Remark \ref{stroockwelldef} gives that $I_r\leq0$ and hence
$$
\veps\int_{\R^N}u_{R} \zeta_\delta(u_{R})\dd x\leq\int_{\R^N}g\mathcal{X}_R \zeta_\delta(u_{R})\dd x\leq \int_{\R^N} (g)^+ \dd x.
$$

Letting $\zeta_\delta(u_{R})\to \sgn^+(u_{R})$ as $\delta\to0^+$ in the above inequality (using Fatou's lemma on the left-hand side since $u_{R} \zeta_\delta(u_{R})\geq0$) yields
\begin{equation}\label{aux15}
\veps\int_{\R^N}(u_R)^+\dd x\leq\int_{\R^N}(g)^+\dd x.
\end{equation}

We note that the sequence $\{u_R\}_{R>0}$ is equibounded and equilipschitz since Proposition \ref{existapproxelliptic} (d) gives
\begin{equation*}
\begin{split}
\|u_R\|_{L^\infty} \leq &\,\frac{1}{\veps}\|g\|_{L^\infty}\\
\|D u_R\|_{L^\infty} \leq &\,\frac{1}{\veps}\|D\left(g \mathcal{X}_R\right)\|_{L^\infty}\leq\frac{1}{\veps}\|D g \|_{L^\infty}+ \frac{1}{\veps}\mathcal{O}\left(\frac{1}{R}\right).
\end{split}
\end{equation*}
Hence, by Arzel\'a-Ascoli, $u_R\to u$ as $R\to\infty$ locally uniformly in $\R^N$ (and thus a.e. in $\R^N$). Sending $R\to\infty$ in \eqref{eqaux13} shows that $u=v_{\veps,r}$, that is, the unique solution of \eqref{elliptic} given by Proposition \ref{existapproxelliptic} (d). Furthermore, we can send $R \to \infty$ in \eqref{aux15} (again using Fatou's lemma) to obtain
\[
\veps\int_{\R^N}(v_{\veps,r})^+\dd x\leq\int_{\R^N}(g)^+\dd x.
\]

By Fatou's lemma and Proposition \ref{approxtoelliptic} (b), we can let $r_n\to0^+$ in the above estimate to get
$$
\veps\int_{\R^N}(v_{\veps})^+\dd x\leq\int_{\R^N}(g)^+\dd x,
$$
where $v_\veps$ is the classical solution of \eqref{elliptic}. 
\end{proof}
 
\begin{corollary}\label{classicalsolnL1contr}
Assume \eqref{muassumption}, $g\in C_\textup{b}^\infty(\R^N)$, $\veps>0$, and $v_\veps=\B[g]$.
\begin{enumerate}[(a)]
\item If $(g)^-\in L^1(\R^N)$, then $\veps\int_{\R^N}(v_{\veps})^-\dd x\leq\int_{\R^N}(g)^-\dd x$.
\item If $g\in L^1(\R^N)$, then $\veps\int_{\R^N}|v_{\veps}|\dd x\leq\int_{\R^N}|g|\dd x$
\end{enumerate}
\end{corollary}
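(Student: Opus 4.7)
The plan is to reduce both parts of the corollary to Proposition \ref{l1+estimate} using the linearity of the resolvent operator $\B$ and additivity of the decomposition $|f| = (f)^+ + (f)^-$.

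For part (a), I would apply Proposition \ref{l1+estimate} to the function $-g$. Since $\Levy^\mu$ is linear and $-g\in C_\textup{b}^\infty(\R^N)$ as well, the linearity of equation \eqref{elliptic} together with the uniqueness of classical solutions from Proposition \ref{approxtoelliptic} (a) (equivalently, Corollary \ref{classicaluniqueelliptic}) guarantees that $\B[-g]=-\B[g]=-v_\veps$. The hypothesis $(g)^-\in L^1(\R^N)$ translates into $(-g)^+=(g)^-\in L^1(\R^N)$, so Proposition \ref{l1+estimate} applied to $-g$ yields
\[
\veps\int_{\R^N}(-v_\veps)^+\dd x \leq \int_{\R^N}(-g)^+\dd x,
\]
which is exactly the claim $\veps\int_{\R^N}(v_\veps)^-\dd x \leq \int_{\R^N}(g)^-\dd x$.

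For part (b), since $g\in L^1(\R^N)$ implies $(g)^+, (g)^- \in L^1(\R^N)$, I would invoke Proposition \ref{l1+estimate} to control $(v_\veps)^+$ and part (a) above to control $(v_\veps)^-$. Adding the two inequalities and using the pointwise identities $|v_\veps|=(v_\veps)^++(v_\veps)^-$ and $|g|=(g)^++(g)^-$ gives
\[
\veps\int_{\R^N}|v_\veps|\dd x \leq \int_{\R^N}(g)^+\dd x + \int_{\R^N}(g)^-\dd x = \int_{\R^N}|g|\dd x.
\]

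There is no genuine obstacle here: all of the heavy machinery (Stroock--Varopoulos type inequality, cutoff arguments, and passage to the limit $r\to 0^+$ via Proposition \ref{approxtoelliptic}) has already been invested into Proposition \ref{l1+estimate}. The corollary is a purely algebraic consequence exploiting linearity plus the positive-part/negative-part decomposition, and requires no further analysis beyond verifying that $\B[-g]=-\B[g]$, which is immediate from uniqueness.
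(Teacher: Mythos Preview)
Your proposal is correct and follows essentially the same approach as the paper: apply Proposition \ref{l1+estimate} to $-g$ using $\B[-g]=-\B[g]$ for part (a), then add the positive and negative part estimates for part (b). The paper's proof is slightly terser but the argument is identical.
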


\begin{proof}
(a) Note that $(g)^-\in L^1(\R^N)$ implies that $(-g)^+\in L^1(\R^N)$. Since $\B[-g]=-\B[g]$, we have by Proposition \ref{l1+estimate} that
$$
\veps\int_{R^N}(\B[g])^-\dd x=\veps\int_{\R^N}(\B[-g])^+\dd x\leq\int_{\R^N}(-g)^+=\int_{\R^N}(g)^-\dd x.
$$
\smallskip

\noindent
(b) Follows by noting that $(v_\veps)^++(v_\veps)^-=|v_\veps|$.
\end{proof}

Below, we collect the main results for \eqref{elliptic}.

\begin{theorem}\label{propertiesofB1}
Assume \eqref{muassumption}, $g\in L_{\textup{loc}}^1(\R^N)$, and $v_\veps\in L_{\textup{loc}}^1(\R^N)$ is a distributional solution of \eqref{elliptic}.
\begin{enumerate}[(a)]
\item If $(g)^+\in L^1(\R^N)$, then
$$
\veps\int_{\R^N}(v_\veps)^+\dd x\leq \int_{\R^N}(g)^+\dd x.
$$
\item If $g\geq 0$ a.e. on $\R^N$, then $v_\veps \geq 0$ a.e. on $\R^N$. 
\end{enumerate}
\end{theorem}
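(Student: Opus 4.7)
The plan is to prove part (a) by mollifying the distributional equation, testing the resulting pointwise smooth equation against a truncated approximation of $\sgn^+$, and passing to the limit via Fatou's lemma. Part (b) follows immediately from (a) applied to $-v_\veps$ with right-hand side $-g$: since $g \ge 0$ a.e.\ gives $(-g)^+ \equiv 0 \in L^1(\R^N)$, part (a) yields $\veps \int_{\R^N} (-v_\veps)^+ \dd x \le 0$, i.e., $v_\veps \ge 0$ a.e.

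For (a), first mollify. With $\omega_\delta$ from \eqref{mollifierspace}, set $v_{\veps,\delta} := v_\veps \ast \omega_\delta$ and $g_\delta := g \ast \omega_\delta$. By the symmetry of $\mu$, convolution commutes with $\Levy^\mu$ at the distributional level, so $v_{\veps,\delta} \in C^\infty(\R^N)$ satisfies the pointwise identity $\veps v_{\veps,\delta} - \Levy^\mu[v_{\veps,\delta}] = g_\delta$ on $\R^N$. Young's inequality together with $(g_\delta)^+ \le g^+ \ast \omega_\delta$ gives $\|(g_\delta)^+\|_{L^1(\R^N)} \le \|g^+\|_{L^1(\R^N)}$. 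The main task is then to establish the uniform-in-$\delta$ a priori estimate $\veps \|(v_{\veps,\delta})^+\|_{L^1} \le \|(g_\delta)^+\|_{L^1}$; once this is proved, $v_{\veps,\delta} \to v_\veps$ in $L^1_{\textup{loc}}(\R^N)$ and a.e.\ along a subsequence as $\delta \to 0^+$, and a final application of Fatou's lemma to $(v_{\veps,\delta})^+$ concludes.

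For the a priori estimate I would multiply the pointwise equation by $\zeta_\eta(v_{\veps,\delta})\mathcal{X}_R$ and integrate over $\R^N$, where $\zeta_\eta$ is a smooth nondecreasing approximation of $\sgn^+$ with $0 \le \zeta_\eta \le 1$, and $\mathcal{X}_R$ is the cutoff used in the proof of Proposition \ref{l1+estimate}. Splitting $\Levy^\mu = \Levy^{\mu_r} + (\Levy^\mu - \Levy^{\mu_r})$ and applying the symmetrization identity for the bounded measure $\Levy^{\mu_r}$ (as in the proof of Lemma \ref{Stroock-Var}), the nonlocal-dissipation term decomposes into a nonpositive piece, coming from the monotonicity of $\zeta_\eta$, plus an error term coming from the interaction of $\mathcal{X}_R$ with the nonlocal operator. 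The right-hand side contribution is bounded by $\int_{\R^N}(g_\delta)^+ \dd x \le \|g^+\|_{L^1}$. Sending $\eta \to 0^+$, $r \to 0^+$, and $R \to \infty$ in this order, using Lemma \ref{propnonlocal} and Lebesgue's dominated convergence theorem, yields the claimed bound. The main obstacle will be controlling the cutoff error term
\[
\int_{\R^N}\int_{|z|>0}[v_{\veps,\delta}(x+z) - v_{\veps,\delta}(x)]\zeta_\eta(v_{\veps,\delta}(x))[\mathcal{X}_R(x+z) - \mathcal{X}_R(x)]\dd \mu(z)\dd x
\]
uniformly as $R \to \infty$. This should follow from the local Lipschitz regularity of the smooth function $v_{\veps,\delta}$ together with the L\'evy condition \eqref{muassumption} applied to the increment $\mathcal{X}_R(x+z) - \mathcal{X}_R(x)$, which vanishes pointwise as $R \to \infty$ and decays in $z$ like $\|D\mathcal{X}\|_{L^\infty}|z|/R$ for $|z| \le 1$.
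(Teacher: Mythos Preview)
Your argument for (b) matches the paper exactly, and for (a) the mollification step and the final Fatou passage also coincide with the paper. The gap is in your derivation of the intermediate bound $\veps\|(v_{\veps,\delta})^+\|_{L^1}\le\|(g_\delta)^+\|_{L^1}$: the cutoff error term you isolate cannot be controlled with the information available. For $|z|\le 1$ the support of $\mathcal{X}_R(\cdot+z)-\mathcal{X}_R$ lies in an annulus $\{R\lesssim|x|\lesssim 2R\}$ of volume $\sim R^N$, so even with the $|z|/R$ decay you need something like
\[
\frac{1}{R}\int_{R\lesssim|x|\lesssim 2R}\|Dv_{\veps,\delta}\|_{L^\infty(B(x,1))}\dd x\to 0.
\]
Since $v_\veps$ is only assumed to be in $L^1_{\textup{loc}}(\R^N)$, neither $v_{\veps,\delta}$ nor $Dv_{\veps,\delta}$ is globally bounded or integrable, and this quantity need not vanish; local Lipschitz regularity says nothing about growth at infinity. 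For $|z|>1$ the situation is worse: there is no smallness and no bound on $|v_{\veps,\delta}(x+z)-v_{\veps,\delta}(x)|$.

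The paper avoids this entirely by invoking Proposition~\ref{l1+estimate}, whose proof uses a genuinely different localisation: rather than inserting $\mathcal{X}_R$ into the test function, one replaces the \emph{right-hand side} by $g_\delta\mathcal{X}_R$ and solves the resulting equation (first for the bounded operator $\Levy^{\mu_r}$) to obtain a new function $u_R\in C_\textup{b}^\infty(\R^N)$. Because $(g_\delta\mathcal{X}_R)^+\le(g_\delta)^+\in L^1$, Corollary~\ref{g+} yields $(u_R)^+\in L^1(\R^N)$ outright, so Lemma~\ref{Stroock-Var} applies to $u_R$ \emph{without any spatial cutoff} and no commutator error ever appears. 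One then sends $R\to\infty$ using uniform-in-$R$ $C^1$ bounds on $u_R$ (from Proposition~\ref{existapproxelliptic}~(d)) and Arzel\`a--Ascoli, and finally $r\to0^+$ and Fatou. Moving the cutoff from the test function to the data is exactly what produces the global integrability you are missing.
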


\begin{proof}
(a) Let $\omega_\delta\in C_\textup{c}^\infty(\R^N)$ be defined in \eqref{mollifierspace}, and let $v_{\veps, \delta}=v_\veps\ast \omega_\delta\in C_\textup{b}^\infty(\R^N)$. By assumption, 
$$
\veps\int_{\R^N} v_\veps \psi \dd y-\int_{\R^N} v_\veps\Levy^{\mu} [\psi] \dd y= \int_{\R^N} g \psi \dd y
$$
for all $\psi\in C_\textup{c}^\infty(\R^N)$. Taking $\psi(y)=\omega_\delta(x-y)$ for $x\in \R^N$, we get the pointwise equation
\begin{equation*}
\begin{split}
\veps v_{\veps,\delta}-\Levy^\mu[v_{\veps,\delta}]=g\ast\omega_\delta\qquad\text{in}\qquad \R^N.
\end{split}
\end{equation*}

Note that $0\leq\left( g\ast\omega_\delta\right)^+\leq (g)^+\ast\omega_\delta\in L^1(\R^N)$ (see e.g. Lemma 5.1 in \cite{EnJa14} ), so Proposition \ref{l1+estimate} gives
\begin{equation*}
\veps\int_{\R^N}(v_{\veps,\delta})^+\dd x\leq \int_{\R^N}\left(g\ast\omega_\delta\right)^+\dd x. 
\end{equation*}
Then by Fatou's lemma
\begin{equation*}
\begin{split}
\veps\int_{\R^N}\liminf_{\delta\to0^+}(v_{\veps,\delta})^+\dd x\leq\liminf_{\delta\to0^+}\veps\int_{\R^N}(v_{\veps,\delta})^+\dd x \leq \liminf_{\delta\to0^+}\int_{\R^N}(g)^+\ast\omega_\delta\dd x. 
\end{split}
\end{equation*}
Since $(\cdot)^+$ is continuous, $(g)^+\in L^1(\R^N)$, and $v_{\veps,\delta}\in L_{\textup{loc}}^1(\R^N)$, the properties of mollifiers yields
\begin{equation*}
\veps\int_{\R^N}(v_\veps)^+\dd x \leq \int_{\R^N}(g)^+\dd x.
\end{equation*}
\smallskip

\noindent(b) Note that $-v_\veps$ solves \eqref{elliptic} with right-hand side $-g$. If $-g\leq 0$ a.e. on $\R^N$, then $(-g)^+=0\in L^1(\R^N)$. By part (a), we deduce that $\veps\int_{\R^N}(-v_\veps)^+\dd x\leq 0$, and hence that $-v_\veps\leq 0$ a.e. on $\R^N$.
\end{proof}

We are now ready to prove our main theorem for the elliptic equation \eqref{elliptic}.

\begin{proof}[Proof of Theorem \ref{collectedresultselliptic}]
(a) By the assumptions and Proposition \ref{existapproxelliptic} (d), for every $r>0$, there exists a unique classical solution $v_{\veps,r}\in C_\textup{b}^\infty(\R^N)$ of \eqref{approxelliptic} satisfying
$$
\veps\|D^\alpha v_{\veps,r}\|_{L^\infty}\leq\|D^\alpha g\|_{L^\infty} \qquad\text{for all}\qquad \alpha\in\mathbb{N}^N.
$$

An Arzel\`a-Ascoli argument as in the proof of Proposition \ref{approxtoelliptic} (in this case combined with a diagonal extraction argument), shows the existence of classical solutions $v_\veps\in C_\textup{b}^\infty(\R^N)$ of \eqref{elliptic} satisfying
$$
\veps\|D^\alpha v_{\veps}\|_{L^\infty}\leq\|D^\alpha g\|_{L^\infty} \qquad\text{for all}\qquad \alpha\in\mathbb{N}^N.
$$
Moreover, Corollary \ref{classicaluniqueelliptic} ensures that the classical solutions $v_\veps$ are unique.
\medskip

\noindent
(b) {\em Existence of $L^1$-solutions:} Let $\delta>0$, $g_\delta=g\ast
\omega_\delta\in C_\textup{b}^\infty(\R^N)$, where $\omega_\delta$ is defined
by \eqref{mollifierspace}, and $v_{\veps,\delta}\in C_\textup{b}^\infty(\R^N)$ be
the solution of \eqref{elliptic} with $g_\delta$ as right hand side. By Remark
\ref{propapproxelliptic} (a), a difference of solutions is also a
solution, and then by Corollary \ref{classicalsolnL1contr} (b),
$$
\veps\|v_{\veps,\delta_1}-v_{\veps,\delta_2}\|_{L^1}\leq\|g_{\delta_1}-g_{\delta_2}\|_{L^1}\qquad\text{for
  every}\qquad\delta_1,\delta_2>0.
$$
Hence, $\{v_{\veps,\delta}\}_{\delta>0}$ is Cauchy and there exists
$v_\veps\in L^1(\R^N)$ such that
$\|v_{\veps,\delta}-v_\veps\|_{L^1}\to0$ as $\delta\to0^+$. 

Since $v_{\veps,\delta}$ satisfies \eqref{elliptic} with right-hand side $g_\delta$,
$$
\veps\int_{\R^N}v_{\veps,\delta}\psi\dd x - \int_{\R^N}v_{\veps,\delta}\Levy^\mu[\psi]\dd x=\int_{\R^N}g_\delta \psi\dd x \qquad\text{for all}\qquad \psi\in C_\textup{c}^\infty(\R^N),
$$
and since $v_{\veps,\delta}, g_\delta\to v_\veps, g$ in $L^1(\R^N)$ as $\delta\to0^+$, we send $\delta\to0^+$ and find that $v_\veps$ is an $L^1$-distributional solution of \eqref{elliptic}.

{\em Uniqueness:} Note that $L^1\subset L_{\textup{loc}}^1$. Consider two distributional solutions $v_\veps, \vv_\veps$ of \eqref{elliptic} with right-hand sides $g, \hat{g}\in L^1(\R^N)$. If $g-\hat{g}=0$ a.e., then $v_\veps-\vv_\veps=\B[g-\hat{g}]=0$ by Theorem \ref{propertiesofB1} (b).

{\em $L^1$-estimate:} By the assumptions, we can take $v_\veps\in L^1(\R^N)\subset L_{\textup{loc}}^1(\R^N)$ and $g\in L^1(\R^N)$. Then Theorem \ref{propertiesofB1} (a) gives
$$
\veps\|(v_\veps)^+\|_{L^1}\leq \|(g)^+\|_{L^1}.
$$
A similar argument as in the proof of Corollary \ref{classicalsolnL1contr} concludes the proof.
\medskip

\noindent
(c) {\em Existence of $L^\infty$-solutions:} Proposition
\ref{existapproxelliptic} (b) ensures that there exists a unique
a.e. solution $v_{\veps, r}\in L^\infty(\R^N)$  of $$\veps
v_{\veps,r}-\Levy^{\mu_r}[v_{\veps,r}]=g,$$ and $\veps
\|v_{\veps,r}\|_{L^\infty}\leq \|g\|_{L^\infty}$. Then, by Alaoglu's theorem there exists $\overline{v_\veps}\in L^\infty(\R^N)$ such that, up to a subsequence, $v_{\veps,r_n}\stackrel{*}{\rightharpoonup}\overline{v_\veps} $  in $L^\infty(\R^N)$ as $r_n\to 0^+$. That is,
\begin{equation*}
\lim_{r_n\to0^+}\int_{\R^N} v_{\veps,r_n} \psi \dd x=\int_{\R^N} \overline{v_{\veps}} \psi \dd x \qquad \text{for all}\qquad \psi \in L^1(\R^N). 
\end{equation*}

To finish the existence proof, we need to show that $\overline{v_\veps}$ is in fact a distributional solution of \eqref{elliptic}. 
Consider a function $\gamma \in C_\textup{c}^\infty(\R^N)$, then
$\gamma, \Levy^{\mu_{r_n}}[\gamma], \Levy^{\mu}[\gamma]\in L^1(\R^N)$
(see Lemma \ref{propnonlocal} (b)). Since $v_{\veps,r_n}$ is a
pointwise a.e. solution  and $v_{\veps,r_n},\Levy^{\mu_{r_n}}[
v_{\veps,r_n}] \in L^\infty(\R^N)$, we have by integration and
self-adjointness of $\Levy^{\mu_{r_n}}$ (cf. Lemma \ref{propnonlocal}
and Remark \ref{nonlocalwelldefined} (b)) 
 that 
\[
\veps \int_{\R^N}  v_{\veps,r_n}\gamma \dd x- \int_{\R^N}  v_{\veps,r_n}\Levy^{\mu_{r_n}}[\gamma] \dd x=  \int_{\R^N} g \gamma \dd x \qquad \text{for all}\qquad \gamma \in C_\textup{c}^\infty(\R^N).
\]
The weak* $L^\infty$-convergence ensures that 
\[
\lim_{r_n\to0^+}\int_{\R^N} v_{\veps,r_n} \gamma \dd x = \int_{\R^N} \overline{v_{\veps}} \gamma \dd x \qquad\text{for all}\qquad \gamma\in C_\textup{c}^\infty(\R^N). 
\]
By Remark \ref{propapproxelliptic} (b), we have that, for any $\gamma \in C_\textup{c}^\infty (\R^N)$, $\Levy^{\mu_{r_n}}[\gamma]\to\Levy^{\mu}[\gamma]$ in $L^1(\R^N)$ as $r_n\to0^+$. Then, since $\|v_{\veps,{r_n}}\|_{L^\infty}\leq \frac{1}{\veps}\|g\|_{L^\infty}$, we get as $r_n\to0^+$
\begin{equation*}
\begin{split}
&\int_{\R^N} v_{\veps,r_n}\Levy^{\mu_{r_n}}[ \gamma] \dd x\\
& = \int_{\R^N} v_{\veps,r_n}\Levy^{\mu}[ \gamma] \dd x +\int_{\R^N} v_{\veps,r_n}(\Levy^{\mu_{r_n}}-\Levy^{\mu})[ \gamma] \dd x \to\int_{\R^N} \overline{v_{\veps}} \Levy^{\mu}[\gamma] \dd x,
\end{split}
\end{equation*}
for all $ \gamma\in C_\textup{c}^\infty(\R^N)$. This shows that 
\[
\veps \int_{\R^N}  \overline{v_{\veps}}\gamma \dd x- \int_{\R^N} \overline{v_{\veps}} \Levy^{\mu}[\gamma] \dd x=  \int_{\R^N} g \gamma \dd x \qquad \text{for all}\qquad \gamma \in C_\textup{c}^\infty(\R^N),
\]
that is, $\overline{v_{\veps}}$ is an $L^\infty$-distributional solution of \eqref{elliptic}.

{\em Uniqueness:} Note that $L^\infty\subset L_{\textup{loc}}^1$. Consider two distributional solutions $v_\veps, \vv_\veps$ of \eqref{elliptic} with right-hand sides $g, \hat{g}\in L^\infty(\R^N)$. If $g-\hat{g}=0$ on $\R^N$, then $v_\veps-\vv_\veps=0$ by Theorem \ref{propertiesofB1} (b).

{\em $L^\infty$-estimate:} Observe that
$\pm\frac{1}{\veps}\|g\|_{L^\infty}\in L^\infty(\R^N)\subset
L_{\textup{loc}}^1(\R^N)$ are distributional solutions of
\eqref{elliptic} with $\pm\|g\|_{L^\infty}$ as right-hand
sides. Moreover, $-\|g\|_{L^\infty}\leq g\leq\|g\|_{L^\infty}$. Then
Theorem \ref{propertiesofB1} (b) gives
$|v_\veps|\leq\frac{1}{\veps}\|g\|_{L^\infty}$.
\end{proof}

This section is concluded by a proof of the self-adjointness of $\B$.

\begin{proof}[Proof of Lemma \ref{symmetryB}]
Let $f_\delta=f\ast\omega_\delta$ and $g_\delta=g\ast\omega_\delta$
where $\omega_\delta$ is defined by \eqref{mollifierspace}. Then
$f_\delta\in C_\textup{b}^\infty(\R^N)\cap W^{2,1}(\R^N)$ and $g\in C_\textup{b}^\infty(\R^N)$, and then by
Theorem \ref{collectedresultselliptic} (a)--(c),
$\B[f_\delta]\in C_\textup{b}^\infty(\R^N)\cap W^{2,1}(\R^N)$, $\B[g_\delta]\in
C_\textup{b}^\infty(\R^N)$, and 
\begin{align*}
\veps\B[f_\delta]-\Levy^\mu[\B[f_\delta]]&=f_\delta(x)
\qquad\text{in}\qquad \R^N,\\
\veps\B[g_\delta]-\Levy^\mu[\B[g_\delta]]&=g_\delta(x)
\qquad\text{in}\qquad \R^N.
\end{align*}
By the regularity and integrability of the terms of the equations (cf. Lemma
\ref{propnonlocal}), we may multiply the first equation by
$\B[g_\delta]$ and the second by $\B[f_\delta]$, and then integrate
both equations in $x$ over $\R^N$. By self-adjointness of $\Levy^\mu$
(Lemma \ref{propnonlocal} (c)), we then find that
\begin{equation}\label{epsdeltasymmetry}
\begin{split}
\,\int_{\R^N}f_\delta\B[g_\delta]\dd x=&\,\int_{\R^N}
\big(\veps\B[f_\delta]-\Levy^\mu[\B[f_\delta]]\big)\B[g_\delta]\dd x\\
=&\,\int_{\R^N} \big(\veps\B[g_\delta]-\Levy^\mu[\B[g_\delta]]\big)\B[f_\delta] \dd x\\
=&\int_{\R^N}g_\delta\B[f_\delta]\dd x
\end{split}
\end{equation}

To pass to the limit as $\delta\to 0^+$, we first subtract equations to find that 
\[
\veps \B[f]-\veps \B[f_\delta] -\Levy^\mu\big[\B[f]-\B[f_\delta]\big]=f-f_\delta \, \qquad\text{in}\qquad \mathcal{D}'(\R^N),  
\]
and hence by Theorem \ref{collectedresultselliptic} (b), linearity, and properties
of mollifiers,
\[
\veps\|\B[f]-\B[f_\delta] \|_{L^1}=\veps\|\B[f-f_\delta] \|_{L^1}\leq
\|f-f_\delta\|_{L^1} \to 0 \qquad\text{as}\qquad \delta\to 0^+.\]
On the other hand, by Theorem \ref{collectedresultselliptic} (b) and 
(c), and properties of the mollifiers,
$$\veps\|\B[f_\delta]\|_{L^1}\leq
\|f\|_{L^1},\quad \veps\|\B[f_\delta]\|_{L^\infty}\leq
\|f\|_{L^\infty}, \quad \veps\|\B[g_\delta]\|_{L^\infty}\leq
\|g\|_{L^\infty}, $$
and $g_\delta\to g$ a.e. Using $L^1$-convergence for the $f$-terms and the dominated convergence theorem
for the $g$-terms, we may send $\delta\to0^+$ in \eqref{epsdeltasymmetry} to
get the result. 
\end{proof}

\appendix
\section{Technical results}\label{appendix}

\subsection{Proof of Liouville type of theorem}

\begin{proof}[Proof of Theorem \ref{Liouville}] 
By the definition of distributional solutions,
$$
\int_{\R^N}v(y)\Levy^\mu[\psi](y)\dd y=0 \qquad\text{for all}\qquad \psi\in C_\textup{c}^\infty(\R^N).
$$
Let $x\in\R^N$, take $\psi(y)=\omega_\delta(x-y)$, where
$\omega_\delta$ is defined in \eqref{mollifierspace}, and let $v_\delta=v\ast\omega_\delta\in
C_0(\R^N)\cap C_\textup{b}^\infty(\R^N)$. By Lemma \ref{propnonlocal} (b), $\Levy^\mu[\psi]\in
L^1$, and we may use Fubini's theorem to see that 
\begin{equation}\label{LiouSmo}
\Levy^\mu[v_\delta](x)=0 \qquad\text{for every}\qquad x\in\R^N.
\end{equation}

Assume that there exists an $\tilde{x}\in\R^N$ such that
$v_\delta(\tilde{x})\neq0$. We only consider the case $v_\delta(\tilde{x})>0$;
the proof in the other case is similar. Then
$M:=\sup_{x\in\R^N}v_\delta>0$, and since $v_\delta\in C_0(\R^N)$
there exists an $x_0$ such that 
$$0<M=\max_{x\in\R^N}v_\delta=v_\delta(x_0).$$ 
By equation \eqref{LiouSmo} and Lemma \ref{propnonlocal} (b), we then find that
\begin{equation*}
\begin{split}
0=\Levy^{\mu}[v_\delta](x_0)=&\,\int_{|z|\leq\kappa}\lef v_\delta(x_0+z)-v_\delta(x_0)-z\cdot Dv_\delta(x_0)\rig\dd\mu(z)\\
&\,+\int_{|z|>\kappa}\lef v_\delta(x_0+z)-v_\delta(x_0)\rig\dd\mu(z)\\
\leq&\,\|D^2v_\delta\|_{L^\infty\left(\overline{B}(x_0,\kappa)\right)}\int_{|z|\leq\kappa}|z|^2\dd\mu(z)\\
&\,+\int_{|z|>\kappa}\lef v_\delta(x_0+z)-M\rig\dd\mu(z).\\
\end{split}
\end{equation*}
Take any $z_0\in \textup{supp}\; \mu$. By definition, $z_0\neq0$ and
$\mu(B(z_0,r))>0$ for all $r>0$. Hence we can take $r,\kappa\in(0,1)$ small
enough such that
$$B(z_0,r)\cap \{z\in\R^N:|z|\leq\kappa\}=\emptyset.$$
Since $\kappa<1$, $v_\delta(x_0+z)-M\leq0$, and $B(z_0,r)\subset
\{z\in\R^N:|z|>\kappa\}$, the above inequality yields that
$$
\int_{B(z_0,r)}\lef v_\delta(x_0+z)-M\rig\dd\mu(z)\geq\,-\|D^2v_\delta\|_{L^\infty\left(\overline{B}(x_0,1)\right)}\int_{|z|\leq1}|z|^2\mathbf{1}_{|z|\leq\kappa}\dd\mu(z).
$$
Taking the limit as $\kappa\to0^+$ using Lebesgue's dominated convergence theorem (the integrand is dominated by $|z|^2$ which is integrable by \eqref{muassumption}) gives
$$
\int_{B(z_0,r)}\lef v_\delta(x_0+z)\dd\mu(z)- M\mu(B(z_0,r))\rig \dd \mu(z)\geq0.
$$
Then by continuity, $v_\delta(x_0+z)=v_\delta(x_0+z_0)+\lambda(|z-z_0|)$
in $B(z_0,r)$ for some modulus of continuity $\lambda$, and we find
that
$$
v_\delta(x_0+z_0)+\lambda(r)\geq\frac{1}{\mu(B(z_0,r))}\int_{B(z_0,r)}v_\delta(x_0+z)\dd\mu(z)\geq M.
$$
Hence, we may send $r\to0^+$ and get that $v_\delta(x_0+z_0)\geq M$. It follows that
$v_\delta(x_0+z_0)= M$ since $M$ is the maximum of $v_\delta$. 

Repeating the above argument, we find that $v_\delta(x_0+nz_0)=M$ for
every $n\in\N$, and thus
$$\limsup_{n\to\infty}v_\delta(x_0+nz_0)\geq M>0.$$
 This is a contradiction since $\lim_{|x|\to\infty}v_\delta(x)=0$. So,
 we conclude that $v_\delta(x)=0$ for every $x\in \R^N$. 

By the properties of mollifiers, $v_\delta\to v$ locally uniformly in
$\R^N$ as $\delta\to0^+$, and hence it follows that also $v(x)=0$ for every $x\in\R^N$.
\end{proof}

\subsection{Proof of a measure theory result}

\begin{proof}[Proof of Lemma \ref{lemmameasure}]
Remember that we defined
\begin{equation*}
M_{1}(\As,\Bs)=\int_{\As}\left( \int_{\Bs-z}\dd\nu(x)\right)\dd z=\int_{\As}\nu(\Bs-z)\dd z,
\end{equation*}
\begin{equation*}
M_2(\As,\Bs)=\int_{\Bs}\left( \int_{\As-z}\dd\nu(x)\right)\dd z=\int_{\Bs}\nu(\As-z)\dd z,
\end{equation*}
and that we want to show that $M_1(\As, \Bs)=M_2(\As, \Bs)$.

Consider the set $\Cs\subset \R^{2N}$ defined as
\[
\Cs=\{(x,z)\in \R^{2N}: z\in \As, \ \ x\in \Bs-z\}.
\]
Furthermore, define the sets 
\[
\Ss=\{x=x_{\Bs}-x_{\As}: x_{\As}\in \As, \ \ x_{\Bs} \in \Bs\}=\bigcup_{x_{\As}\in \As}(\Bs-x_{\As}),
\]
\[
\Gs_x=\{z\in \As: x\in \Bs-z\}=\{z\in \As: z\in \Bs-x\}=\As\cap(\Bs-x).
\]
Note that $\Cs$ can also be expressed as
\[
\Cs=\{(x,z)\in\R^{2N}: x\in \Ss, \ \ z\in \Gs_x\}.
\]

\begin{figure}[htp] 
\centering{
\includegraphics[scale=0.5]{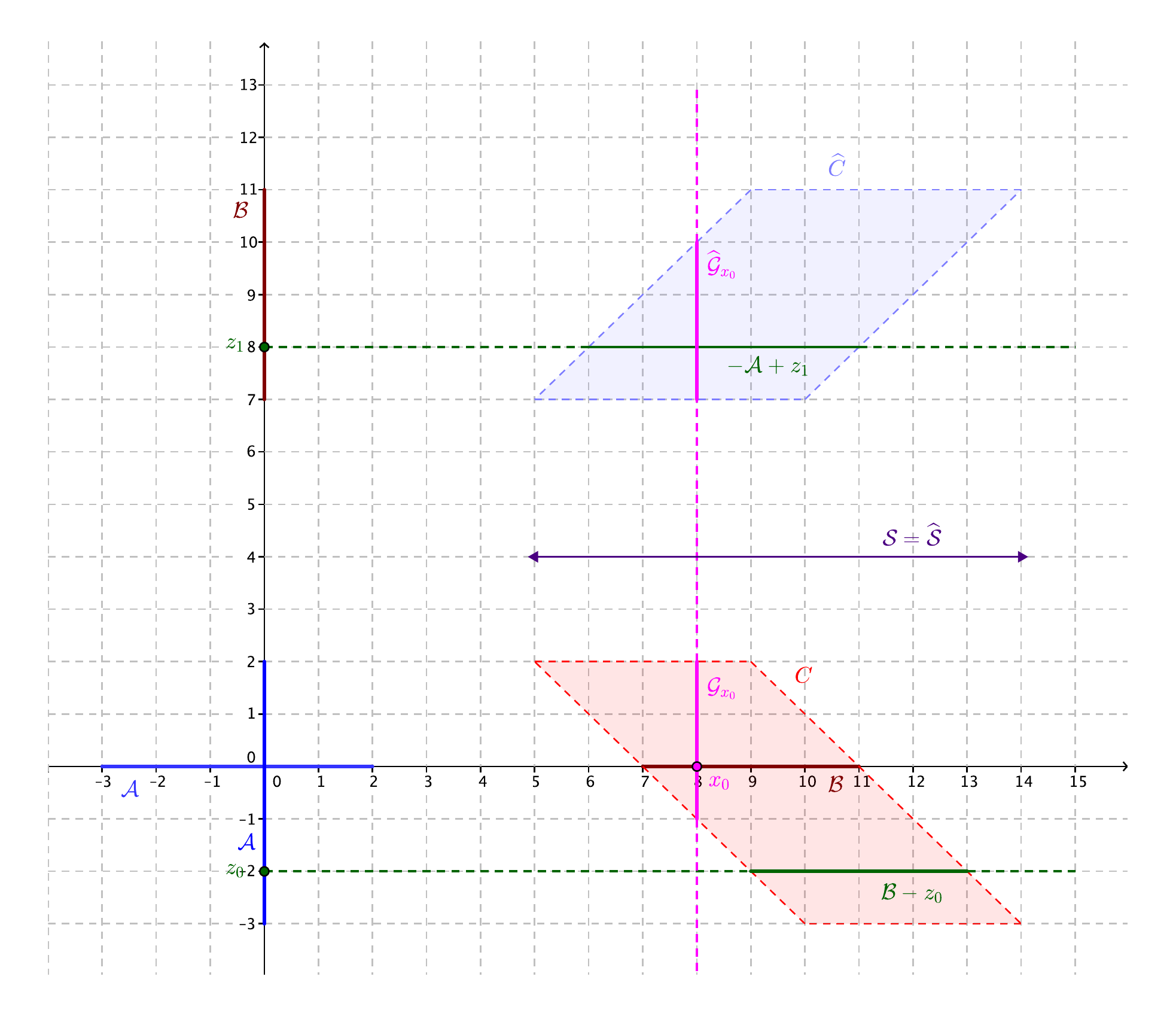}}
\caption{}
\label{explainingmeasure}
\end{figure}

Then
\begin{equation}\label{fubini}
\begin{split}
M_{1}(\As,\Bs)=&\int_{\R^N}\left( \int_{\R^N} \mathbf{1}_{\As}(z)\,\mathbf{1}_{\Bs-z}(x)\dd\nu(x)\right)\dd z=\int_{\R^N}\left( \int_{\R^N} \mathbf{1}_{\Cs}(x,z)\dd\nu(x)\right)\dd z\\
=&\int_{\R^N}\left( \int_{\R^N} \mathbf{1}_{\Cs}(x,z)\dd z\right)\dd\nu(x)=\int_{\R^N}\left( \int_{\R^N}\mathbf{1}_{\Ss}(x)\, \mathbf{1}_{\Gs_x}(z) \dd z\right)\dd\nu(x)\\
=&\int_{\Ss}\left( \int_{\Gs_x} \dd z\right)\dd\nu(x)=\int_{\Ss}|\Gs_x|\dd\nu(x),
\end{split}
\end{equation}
where the third equality follows by Tonelli's theorem (the tensor measure is a nonnegative Radon measure), and $|\cdot|$ denotes the Lebesgue measure on $\R^N$.

We can proceed in the same way to change the order of integration in the expression for $M_2(\As,\Bs)$, but first we make use of the symmetry of $\nu$ 
\[
M_2(\As,\Bs)=\int_{\Bs}\nu(\As-z)\dd z=\int_{\Bs}\nu(-\As+z)\dd z=\int_{\Bs}\left( \int_{z-\As}\dd\nu(x)\right)\dd z.
\]
Using the same technique we consider the sets,
\[
\widehat{\Cs}=\{(x,z)\in \R^{2N}: z\in \Bs, \ \ x\in -\As+z\},
\]
\[
\widehat{\Ss}=\{x=x_{\Bs}+x_{\As}: x_{\As}\in -\As, \ \ x_{\Bs} \in \Bs\}=\bigcup_{x_{\Bs}\in \Bs}(x_{\Bs}-\As),
\]
\[
\widehat{\Gs}_x=\{z\in \Bs: x\in -\As+z\}=\{z\in \Bs: z\in \As+x\}=\Bs\cap(\As+x).
\]
Second, we follow \eqref{fubini} to get
\begin{equation}\label{fubini2}
M_2(\As,\Bs)=\int_{\widehat{\Ss}}|\widehat{\Gs}_x|\dd\nu(x).
\end{equation}

Now, note that $\Ss=\widehat{\Ss}$. Moreover, $\Gs_x=\As\cap(\Bs-x)$ is just a translation of $\widehat{\Gs}_x=\Bs\cap(\As+x)$. Then $|\Gs_x|=|\widehat{\Gs}_x|$, since the Lebesgue measure is invariant under translations. (Consult Figure \ref{explainingmeasure} for a visual overview of the sets.)

Finally, we can conclude by \eqref{fubini} and \eqref{fubini2} that
\[
M_{1}(\As,\Bs)=\int_{\Ss}|\Gs_x|\dd\nu(x)=\int_{\widehat{\Ss}}|\widehat{\Gs}_x|\dd\nu(x)=M_2(\As,\Bs),
\]
which completes the proof.\end{proof}

\subsection*{Acknowledgements}

E. R. Jakobsen was supported by the grant Waves and Nonlinear
Phenomena (WaNP) from the Research Council of Norway. F. del Teso was
supported by the FPU grant AP2010-1843 and the grants MTM2011-24696 and MTM2014-52240-P from the Spanish Ministry of
Education, Culture and Sports, the BERC 2014-2017 program from the
Basque Government, and  BCAM Severo Ochoa excellence accreditation SEV-2013-0323 from Spanish Ministry of Economy and Competitiveness (MINECO). 
He would like to thank the Department of Mathematical
Sciences at NTNU for hosting him from September to
December 2014. We also thank Juan Luis V\'azquez for encouraging and
supporting the stay of the second author at NTNU where this paper was
started, and for useful comments and
discussions. Finally we thank the anonymous referee for
 his suggestions and remarks that helped us improve our paper.


\end{document}